\newcommand{\abs}[1]{\left\lvert#1\right\rvert}   
\newcommand{\norm}[1]{\left\lVert#1\right\rVert}   
\newcommand{\demph}[1]{{\it #1}}
\DeclareMathOperator{\kernel}{Kern}
\DeclareMathOperator{\id}{Id}
\DeclareMathOperator{\ev}{ev} 
\newcommand{\Cont}{{\mathscr C}} 
\DeclareMathOperator{\Lin}{L} 
\DeclareMathOperator{\Ad}{Ad}
\DeclareMathOperator{\Leb}{L} 
\DeclareMathOperator{\Komp}{K} 
\DeclareMathOperator{\KTh}{K}
\DeclareMathOperator{\KK}{KK}
\DeclareMathOperator{\Mult}{M}
\newcommand{\C}{\ensuremath{{\mathbb C}}}
\newcommand{\E}{\ensuremath{{\mathbb E}}}
\newcommand{\M}{\ensuremath{{\mathbb M}}}
\newcommand{\N}{\ensuremath{{\mathbb N}}}
\newcommand{\R}{\ensuremath{{\mathbb R}}}
\newcommand{\Z}{\ensuremath{{\mathbb Z}}}
\newcommand{\mA}{\ensuremath{{\mathcal A}}}
\newcommand{\mC}{\ensuremath{{\mathcal C}}}
\newcommand{\mD}{\ensuremath{{\mathcal D}}}
\newcommand{\mE}{\ensuremath{{\mathcal E}}}
\newcommand{\mF}{\ensuremath{{\mathcal F}}}
\newcommand{\mG}{\ensuremath{{\mathcal G}}}
\newcommand{\mM}{\ensuremath{{\mathcal M}}}
\newcommand{\mR}{\ensuremath{{\mathcal R}}}
\theoremstyle{plain}
\newtheorem {theorem} {Theorem}[section]
\newtheorem {lemma}[theorem] {Lemma}
\newtheorem {proposition} [theorem]{Proposition}
\newtheorem {corollary} [theorem]{Corollary}
\newtheorem* {theorem*} {Theorem}
\newtheorem* {proposition*} {Proposition}
\newtheorem* {lemma*}{Lemma}
\theoremstyle{definition}
\newtheorem {definition} [theorem]{Definition}
\newtheorem {deflemma} [theorem]{Definition and Lemma}
\newtheorem {remark} [theorem]{Remark}
\newcommand{\KKban}{{\mathrm {KK}}^{\ban}}
\newcommand{\KKbanG}{{\mathrm {KK}}^{\ban}_G}
\newcommand{\Eban}{\E^{\ban}}
\newcommand{\EbanG}{\E^{\ban}_G}
\DeclareMathOperator{\ban}{ban}
\DeclareMathOperator{\ssplit}{ss}
\DeclareMathOperator{\equi}{equiv}
\newcommand{\kk}{{\mathrm {kk}}}
\newcommand{\kkban}{\kk^{\ban}}
\newcommand{\kkbanG}{\kk^{\ban}_{G}}
\newcommand{\kkbancat}{\cat{kk}^{\ban}}
\newcommand{\kkbanGfunc}{\kk^{\ban}_G}
\newcommand{\kkbanGcat}{\cat{kk}^{\ban}_G}
\newcommand{\rkk}{{\mathrm {rkk}}}
\newcommand{\rkkban}{\rkk^{\ban}}
\newcommand{\Rkk}{{\mathrm {Rkk}}}
\newcommand{\Rkkban}{\Rkk^{\ban}}
\newcommand{\RkkbanG}{\Rkk^{\ban}_{G}}
\newcommand{\RkkbanGcat}{\cat{Rkk}^{\ban}_G}
\newcommand{\rRkk}{{\mathcal R}{\mathrm {kk}}}
\newcommand{\rRkkban}{\rRkk^{\ban}}
\newcommand{\rRkkbanG}{\rRkk^{\ban}_{G}}
\newcommand{\rRkkbanGcat}{{\bf \mathcal R}\cat{kk}^{\ban}_G}
\newcommand{\MoritabanGcat}{\cat{Mor}^{\ban}_G}
\newcommand{\LazyAnd}{\quad \text{and} \quad}
\newcommand{\rmd}{{\, \mathrm{d}}}
\newcommand{\zylinder}{{\mathrm Z}} 
\newcommand{\cone}{{\mathrm C}}  
\newcommand{\suspension}{{\mathrm \Sigma}}  
\DeclareMathOperator{\RKK}{\mR KK}
\newcommand{\RKKban}{\RKK^{\ban}} 
\newcommand{\RKKbanG}{\RKK^{\ban}_{G}} 
\newcommand{\gG}{\mG} 
\newcommand{\ContSect}{\Gamma}
\DeclareMathOperator{\MbanG}{\M^{\ban}_G} 
\DeclareMathOperator{\Moritaban}{Mor^{\ban}} 
\DeclareMathOperator{\MoritabanG}{Mor^{\ban}_G} 
\newcommand{\Moritabancat}{\cat{Mor}^{\ban}}
\DeclareMathOperator{\colim}{colim}
\newcommand{\cat}[1]{\text{\rm \bf  #1}}
\newcommand{\BanAlg}{\ensuremath{\cat{BanAlg}}}
\newcommand{\GBanAlg}{\ensuremath{G\text{-}\cat{BanAlg}}}
\DeclareMathOperator{\pt}{pt.}
\DeclareMathOperator{\SigmaHobanGcat}{\ensuremath{\mathbf{\Sigma}\cat{Ho}^{\ban}_G}}
\DeclareMathOperator{\SigmaHobanG}{\Sigma Ho^{\ban}_G}
\DeclareMathOperator{\can}{can}
\begin{document}

\title{$kk$-Theory for Banach Algebras II:\\ Equivariance and Green-Julg type theorems}
\author{Walther Paravicini}
\date{\today}
\maketitle

\begin{abstract}
\noindent We extend the definition of the bivariant $\KTh$-theory $\kk^{\ban}$ from plain Banach algebras to Banach algebras equipped with an action of a locally compact Hausdorff group $G$. We also define a natural transformation from Lafforgue's theory $\KKbanG$ into the new equivariant theory, overcoming some technical difficulties that are particular to the equivariant case. The categorical framework allows us to systematically define a descent homomorphism and to prove a Green-Julg theorem, a dual version of it and a generalised version that involves the action of a proper $G$-space. We also include a na{\"i}ve  Poncar\'{e} duality theorem.

\medskip 

\noindent {\it Keywords:} bivariant K-theory, Kasparov theory, kk-theory, Banach algebras, Morita invariance, triangulated categories, Green-Julg theorem, Poincar{'e} duality

\medskip

\noindent AMS 2010 {\it Mathematics subject classification:} Primary 19K35 ; Secondary 22D15, 46M18, 46H99

\end{abstract}

\noindent In \cite{Paravicini:14:kkban:arxiv}, we have defined an equivariant bivariant $\KTh$-theory $\kkban$ for Banach algebras that has a product and reasonable homological properties. This was achieved using triangulated categories and ensuring that the new theory has a universal property that implies that it is, in particular, Morita invariant and that there is a canonical natural transformation from Vincent Lafforgue's theory $\KKban$ into it. A key ingredient in the definition was the notion of a semi-split short-exact sequence of Banach algebras, i.e. a short exact sequence
$$
\xymatrix{  B \ \ar@{>->}[r]^{\iota}& D \ar@{->>}[r]^{\pi}& A}
$$
of Banach algebras where $\iota$ and $\pi$ are continuous homomorphisms, $\pi$ is surjective, $\iota$ is injective and $\iota(B) = \kernel(\pi)$ and such that the map $\pi$ has a continuous linear split $\sigma$. 

Now let $G$ be a locally compact Hausdorff group. In order to define an equivariant theory $\kkbanG$ one has to decide on a definition of $G$-equivariant semi-split extensions: We consider $G$-actions on the Banach algebras $A$, $D$ and $B$ and assume that $\iota$ and $\pi$ are $G$-equivariant. The extra complexity that led to the separation of the non-equivariant from the equivariant case in  \cite{Paravicini:14:kkban:arxiv} in order to keep the exposition clearer is related to the question whether or not $\sigma\colon A \to D$ should be assumed to be $G$-equivariant as well.

In the case of an odd cycle, a na\"{i}ve extension of the definition of the natural transformation from $\KKban$ to $\kkban$ given in \cite{Paravicini:14:kkban:arxiv} to the equivariant case leads to an extension with a (possibly) non-equivariant split. Interestingly, one can always replace such an extension with a Morita equivalent extension with an equivariant split (compare \cite{Thomsen:01} for a C$^*$-algebraic version of this). But in the case of an even cycle $(E,T)\in \EbanG(A,B)$, where $A$ and $B$ are $G$-Banach algebras, the machinery of double split extensions that is used in \cite{Paravicini:14:kkban:arxiv} to define the morphism in $\kkban$ attached to $(E,T)$ seems not to work in the equivariant case if $T$ itself is not equivariant. 

So the question arises if we are allowed to replace the operator $T$, that is only equivariant up to compact operators, with an operator that is actually equivariant. We show that this is always possible but we have to replace the algebras $A$ and $B$ with $G$ equivariantly Morita equivalent algebras $A'$ and $B'$, respectively, to obtain enough space; as $\kkbanG$ is defined in a way that ensures Morita invariance, this will be good enough for our purposes. Compare \cite{Thomsen:01} and \cite{Meyer:00:EquivariantKK} for C$^*$-algebraic considerations of a similar kind. The Morita equivalences that we use for this construction are the same that have to be used for the result on short exact sequences mentioned above, so we deal with the construction for extensions in Section~\ref{Subsection:EquivariantExtensions} before we come to the construction for cycles in Section~\ref{Subsection:EquivariantCycles}.

In Section~\ref{Section:DefinitionAndBasicProperties}, we generalize the definition of $\kkban$ from \cite{Paravicini:14:kkban:arxiv} to the equivariant case and construct the natural transformation from $\KKbanG$ into the new theory $\kkbanG$. Because of the problems mentioned above, the construction of this comparison transform is somewhat delicate and extensive although the general idea is the same as in the non-equivariant case. It is compatible with the descent homomorphism for unconditional completions that we define. 

The  theorems on $\kkbanG$ that are proved in this article are the following:

\begin{theorem}[The Green-Julg theorem for $\kkban$]\label{Theorem:GreenJulg:Einleitung}
Let $G$ be a compact Hausdorff group. Then
$$
\kkbanG(A, B) \cong \kkban(A, \Leb^1(G,B)),
$$
naturally, where $A$ is a Banach algebra (equipped with trivial $G$-action) and $B$ is a $G$-Banach algebra.
\end{theorem}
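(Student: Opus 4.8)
The plan is to construct the isomorphism at the level of the universal categories $\kkbanG$ and $\kkban$ by exhibiting, for a compact group $G$, an adjunction between the relevant functors and checking that the universal properties force the map to be an isomorphism. The starting point is the classical fact that for a compact group $G$ and a $G$-Banach algebra $B$, the averaging/integration construction gives a natural correspondence between $G$-equivariant data over $B$ and plain data over $\Leb^1(G,B)$, exactly as in the $C^*$-algebraic Green--Julg theorem. Concretely, I would first recall (from \cite{Paravicini:14:kkban:arxiv} and the present paper) that $\kkbanG(A,B)$ is built from a homotopy category obtained by inverting the semi-split extensions and formally adjoining suspension inverses, and that $\kkban(A,\Leb^1(G,B))$ is the analogous non-equivariant object. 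Since $A$ carries the trivial $G$-action, there is a canonical functor $\tau$ sending a non-equivariant algebra to the corresponding $G$-algebra with trivial action, and on the other side the functor $B\mapsto \Leb^1(G,B)$ (the "crossed product" $B\rtimes G$, which for compact $G$ is just $\Leb^1(G,B)$ with convolution). The first step is therefore to check that both of these functors preserve semi-split extensions, are compatible with suspensions and the Morita-equivalence relations used to define $\kkbanG$ and $\kkban$, hence descend to triangulated functors between the universal categories.

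The core of the argument is then the adjunction: for $A$ with trivial action and $B$ a $G$-Banach algebra, I claim a natural bijection between $G$-equivariant homomorphisms (up to the appropriate equivalence) $A\to M$ and plain homomorphisms $A\to$ (a suitable completion involving $\Leb^1(G,M)$), coming from the fact that a trivial-action algebra mapping $G$-equivariantly into $M$ is the same as mapping into the fixed-point-type algebra, which for compact $G$ one identifies with $\Leb^1(G,M)$ up to Morita equivalence via the standard projection $p\in\Leb^1(G,\C)$ given by the normalized Haar measure. I would make this precise on the level of cycles: given a class in $\kkbanG(A,B)$ represented by a bounded $G$-$\K$-cycle, average the operator and the module structure using Haar measure (legitimate since $G$ is compact so $\Leb^1(G)$ is unital) to produce a cycle over $\Leb^1(G,B)$, and conversely, induce a cycle over $B$ from one over $\Leb^1(G,B)$ by the usual induction-of-representations construction; the two operations are mutually inverse up to the relations already imposed in $\kkbanG$ and $\kkban$. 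Naturality in $A$ and $B$ is then a formal check once the constructions are written functorially.

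The main obstacle I anticipate is \emph{not} the existence of the two maps but verifying that the comparison transformation from $\KKbanG$ into $\kkbanG$ is compatible with all of this, and more seriously, dealing with the non-equivariance issues flagged in the introduction: the averaging construction applied to an even cycle $(E,T)$ with $T$ only equivariant up to compact operators does not literally land in $\kkban(A,\Leb^1(G,B))$ until one has passed to the Morita-equivalent replacements $A',B'$ of Section~\ref{Subsection:EquivariantCycles}. So the honest strategy is to first invoke that replacement to assume $T$ is genuinely equivariant, then average, then use Morita invariance of both $\kkbanG$ and $\kkban$ to transport back. I would organize the proof as: (1) reduce to genuinely equivariant cycles via the Section~\ref{Subsection:EquivariantCycles} machinery; (2) define the averaging functor on such cycles and check it respects homotopy, degeneracy, direct sums and semi-split extensions; (3) define the induction functor in the other direction; (4) check the two compositions are the identity using only moves already available in the universal categories (homotopy, Morita equivalence, suspension); (5) verify naturality and compatibility with the product and with the comparison maps from Lafforgue's theory. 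Step (4) — showing the round trips are identities rather than merely isomorphisms — is where the compact-group hypothesis is really used (unitality of $\Leb^1(G)$, existence of the Haar projection), and is the step I would write out with the most care.
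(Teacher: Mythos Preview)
Your high-level strategy---prove the isomorphism as an adjunction between the trivial-action functor $A\mapsto A_\tau$ and the descent functor $B\mapsto \Leb^1(G,B)$---is exactly what the paper does. But your detailed plan diverges from the paper's in a way that reveals a conceptual gap. You propose to ``make this precise on the level of cycles: given a class in $\kkbanG(A,B)$ represented by a bounded $G$-$\K$-cycle, average the operator\ldots''. Classes in $\kkbanG(A,B)$ are \emph{not} represented by cycles; $\kkbanG$ is defined as a localisation $\SigmaHobanGcat[\mM^G_{G-\ssplit}\cup\mM^G_{\text{Morita}}]$, so its morphisms are formal compositions of homomorphisms and inverses of homomorphisms (up to suspension). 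The averaging construction you sketch is appropriate for $\KKbanG$, not for $\kkbanG$, and the non-equivariance issues you import from Section~\ref{Subsection:EquivariantCycles} are relevant only to the comparison map $\KKbanG\to\kkbanG$, not to the Green--Julg theorem itself.

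The paper's proof is therefore much shorter than what you outline: it writes down the unit $\eta_A\colon A\to \Leb^1(G,A_\tau)$ as the \emph{honest homomorphism} $a\mapsto(t\mapsto a)$, and the co-unit $\varepsilon_B\in\MoritabanG(\Leb^1(G,B)_\tau,B)$ as the composition of the homomorphism $j_B\colon \Leb^1(G,B)_\tau\to\Leb^1(G\ltimes G,B)$, $f\mapsto[(s,t)\mapsto f(s)]$, with the Morita equivalence $(\Leb^1(G,B),\Cont_0(G,B))$ of Proposition~\ref{Proposition:MoritaEquivalenceLeb1}. The two triangle identities are then checked \emph{in the Morita category} by producing explicit concurrent homomorphisms between Morita equivalences (Lemma~\ref{Lemma:ConcurrentEquivalences:Equivariant}); no cycle manipulation, no averaging of operators, and no appeal to Section~\ref{Subsection:EquivariantCycles} is needed. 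Naturality for $\kkban$-morphisms follows because every such morphism factors (up to suspension) through homomorphisms and their inverses, and $\eta$, $\varepsilon$ are visibly compatible with suspension. Your steps (1)--(3) are thus unnecessary, and step (4) is replaced by two short diagram chases in $\MoritabanGcat$ and $\Moritabancat$.
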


Note that this becomes the ordinary Green-Julg theorem (for $\Leb^1$) if $A=B=\C$. The version stated here says that the functor that sends a Banach algebra $A$ to itself, thought of as a $G$-Banach algebra with trivial $G$-action, and the functor that sends a $G$-Banach algebra $B$ to $\Leb^1(G,B)$, are adjoint.\footnote{After obtaining this result in the Banach algebraic context I discovered that the same approach (also for the dual version of the theorem) has already been presented in \cite{Meyer:08} in a C$^*$-algebraic setting and, more recently, in \cite{Ellis:14} in an algebraic situation. The proofs in the Banach algebraic situation, however, seem to be somewhat more involved.} Indeed, this is how it is proved: We check the unit-co-unit equations that constitute the adjunction. 

Dually, we have another adjunction that we prove similarly:

\begin{theorem}[The dual Green-Julg Theorem for $\kkban$]\label{Theorem:DualGreenJulg:Einleitung}
Let $G$ be a discrete group. Then
$$
\kkbanG(A, B) \cong \kkban(\ell^1(G, A), B),
$$
naturally, where $A$ is a $G$-Banach algebra and $B$ is a Banach algebra (equipped with the trivial action).
\end{theorem}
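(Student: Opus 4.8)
The plan is to realise the claimed isomorphism as the hom-set bijection of an adjunction. Write $\tau\colon\kkbancat\to\kkbanGcat$ for the functor that equips a Banach algebra with the trivial $G$-action, and let $F\colon\kkbanGcat\to\kkbancat$ be the functor $A\mapsto\ell^1(G,A)$ which is defined on morphisms by the descent homomorphism associated with the unconditional completion $\ell^1(G)$; functoriality of $F$ is functoriality of that descent. The assertion of the theorem is precisely that $F$ is left adjoint to $\tau$, and, just as for Theorem~\ref{Theorem:GreenJulg:Einleitung}, I would establish this by producing a unit $\eta\colon\id_{\kkbanGcat}\Rightarrow\tau F$ and a co-unit $\varepsilon\colon F\tau\Rightarrow\id_{\kkbancat}$ and verifying the two triangle identities. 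The naturality of the resulting bijection $\kkban(\ell^1(G,A),B)\cong\kkbanG(A,\tau B)$ in both variables is then automatic.

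The co-unit is the easy half. For a Banach algebra $B$ the summation map $\ell^1(G,B)\to B$, $f\mapsto\sum_{g\in G}f(g)$, is a bounded homomorphism: multiplicativity is the only thing to check, and since the action on $B$ is trivial the twisted convolution in $\ell^1(G,B)$ is ordinary convolution, so that $\sum_{g}(f_1*f_2)(g)=\bigl(\sum_{g}f_1(g)\bigr)\bigl(\sum_{h}f_2(h)\bigr)$. I take $\varepsilon_B\in\kkban(\ell^1(G,\tau B),B)$ to be its class; naturality in $B$ is immediate.

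The unit $\eta_A\in\kkbanG(A,\tau\ell^1(G,A))$ is the substantial part, because for infinite $G$ there is no nonzero $G$-equivariant bounded homomorphism from $A$ to $\tau\ell^1(G,A)$ and $\eta_A$ must therefore be a genuine bivariant class. I would build it by the device this paper is organised around: on the rank-one free right Banach $\ell^1(G,A)$-module $\mE:=\ell^1(G,A)$ (the algebra carrying the trivial action) let $A$ act on the left through the canonical \emph{non-equivariant} homomorphism $\iota_A\colon A\to\ell^1(G,A)$, $a\mapsto\delta_e\otimes a$, followed by convolution, and twist the $G$-action on $\mE$ by the invertible multipliers $\delta_g$, i.e.\ set $g\cdot f:=\delta_g*f$. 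Left multiplication by $\delta_g$ commutes with the right $\ell^1(G,A)$-action, so the latter stays (trivially) equivariant; and from $\delta_g*\iota_A(a)=\iota_A(g\cdot a)*\delta_g$ the left $A$-action becomes equivariant. Passing to unitisations and multipliers where necessary and forming the even cycle with zero operator --- legitimate since $\iota_A(a)$ is a compact operator on $\mE$ --- this Morita-type cycle defines $\eta_A$; it is exactly the comparison homomorphism $\iota_A$ ``equivariantised at the cost of the trivial Morita self-equivalence of $\ell^1(G,A)$'', so the constructions of Section~\ref{Subsection:EquivariantCycles} apply and naturality of $\eta$ can be read off the explicit cycles.

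It remains to verify the triangle identities. One of them is essentially free: over a trivial-action algebra $\tau B$ the homomorphism $\iota_{\tau B}$ is already equivariant, so $\eta_{\tau B}$ is the class of $b\mapsto\delta_e\otimes b$, and $\tau(\varepsilon_B)\circ\eta_{\tau B}$ is the class of $b\mapsto\delta_e\otimes b\mapsto\sum_g(\delta_e\otimes b)(g)=b$, hence the identity. The other identity, $\varepsilon_{\ell^1(G,A)}\circ F(\eta_A)=\id$ in $\kkban(\ell^1(G,A),\ell^1(G,A))$, is where the work lies and is the main obstacle: it forces one to compute the descent $F(\eta_A)$ of the twisted module cycle $\mE$ --- an explicit right module over $\ell^1(G,\tau\ell^1(G,A))$ built out of $\ell^1(G,\mE)$ --- and to compose it with the summation homomorphism $\varepsilon_{\ell^1(G,A)}$, recognising the outcome as the identity cycle. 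After unwinding convolutions this should come down to an elementary identity in $\ell^1(G\times G,A)$; the genuine difficulty, and the reason the argument is longer than its $C^*$-algebraic counterpart in \cite{Meyer:08}, is the Banach-module bookkeeping --- unitisations, the Lafforgue pairs underlying even cycles, admissibility of the zero operator at each stage, and the compatibility of $\eta$ and $\varepsilon$ with the descent functor.
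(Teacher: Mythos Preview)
Your strategy is the paper's strategy: establish the adjunction by exhibiting unit and co-unit and checking the triangle identities, with the co-unit being the summation homomorphism and the unit being the Morita-type cycle built from $\iota_A\colon a\mapsto\delta_e\otimes a$ together with the left-translation $G$-action $g\cdot f=\delta_g*f$. Your module $\mE$ is exactly what the paper calls $\ell^1(G,A)_\lambda$, and the paper factors the unit as
\[
\eta_A=[\ell^1(G,A)_\lambda]\circ[\iota_A]\ \in\ \MoritabanG(A,\ell^1(G,A)_\tau),
\]
where $\iota_A$ is viewed as a genuinely $G$-equivariant homomorphism into $\ell^1(G,A)_\kappa$ (the algebra with the \emph{conjugation} action), and $\ell^1(G,A)_\lambda$ is the standard Morita equivalence between an inner action and the trivial one. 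So on the level of objects there is no difference.

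Where the paper is more efficient is the second triangle identity. Rather than descend the cycle $\mE$ and unwind convolutions in $\ell^1(G\times G,A)$, the paper observes (Lemma~\ref{Lemma:TwoWaysTheSame}) that after applying $\ell^1(G,\cdot)$ the Morita equivalence $\ell^1(G,A)_\lambda$ coincides, already in the Morita category, with the explicit Banach-algebra isomorphism
\[
\Gamma\colon \ell^1(G,\ell^1(G,A)_\kappa)\to\ell^1(G,\ell^1(G,A)_\tau),\qquad f\mapsto\bigl(t\mapsto f(t)\,U_t\bigr).
\]
Hence $\ell^1(G,\eta_A)$ is represented by the honest homomorphism $\varphi=\Gamma\circ\ell^1(G,\iota_A)$, and one checks in one line that $\varepsilon_{\ell^1(G,A)}\circ\varphi=\id_{\ell^1(G,A)}$. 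This bypasses all the module bookkeeping you flag as the main obstacle.

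One caution on the first triangle identity: your shortcut is not quite right. Even for trivial $B$, the $G$-action on your module $\mE=\ell^1(G,B)$ is $g\cdot f=\delta_g*f$, which is nontrivial, so $\eta_{\tau B}$ is not literally the class of the homomorphism $\iota_{\tau B}$ into $\tau\ell^1(G,B)$. The paper handles this by a concurrent-homomorphism argument: $\varepsilon_B$ induces a morphism of equivariant Morita equivalences from $\ell^1(G,B_\tau)_\lambda$ to the trivial self-equivalence $(B_\tau,B_\tau)$, whence $(\varepsilon_B)_\tau\circ\eta_{B_\tau}=\id$ in $\MoritabanG$. This is easy, but it is not the computation you wrote.
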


Finally, we define a generalisation $\rRkkbanG(X; \cdot, \cdot)$ of $\kkbanG$ that is suited for $G$-Banach algebras that are fibred over the locally compact $G$-space $X$. The Green-Julg theorem mentioned above generalises to the following theorem:

\begin{theorem}[The Green-Julg Theorem for proper group actions]\label{Theorem:GreenJulg:generalised:Einleitung}
Let $G$ be a locally compact Hausdorff group acting properly and continuously on a locally compact Hausdorff space $X$. Then
$$
\rRkkbanG(X; A_{\tau}, B) \cong \rRkkban(X/G; A, \Leb^1(G,B)),
$$
naturally, where $A$ is a $\Cont_0(X/G)$-Banach algebra, $B$ is a $G$-$\Cont_0(X/G)$-Banach algebra and $A_{\tau}:=\Cont_0(X) \otimes_{\Cont_0(X/G)} A$, equipped with the $G$-action coming from the canonical action of $G$ on $\Cont_0(X)$.
\end{theorem}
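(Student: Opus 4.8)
The plan is to prove the theorem the way Theorem~\ref{Theorem:GreenJulg:Einleitung} is proved --- by exhibiting an adjunction and checking its unit--counit equations --- but now \emph{fibred over} $X/G$. Concretely, I would consider the induction functor $\tau$, $A\mapsto A_\tau=\Cont_0(X)\hotimes_{\Cont_0(X/G)}A$, from $\Cont_0(X/G)$-Banach algebras to $G$-$\Cont_0(X)$-Banach algebras, and the $\Leb^1$-crossed product functor $B\mapsto\Leb^1(G,B)$ in the opposite direction. First I would check that both are additive, send semi-split extensions to semi-split extensions, and respect homotopies, so that by the universal properties they descend to functors between the relevant triangulated bivariant categories (those in which $\rRkkbanG(X;\cdot,\cdot)$ and $\rRkkban(X/G;\cdot,\cdot)$ are the morphism groups). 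The theorem is then equivalent to the assertion that $\tau$ is left adjoint to $\Leb^1(G,-)$ on these categories; the isomorphism in the statement is the adjunction bijection $\phi\mapsto\Leb^1(G,\phi)\circ\eta_A$ with inverse $\psi\mapsto\varepsilon_B\circ\tau(\psi)$, and its naturality in $A$ and $B$ is then automatic.

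The role played by normalised Haar measure (equivalently, by the Haar idempotent of $\Leb^1(G)$) in the compact case is taken over by a cutoff function $c$ for the proper action: a continuous $c\colon X\to[0,\infty)$ with support proper over $X/G$ and $\int_G c(g^{-1}x)\rmd g=1$ for all $x\in X$, which exists precisely because the action is proper. Out of it I would form the cutoff element $p$ with $p(g)(x)=\sqrt{c(x)\,c(g^{-1}x)}$; properness makes $p$ a bounded element of the relevant $\Leb^1$-algebra, and the cutoff identity yields the convolution relation $p*p=p$. The unit $\eta_A\colon A\to\Leb^1(G,A_\tau)$ I would define on the nose as the $\Cont_0(X/G)$-linear homomorphism sending $a$ to the $A_\tau$-valued function $g\mapsto(\sqrt c\cdot g\sqrt c)\hotimes a$; that this is multiplicative is exactly the computation behind $p*p=p$, now carried out with an extra tensor leg $a\in A$, and $\Cont_0(X/G)$-linearity is clear. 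The counit $\varepsilon_B\colon\Leb^1(G,B)_\tau\to B$ is the delicate one: it is the $G$-$\Cont_0(X)$-equivariant morphism in $\rRkkbanG(X;\cdot,\cdot)$ induced, via Morita invariance, by a Morita morphism whose underlying Banach pair is an $\Leb^2$-type completion of $\Cont_c(G\times X,B)$ with the module structures prescribed by $p$ --- the parametrised analogue of the fact that, for a proper action, $\Leb^1(G,-)$ applied to an induced coefficient algebra is Morita equivalent to that algebra. Here properness is needed a second time: both to make the bimodule pairings bounded and to reconcile the $\Cont_0(X)$-fibration with the $G$-action.

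It then remains to verify the two triangle identities, $\varepsilon_{\tau A}\circ\tau(\eta_A)=\id_{\tau A}$ and $\Leb^1(G,\varepsilon_B)\circ\eta_{\Leb^1(G,B)}=\id_{\Leb^1(G,B)}$. The first I expect to come out essentially formally, because $\tau(\eta_A)$ and $\varepsilon_{\tau A}$ are both assembled from the very same projection $p$ and the relation $p*p=p$. The second is where I expect the main obstacle to lie: as already in the classical Green--Julg theorem it fails at the level of homomorphisms, so one has to produce an explicit homotopy inside $\rRkkban(X/G;\Leb^1(G,B),\Leb^1(G,B))$ --- a rotation-type homotopy of cycles, or a degenerate cycle measuring the discrepancy --- and check that it is bounded, $\Cont_0(X/G)$-linear, and compatible with the Banach-module (as opposed to Hilbert-module) structures and with all the $\Leb^1$-norms in play; this bookkeeping is precisely what makes the Banach setting more involved than the $C^*$-algebraic one, and it is where the defining property of the cutoff function must be invoked one last time. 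As a consistency check I would finally note that taking $X=\pt$ (so that $G$ is forced to be compact and $c\equiv1$) recovers the proof of Theorem~\ref{Theorem:GreenJulg:Einleitung}, of which the above is a $\Cont_0(X/G)$-parametrised version.
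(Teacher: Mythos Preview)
Your overall strategy --- proving the theorem as an adjunction between $A\mapsto A_\tau$ and $B\mapsto\Leb^1(G,B)$ and verifying the unit--counit equations --- is exactly what the paper does. However, there is a genuine gap in your construction of the unit $\eta_A$.

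You propose to define $\eta_A\colon A\to\Leb^1(G,A_\tau)$ ``on the nose'' by $a\mapsto\bigl(g\mapsto\sqrt c\cdot g\sqrt c\otimes a\bigr)$, relying on the cutoff element $p$ being bounded in the relevant $\Leb^1$-algebra. This fails when $X/G$ is non-compact. The $\Leb^1$-norm of $p$ is $\int_G\sup_{x\in X}\sqrt{c(x)\,c(g^{-1}x)}\,\rmd g$, and properness gives no uniform control over the supremum as $x$ ranges over all of $X$; for instance, with $G=\Z$ acting on $X=\Z\times\N$ by translation in the first factor and $c(n,m)=\delta_{n,0}$, one has $p(0)(n,m)=\delta_{n,0}$, which is not in $\Cont_0(X)$. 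The paper confronts exactly this obstruction: the analogous map $\phi_A$ (built from two functions $c$ and $d$ with $d\equiv1$ on $\supp c$, rather than $\sqrt c$) is well-defined only on the dense subalgebra $A_c=\Cont_c(X/G)\cdot A$ and is \emph{not} bounded for the norm of $A$. The remedy is to complete $A_c$ in the pullback norm $\norm{a}_0:=\norm{\phi_A(a)}_{\Leb^1}$ to obtain an auxiliary algebra $A_0$, show that the identity on $A_c$ extends to a homomorphism $\psi_A\colon A_0\to A$ which is a Morita equivalence, and then set $\eta_A:=\phi_A\circ\psi_A^{-1}$ as a \emph{Morita morphism} rather than a genuine homomorphism. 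This is precisely the Banach-specific difficulty that distinguishes the $\Leb^1$ setting from the $C^*$-setting (where the operator norm of the cutoff projection is controlled by the relation $p=p^*=p^2$ alone), and your proposal does not address it.

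There are also smaller divergences worth noting. The paper's counit is the composite of an explicit homomorphism $j_B\colon\Leb^1(G,B)_\tau\to\Leb^1(G\ltimes G,B)$ with the standard Morita equivalence $(\Leb^1(G,B),\Cont_0(G,B))$ between $\Leb^1(G\ltimes G,B)$ and $B$ from Proposition~\ref{Proposition:MoritaEquivalenceLeb1}, rather than an $\Leb^2$-type module. And both unit--counit equations are established entirely in the Morita category via concurrent homomorphisms of Morita equivalences --- no rotation homotopy is used; the second equation in fact requires a further auxiliary completion $(H_B^<,H_B^>)$ on the bra and ket sides, mirroring the $A_0$ trick. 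But the essential missing ingredient in your outline is the boundedness failure of $\eta_A$ and the Morita-equivalence workaround that repairs it.
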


This theorem is the $\kkban$-version of the main theorem of \cite{Paravicini:10:GreenJulg:erschienen} that involved $\KKban$, and the proof is considerably easier.  As a consequence, we show 

\begin{corollary}\label{Corollary:GGJ:Einleitung} If, in the above theorem, $X/G$ is compact and $A=\Cont_0(X/G)$ then we have a natural isomorphism
$$
\rRkkbanG(X; \Cont_0(X), B)\ \cong \ \KTh_0(\Leb^1(G,B)).
$$
\end{corollary}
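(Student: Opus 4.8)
The plan is to obtain the corollary by specialising Theorem~\ref{Theorem:GreenJulg:generalised:Einleitung} to $A := \Cont_0(X/G)$, the unit object of the category of $\Cont_0(X/G)$-Banach algebras, and then simplifying both sides. On the left-hand side, the twisted algebra becomes
$$
A_{\tau} \;=\; \Cont_0(X) \otimes_{\Cont_0(X/G)} \Cont_0(X/G) \;\cong\; \Cont_0(X),
$$
and under this identification the $G$-action coming from the canonical action of $G$ on $\Cont_0(X)$ is just the usual one, so the left-hand side of the theorem is precisely $\rRkkbanG(X; \Cont_0(X), B)$, the left-hand side of the corollary. This identification of $A_\tau$ is the one genuinely routine point, and I would spell it out directly from the definition of the fibred tensor product. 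Meanwhile the right-hand side of the theorem becomes $\rRkkban(X/G; \Cont_0(X/G), \Leb^1(G,B))$, so it remains to identify this group with $\KTh_0(\Leb^1(G,B))$.

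For the latter I would invoke the general principle that over a \emph{compact} base space $Z$ the algebra $\Cont_0(Z) = \Cont(Z)$ is the tensor unit of $\rRkkban(Z; \cdot, \cdot)$, and that morphisms out of the tensor unit corepresent $\KTh$-theory: $\rRkkban(Z; \Cont_0(Z), D) \cong \KTh_0(D)$, naturally in the $\Cont(Z)$-Banach algebra $D$. This is the $\mathcal R$-fibred analogue of the non-equivariant identification $\kkban(\C, D) \cong \KTh_0(D)$. Applying it with $Z = X/G$ and $D = \Leb^1(G,B)$, and combining with the previous paragraph, yields the asserted isomorphism, and naturality in $B$ is inherited from the naturality in Theorem~\ref{Theorem:GreenJulg:generalised:Einleitung}. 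I would emphasise that compactness of $X/G$ is exactly what makes this last step work: for a non-compact base $\Cont_0(Z)$ is only a local unit, and $\rRkkban(Z; \Cont_0(Z), D)$ would then compute a representable-type $\KTh$-group rather than $\KTh_0(D)$ itself (compare the classical fact that $\RKK(Z; \Cont_0(Z), \Cont_0(Z))$ is the representable, not the compactly supported, $\KTh$-theory of $Z$).

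The only step requiring real care is the identification $\rRkkban(Z; \Cont_0(Z), D) \cong \KTh_0(D)$ for compact $Z$ together with the compatibility of the whole chain of isomorphisms with the structural maps, i.e. the naturality statement. I expect this identification to be already available from the construction of $\rRkkban$; in that case the corollary is immediate. Otherwise I would prove it directly by checking that, for $Z$ compact, the functor $D \mapsto \rRkkban(Z; \Cont_0(Z), D)$ on $\Cont(Z)$-Banach algebras is homotopy invariant, Morita invariant and half-exact, and then comparing it with $\KTh_0$ through the canonical natural transformation $\kkban \to \rRkkban$ restricted to the base $Z$, tracking the resulting isomorphism through the unit–co-unit data underlying Theorem~\ref{Theorem:GreenJulg:generalised:Einleitung} to see that it is natural in $B$. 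Everything else in the argument is bookkeeping.
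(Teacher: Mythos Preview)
Your proposal is correct and matches the paper's approach exactly: specialise the generalised Green--Julg theorem to $A=\Cont_0(X/G)$, identify $A_\tau\cong\Cont_0(X)$, and then apply the identification $\rRkkban(Z;\Cont(Z),D)\cong\KTh_0(D)$ for compact $Z$, which the paper records as Corollary~\ref{Corollary:RkkbanAndCompactSpaces} (a consequence of the adjunction in Theorem~\ref{Theorem:RkkbanForCompactSpace}). Your contingency plan for proving that last identification is unnecessary here, since the paper already establishes it.
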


\section{Preliminaries}

Let $G$ be a locally compact Hausdorff group.  In what follows, we use the notation of \cite{Paravicini:14:kkban:arxiv}. Before we discuss the methods, alluded to in the introduction, of making splits or operators equivariant, we first convince ourselves that the theory of Morita morphisms and Morita equvalences between (possibly degenerate) Banach algebras carries over from \cite{Paravicini:14:kkban:arxiv} without any surprises.

\subsection{The equivariant Morita category}

Recall from \cite{Paravicini:07:Morita:richtigerschienen} that there is an ``$G$-equivariant Morita category'', the objects of which are the non-degenerate $G$-Banach algebras and the morphism set between an object $A$ and an object $B$ is the set $\MoritabanG(A,B)$ of homotopy classes of $G$-equivariant Morita cycles between $A$ and $B$. You can think of the Morita cycle as the composition of a (usual) homomorphism of Banach algebras and a Morita equivalence. A homotopy from $A$ to $B$ is a Morita cycle between $A$ and $B[0,1]$. 

In \cite{Paravicini:14:kkban:arxiv}, the definition of the (non-equivariant) Morita category is extended to possibly degenerate Banach algebras. The further extension to possibly degenerate $G$-Banach algebras can be achieved by adding $G$-actions at the appropriate places; for example, the definition of an equivariant Morita cycle reads as follows:

 \begin{definition}[Morita cycle]
A \demph{$G$-equivariant Morita cycle} $(F,\varphi)$ from $A$ to $B$ is a $G$-Banach $B$-pair $F$ together with a $G$-equivariant homomorphism $\varphi\colon A \to \Lin_B(F)$ such that there is a $k\in \N$ satisfying $\varphi(A)^k \subseteq \Komp_B(F)$. The class of all Morita cycles from $A$ to $B$ will be denoted by $\MbanG(A,B)$.
\end{definition}

If $(F, \varphi)\in \MbanG(A,B)$ then we will sometimes simply write $F$ for this cycle and suppress the left action $\varphi$ in the notation.

Taking this as a starting point and generalising the definitions from \cite{Paravicini:14:kkban:arxiv}, one obtains a $G$-equivariant version $\MoritabanGcat$ of the Morita category $\Moritabancat$ together with a canonical functor $\varphi \mapsto \MoritabanG(\varphi)$ from the category $\GBanAlg$ of $G$-Banach algebras and equivariant homomorphisms to $\MoritabanGcat$.

In particular, there is a notion of a $G$-equivariant Morita equivalence between (possibly degenerate) $G$-Banach algebras; such a $G$-Morita equivalence $E$ from $A$ to $B$ gives an isomorphism $\MoritabanG(E)$ from $A$ to $B$ in the $G$-equivariant Morita category.

Also the definition of concurrent homomorphisms between Morita equivalences generalises to the equivariant case. We obtain the following condition on when two Morita cycles give the same Morita morphism:

\begin{lemma}\label{Lemma:ConcurrentEquivalences:Equivariant}
Let ${_{\chi}} \Phi_\psi$  be a $G$-equivariant concurrent homomorphism from a $G$-equivariant Morita equivalence ${_A}E_B$ to a $G$-equivariant Morita equivalence ${_{A'}}E'_{B'}$ with ($G$-equivariant) coefficient maps $\varphi\colon A\to A'$ and $\psi\colon B \to B'$. Then the mapping cone of $\Phi$ induces a homotopy that yields the equation
$$
\MoritabanG(\psi) \circ \MoritabanG(E) = \MoritabanG(E') \circ \MoritabanG(\chi) \quad \in \quad \MoritabanG(A, B') .
$$
\end{lemma}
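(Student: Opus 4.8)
Following the non-equivariant treatment in \cite{Paravicini:14:kkban:arxiv}, the plan is to realise the asserted identity as an equality of Morita morphisms coming from a single explicit homotopy, the one encoded in the mapping cylinder of $\Phi$. The only genuinely new ingredient is the (automatic) observation that all the $G$-Banach algebras, pairs and homomorphisms built below carry the obvious diagonal $G$-action and that every structure map and every homotopy occurring is then $G$-equivariant; so the equivariant statement follows ``without surprises'' from the non-equivariant one.

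First I would pin down the two composites. The morphism $\MoritabanG(\psi)\circ\MoritabanG(E)$ is represented by the $G$-equivariant Morita cycle $E\otimes_\psi B'$ (push $E$ forward along $\psi$, keeping the left $A$-action), while $\MoritabanG(E')\circ\MoritabanG(\chi)$ is represented by $E'$ with its left $A$-action pulled back along $\chi$; write $\chi^{*}E'$ for the latter. Since $\Phi$ is a $G$-equivariant concurrent homomorphism with coefficient maps $\chi$ and $\psi$, it descends to a $G$-equivariant homomorphism of $B'$-pairs $\bar\Phi\colon E\otimes_\psi B'\to\chi^{*}E'$, $\xi\otimes b'\mapsto\Phi(\xi)b'$, which is left $A$-linear; compatibility of $\bar\Phi$ with the $B'$-valued inner products is exactly the concurrency relation $\langle\Phi\xi',\Phi\xi\rangle_{E'}=\psi(\langle\xi',\xi\rangle_E)$. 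Now form the mapping cylinders $\zylinder_\chi$, $\zylinder_\psi$ of the coefficient maps — possibly degenerate $G$-Banach algebras — and the mapping cylinder $\zylinder_\Phi$ of $\Phi$, a $G$-Banach $\zylinder_\psi$-pair carrying a left action of $\zylinder_\chi$, all with the diagonal $G$-action.

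The step I expect to be the main obstacle is the structural claim that $\zylinder_\Phi$ is a $G$-equivariant \emph{Morita equivalence} from $\zylinder_\chi$ to $\zylinder_\psi$: one must check non-degeneracy and fullness of $\zylinder_\Phi$ over $\zylinder_\psi$ and identify $\Komp_{\zylinder_\psi}(\zylinder_\Phi)$ with $\zylinder_\chi$, which typically requires a partition-of-unity argument in the cylinder variable. This is precisely where one uses that $E$ and $E'$ are Morita equivalences and that $\Phi$ is concurrent, so that $\Phi$ intertwines the identifications $A\cong\Komp_B(E)$ and $A'\cong\Komp_{B'}(E')$ through $\chi$. In the equivariant setting the verification is word-for-word that of \cite{Paravicini:14:kkban:arxiv}, because the diagonal $G$-action commutes with all of these identifications.

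Granting this, I would finish by a short diagram chase in $\MoritabanGcat$. The projections $\pi^{\zylinder_\chi}\colon\zylinder_\chi\to A$ and $\pi^{\zylinder_\psi}\colon\zylinder_\psi\to B$ are $G$-equivariant homotopy equivalences (homotopy inverse: the inclusion of constant paths), hence $\MoritabanG(\pi^{\zylinder_\chi})$ and $\MoritabanG(\pi^{\zylinder_\psi})$ are isomorphisms, and base-changing the Morita equivalence $\zylinder_\Phi$ along the two ends gives
$$
\MoritabanG(\pi^{\zylinder_\psi})\circ\MoritabanG(\zylinder_\Phi)=\MoritabanG(E)\circ\MoritabanG(\pi^{\zylinder_\chi}),\qquad
\MoritabanG(\ev^{\zylinder_\psi})\circ\MoritabanG(\zylinder_\Phi)=\MoritabanG(E')\circ\MoritabanG(\ev^{\zylinder_\chi}),
$$
where $\ev^{\zylinder_\chi}\colon\zylinder_\chi\to A'$ and $\ev^{\zylinder_\psi}\colon\zylinder_\psi\to B'$ are the evaluations at the free endpoint. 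Since $\ev^{\zylinder_\chi}$ is homotopic to $\chi\circ\pi^{\zylinder_\chi}$ and $\ev^{\zylinder_\psi}$ to $\psi\circ\pi^{\zylinder_\psi}$ (via $t\mapsto\ev_t$) and $\MoritabanG$ is homotopy invariant, these two identities combine to
$$
\MoritabanG(\psi)\circ\MoritabanG(E)\circ\MoritabanG(\pi^{\zylinder_\chi})=\MoritabanG(E')\circ\MoritabanG(\chi)\circ\MoritabanG(\pi^{\zylinder_\chi}),
$$
and cancelling the isomorphism $\MoritabanG(\pi^{\zylinder_\chi})$ yields the assertion. (Equivalently, one may repackage the same data into a single Morita cycle over $B'[0,1]$ with endpoints $E\otimes_\psi B'$ and $\chi^{*}E'$ — the mapping cylinder of $\bar\Phi$ — which is the homotopy ``induced by the mapping cone of $\Phi$'' in the statement.)
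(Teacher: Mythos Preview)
Your proposal is correct and takes essentially the same approach as the paper: the lemma is stated here without proof, as the straightforward $G$-equivariant analogue of Lemma~1.19 in \cite{Paravicini:14:kkban:arxiv}, and the paper's only indication of argument is the phrase ``the mapping cone of $\Phi$ induces a homotopy'' embedded in the statement itself---exactly the construction you spell out (under the name ``mapping cylinder''), together with the observation that all pieces carry the diagonal $G$-action. Your parenthetical remark at the end correctly identifies the homotopy the paper has in mind; the terminological discrepancy between ``cone'' and ``cylinder'' is harmless here.
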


We summarise the situation of the preceding lemma in the diagram
$$
\xymatrix{
 A \ar[rr]^-{E} \ar[d]_{\varphi}& \ar@{=>}[d]|{\Phi}&B \ar[d]^{\psi}  \\ 
 A'\ar[rr]_{E'}&&B' 
}
$$
that commutes in $\MoritabanGcat$.

By construction, the canonical functor from $\GBanAlg$ to $\MoritabanGcat$ is invariant under $G$-equivariant homotopies and under $G$-equivariant Morita equivalences. As in \cite{Paravicini:14:kkban:arxiv}, Theorem 1.17, one can show that this functor is universal with this property.

\begin{theorem}\label{Theorem:LiftToMoritabanG}
The functor $\MoritabanG \colon \GBanAlg \to \MoritabanGcat$ is the universal $G$-homotopy invariant and $G$-Morita invariant functor on $\GBanAlg$: 
Let $\mC$ be a category and $\mF \colon \GBanAlg \to \mC$ a functor. Then $\mF$ is $G$-homotopy invariant and $G$-Morita invariant if and only if $\mF$ factors through  $\MoritabanG \colon \GBanAlg \to \MoritabanGcat$. The factorisation, if it exists, is unique. 
\end{theorem}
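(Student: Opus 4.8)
The plan is to follow the proof of Theorem~1.17 of \cite{Paravicini:14:kkban:arxiv}, inserting $G$-actions wherever they are needed; since, as noted above, the $G$-equivariant Morita theory carries over without surprises, no genuinely new difficulty arises. The ``only if'' direction is immediate: the canonical functor $\MoritabanG$ is $G$-homotopy invariant and $G$-Morita invariant by construction, so any functor of the form $\bar{\mF}\circ\MoritabanG$ automatically inherits both properties.

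For the converse, suppose $\mF\colon\GBanAlg\to\mC$ is $G$-homotopy invariant and $G$-Morita invariant, and construct a factorisation $\bar{\mF}\colon\MoritabanGcat\to\mC$. On objects, set $\bar{\mF}(A):=\mF(A)$; this is legitimate because $\GBanAlg$ and $\MoritabanGcat$ share the same class of objects, namely the $G$-Banach algebras. For a morphism, i.e.\ a homotopy class $[F,\varphi]\in\MoritabanG(A,B)$, I would use the description recalled above of a Morita cycle as a composite of a $G$-equivariant homomorphism $\psi\colon A\to C$ and a $G$-equivariant Morita equivalence $E$ from $C$ to $B$. The equivalence $E$ comes with a $G$-Banach linking algebra $L$ and $G$-equivariant corner homomorphisms $\iota_C\colon C\to L$ and $\iota_B\colon B\to L$; by $G$-Morita invariance $\mF(\iota_C)$ and $\mF(\iota_B)$ are isomorphisms in $\mC$, so one may set
$$
\bar{\mF}\bigl([F,\varphi]\bigr):=\mF(\iota_B)^{-1}\circ\mF(\iota_C)\circ\mF(\psi).
$$

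The remaining work is to check that $\bar{\mF}$ is well defined, is a functor, restricts to $\mF$ on $\GBanAlg$, and is unique. Independence of the choice of factorisation $(\psi,E)$ and of the representative of the homotopy class is the core of the argument, and it is exactly here that Lemma~\ref{Lemma:ConcurrentEquivalences:Equivariant} is used: two competing choices are connected by a $G$-equivariant concurrent homomorphism whose mapping cone yields a $G$-equivariant homotopy, and applying $\mF$ — now invoking $G$-homotopy invariance to absorb that homotopy — turns the resulting square that commutes in $\MoritabanGcat$ into the required identity in $\mC$. Functoriality is obtained the same way, since the composition of two Morita cycles is encoded by a larger linking algebra sitting in a diagram that commutes in $\MoritabanGcat$; preservation of identities and the relation $\bar{\mF}\circ\MoritabanG=\mF$ follow by taking, for an honest equivariant homomorphism, the trivial factorisation with $E$ the identity equivalence. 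Uniqueness is then automatic, because every morphism of $\MoritabanGcat$ can be written in the form $\MoritabanG(\iota_B)^{-1}\circ\MoritabanG(\iota_C)\circ\MoritabanG(\psi)$, so any factorisation of $\mF$ is forced to satisfy the displayed formula.

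I expect no conceptually new obstacle here: the whole content is to verify that each auxiliary object occurring in the non-equivariant proof — linking algebras, mapping cones, the homotopies they produce — can be equipped with compatible continuous $G$-actions, and that the $G$-equivariant concurrent-homomorphism lemma applies verbatim at every step. This is careful bookkeeping rather than new mathematics, which is why the statement can be asserted ``as in \cite{Paravicini:14:kkban:arxiv}''.
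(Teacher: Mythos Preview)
Your proposal is correct and takes essentially the same approach as the paper, which does not give a separate proof but simply states that one proceeds ``as in \cite{Paravicini:14:kkban:arxiv}, Theorem~1.17'' with $G$-actions inserted throughout. Your sketch of how the linking-algebra construction and Lemma~\ref{Lemma:ConcurrentEquivalences:Equivariant} supply well-definedness and functoriality is exactly the expected adaptation.
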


Now let $\mA(G)$ be an unconditional completion of $\Cont_c(G)$. Consider the descent functor $\mA(G, \cdot)$ from $\GBanAlg$ to $\BanAlg$. 

\begin{proposition}\label{Proposition:Descent:Morita}
There is a unique functor, that we also denote $\mA(G, \cdot)$, from $\MoritabanGcat$ to $\Moritabancat$ that makes the following diagram commutative
$$
\xymatrix{\GBanAlg \ar[rr]^-{\MoritabanG} \ar[d]^{\mA(G, \cdot)}&& \MoritabanGcat \ar@{-->}[d]^{\mA(G, \cdot)}\\
\BanAlg \ar[rr]^-{\Moritaban} && \Moritabancat
}
$$
\end{proposition}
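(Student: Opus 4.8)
The plan is to obtain the functor for free from the universal property of $\MoritabanG$ recorded in Theorem~\ref{Theorem:LiftToMoritabanG}, rather than to build it by hand on Morita cycles. Consider the composite
$$
\mF := \Moritaban \circ \mA(G,\cdot)\colon \GBanAlg \longrightarrow \BanAlg \longrightarrow \Moritabancat .
$$
By Theorem~\ref{Theorem:LiftToMoritabanG} this composite factors through $\MoritabanG$ --- and uniquely so --- as soon as one checks that $\mF$ is $G$-homotopy invariant and $G$-Morita invariant. The resulting functor $\MoritabanGcat \to \Moritabancat$ is then the desired $\mA(G,\cdot)$: on objects and on classes of equivariant homomorphisms it agrees with the descent functor, because $\MoritabanG$ and $\Moritaban$ are the identity on objects and carry homomorphisms to their Morita classes, so the square commutes. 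Uniqueness of \emph{any} functor making the square commute follows from the uniqueness clause in Theorem~\ref{Theorem:LiftToMoritabanG}: such a functor, precomposed with $\MoritabanG$, must equal $\mF$. Hence everything comes down to the two invariance properties of $\mF$.

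For $G$-homotopy invariance I would use the compatibility of the unconditional completion with the unit interval: for every $G$-Banach algebra $B$ there is a natural isometric isomorphism $\mA\bigl(G,\Cont([0,1],B)\bigr)\cong\Cont\bigl([0,1],\mA(G,B)\bigr)$, since the convolution over $G$ and the pointwise structure over $[0,1]$ do not interact. Thus the descent functor sends a $G$-equivariant homotopy $H\colon A\to B[0,1]$ to a homotopy $\mA(G,H)\colon\mA(G,A)\to\mA(G,B)[0,1]$ connecting $\mA(G,f_0)$ and $\mA(G,f_1)$, and since $\Moritaban$ is homotopy invariant by construction (it is the universal homotopy- and Morita-invariant functor on $\BanAlg$, \cite{Paravicini:14:kkban:arxiv}), we conclude $\mF(f_0)=\mF(f_1)$.

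For $G$-Morita invariance I would test the condition on linking algebras: given a $G$-equivariant Morita equivalence ${}_{A}E_{B}$ with linking algebra $L(E)$ and the two equivariant corner embeddings $\iota_A\colon A\to L(E)$ and $\iota_B\colon B\to L(E)$, one must show that $\mF(\iota_A)$ and $\mF(\iota_B)$ are isomorphisms in $\Moritabancat$. The key point is that descent commutes with the formation of linking algebras: $\mA(G,E)$ is a Morita equivalence between $\mA(G,A)$ and $\mA(G,B)$, the algebra $\mA(G,L(E))$ is naturally isomorphic to the linking algebra $L(\mA(G,E))$, and $\mA(G,\iota_A)$, $\mA(G,\iota_B)$ correspond to its corner embeddings --- this is the descent behaviour of $G$-Banach pairs and $G$-Morita equivalences worked out in \cite{Paravicini:07:Morita:richtigerschienen}, adapted to the possibly degenerate case exactly as the rest of the Morita theory is adapted above. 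Since $\Moritaban$ is Morita invariant, $\Moritaban$ of a corner embedding of a linking algebra is an isomorphism, so $\mF(\iota_A)$ and $\mF(\iota_B)$ are isomorphisms.

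The only part requiring genuine care is this Morita-invariance step: identifying $\mA(G,L(E))$ with $L(\mA(G,E))$ in the possibly degenerate $G$-setting and checking that all module structures and $G$-actions match up under descent. This is not new mathematics --- it is a repackaging of the descent of $G$-Banach pairs and linking algebras from \cite{Paravicini:07:Morita:richtigerschienen} --- but it is where the actual bookkeeping lives; once it is in place, the universal property of $\MoritabanG$ does the rest, and the uniqueness of the factorisation is automatic.
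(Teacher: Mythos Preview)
Your approach is essentially the same as the paper's: define $\mF=\Moritaban\circ\mA(G,\cdot)$ and invoke Theorem~\ref{Theorem:LiftToMoritabanG} after verifying $G$-homotopy invariance and $G$-Morita invariance; the Morita step via descent of the linking algebra is exactly what the paper does.

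There is, however, one inaccuracy in your homotopy step. You assert a natural isometric isomorphism $\mA\bigl(G,\Cont([0,1],B)\bigr)\cong\Cont\bigl([0,1],\mA(G,B)\bigr)$, but this is false for general unconditional completions: the norm on the left is $\bigl\|\,g\mapsto\sup_{t}\|f(g,t)\|_B\,\bigr\|_{\mA(G)}$ while on the right it is $\sup_{t}\bigl\|\,g\mapsto\|f(g,t)\|_B\,\bigr\|_{\mA(G)}$, and the former dominates the latter in general without equality (already for $\mA(G)=\Leb^1(G)$). What does exist, and what the paper uses, is only the canonical \emph{contractive} homomorphism $\mA(G,B[0,1])\to\mA(G,B)[0,1]$; this suffices, since composing $\mA(G,\varphi)$ with it produces the desired homotopy between $\mA(G,\varphi_0)$ and $\mA(G,\varphi_1)$. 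Your argument survives with this correction, but the isomorphism claim as stated is wrong.
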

\begin{proof}
Let $\mF\colon \GBanAlg \to \Moritabancat$ be the functor $\Moritaban \circ \mA(G,\cdot)$. We show that it is $G$-ho\-mo\-to\-py invariant and $G$-Morita invariant so that the claim follows from Theorem~\ref{Theorem:LiftToMoritabanG}. 
\begin{itemize}
\item The functor $\mA(G, \cdot)$ sends $G$-homotopic morphisms to homotopic morphisms: Let $\varphi \colon A \to B[0,1]$ be a $G$-equivariant homotopy between $\varphi_0$ and $\varphi_1$. Then $\mA(G, \varphi)$ is a morphism from $\mA(G, A)$ to $\mA(G,B[0,1])$. If you compose it with the canonical contractive morphism from $\mA(G, B[0,1])$ to $\mA(G, B)[0,1]$ then you obtain a homotopy between $\mA(G,\varphi_0)$ and $\mA(G, \varphi_1)$. So $\mF$ is $G$-homotopy invariant.

\item The functor $\mA(G, \cdot)\colon \GBanAlg \to \BanAlg$ sends $G$-Morita equivalences to Morita equivalences: Let $E$ be a $G$-equi\-va\-riant Morita equivalence between $G$-Banach algebras $A$ and $B$. Then $\mA(G,E)$ is a Morita equivalence between $\mA(G,A)$ and $\mA(G,B)$. If $\iota$ denotes the canonical inclusion of $A$ into the linking algebra $L$ of $E$, then $\mA(G, \iota)$ is the inclusion of $\mA(G, A)$ in $\mA(G, L)$. Because $\Moritaban\colon \BanAlg \to \Moritabancat$ is Morita invariant, it follows that $\mF$ is $G$-Morita invariant.
\end{itemize}\end{proof}

\subsection{Equivariant extensions and equivariant splits}\label{Subsection:EquivariantExtensions}

In this section we discuss how every extension of $G$-Banach algebras with continuous linear split can, up to Morita equivalence, be replaced with an extension with equivariant split.

\begin{definition} We define three classes $\mE^G\supseteq \mE^G_{\ssplit} \supseteq \mE^G_{G-\ssplit}$ of extensions of $G$-Banach algebras:

\begin{enumerate}
\item The class $\mE^G$ is defined to be the class of all \demph{($G$-equivariant) extensions} of $G$-Banach algebras, i.e., extensions of $G$-Banach algebras of the form 
$$
\xymatrix{  B \ \ar@{>->}[r]^{\iota}& D \ar@{->>}[r]^{\pi}& A}
$$
where $\iota$ and $\pi$ are $G$-equivariant continuous homomorphisms, $\pi$ is surjective, $\iota$ is injective (with closed range) and $\iota(B) = \kernel(\pi)$.

\item The class $\mE_{\ssplit}^G$ is defined to be the class of all \demph{semi-split ($G$-equivariant) extensions} of $G$-Banach algebras, i.e., extensions of the above form where $\iota$ and $\pi$ are $G$-equivariant continuous homomorphisms and $\pi$ has a continuous linear (not necessarily $G$-equivariant) split.

\item The class $\mE_{G-\ssplit}^G$ is defined to be the sub-class of $\mE^G_{\ssplit}$ of \demph{equivariantly semi-split ($G$-equivariant) extensions} of $G$-Banach algebras, i.e., $G$-equivariant extensions that permit a  \emph{$G$-equivariant} linear continuous split.
\end{enumerate}
\end{definition}

Extensions arising from odd $G$-equivariant Kasparov cycles are $G$-equivariant extensions that permit a continuous linear split but that do not come naturally with a $G$-equivariant split (see the discussion in Subsection~\ref{Subsection:Comparison}). However, it is always possible to replace a given extension in $\mE^G_{\ssplit}$ another extension in $\mE^G_{G-\ssplit}$ that is ``$G$-equivariantly Morita equivalent''. This fact by itself, interesting as it is, does not seem to help directly in the case of even Kasparov modules, but the tools that we develop can and will be used in the even case as well.

In a first step, we are now going to define, for every $G$-Banach algebra $A$, another $G$-Banach algebra called $\Leb^1(G\ltimes G, A)$ that is equivariantly Morita equivalent to $A$. For further reference, we even give the necessary definitions for Banach spaces etc..

Note that the following definition is a variant (that can even be regarded as a special case) of the corresponding definitions in Section~5 of \cite{Paravicini:07:Induction:erschienen}. Compare also the definitions in Section~1.3 of \cite{Lafforgue:06}.
  
\begin{definition}
\begin{enumerate}
\item Let $E$ be a Banach space. Define $\Leb^1(G\ltimes G, E)$ to be the completion of $\Cont_c(G\times G, E)$ with respect to the norm:
$$
\norm{\xi}_1:= \sup_{t\in G}\int_G \norm{\xi(s,t)}_E \rmd s.
$$
\item If $E_1$, $E_2$ and $F$ are Banach spaces, with $E_2$ a $G$-Banach space, i.e., $E$ carries a strongly continuous isometric action of $G$, and $\mu\colon E_1 \times E_2 \to F$ is a continuous bilinear map, then we define the convolution
$$
(\xi_1 * \xi_2)(s,t) := \int_G \mu\left(\xi_1(r, t),\ r \xi_2(r^{-1}s, r^{-1}t)\right) \rmd r, \qquad s,t\in G.
$$
for all $\xi_1 \in \Cont_c(G\times G, E_1)$ and $\xi_1 \in \Cont_c(G\times G, E_1)$. This product extends to a continuous bilinear map $* \colon \Leb^1(G\ltimes G, E_1) \times \Leb^1(G\ltimes G, E_2) \to \Leb^1(G\ltimes G, F)$ such that $\norm{*} \leq \norm{\mu}$.
\item If $E$ is a Banach space, then $\Leb^1(G \ltimes G, E)$ carries an isometric strongly continuous action of $G$ which is given on $\Cont_c(G\times G, E)$ by
$$
(r\xi)(s,t) := \xi(s,tr), \qquad r,s,t\in G, \xi \in \Cont_c(G\times G, E).
$$
\end{enumerate}
\end{definition}

\begin{definition}
Let $E$ and $F$ be Banach spaces and $T\in \Lin(E,F)$ a continuous linear map from $E$ to $F$. On $\Cont_c(G \times G, E)$, define the map $\Leb^1(G\ltimes G, T)$ by 
$$
\Leb^1(G\ltimes G, T)(\xi) (s,t):= T\left(\xi(s,t)\right),
$$
for all $s,t\in G$ and $\xi\in \Cont_c(G\times G, E)$. Then $\Leb^1(G\ltimes G, T)$ extends to a continuous linear and \emph{$G$-equivariant} map from $\Leb^1(G\ltimes G, E)$ to $\Leb^1(G\ltimes G, F)$ such that $\norm{\Leb^1(G\ltimes G, T)} \leq \norm{T}$.  This definition is functorial. 
\end{definition}

\begin{lemma}
Let $A$ be a $G$-Banach algebra. Then $\Leb^1(G\ltimes G, A)$ is a $G$-Banach algebra when equipped with the convolution product and the above $G$-action. 
\end{lemma}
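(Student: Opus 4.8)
The plan is to check the three conditions that together amount to $\Leb^1(G\ltimes G, A)$ being a $G$-Banach algebra: that it is a Banach space whose norm is submultiplicative for $*$; that $*$ is associative; and that the given $G$-action is isometric, strongly continuous, and by algebra automorphisms. The first of these is essentially already done in the definitions above: $\Leb^1(G\ltimes G, A)$ is by construction the completion of $\Cont_c(G\times G, A)$ in the stated norm, and applying the convolution construction above to the contractive multiplication map $\mu\colon A\times A\to A$ of the Banach algebra $A$ exhibits $*$ as a continuous bilinear map $\Leb^1(G\ltimes G, A)\times\Leb^1(G\ltimes G, A)\to\Leb^1(G\ltimes G, A)$ with $\norm{*}\le\norm{\mu}\le 1$; in particular $\norm{\xi_1*\xi_2}_1\le\norm{\xi_1}_1\norm{\xi_2}_1$.

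For associativity, since $*$ is continuous and $\Cont_c(G\times G, A)$ is dense in $\Leb^1(G\ltimes G, A)$, it suffices to prove $(\xi_1*\xi_2)*\xi_3 = \xi_1*(\xi_2*\xi_3)$ for $\xi_1,\xi_2,\xi_3\in\Cont_c(G\times G, A)$; for such $\xi_i$ the pairwise convolutions again lie in $\Cont_c(G\times G, A)$, and all the integrands occurring are continuous and compactly supported, so the iterated integrals defining the two triple convolutions may be manipulated freely by Fubini. Expanding the left-hand side and carrying out the substitution $u\mapsto vu$ in the inner integration variable (using left-invariance of the Haar measure), one is left with having to interchange the action of the group element $v$ with a product and with an $A$-valued integral; both are legitimate because the $G$-action on $A$ is by isometric algebra automorphisms, so $v(ab)=v(a)\,v(b)$ and $v$ commutes with integration. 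This is the only genuinely computational step, and it is a special case of the associativity verification carried out in Section~5 of \cite{Paravicini:07:Induction:erschienen}, which one may either reproduce or simply invoke.

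For the $G$-action, the formula $(r\xi)(s,t)=\xi(s,tr)$ defines an isometric action on $\Cont_c(G\times G, A)$ that extends to a strongly continuous isometric action of $G$ on $\Leb^1(G\ltimes G, A)$, as already recorded above. It remains to see that every $r\in G$ acts by an algebra homomorphism, i.e.\ $r(\xi_1*\xi_2)=(r\xi_1)*(r\xi_2)$; on $\Cont_c(G\times G, A)$ this is obtained by substituting the defining formulas, where the two sides in fact agree term by term with no change of variables needed, and the general identity then follows by density and continuity. Combining the three points gives the claim. The main obstacle is the associativity computation; everything else is bookkeeping with the definitions already in place.
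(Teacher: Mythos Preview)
Your proposal is correct and is exactly the kind of direct verification the paper has in mind; in fact the paper does not supply any proof of this lemma at all, simply stating it after having noted that the construction is a variant (even a special case) of the ones in Section~5 of \cite{Paravicini:07:Induction:erschienen} and Section~1.3 of \cite{Lafforgue:06}. Your three-step check (submultiplicativity from the general convolution estimate, associativity by a Fubini/substitution argument on $\Cont_c(G\times G,A)$, and multiplicativity of the $G$-action by direct inspection) is the expected route and matches what those references do.
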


\begin{remark}
The algebra $\Leb^1(G\ltimes G, A)$ can be thought of as the $\Leb^1$ version of the groupoid crossed product of $A$ by the transport groupoid $G \ltimes G$. The $G$-action on $\Leb^1(G\ltimes G, A)$ is given by the right action of $G$ on $G$ that commutes with the left $G$-action on $G$.
\end{remark}

\begin{proposition}\label{Proposition:MoritaEquivalenceLeb1}
Let $A$ be a $G$-Banach algebra. Then $A$ and $\Leb^1(G\ltimes G, A)$ are $G$-equivariantly Morita equivalent in the sense of \cite{Paravicini:07:Morita:richtigerschienen} and \cite{Lafforgue:04}; the Morita equivalence is given by the pair $(\Leb^1(G,A),\Cont_0(G,A))$, equipped with the following operations:
\begin{eqnarray*}
(\alpha \cdot \xi^>)(t)&:=& \int_{G} \alpha(r,t)\ r \xi^>(r^{-1}t) \rmd r\\
(\xi^> a)(t) &:=& \xi^>(t)\ t a\\
(u \xi^>)(t) &:=& \xi^>(tu)\\
(\xi^< \cdot \alpha)(s) &:=& \int_{G} \xi^<(r)\ r \alpha(r^{-1}s, r^{-1}) \rmd r\\
(u \xi^<)(s) &:=& u \xi^<(u^{-1}s)\\
(a \xi^<)(s) &:= & a \xi^<(s)\\
\langle \xi^<, \xi^>\rangle_A &:=& \int_{G} \xi^<(r)\  r \xi^>(r^{-1}) \rmd r\\
{_{\Leb^1}} \langle \xi^>, \xi^<\rangle (s,t) &:=& \xi^>(t) \ t\xi^<(t^{-1}s),
\end{eqnarray*}
for all $s,t,u \in G$, $a\in A$, $\xi^< \in \Cont_c(G,A) \subseteq \Leb^1(G,A)$, $\xi^>\in \Cont_c(G,A) \subseteq \Cont_0(G,A)$ and $\alpha \in \Cont_c(G\times G, A)$.
\end{proposition}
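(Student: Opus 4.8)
The plan is to verify directly that the listed data turns $F:=(\Leb^1(G,A),\Cont_0(G,A))$ into a $G$-equivariant Morita equivalence in the sense recalled before Lemma~\ref{Lemma:ConcurrentEquivalences:Equivariant} (equivalently, in the sense of \cite{Paravicini:07:Morita:richtigerschienen} and \cite{Lafforgue:04}): that $F$ is a $G$-Banach $\Leb^1(G\ltimes G,A)$-$A$-pair, that the two brackets $\langle\cdot,\cdot\rangle_A$ and ${}_{\Leb^1}\langle\cdot,\cdot\rangle$ are bounded and $G$-equivariant, that they are mutually compatible via the imprimitivity relations
$$
{}_{\Leb^1}\langle\xi^>,\xi^<\rangle\cdot\eta^> = \xi^>\cdot\langle\xi^<,\eta^>\rangle_A
\LazyAnd
\xi^<\cdot{}_{\Leb^1}\langle\eta^>,\zeta^<\rangle = \langle\xi^<,\eta^>\rangle_A\cdot\zeta^<,
$$
and that $F$ is full on both sides. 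One could instead observe that $F$ is an instance of the Morita equivalence attached to a groupoid crossed product --- here the crossed product of $A$ by the transport groupoid $G\ltimes G$ --- as in Section~5 of \cite{Paravicini:07:Induction:erschienen} and Section~1.3 of \cite{Lafforgue:06}, and merely record the resulting formulas and the residual right $G$-action; I will follow the self-contained route.

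First I would check that each of the operations in the statement is well defined and contractive on the dense subspaces $\Cont_c(G,A)\subseteq\Leb^1(G,A)$, $\Cont_c(G,A)\subseteq\Cont_0(G,A)$ and $\Cont_c(G\times G,A)\subseteq\Leb^1(G\ltimes G,A)$; here one only uses the triangle inequality for the Haar integral, the isometry of the $G$-action on $A$ and the left invariance of Haar measure (for instance $\norm{{}_{\Leb^1}\langle\xi^>,\xi^<\rangle}_1=\sup_{t}\int_G\norm{\xi^>(t)\,t\xi^<(t^{-1}s)}\rmd s\le\norm{\xi^>}_\infty\norm{\xi^<}_1$, and similarly $\norm{\xi^<\cdot\alpha}_1\le\norm{\xi^<}_1\norm{\alpha}_1$ after a Fubini step and the substitution $s\mapsto rs$). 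Hence every operation extends to the stated completions. Next I would verify the bimodule axioms --- associativity of the two one-sided $A$-actions and of the two one-sided $\Leb^1(G\ltimes G,A)$-actions, their mutual commutation --- together with the bilinearity of the two brackets and the two imprimitivity relations above. After inserting the defining integrals, each of these identities collapses by Fubini's theorem and a single substitution of the form $r\mapsto ur$ or $r\mapsto rs$, using that the $G$-action on $A$ is by algebra homomorphisms; none of these computations is problematic.

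I would then settle $G$-equivariance. The $G$-actions on $\Leb^1(G,A)$ and $\Cont_0(G,A)$ given by $(u\xi^<)(s)=u\xi^<(u^{-1}s)$ and $(u\xi^>)(t)=\xi^>(tu)$ are isometric and strongly continuous (standard, from strong continuity of translation and of the $A$-action), every module operation intertwines the relevant $G$-actions, and both brackets are $G$-equivariant --- e.g.\ $\langle u\xi^<,u\xi^>\rangle_A=u\langle\xi^<,\xi^>\rangle_A$ and ${}_{\Leb^1}\langle u\xi^>,u\xi^<\rangle=u\cdot{}_{\Leb^1}\langle\xi^>,\xi^<\rangle$ --- which once more follows from the substitution $r\mapsto ur$ in the defining integrals.

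The only step that calls for an actual argument --- and which I expect to be the main obstacle --- is fullness: one must show that the closed linear span of $\{\langle\xi^<,\xi^>\rangle_A:\xi^<,\xi^>\in\Cont_c(G,A)\}$ is all of $A$ and that the closed linear span of $\{{}_{\Leb^1}\langle\xi^>,\xi^<\rangle:\xi^>,\xi^<\in\Cont_c(G,A)\}$ is all of $\Leb^1(G\ltimes G,A)$. For the first I would let $\xi^<$ and $\xi^>$ run through functions $f(\cdot)a$ with $f\in\Cont_c(G)$ supported in shrinking neighbourhoods of the unit and $\int_G f(r)f(r^{-1})\rmd r\to 1$, so that $\langle f(\cdot)a,f(\cdot)b\rangle_A\to ab$; for the second I would fix $\alpha\in\Cont_c(G\times G,A)$ and approximate it in the $\Leb^1$-norm by sums of the elementary kernels $(s,t)\mapsto\xi^>(t)\,t\xi^<(t^{-1}s)$, which amounts to an approximate-identity argument in the $s$-variable (with $\xi^<$ concentrated near the unit) while $\xi^>$ absorbs the $t$-dependence. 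This bookkeeping must be carried out with some care, since $A$ and $\Leb^1(G\ltimes G,A)$ need not be unital (nor even non-degenerate), but this is precisely the situation for which the Morita theory of \cite{Paravicini:14:kkban:arxiv} was set up; everything else in the proof is a routine, if somewhat lengthy, succession of Fubini-type computations.
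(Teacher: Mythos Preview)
Your approach is essentially the same as the paper's: direct verification of the bimodule and pair axioms, the norm estimates, $G$-equivariance, and then fullness. The paper is terser but follows exactly this outline.

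One point deserves sharpening. You state the fullness goal as showing that the $A$-valued bracket spans \emph{all of $A$} and the $\Leb^1$-valued bracket spans \emph{all of $\Leb^1(G\ltimes G,A)$}. In the possibly degenerate setting that the paper explicitly works in, this is not what one proves and not what the definition requires: the paper identifies the closed span of $\langle\Leb^1(G,A),\Cont_0(G,A)\rangle_A$ with $\overline{AA}$, and the closed span of ${}_{\Leb^1}\langle\Cont_0(G,A),\Leb^1(G,A)\rangle$ with $\Leb^1(G\ltimes G,A^2)$, which it then shows equals $\Leb^1(G\ltimes G,A)^2$. Your approximation arguments in fact land exactly there --- the $A$-side argument produces limits of the form $ab$, not arbitrary elements of $A$ --- and your closing caveat about non-degeneracy shows you are aware of the issue; but the formulation of the fullness target should be corrected to match the Morita theory for degenerate algebras from \cite{Paravicini:14:kkban:arxiv}.
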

\begin{proof}
By direct calculation one can see that $\Leb^1(G,A)$ is a $G$-equivariant Banach $A$-$\Leb^1(G\ltimes G, A)$-bimodule with the above operations and, similarly, that $\Cont_0(G,A)$ is a $G$-equivariant Banach $\Leb^1(G\ltimes G, A)$-$A$-bimodule. The $A$-valued inner product satisfies 
$$
\norm{\langle \xi^<, \xi^>\rangle_A} \leq \norm{\xi^<}_1 \norm{\xi^>}_{\infty}
$$
and turns $(\Leb^1(G,A),\Cont_0(G,A))$ into a $G$-equivariant Banach $A$-pair as can be checked by direct calculation. Similarly, one checks that $(\Cont_0(G,A),\Leb^1(G,A))$ is a $G$-equivariant Banach $\Leb^1(G\ltimes G, A)$-pair, which means in particular that we have
$$
\norm{{_{\Leb^1}}\langle \xi^>, \xi^<\rangle} \leq \norm{\xi^>}_{\infty}\norm{\xi^<}_1.
$$
 The actions and the inner product are compatible. Moreover, it is easy to check that the closed linear span of $\langle \Leb^1(G, A), \Cont_0(G,A)\rangle_A$ is the closed linear span $AA$ of $\{a\cdot a': \ a,a'\in A\} \subseteq A$. Also, the linear span of ${_{\Leb^1}} \langle \Cont_c(G,A), \Cont_c(G,A)\rangle$ is dense in the inductive limit topology in $\Cont_c(G \times G, AA)$, and it follows that  the span of ${_{\Leb^1}} \langle \Cont_0(G,A), \Leb^1(G,A)\rangle$ is dense in $\Leb^1(G\ltimes G, A^2)$ which can be shown to be $\Leb^1(G\ltimes G, A)^2$.
So it follows that  $(\Leb^1(G,A),\Cont_0(G,A))$ is a Morita equivalence between $\Leb^1(G\ltimes G, A)$ and $A$ in the sense of \cite{Paravicini:07:Morita:richtigerschienen}.\end{proof}

\begin{theorem}\label{Theorem:Extension:GequivariantSplit}
Let $\xymatrix{  B \ \ar@{>->}[r]^{\iota}& D \ar@{->>}[r]^{\pi}& A}$ be a semi-split $G$-equivariant extension with continuous linear split $\sigma$. Then the $G$-equivairant extension
$$
\xymatrix{  \Leb^1(G\ltimes G, B) \ \ar@{>->}[rr]^-{\Leb^1(G\ltimes G, \iota)}&& \Leb^1(G\ltimes G, D) \ar@{->>}[rr]^-{\Leb^1(G\ltimes G,\pi)}& &\Leb^1(G\ltimes G, A)}
$$
is equivariantly semi-split with equivariant continuous linear split $\Leb^1(G\ltimes G, \sigma)$.
\end{theorem}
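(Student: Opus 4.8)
The plan is to deduce everything from the formal properties of the functor $\Leb^1(G\ltimes G,\cdot)$, treating exactness as a statement about the underlying Banach spaces rather than trying to describe the relevant kernels and ranges as explicit function spaces. First note that $\Leb^1(G\ltimes G,\iota)$ and $\Leb^1(G\ltimes G,\pi)$ are $G$-equivariant continuous homomorphisms: equivariance of these maps (indeed of $\Leb^1(G\ltimes G,T)$ for \emph{any} continuous linear $T$) is built into the definition, and multiplicativity with respect to the convolution products holds because $\iota$ and $\pi$ are themselves equivariant, hence intertwine the $G$-action that appears in the convolution formula. Likewise $\Leb^1(G\ltimes G,\sigma)$ is continuous, linear and automatically $G$-equivariant, and from $\pi\circ\sigma=\id_A$ together with functoriality we get $\Leb^1(G\ltimes G,\pi)\circ\Leb^1(G\ltimes G,\sigma)=\Leb^1(G\ltimes G,\id_A)=\id$; so $\Leb^1(G\ltimes G,\sigma)$ is an equivariant linear continuous split and, in particular, $\Leb^1(G\ltimes G,\pi)$ is surjective.

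It then remains to show that $\Leb^1(G\ltimes G,\iota)$ is injective with closed range equal to $\kernel\bigl(\Leb^1(G\ltimes G,\pi)\bigr)$. The key observation is that the original extension is already split as a diagram of Banach spaces: since $\iota$ is injective with closed range $\kernel(\pi)$, the open mapping theorem makes it a topological isomorphism of $B$ onto $\kernel(\pi)$, while the split $\sigma$ yields a topological direct sum $D=\kernel(\pi)\oplus\sigma(A)$. Hence $\rho:=\sigma\circ\pi$ is a continuous idempotent on $D$ with image $\sigma(A)$ and kernel $\iota(B)$, and $\tau:=\iota^{-1}\circ(\id_D-\rho)\colon D\to B$ is a continuous linear map satisfying $\tau\circ\iota=\id_B$ and $\iota\circ\tau=\id_D-\rho$.

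Finally I would transport these identities through the functor, using that $\Leb^1(G\ltimes G,\cdot)$ is multiplicative under composition and additive in the morphism variable (the latter being immediate from $\Leb^1(G\ltimes G,T)(\xi)(s,t)=T(\xi(s,t))$). From $\tau\circ\iota=\id_B$ we get that $\Leb^1(G\ltimes G,\iota)$ has a left inverse, hence is injective; its range equals that of the idempotent $\Leb^1(G\ltimes G,\id_D-\rho)=\id-\Leb^1(G\ltimes G,\rho)$, hence is closed. From $\pi\circ\iota=0$ we get $\Leb^1(G\ltimes G,\pi)\circ\Leb^1(G\ltimes G,\iota)=0$, so the range of $\Leb^1(G\ltimes G,\iota)$ lies in $\kernel\bigl(\Leb^1(G\ltimes G,\pi)\bigr)$; conversely, if $\Leb^1(G\ltimes G,\pi)(\eta)=0$ then $\Leb^1(G\ltimes G,\rho)(\eta)=\Leb^1(G\ltimes G,\sigma)\bigl(\Leb^1(G\ltimes G,\pi)(\eta)\bigr)=0$, so $\eta=\Leb^1(G\ltimes G,\id_D-\rho)(\eta)=\Leb^1(G\ltimes G,\iota)\bigl(\Leb^1(G\ltimes G,\tau)(\eta)\bigr)$ lies in the range of $\Leb^1(G\ltimes G,\iota)$. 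This yields the exactness and finishes the proof. There is no serious obstacle here: the only points that require a little care are the appeal to the open mapping theorem (to make $\tau$ continuous) and the elementary but essential observation that $\Leb^1(G\ltimes G,\cdot)$ respects sums of morphisms, not merely compositions; no estimates involving the convolution product or the $\Leb^1$-norm beyond those already recorded are needed.
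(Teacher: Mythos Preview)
Your proof is correct. In the paper this theorem is stated without proof, being regarded as an immediate consequence of the preceding definitions: the crucial point, that $\Leb^1(G\ltimes G,T)$ is $G$-equivariant for \emph{any} continuous linear $T$ (in particular for the non-equivariant split $\sigma$), has already been recorded, and the remaining exactness verification is routine. Your argument spells out precisely this routine part, and does so cleanly by transporting the Banach-space splitting $D\cong\iota(B)\oplus\sigma(A)$ through the functor using functoriality and additivity on morphisms; there is nothing to add or correct.
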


\subsection{Equivariant cycles and equivariant operators}\label{Subsection:EquivariantCycles}

Let $A$ and $B$ be $G$-Banach algebras. The class $\EbanG(A,B)$ is defined as in \cite{Lafforgue:02} or rather as in the proof of Theorem~1.27 of \cite{Paravicini:14:kkban:arxiv} if you do not want to restrict yourself to non-degenerate Banach algebras. Similarly, define the notion of homotopy on $\EbanG(A,B)$ as in \cite{Lafforgue:02} or \cite{Paravicini:14:kkban:arxiv} and call the resulting group of equivalence classes $\KKbanG(A,B)$. 

In this section, we discuss how one can replace any element $(E,T)$ of $\EbanG(A,B)$ with a version for which the operator $T$ is $G$-equivariant. This can be done in two natural ways: The perhaps the most natural replaces both algebras $A$ and $B$, with Morita equivalent version called $A'=\Leb^1(G\ltimes G, A)$ and $B'=\Leb^1(G\ltimes G, B)$. The cycle corresponding to $(E,T)$ in $\EbanG(A',B')$ is obtained in a very systematic way. A little less systematic, but smaller in a certain sense is another version of the same trick which finds the equivariant cycle corresponding to $(E,T)\in \EbanG(A,B)$ in $\EbanG(A',B)$. It is this second version that we are going to use later on. Both versions amount to the same cycle if one uses the Morita equivalence between $B'$ and $B$ to identify $\KKbanG(A',B')$ and $\KKbanG(A',B)$. 

\begin{definition}
Let $\E^{\ban}_{G, \equi}(A,B)$ be the subclass of all \demph{$G$-equivariant cycles} in $\EbanG(A,B)$, i.e., of all cycles $(E,T)$  such that $T$ is $G$-equivariant. Two such cycles are $G$-equivariantly homotopic if there exists a $G$-equivariant homotopy between them. Define $\KK^{\ban}_{G, \equi}(A,B)$ to be the class of all $G$-equivariant homotopy classes in $\E^{\ban}_{G, \equi}(A,B)$. It is an abelian group. 
\end{definition}

\subsubsection{Version 1: $\KKbanG(A,B) \to \KK^{\ban}_{G,\equi}(\Leb^1(G\ltimes G, A), \Leb^1(G\ltimes G, B))$}

The following definition is a refinement of the corresponding definition in Section~5 of \cite{Paravicini:07:Induction:erschienen}, compare also \cite{Lafforgue:06} and \cite{Paravicini:07}.

\begin{definition}[The transformation for Banach pairs]
Let $E$ be a $G$-Banach $B$-pair. Define $\Leb^1(G\ltimes G, E)$ to be the pair $(\Leb^1(G\ltimes G, E^<), \Leb^1(G\ltimes G, E^>))$ equipped with operations given by convolution, i.e., with
\begin{eqnarray*}
(\xi^> * \beta)(s,t) &:=& \int_{G}  \xi^>(r,t) \ r \beta(r^{-1}s, r^{-1}t) \rmd r\\
(\beta * \xi^<)(s,t) &:=& \int_{G} \beta(r,t) \ r \xi^<(r^{-1}s, r^{-1}t) \rmd r\\
\langle \xi^<, \xi^>\rangle (s,t) &:=& \int_{G} \langle \xi^<(r,t), \ r\xi^>(r^{-1}s, r^{-1}t) \rangle \rmd r
\end{eqnarray*}
for all $s,t\in G$, $\xi^< \in \Cont_c(G \times G, E^<)$, $\xi^> \in \Cont_c(G \times G, E^>)$ and $\beta \in \Cont_c(G \times G, B)$. Equipped with the $G$-actions defined above, this defines a $G$-Banach $\Leb^1(G\ltimes G, B)$-pair. If $E$ carries a compatible left action of some Banach algebra $A$, then similar convolution formulas define a compatible left action of $\Leb^1(G\ltimes G, A)$ on $\Leb^1(G \ltimes G, E)$.

\end{definition}

The statements in the preceding definition as well as in the following can be checked by direct calculation.

\begin{definition}[The transformation for operators]
Let $E$ and $F$ be $G$-Banach $B$-pairs and $T=(T^<,T^>)\in \Lin_B(E,F)$. Define 
$$
\Leb^1(G\ltimes G, T):=(\Leb^1(G\ltimes G, T)^<, \Leb^1(G\ltimes G, T)^>) \in \Lin_{\Leb^1(G\ltimes G, B)}\left(\Leb^1(G\ltimes G, E), \Leb^1(G\ltimes G, F)\right)
$$
where $\Leb^1(G\ltimes G, T)^>$ is defined as above, i.e.,
$$
\Leb^1(G\ltimes G, T)^>(\xi^>) (s,t) = T^>\left( \xi^>(s,t)\right),
$$
for all $s,t\in G$ and $\xi^> \in \Cont_c(G\times G, E^>)$, whereas $\Leb^1(G\ltimes G, T)^<$ is defined, somewhat differently, by
$$
\Leb^1(G\ltimes G, T)^<(\eta^<) (s,t) = s \ T^<\left(s^{-1} \eta^<(s,t)\right),
$$
for all $s,t\in G$ and $\eta^< \in \Cont_c(G\times G, F^<)$. We have $\norm{\Leb^1(G\ltimes G, T)} \leq \norm{T}$.

This construction is functorial and $\Leb^1(G\ltimes G, T)$ is equivariant.
\end{definition}

\begin{lemma}[The transformation for $\KKbanG$-cycles]\label{Lemma:L1Transformation:ForCycles}
Let $(E,T)$ be in $\EbanG(A,B)$. Then
$$
\Leb^1(G\ltimes G, (E,T)):= \left(\Leb^1(G\ltimes G, E), \Leb^1(G\ltimes G, T)\right) \ \in \ \E^{\ban}_{G,\equi}(\Leb^1(G\ltimes G, A), \Leb^1(G\ltimes G, B)).
$$ 
\end{lemma}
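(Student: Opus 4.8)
The plan is to verify, point by point, the three defining conditions of a Kasparov cycle in $\E^{\ban}_{G,\equi}(\Leb^1(G\ltimes G,A),\Leb^1(G\ltimes G,B))$ for the pair $(\Leb^1(G\ltimes G,E),\Leb^1(G\ltimes G,T))$. By the preceding two definitions we already know that $\Leb^1(G\ltimes G,E)$ is a $G$-Banach $\Leb^1(G\ltimes G,B)$-pair carrying a compatible left action of $\Leb^1(G\ltimes G,A)$, and that $\Leb^1(G\ltimes G,T)$ is a $G$-equivariant element of $\Lin_{\Leb^1(G\ltimes G,B)}(\Leb^1(G\ltimes G,E))$ of norm at most $\norm T$. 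So the $G$-equivariance of the operator, which is the one genuinely new feature compared with the non-equivariant situation, is already built into the construction; what remains is to check that the cycle conditions on $T$ are inherited by $\Leb^1(G\ltimes G,T)$ under the left action of $\Leb^1(G\ltimes G,A)$.

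Concretely, I would fix a cycle $(E,T)$ with left action $\varphi\colon A\to\Lin_B(E)$ and recall that being in $\EbanG(A,B)$ means $\varphi(a)(T-T^*)$, $\varphi(a)(T^2-\id)$ and $[\varphi(a),T]$ all lie in $\Komp_B(E)$ for every $a\in A$ (in the Banach setting ``$T^*$'' and ``$[\,\cdot\,,\cdot\,]$'' are to be read in the appropriate two-sided form, as in \cite{Lafforgue:02} or the proof of Theorem~1.27 in \cite{Paravicini:14:kkban:arxiv}, but the bookkeeping is the same). The first key step is the algebraic identity: applying the functor $\Leb^1(G\ltimes G,\cdot)$ to the operator expressions and using that this functor is multiplicative for the convolution product on operators and intertwines the left actions, one gets, for $\alpha\in\Cont_c(G\times G,A)$,
$$
\varphi'(\alpha)\bigl(\Leb^1(G\ltimes G,T)^2-\id\bigr)=\Leb^1\bigl(G\ltimes G,\,\varphi(\cdot)(T^2-\id)\bigr)(\alpha)
$$
up to terms that are themselves of the desired (compact) form, and similarly for the self-adjointness defect and the commutator; here $\varphi'$ is the left action of $\Leb^1(G\ltimes G,A)$. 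The second key step is to show that the functor $\Leb^1(G\ltimes G,\cdot)$ sends $\Komp_B(E)$ into $\Komp_{\Leb^1(G\ltimes G,B)}(\Leb^1(G\ltimes G,E))$: it plainly sends a rank-one operator $\ketbra{\xi^>}{\xi^<}$ to something expressible through the convolution inner product and the convolution module actions on $\Leb^1(G\ltimes G,E)$, hence into the compacts, and then one passes to norm limits and linear combinations. Combining the two steps, each of the three defects for $\Leb^1(G\ltimes G,T)$ is, for $\alpha$ in the dense subalgebra $\Cont_c(G\times G,A)$, a norm limit of compact operators, hence compact; a density/continuity argument extends this to all of $\Leb^1(G\ltimes G,A)$.

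A subtlety worth spelling out is the asymmetric formula for $\Leb^1(G\ltimes G,T)^<$ (the $s\,T^<(s^{-1}\,\cdot\,)$ twist), which is exactly what is needed to make the pair of maps compatible with the convolution inner product and to make the operator equivariant; I would check that this twist is compatible with the module structure maps so that products such as $\varphi'(\alpha)\circ\Leb^1(G\ltimes G,T)$ again fall under the functor $\Leb^1(G\ltimes G,\cdot)$ applied to $\varphi(\cdot)\circ T$, so that no spurious non-compact error terms appear. I expect the main obstacle to be purely this bookkeeping with the twisted $<$-component: verifying that $\Leb^1(G\ltimes G,\cdot)$ is genuinely functorial and multiplicative on operators when one component is twisted and the other is not, and that it is still compatible with the left-module maps, so that the clean identities of the first key step actually hold on the nose (or up to manifestly compact remainders). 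Once that is in place the rest is routine: the compactness of the three defects is inherited directly, equivariance is already known, and the norm bound $\norm{\Leb^1(G\ltimes G,T)}\le\norm T$ is quoted from the previous definition.
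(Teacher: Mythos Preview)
Your proposal is correct and aligns with the intended argument. The paper itself does not write out a proof here but simply refers to \cite{Paravicini:07}, Paragraph~5.2.8, \cite{Lafforgue:06}, Lemme~1.3.5, and \cite{Paravicini:07:Induction:erschienen}, Section~5.4; the hint given in the proof of the next lemma (``What you have to show is that the convolution with elements of $\Leb^1(G\ltimes G, \Komp_B(E))$ define compact operators\ldots'') confirms that your two key steps---reducing the defects of $\Leb^1(G\ltimes G,T)$ to convolution by $\Leb^1(G\ltimes G,\Komp_B(E))$-valued functions, and then showing such convolutions are compact---are exactly the content of the referenced proofs. Your flagging of the twisted formula for $\Leb^1(G\ltimes G,T)^<$ as the main bookkeeping point is also apt.
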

\begin{proof}
See \cite{Paravicini:07}, Paragraph~5.2.8, compare \cite{Lafforgue:06}, Lemme~1.3.5 or \cite{Paravicini:07:Induction:erschienen}, Section 5.4.
\end{proof}


\begin{proposition}
Let $(E_0,T_0)$ and $(E_1, T_1)$ be homotopic elements of $\EbanG(A,B)$. Then there is a $G$-equivariant homotopy from $\Leb^1(G\ltimes G, (E_0, T_0))$ to $\Leb^1(G\ltimes G, (E_1, T_1))$ in $\E^{\ban}_{G,\equi}(\Leb^1(G\ltimes G, A), \Leb^1(G\ltimes G, B))$. We obtain a natural transformation
$$
\KKbanG(A,B) \ \to\  \KK^{\ban}_{G,\equi}(\Leb^1(G\ltimes G, A), \Leb^1(G\ltimes G, B)).
$$
\end{proposition}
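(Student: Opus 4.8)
The plan is to leverage the functoriality established in Lemma~\ref{Lemma:L1Transformation:ForCycles} together with the observation that a homotopy is itself a $\KKbanG$-cycle valued in the algebra $B[0,1]$ of continuous paths. Concretely, if $(E_0,T_0)$ and $(E_1,T_1)$ are homotopic in $\EbanG(A,B)$, there is a cycle $(\mE,\mathbf{T})\in \EbanG(A,B[0,1])$ whose evaluations at $0$ and $1$ recover the two given cycles (up to isomorphism of cycles). Applying the construction $\Leb^1(G\ltimes G,\cdot)$ to this path cycle yields $\Leb^1(G\ltimes G,(\mE,\mathbf T))\in \E^{\ban}_{G,\equi}(\Leb^1(G\ltimes G,A),\Leb^1(G\ltimes G,B[0,1]))$, and the point is to interpret this as a $G$-equivariant homotopy in $\E^{\ban}_{G,\equi}(\Leb^1(G\ltimes G,A),\Leb^1(G\ltimes G,B))$.

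First I would make precise the identification between $\Leb^1(G\ltimes G,B[0,1])$ and $\Leb^1(G\ltimes G,B)[0,1]$, or more modestly, produce a canonical $G$-equivariant contractive homomorphism from the former to the latter that is compatible with evaluation at each point $t\in[0,1]$; this is the pair-algebra analogue of the map used in the proof of Proposition~\ref{Proposition:Descent:Morita} (the canonical contractive morphism $\mA(G,B[0,1])\to\mA(G,B)[0,1]$). One checks on $\Cont_c(G\times G\times[0,1],B)$ that the $\Leb^1$-norm dominates the supremum over $t\in[0,1]$ of the fibrewise $\Leb^1$-norms, so that the identity on $\Cont_c$ extends. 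Pushing the pair $\Leb^1(G\ltimes G,\mE)$ along this homomorphism (and doing the analogous thing for the operator, using that $\Leb^1(G\ltimes G,\mathbf T)$ is built fibrewise on the $>$-component and by the twisted formula on the $<$-component, both of which are compatible with evaluation) gives a genuine element of $\E^{\ban}_{G,\equi}(\Leb^1(G\ltimes G,A),\Leb^1(G\ltimes G,B)[0,1])$, i.e.\ a $G$-equivariant homotopy. Second, I would verify that evaluating this homotopy at $0$ and $1$ reproduces $\Leb^1(G\ltimes G,(E_0,T_0))$ and $\Leb^1(G\ltimes G,(E_1,T_1))$; this follows from functoriality of $\Leb^1(G\ltimes G,\cdot)$ applied to the evaluation homomorphisms $\Ev_0,\Ev_1\colon B[0,1]\to B$ and the compatibility of the construction with the coefficient homomorphisms on both sides. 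Since $\Leb^1(G\ltimes G,\cdot)$ on cycles is already functorial for equivariant homomorphisms, this is essentially bookkeeping.

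Finally, for naturality of the resulting map $\KKbanG(A,B)\to\KK^{\ban}_{G,\equi}(\Leb^1(G\ltimes G,A),\Leb^1(G\ltimes G,B))$: given $G$-equivariant homomorphisms $f\colon A'\to A$ and $g\colon B\to B'$, one must check that pushing forward/pulling back a cycle and then applying $\Leb^1(G\ltimes G,\cdot)$ agrees with first applying $\Leb^1(G\ltimes G,\cdot)$ and then using $\Leb^1(G\ltimes G,f)$ and $\Leb^1(G\ltimes G,g)$. Both operations are defined fibrewise (plus the twist on the $<$-leg), so the two composites literally agree on the dense subspaces $\Cont_c(G\times G,-)$ and hence on completions; the additivity of the map is inherited from additivity of the $\KKbanG$-cycle sum under $\Leb^1(G\ltimes G,\cdot)$, which respects direct sums of pairs.

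The main obstacle I anticipate is not any deep conceptual point but the careful treatment of the $<$-component of the operator transformation: because $\Leb^1(G\ltimes G,T)^<$ is defined by the twisted formula $\eta^<\mapsto\big((s,t)\mapsto s\,T^<(s^{-1}\eta^<(s,t))\big)$ rather than fibrewise, one has to confirm that this twist is compatible with the identification $\Leb^1(G\ltimes G,B[0,1])^<\to\Leb^1(G\ltimes G,B)[0,1]^<$ and with evaluation at endpoints, and that it still yields an element of $\Lin_{\Leb^1(G\ltimes G,B)[0,1]}$ with the correct norm bound. Once that compatibility is in hand, together with the fact (from Lemma~\ref{Lemma:L1Transformation:ForCycles}) that the transformed pair-operator pair genuinely lies in $\E^{\ban}_{G,\equi}$, the rest of the argument is a routine chase through the definitions.
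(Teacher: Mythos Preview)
Your proposal is correct and follows the natural line of argument: apply the $\Leb^1(G\ltimes G,\cdot)$ construction to the homotopy cycle itself, push forward along the canonical contractive $G$-equivariant homomorphism $\Leb^1(G\ltimes G,B[0,1])\to\Leb^1(G\ltimes G,B)[0,1]$, and verify compatibility with the endpoint evaluations. The paper does not actually give a proof here but simply refers to \cite{Paravicini:07}, Proposition~5.2.23; your sketch is precisely the kind of argument one expects to find there, and the technical point you flag about the twisted $<$-component is real but harmless, since the twist involves only the $G$-variables $(s,t)$ and commutes with anything done in the $[0,1]$-direction.
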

\begin{proof}
See \cite{Paravicini:07}, Proposition~5.2.23.
\end{proof}

\subsubsection{Version 2: $\KKbanG(A,B) \to \KK^{\ban}_{G,\equi}(\Leb^1(G\ltimes G, A), B)$}

\begin{definition}\label{Definition:ContNullVomPaar}
Let $E$ be a $G$-Banach $A$-$B$-pair. Define $\Cont_0(G, E)$ to be the pair  $(\Leb^1(G,E^<),\Cont_0(G,E^>))$ equipped with operations given by convolution, i.e., with
\begin{eqnarray*}
(\alpha \cdot \xi^>)(t)&:=& \int_{G} \alpha(r,t) \ r\xi^>(r^{-1}t) \rmd r\\
(\xi^> b)(t) &:=& \xi^>(t) \ tb\\
(u \xi^>)(t) &:=& \xi^>(tu)\\
(\xi^< \cdot \alpha)(s) &:=& \int_{G} \xi^<(r) \ r\alpha(r^{-1}s, r^{-1}) \rmd r\\
(u \xi^<)(s) &:=& u \xi^<(u^{-1}s)\\
(b \xi^<)(s) &:= & b \xi^<(s)\\
\langle \xi^<, \xi^>\rangle_B &:=& \int_{G} \langle \xi^<(r), \ r \xi^>(r^{-1}) \rangle \rmd r\\
\end{eqnarray*}
for all $s,t,u \in G$, $b\in B$, $\xi^< \in \Cont_c(G,E^<) \subseteq \Leb^1(G,E^<)$, $\xi^>\in \Cont_c(G,E^>) \subseteq \Cont_0(G,E^>)$ and all $\alpha \in \Cont_c(G\times G, A)$. Equipped with the $G$-actions defined above, this defines a $G$-Banach $\Leb^1(G\ltimes G, A)$-$B$-pair.

\end{definition}

As above, we define the transformation also for operators:

\begin{definition} \label{Definition:ContNullVomZykel}
Let $E$ and $F$ be $G$-Banach $B$-pairs and $T=(T^<,T^>)\in \Lin_B(E,F)$. Define 
$$
\Cont_0(G,T):=(\Cont_0(G,T)^<, \Cont_0(G,T)^>) \in \Lin_{B}\left(\Cont_0(G, E), \Cont_0(G, F)\right)
$$
where $\Cont_0(G,T)^>$ is defined as
$$
\Cont_0(G,T)^>(\xi^>) (t) = T^>\left(\xi^>(t)\right),
$$
for all $t\in G$ and $\xi^> \in \Cont_c(G, E^>)$, and $\Cont_0(G,T)^<$ is defined by
$$
\Cont_0(G,T)^<(\eta^<) (t) = t^{-1} \ T^<\left(t \eta^<(t)\right),
$$
for all $t\in G$ and $\eta^< \in \Cont_c(G, F^<)$. We have $\norm{\Cont_0(G,T)} \leq \norm{T}$. This construction is functorial and $\Cont_0(G,T)$ is equivariant.
\end{definition}

\begin{lemma}
If $(E,T)$ is in $\EbanG(A,B)$, then
$$
\Cont_0(G,(E,T)):=(\Cont_0(G, E),\Cont_0(G, T)) \ \in \ \E^{\ban}_{G, \equi}(\Leb^1(G\ltimes G, A), B).
$$
It is compatible with homotopies of (equivariant) cycles.
\end{lemma}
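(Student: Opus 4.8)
The plan is to recognise that the construction $\Cont_0(G,(E,T))$ is nothing but the Version~1 transformation $\Leb^1(G\ltimes G,(E,T))$ of Lemma~\ref{Lemma:L1Transformation:ForCycles} composed, on the right, with the $G$-equivariant Morita equivalence $(\Leb^1(G,B),\Cont_0(G,B))$ between $\Leb^1(G\ltimes G,B)$ and $B$ from Proposition~\ref{Proposition:MoritaEquivalenceLeb1}; everything then reduces to that lemma together with the standard fact that composing a Kasparov cycle with a Morita equivalence again yields a Kasparov cycle. Concretely, one checks by a change-of-variables computation with the convolution formulas of Definition~\ref{Definition:ContNullVomPaar} that there is an isometric isomorphism of $G$-Banach $\Leb^1(G\ltimes G,A)$-$B$-pairs $\Cont_0(G,E)\cong\Leb^1(G\ltimes G,E)\otimes_{\Leb^1(G\ltimes G,B)}(\Leb^1(G,B),\Cont_0(G,B))$, under which $\Cont_0(G,T)$ corresponds to $\Leb^1(G\ltimes G,T)\otimes\id$; the asymmetric twist $t^{-1}T^<(t\,\eta^<(t))$ in the definition of $\Cont_0(G,T)^<$ is precisely what makes this identification work. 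Alternatively, and if one prefers to stay elementary, all the assertions below can be checked by hand exactly as in \cite{Paravicini:07}, Paragraph~5.2.8 and \cite{Lafforgue:06}, Lemme~1.3.5.

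Either way, the steps are: First, $\Cont_0(G,E)$ is a $G$-Banach $\Leb^1(G\ltimes G,A)$-$B$-pair by Definition~\ref{Definition:ContNullVomPaar}. Second, $\Cont_0(G,T)=(\Cont_0(G,T)^<,\Cont_0(G,T)^>)$ is a well-defined operator in $\Lin_B(\Cont_0(G,E))$: one verifies $\norm{\Cont_0(G,T)}\le\norm{T}$ and the pairing compatibility $\langle \Cont_0(G,T)^<\eta^<,\xi^>\rangle_B=\langle \eta^<,\Cont_0(G,T)^>\xi^>\rangle_B$ by substituting in the integral defining $\langle\cdot,\cdot\rangle_B$ and using that $T$ intertwines the pairing of $E$; the grading is inherited from $E$. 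Third, the functor $\Cont_0(G,\cdot)\colon\Lin_B(E)\to\Lin_B(\Cont_0(G,E))$ maps $\Komp_B(E)$ into $\Komp_B(\Cont_0(G,E))$: it suffices to treat a rank-one operator $\ketbra{\xi^>}{\xi^<}$ and check that its image is a norm-limit of finite-rank operators on $\Cont_0(G,E)$ (this is exactly where the Morita-equivalence picture makes the statement transparent, since $\Komp$ is preserved under interior tensor products with imprimitivity bimodules). Consequently, writing out the defining conditions of $\EbanG(A,B)$ for $(E,T)$ — namely that $\varphi(a)(T^2-\id)$, $[\varphi(a),T]$ and the $G$-continuity conditions all lie in $\Komp_B(E)$ for $a\in A$ — and approximating elements of $\Leb^1(G\ltimes G,A)$ by elements of $\Cont_c(G\times G,A)$, we obtain that $\alpha\cdot(\Cont_0(G,T)^2-\id)$ and $[\alpha,\Cont_0(G,T)]$ lie in $\Komp_B(\Cont_0(G,E))$ for every $\alpha\in\Leb^1(G\ltimes G,A)$. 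Fourth, $\Cont_0(G,T)$ is $G$-equivariant by Definition~\ref{Definition:ContNullVomZykel}, so the $G$-continuity conditions hold trivially; hence $\Cont_0(G,(E,T))\in\E^{\ban}_{G,\equi}(\Leb^1(G\ltimes G,A),B)$.

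For compatibility with homotopies, let $(E,T)\in\EbanG(A,B[0,1])$ be a homotopy between $(E_0,T_0)$ and $(E_1,T_1)$. Since $\Leb^1(G,B[0,1])=\Leb^1(G,B)[0,1]$ and $\Cont_0(G,B[0,1])=\Cont_0(G,B)[0,1]$ canonically, applying the construction yields $\Cont_0(G,(E,T))\in\E^{\ban}_{G,\equi}(\Leb^1(G\ltimes G,A),B[0,1])$, and naturality of the evaluation maps $\Ev_t$ with respect to $\Cont_0(G,\cdot)$ shows that its value at $t\in\{0,1\}$ is $\Cont_0(G,(E_t,T_t))$. Thus $\Cont_0(G,(E_0,T_0))$ and $\Cont_0(G,(E_1,T_1))$ are $G$-equivariantly homotopic, and the assignment descends to homotopy classes.

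The step I expect to be the main obstacle is the third one: showing that $\Cont_0(G,\cdot)$ preserves compactness and that the Kasparov conditions survive the non-equivariant-to-equivariant twist. The reformulation via the Morita equivalence in the first paragraph is what tames this; the remaining work (the change of variables for the pairing, the norm estimates, and the explicit identification $\Cont_0(G,E)\cong\Leb^1(G\ltimes G,E)\otimes(\cdots)$) is routine but must be done carefully because of the asymmetric twisting of the $<$-component.
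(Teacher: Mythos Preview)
Your Step~3 contains a genuine error. The claim that the functor $S\mapsto\Cont_0(G,S)$ sends $\Komp_B(E)$ into $\Komp_B(\Cont_0(G,E))$ is false in general: if $S=\ketbra{e^>}{e^<}$ is rank one on $E$, then $\Cont_0(G,S)^>$ sends $\xi^>\in\Cont_0(G,E^>)$ to $t\mapsto e^>\langle e^<,\xi^>(t)\rangle_B$, which ranges over an entire copy of $\Cont_0(G,B)$ as $\xi^>$ varies and is therefore not approximable by finite-rank operators on $\Cont_0(G,E)$ when $G$ is non-compact. The point is that $\Cont_0(G,S)$ acts \emph{pointwise} in $t$, with no integration, so nothing forces compactness. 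Consequently the implication you draw from Step~3 to the Kasparov conditions for $\alpha\cdot(\Cont_0(G,T)^2-\id)$ and $[\alpha,\Cont_0(G,T)]$ does not go through.

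What the paper actually isolates as the key technical step is different: one shows that \emph{convolution} with elements of $\Leb^1(G\ltimes G,\Komp_B(E))$ defines compact operators on $\Cont_0(G,E)$. The integration over $G$ built into the left action of $\Leb^1(G\ltimes G,A)$ is precisely what produces compactness. Once you know this, the Kasparov conditions follow because, for $\alpha\in\Cont_c(G\times G,A)$, the operators $\alpha\cdot(\Cont_0(G,T)^2-\id)$ and $[\alpha,\Cont_0(G,T)]$ can be rewritten as convolution by functions with values in $\Komp_B(E)$, using that $a(T^2-1)$, $[a,T]$ and $a(gT-T)$ lie in $\Komp_B(E)$ for $a\in A$, $g\in G$. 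This is the argument of \cite{Paravicini:07}, Paragraph~5.2.7, to which the paper refers.

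A secondary remark: your Morita-equivalence route is morally correct, but the paper does not establish an isometric isomorphism $\Cont_0(G,E)\cong\Leb^1(G\ltimes G,E)\otimes_{\Leb^1(G\ltimes G,B)}(\Leb^1(G,B),\Cont_0(G,B))$; it only provides a concurrent homomorphism inducing a homotopy (see the comparison proposition following Definition~\ref{Definition:GammaAB}). That is enough to compare the two constructions on the level of $\KKban$, but not to transport the cycle conditions directly; so this route, as you have stated it, also has a gap.
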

\begin{proof}
The arguments are basically the same as in the proof of Lemma~\ref{Lemma:L1Transformation:ForCycles}. What you have to show is that the convolution with elements of $\Leb^1(G\ltimes G, \Komp_B(E))$ define compact operators on $\Cont_0(G, E)$. This is done analogously to \cite{Paravicini:07}, Paragraph~5.2.7.
\end{proof}



Because we will use the resulting map on the level of $\KKban$-elements later on, we will give it a name:

\begin{definition}\label{Definition:GammaAB}
Let 
$$
\gamma_{A,B} \colon \KKbanG(A,B) \to \KK^{\ban}_{G, \equi}(\Leb^1(G \ltimes G, A),B)
$$
be the map $(E,T) \mapsto \Cont_0(G, (E,T))$ on the level of homotopy classes.
\end{definition}

It is a natural homomorphism.

\subsubsection{Comparison of the two versions}

\begin{proposition}
Let $(E,T)$ be an element of $\EbanG(A,B)$. Then there is a canonical equivariant concurrent homomorphism 
$$
\left(\Leb^1(G\ltimes G, E) \otimes_{\Leb^1(G\ltimes G, B)} (\Leb^1(G,B), \Cont_0(G,B)),\ \Leb^1(G\ltimes G,T)\otimes 1\right)  \quad \to \quad \Cont_0( G, (E,T))
$$
which induces a $G$-equivariant homotopy of these two cycles in $\E^{\ban}_{G,\equi}(\Leb^1(G\ltimes G, A), B)$.
\end{proposition}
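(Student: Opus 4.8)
The goal is to exhibit a canonical $G$-equivariant concurrent homomorphism from the cycle obtained by ``internal tensoring'' of $\Leb^1(G\ltimes G,(E,T))$ with the Morita equivalence $(\Leb^1(G,B),\Cont_0(G,B))$ between $\Leb^1(G\ltimes G,B)$ and $B$, into the cycle $\Cont_0(G,(E,T))$, and then to upgrade this concurrent homomorphism to a $G$-equivariant homotopy of the two cycles inside $\E^{\ban}_{G,\equi}(\Leb^1(G\ltimes G,A),B)$. The first step is to compute the internal tensor product on the nose. For the right side, $\Leb^1(G\ltimes G,E^>)\otimes_{\Leb^1(G\ltimes G,B)}\Cont_0(G,B)$ should, by the convolution formulas, be isometrically isomorphic to $\Cont_0(G,E^>)$ via the map induced by $\xi^>\otimes\beta \mapsto \xi^>*\beta$ (with the convolution of Definition~\ref{Definition:ContNullVomPaar}); for the left side, $\Leb^1(G,B)\otimes_{\Leb^1(G\ltimes G,B)}\Leb^1(G\ltimes G,E^<)$ should be identified with $\Leb^1(G,E^<)$ by the analogous convolution. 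These identifications are exactly of the same flavour as the ones verified in the proof of Proposition~\ref{Proposition:MoritaEquivalenceLeb1} and can be checked by direct computation on the dense subspaces $\Cont_c$; the point is that the pair operations match up, that the inner product on the tensor product goes over to the $B$-valued inner product of $\Cont_0(G,E)$, and that the left $\Leb^1(G\ltimes G,A)$-action is intertwined.

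Once this identification is in place, the concurrent homomorphism is the pair $({_\chi}\Phi_\psi)$ whose coefficient maps are the identity on $\Leb^1(G\ltimes G,A)$ (so $\chi=\mathrm{id}$) and the identity on $B$ (so $\psi=\mathrm{id}$); the ``middle'' component $\Phi$ of the concurrent homomorphism is precisely the isomorphism of Banach pairs built from the convolution maps above. One then verifies that $\Phi$ intertwines the operators, i.e.\ that under the identification $\Leb^1(G\ltimes G,T)\otimes 1$ corresponds to $\Cont_0(G,T)$ — here the slightly asymmetric formula for $\Leb^1(G\ltimes G,T)^<$ and for $\Cont_0(G,T)^<$ (the conjugation by $s$, resp.\ $t^{-1}$) is exactly what is needed for the computation to close, and this is the one place where I would actually carry out the substitution carefully. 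Equivariance of $\Phi$ follows from the definitions of the $G$-actions on $\Leb^1(G\ltimes G,\cdot)$, on $\Leb^1(G,\cdot)$ and on $\Cont_0(G,\cdot)$, again by a short computation on $\Cont_c$.

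For the final assertion, that the concurrent homomorphism yields a $G$-equivariant homotopy of cycles, I would invoke the mapping-cone construction exactly as in Lemma~\ref{Lemma:ConcurrentEquivalences:Equivariant}: a $G$-equivariant concurrent homomorphism of Banach pairs produces, via its mapping cone, a homotopy between the two cycles in the relevant $\KK$-picture, and since all data ($\Phi$, the coefficient maps, the operators) are $G$-equivariant, this homotopy lies in $\E^{\ban}_{G,\equi}(\Leb^1(G\ltimes G,A),B)$. Concretely, since $\Phi$ is actually an isometric isomorphism of pairs intertwining the operators and commuting with the (identity) coefficient maps, the two cycles are in fact isomorphic, and an isomorphism of cycles is in particular (equivariantly) homotopic to the identity situation, so the homotopy is the constant one transported along $\Phi$; I would phrase it through the mapping cone only to stay uniform with the cited lemma.

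**Main obstacle.**
The only genuinely delicate point is bookkeeping in the identification of the internal tensor products with $\Cont_0(G,E^>)$ and $\Leb^1(G,E^<)$ and the verification that $\Leb^1(G\ltimes G,T)\otimes 1$ becomes $\Cont_0(G,T)$: the left-hand operators are defined by formulas involving conjugation by the $G$-variable, and one must check that the convolution identifications convert the ``$s\,T^<(s^{-1}\,\cdot\,)$'' prescription on $\Leb^1(G\ltimes G,F^<)$ into the ``$t^{-1}T^<(t\,\cdot\,)$'' prescription on $\Leb^1(G,F^<)$. Everything else — the module structures, the norms ($\norm{\Phi}$ contractive with contractive inverse), $G$-equivariance, compatibility with the left action of $\Leb^1(G\ltimes G,A)$ — is a routine transcription of the computations already performed in Proposition~\ref{Proposition:MoritaEquivalenceLeb1} and in Lemma~\ref{Lemma:L1Transformation:ForCycles}, and I would simply say so rather than reproduce them.
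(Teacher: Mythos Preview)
Your construction of the concurrent homomorphism and the verification that it is $G$-equivariant and intertwines $\Leb^1(G\ltimes G,T)\otimes 1$ with $\Cont_0(G,T)$ are fine and match the paper's sketch. The gap is in the second half: you assert that the convolution map gives an \emph{isometric isomorphism} of Banach pairs
\[
\Leb^1(G\ltimes G,E)\otimes_{\Leb^1(G\ltimes G,B)}(\Leb^1(G,B),\Cont_0(G,B)) \;\cong\; \Cont_0(G,E),
\]
and then conclude that the cycles are isomorphic, hence trivially homotopic. In the Banach setting this isomorphism is not available in general: the balanced tensor product of Banach pairs is a completion of a quotient, and there is no reason for the convolution map to be isometric (or even bijective) onto $\Cont_0(G,E^>)$, resp.\ $\Leb^1(G,E^<)$. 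The analogy with Proposition~\ref{Proposition:MoritaEquivalenceLeb1} does not help here, since that proposition only establishes a Morita equivalence, not a computation of module tensor products. This is precisely the kind of identification that works in the C$^*$-world but typically fails for Banach pairs.

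The paper does not claim any such isomorphism. Instead, it treats the convolution map as a mere concurrent homomorphism of cycles (with identity coefficient maps) and then invokes the criterion of Theorem~3.1 of \cite{Paravicini:07:Morita:richtigerschienen}, which produces a homotopy of $\KKban$-cycles from a concurrent homomorphism provided a technical compactness condition is satisfied. That condition is checked by inspecting how elements of $\Leb^1(G\ltimes G,\Komp_B(E))$ act on the two Banach pairs in question; this is the genuine content of the proof, and it is absent from your proposal. Note also that Lemma~\ref{Lemma:ConcurrentEquivalences:Equivariant}, which you cite, concerns concurrent homomorphisms between Morita \emph{equivalences} and yields equalities in $\MoritabanG$; it does not apply to $\KKban$-cycles carrying an operator, so it cannot serve as the bridge from concurrent homomorphism to homotopy here.
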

\begin{proof}[Sketch of proof]
It is straightforward to see that the canonical concurrent homomorphism from $\Leb^1(G\ltimes G, E) \otimes_{\Leb^1(G\ltimes G, B)} (\Leb^1(G,B), \Cont_0(G,B))$ to $(\Leb^1(G,E^<), \Cont_0(G,E^>))$ intertwines $\Leb^1(G\ltimes G,T)\otimes 1$ and $\Cont_0(G, T)$. It is also easy to check that this homomorphism is equivariant. To see that this homomorphism induces a homotopy one uses the criterion given in Theorem~3.1 of \cite{Paravicini:07:Morita:richtigerschienen}; the technical condition is checked by a careful inspection of how elements of $\Leb^1(G\ltimes G, \Komp(E))$ act on the two involved Banach pairs. The fact that the concurrent homomorphism is equivariant leads to the equivariance of the resulting homotopy.
\end{proof}

\section{Equivariant $\kkban$}\label{Section:DefinitionAndBasicProperties}

In this section, we define $\kkbanG$ and collect its basic properties that carry over from the non-equivariant case without much ado. The comparison map from $\KKbanG$ to $\kkbanG$ will keep us a little busy because we refrain from identifying Morita equivalent Banach algebras for the time being. We construct a descent transformation for $\kkbanG$ to $\kkban$ and, in Subsection~\ref{Subsection:CompatibilityComparisonDescent}, we show that the descent in $\KKban$ that was described by Lafforgue in \cite{Lafforgue:02} is compatible with the descent map in $\kkbanG$ via the comparison maps.

\subsection{The definition}

For every locally compact space $X$ and every $G$-Banach algebra $A$ define $AX$ as the $G$-Banach algebra $\Cont_0(X,  A)$ with the pointwise $G$-action; if $x\in X$, then $\ev_x^A\colon A X \to A$ denotes the (equivariant) evaluation homomorphism at $x$.

Define, for every $G$-Banach algebra $A$,
\begin{eqnarray*}
\zylinder{A} &:= & A[0,1]\\
\cone A & := & A[0,1[\\
\Sigma A & := & A]0,1[ 
\end{eqnarray*}

In the case $A = \C$ we just write $\zylinder$,  $\cone$ and $\suspension$ for the Banach algebras $\zylinder \C$, $\cone \C$ and $\suspension \C$, respectively; they carry the trivial $G$-action.

Two parallel $G$-equivariant morphisms $\varphi_0, \varphi_1 \colon A \to B$ between $G$-Banach algebras are said to be \demph{$G$-equivariantly homotopic} if there exists a $G$-equivariant homotopy from $\varphi_0$ to $\varphi_1$, i.e., a $G$-equivariant morphism $\varphi\colon A \to B[0,1]$ such that $\ev_0^B \circ \varphi = \varphi_0$ and $\ev_1^B \circ \varphi = \varphi_1$. The set of homotopy classes of morphisms from $A$ to $B$ is denoted by $[A,B]$.

Let $A, B$ be $G$-Banach algebras and $m,n \in \Z$. Define
$$
\SigmaHobanG((A,m), (B,n)):=\colim_{k\to \infty} [\Sigma^{m+k} A, \ \Sigma^{n+k} B].
$$
With this set as morphisms, the class of all pairs $(A,m)$ with $A\in \in \GBanAlg$ and $n\in \Z$ becomes a category $\SigmaHobanGcat$. There is a canonical embedding $\can$ of $\GBanAlg/\!\!\sim$ into $\SigmaHobanGcat$ given by $A \mapsto (A,0)$. Define 
$$
\Sigma\colon \SigmaHobanG\to \SigmaHobanG,\ (A,m) \mapsto (A, m+1).
$$
This is an automorphism of the category $\SigmaHobanGcat$. The notation is justified, because there is a natural isomorphism from the functor $A \mapsto (\Sigma A, 0)$ to the functor $A \mapsto (A, 1)$ implemented by $\id_A$, considered as an isomorphism from $(\Sigma A, 0)$ to $(A, 1)$; so $(A,m) \mapsto (A, m+1)$ extends $\Sigma$ from $\GBanAlg/\!\!\sim$ to $\SigmaHobanGcat$.

Let $\varphi \colon A \to B$ be a continuous homomorphism of Banach algebras. The \demph{cone triangle of $\varphi$} is the following diagram in $\BanAlg$ (or its image in $\SigmaHobanGcat$):
$$
\xymatrix{
\Sigma B \ar[r]^-{\iota(\varphi)}  &\cone_{\varphi} \ar[r]^-{\epsilon(\varphi)}  & A \ar[r]^-{\varphi} &B. 
}
$$

A \demph{distinguished triangle} in $\SigmaHobanGcat$ is a $G$-equivariant diagram
$$
\xymatrix{
\Sigma X \ar[r] &X'' \ar[r] & X' \ar[r] &X. 
}
$$
which is isomorphic in $\SigmaHobanGcat$ to the image under $(-\Sigma)^n$, for some $n\in \Z$, of some cone triangle of some continuous $G$-equivariant homomorphism of $G$-Banach algebras.

\begin{theorem}
The category $\SigmaHobanGcat$ together with the inverse suspension $\Sigma^{-1}$ as translation functor and with the class of distinguished triangles defined above is a triangulated category.
\end{theorem}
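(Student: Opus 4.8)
The plan is to reduce the statement to the corresponding non-equivariant theorem from \cite{Paravicini:14:kkban:arxiv} (its Theorem on the triangulated structure of $\SigmaHobancat$), since almost every axiom is formal and insensitive to the presence of a $G$-action. Concretely, I would proceed by verifying the axioms (TR0)--(TR4) of a triangulated category for $\SigmaHobanGcat$ one at a time, always mimicking the non-equivariant proof and inserting $G$-equivariance at each occurrence of a homomorphism, morphism, mapping cone, or homotopy. The translation functor $\Sigma^{-1}$ is an automorphism by construction (this was already observed in the excerpt, where $\Sigma\colon(A,m)\mapsto(A,m+1)$ is shown to be an automorphism of $\SigmaHobanGcat$ extending the suspension), so (TR0) and the additivity/invertibility requirements are immediate. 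For (TR1) one checks that $\id_A$ fits into a distinguished triangle (the cone triangle of $\id_A$, whose cone $\cone_{\id_A}$ is contractible), that every morphism embeds in a distinguished triangle (this is exactly the definition of the cone triangle of a $G$-equivariant homomorphism, together with the standard argument that an arbitrary morphism in $\SigmaHobanGcat$, being represented by a genuine equivariant homomorphism between suspensions after shifting, can be completed), and that any triangle isomorphic to a distinguished one is distinguished (built into the definition).

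The substantive axioms are the rotation axiom (TR2) and the octahedral axiom (TR4); the key geometric input for both is the statement, established non-equivariantly in \cite{Paravicini:14:kkban:arxiv}, that the Puppe sequence of a homomorphism is, up to the sign twist $-\Sigma$, periodic: the cone triangle of $\epsilon(\varphi)\colon \cone_\varphi\to A$ is isomorphic in the homotopy category to the rotation of the cone triangle of $\varphi$. The entire proof of this periodicity goes through verbatim with $G$-actions: all the maps involved (the inclusion $\iota(\varphi)$, the evaluation $\epsilon(\varphi)$, the canonical contracting homotopies on mapping cones of identities, the comparison homomorphism $\cone_{\epsilon(\varphi)}\to\Sigma B$) are manifestly $G$-equivariant when $\varphi$ is, and the homotopies witnessing the relevant equivalences are pointwise given by reparametrisations of $[0,1]$, hence commute with the $G$-action. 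So I would state this $G$-equivariant Puppe periodicity as the one lemma to prove (or cite from the non-equivariant source with the remark that equivariance is preserved), and then (TR2) follows by the usual five-lemma-style argument and (TR3), the morphism-of-triangles axiom, follows from the cone construction being functorial together with the homotopy-invariance of $\SigmaHobanGcat$.

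For (TR4) I would again follow the non-equivariant proof line by line: given equivariant homomorphisms $\varphi\colon A\to B$ and $\psi\colon B\to C$, one forms the mapping cones $\cone_\varphi$, $\cone_\psi$, $\cone_{\psi\varphi}$ and the canonical equivariant homomorphisms between them, and checks the octahedral relations using the same explicit homotopies as before; since cones, compositions and these homotopies are all $G$-equivariant, no new phenomenon appears. The only genuine point to be careful about is that $\SigmaHobanGcat$ is an additive category --- one needs finite direct sums (given by direct sums of $G$-Banach algebras, which inherit the obvious $G$-action) and an abelian group structure on the Hom-sets $\SigmaHobanG((A,m),(B,n))$; the group structure comes, as in the non-equivariant case, from the co-H-group structure on suspensions $\Sigma^{k}A$ in the $G$-equivariant homotopy category, and this structure is compatible with $G$-actions because the comultiplication is a reparametrisation map.

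I expect the main obstacle to be purely expository rather than mathematical: the non-equivariant proof in \cite{Paravicini:14:kkban:arxiv} is itself long, and the honest thing is to isolate precisely which lemmas from there are used and to remark, once and uniformly, that each of them is stated for homomorphisms/homotopies that can carry arbitrary extra structure (here a $G$-action) with no change to the argument, because every auxiliary map is natural in the algebra and every auxiliary homotopy is induced by a continuous map of the parameter interval. Once that observation is made, the proof of the present theorem is essentially ``the same as in the non-equivariant case, with $G$-actions added everywhere and preserved throughout.''
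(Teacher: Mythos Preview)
Your proposal is correct and matches the paper's approach: the paper gives no proof at all for this theorem, implicitly relying on the fact that the non-equivariant argument from \cite{Paravicini:14:kkban:arxiv} carries over verbatim once $G$-actions are added, which is exactly what you spell out. Your detailed sketch of why each axiom survives the passage to the equivariant setting is more than the paper itself provides and is entirely sound.
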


Let $\xymatrix{B\ar@{>->}[r] & D \ar@{->>}[r]^{\pi}  & A}$ be an extension of Banach algebras; let $\kappa_{\pi}\colon B \to \cone_{\pi}$ be the canonical comparison morphism given by $\kappa_{\pi}\colon B \to \cone_{\pi}, \ b \mapsto (b, 0)$. 

Define 
$$
\mM^G_{\ssplit}:=\{\kappa_{\pi}:\ \xymatrix{B \ \ar@{>->}[r]& D \ar@{->>}[r]^{\pi}& A} \text{ extension in } \mE^G_{\ssplit}\}.
$$
Similarly, define $\mM^G_{G-\ssplit}$ and $\mM^G$.

Given a $G$-equivariant Morita equivalence ${_A}E_B$, consider the canonical injection
$$
\iota_E \colon B \hookrightarrow \left(\begin{matrix}A & E^>\\ E^< & B\end{matrix}\right).
$$
Define 
$$
\mM^G_{\text{Morita}}:=\{\iota_{E}:\ {_A}E_B \text{ $G$-equivariant Morita equivalence}\}.
$$ 

\begin{definition}
Let $\kkbanG$ denote the triangulated category $\SigmaHobanGcat[\mM^G_{G-\ssplit} \cup \mM^G_{\text{Morita}}]$.
\end{definition}

\begin{theorem}
We could also define $\kkbanG$ to be  $\SigmaHobanGcat[\mM^G_{\ssplit} \cup \mM^G_{\text{Morita}}]$ because:
For every equivariant extension $\xymatrix{B\ar@{>->}[r] & D \ar@{->>}[r]^{\pi}  & A}$ of $G$-Banach algebras that admits a (not necessarily equivariant) continuous linear split we have that (the image of) $\kappa_{\pi}$ is invertible in $\kkbanG(B, \cone_{\pi})$.
\end{theorem}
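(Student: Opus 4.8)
Since $\mM^G_{G-\ssplit}\subseteq\mM^G_{\ssplit}$, the two descriptions of $\kkbanG$ coincide as soon as the second assertion is proved, so the plan is to show: whenever $e\colon B\rightarrowtail D\xrightarrow{\pi}A$ is a $G$-equivariant extension admitting a continuous linear (not necessarily equivariant) split $\sigma$, the morphism $\kappa_\pi$ is invertible in $\kkbanG$. The difficulty is that there is, in general, no natural $G$-equivariant homomorphism from a $G$-Banach algebra $X$ to its equivariantisation $\Leb^1(G\ltimes G,X)$ --- only the Morita equivalence of Proposition~\ref{Proposition:MoritaEquivalenceLeb1}. I would therefore link $e$ to $\Leb^1(G\ltimes G,e)$ by a \emph{roof} of honest morphisms of extensions, each leg of which becomes invertible in $\kkbanG$, and transport along it the invertibility of $\kappa_{\Leb^1(G\ltimes G,\pi)}$, which holds by construction of $\kkbanG$.

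Write $L:=\Leb^1(G\ltimes G,\cdot)$. First, by Theorem~\ref{Theorem:Extension:GequivariantSplit} the extension $Le\colon LB\rightarrowtail LD\xrightarrow{L\pi}LA$ lies in $\mE^G_{G-\ssplit}$, so $\kappa_{L\pi}\in\mM^G_{G-\ssplit}$ and hence is invertible in $\kkbanG$. Next, for $X\in\{A,D,B\}$ let $\mathcal{L}(E_X)$ denote the linking algebra of the $G$-equivariant Morita equivalence $E_X=(\Leb^1(G,X),\Cont_0(G,X))$ between $X$ and $LX$ supplied by Proposition~\ref{Proposition:MoritaEquivalenceLeb1}. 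I claim there is a ``linking extension''
$$
\mathcal{L}(e)\colon\qquad \mathcal{L}(E_B)\ \xrightarrow{\iota'}\ \mathcal{L}(E_D)\ \xrightarrow{\pi'}\ \mathcal{L}(E_A),
$$
where $\iota'$ and $\pi'$ are defined entrywise: by $L\iota,L\pi$ on the $LX$-corner, by $\iota,\pi$ on the $X$-corner, and by $\Leb^1(G,\cdot)$ and $\Cont_0(G,\cdot)$ of $\iota$, resp.\ $\pi$, on the two off-diagonal modules. That these entrywise recipes define algebra homomorphisms is the statement that $(\Leb^1(G,\pi),\Cont_0(G,\pi))$, and its $\iota$-analogue, are $G$-equivariant concurrent homomorphisms over $(L\pi,\pi)$, resp.\ $(L\iota,\iota)$, which follows by direct computation from the equivariance and multiplicativity of $\iota$ and $\pi$. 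That $\mathcal{L}(e)$ is genuinely an extension is the point at which the split $\sigma$ enters: $\Cont_0(G,\sigma)$, $\Leb^1(G,\sigma)$ and $L\sigma$ (Theorem~\ref{Theorem:Extension:GequivariantSplit}) are continuous linear splits for $\Cont_0(G,\pi)$, $\Leb^1(G,\pi)$ and $L\pi$, so these maps are surjective with kernels $\Cont_0(G,B)$, $\Leb^1(G,B)$ and $LB$; an entrywise inspection then identifies $\ker\pi'$ with $\mathcal{L}(E_B)$ and the cokernel of $\pi'$ with $\mathcal{L}(E_A)$. (In general $\mathcal{L}(e)$ is only semi-split, not equivariantly so.)

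Now both corner inclusions $\iota_{E_X}\colon X\hookrightarrow\mathcal{L}(E_X)$ and $j_{LX}\colon LX\hookrightarrow\mathcal{L}(E_X)$ are invertible in $\kkbanG$ --- the former lies in $\mM^G_{\text{Morita}}$, and the latter is of the same form after the canonical flip of the linking algebra --- and by construction $(\iota_{E_B},\iota_{E_D},\iota_{E_A})$ is a morphism of extensions $e\to\mathcal{L}(e)$ while $(j_{LB},j_{LD},j_{LA})$ is a morphism of extensions $Le\to\mathcal{L}(e)$. Applying the functorial mapping cone to the squares formed by $\pi$ and $\pi'$ yields morphisms of distinguished cone triangles in $\kkbanG$; since $\iota_{E_D},\iota_{E_A}$, resp.\ $j_{LD},j_{LA}$, are invertible there, the standard fact that a morphism of distinguished triangles which is invertible on two vertices is invertible on the third shows that the induced maps $\cone_\pi\to\cone_{\pi'}$ and $\cone_{L\pi}\to\cone_{\pi'}$ are invertible in $\kkbanG$. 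Finally, naturality of $\kappa$ along morphisms of extensions gives two strictly commuting squares relating $\kappa_\pi$, $\kappa_{\pi'}$ and $\kappa_{L\pi}$ through these maps: the one from $Le\to\mathcal{L}(e)$ exhibits $\kappa_{\pi'}$ as a composite of invertible morphisms (using invertibility of $\kappa_{L\pi}$, of $j_{LB}$ and of the induced cone map), hence $\kappa_{\pi'}$ is invertible; the one from $e\to\mathcal{L}(e)$ then exhibits $\kappa_\pi$ as a composite of invertibles. Thus $\kappa_\pi$ is invertible in $\kkbanG$.

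The step I expect to be the real obstacle is the construction of $\mathcal{L}(e)$ in the second paragraph, and in particular the verification that $\mathcal{L}(e)$ is an extension: this is precisely where the hypothesis of a continuous linear split is indispensable, since without it neither $L\pi$ nor $\Cont_0(G,\pi)$, $\Leb^1(G,\pi)$ need be surjective or have the expected kernels. Everything downstream is formal bookkeeping in the triangulated category $\kkbanG$ together with its Morita invariance.
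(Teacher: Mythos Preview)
Your argument is correct and follows essentially the same route as the paper: build the linking-algebra extension $\mathcal{L}(e)$ sitting between $e$ and $\Leb^1(G\ltimes G,e)$, note that all corner inclusions are $\kkbanG$-isomorphisms, and transport invertibility of $\kappa_{L\pi}$ back to $\kappa_\pi$ via the roof. The only difference is cosmetic: where you invoke the triangulated two-out-of-three lemma to show the induced maps on mapping cones are invertible, the paper observes directly that $\cone_{\pi'}$ is itself the linking algebra of a Morita equivalence between $\cone_\pi$ and $\cone_{L\pi}$, so the induced cone maps are again corner inclusions and hence lie in $\mM^G_{\text{Morita}}$.
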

\begin{proof}
Let $\epsilon\colon \xymatrix{B\ar@{>->}[r]^{\iota} & D \ar@{->>}[r]^{\pi}  & A}$ be such an extension, that is, an extension in $\mE^G_{\ssplit}$. By Theorem \ref{Theorem:Extension:GequivariantSplit} the extension
$$
\Leb^1(G\ltimes G, \epsilon)\colon \xymatrix{  \Leb^1(G\ltimes G, B) \ \ar@{>->}[rr]^-{\Leb^1(G\ltimes G, \iota)}&& \Leb^1(G\ltimes G, D) \ar@{->>}[rr]^-{\Leb^1(G\ltimes G,\pi)}& &\Leb^1(G\ltimes G, A)}
$$
has a $G$-equivariant continuous linear split. By Proposition \ref{Proposition:MoritaEquivalenceLeb1}, there is a $G$-equivariant Morita equivalence between $\Leb^1(G\ltimes G, A)$ and $A$. Let $L_A$ denote the corresponding linking algebra. Similarly, define $L_D$ and $L_B$. These algebras fit canonically in a semi-split $G$-equivariant extension $L_{\epsilon}\colon \xymatrix{L_B \ \ar@{>->}[rr]^-{L_{\iota}}&& L_D \ar@{->>}[rr]^-{L_{\pi}} &&L_A}$. We obtain a commutative diagram
$$
\xymatrix{ 
\Leb^1(G\ltimes G, B) \ \ar@{>->}[rr]^-{\Leb^1(G\ltimes G, \iota)} \ar[d]&& \Leb^1(G\ltimes G, D) \ar@{->>}[rr]^-{\Leb^1(G\ltimes G,\pi)} \ar[d]& &\Leb^1(G\ltimes G, A)\ar[d]\\ 
L_B \ \ar@{>->}[rr]^-{L_{\iota}}&& L_D \ar@{->>}[rr]^-{L_{\pi}} &&L_A\\
B\ar@{>->}[rr]^{\iota} \ar[u] && D \ar@{->>}[rr]^{\pi}  \ar[u]&& A\ar[u]
}
$$
Note that all vertical arrows are isomorphisms in $\kkbanG$. We now focus on the lower part of the diagram: It induces a commutative diagram of the form
$$
\xymatrix{ 
B \ar[d]_{\cong} \ar[r]^{\kappa_{\pi}}& \cone_{\pi} \ar[d]^{\gamma}\\
L_B \ar[r]_{\kappa_{L_{\pi}}}& \cone_{L_{\pi}}
}
$$
The vertical arrow $\gamma$ is given by the universal property of the mapping cone; it is straightforward to see that this map is, up to isomorphism of $G$-Banach algebras, the inclusion of a corner in a linking algebra and hence an isomorphism in $\kkbanG$. Because both vertical arrows are isomorphisms in $\kkbanG$, it follows that $\kappa_{\pi}$ is an isomorphism in $\kkbanG$ if and only if $\kappa_{L_{\pi}}$ is. In a similar way, one can show that $\kappa_{L_{\pi}}$ is an isomorphism if and only if $\kappa_{\Leb^1(G\ltimes G, \pi)}$ is an isomorphism in $\kkbanG$. Because the extension $\Leb^1(G\ltimes G, \epsilon)$ is in $\mE^G_{G-\ssplit}$, it follows that $\kappa_{\Leb^1(G\ltimes G, \pi)}$ and hence also $\kappa_{\pi}$ is an isomorphism in $\kkbanG$.
\end{proof}


\begin{deflemma}\label{Deflemma:ExtensionsAreDistinguishedInkkban}
Let $\epsilon\colon \xymatrix{B \ \ar@{>->}[r]& D \ar@{->>}[r]^{\pi}& A}$ be an extension of $G$-Banach algebras in $\mE^G_{\ssplit}$. Then $\kkbanG(\epsilon) \in \kkbanG(\Sigma A, B)$ is defined as the product of the canonical morphism $\Sigma A \to \cone_{\pi}$ in $\kkbanG(\Sigma A, \cone_{\pi})$ and the inverse of the morphism $\kappa_{\pi}\colon B \to \cone_{\pi}$ in $\kkbanG(B,\cone_{\pi})$.

The extension triangle
$$
\xymatrix{\Sigma A  \ar[rr]^{\kkbanG(\epsilon)} && B \ar[r]& D \ar[r] & A}
$$
is a distinguished triangle in $\kkbanG$. In particular, every element of $\mE^G_{\ssplit}$ gives long exact sequences in $\kkbanG$ in both variables.
\end{deflemma}

\subsection{Properties}

The following two lemmas can directly be read of the triangulated structure of $\kkbanG$.

\begin{lemma}\label{Lemma:ExactSequencesWithKContractiveMiddleTerm:kkbanG}
Let $\epsilon\colon \xymatrix{B \ \ar@{>->}[r]& D \ar@{->>}[r]^{\pi}& A}$ be an extension of $G$-Banach algebras in $\mE^G_{\ssplit}$ such that $D \cong 0$ in $\kkbanG$. Then $\kkbanG(\epsilon)$ is an isomorphism from $\Sigma A$ to $B$. 
\end{lemma}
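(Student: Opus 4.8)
The plan is to read this off directly from the triangulated structure of $\kkbanG$. By Definition and Lemma~\ref{Deflemma:ExtensionsAreDistinguishedInkkban}, the extension $\epsilon$ gives rise to a distinguished triangle
$$
\xymatrix{\Sigma A  \ar[rr]^{\kkbanG(\epsilon)} && B \ar[r]& D \ar[r] & A}
$$
in $\kkbanG$. The general fact I want to invoke is the elementary one that, in any triangulated category, if one of the three vertices of a distinguished triangle is a zero object, then the morphism between the two remaining vertices is an isomorphism; applied here with the vanishing vertex $D \cong 0$, this yields exactly that $\kkbanG(\epsilon)\colon \Sigma A \to B$ is an isomorphism.

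Concretely I would argue as follows. Since $\kkbanG$ is additive and $D \cong 0$ in it, every Hom-group into or out of $D$ vanishes; in particular the morphisms $B \to D$ and $D \to A$ in the triangle are zero. Fix an arbitrary object $W$ of $\kkbanG$ and apply the cohomological functor $\kkbanG(W, -)$ to the triangle above and to its two rotations (keeping in mind that the translation functor of $\kkbanG$ is $\Sigma^{-1}$, so these rotations involve $\Sigma D$, $\Sigma^{-1} D$, etc.). One gets long exact sequences in which every third term is of the form $\kkbanG(W, \Sigma^{n} D)$ for some $n \in \Z$, hence zero; exactness then forces
$$
\kkbanG(W, \kkbanG(\epsilon)) \colon \kkbanG(W, \Sigma A) \longrightarrow \kkbanG(W, B)
$$
to be an isomorphism for every $W$. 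Applying instead the contravariant functor $\kkbanG(-, W)$ and using the vanishing of $\kkbanG(\Sigma^{n} D, W)$ for all $n$, one sees in the same way that precomposition with $\kkbanG(\epsilon)$ is an isomorphism for every $W$. By the Yoneda lemma, $\kkbanG(\epsilon)$ is an isomorphism from $\Sigma A$ to $B$.

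I do not expect any genuine obstacle here: the entire content is triangulated-category bookkeeping, in line with the remark preceding the lemma that it can be read off the triangulated structure. The only two points worth a moment's attention are that the translation functor of $\kkbanG$ is $\Sigma^{-1}$ rather than $\Sigma$ (so the triangle must be rotated correctly before being fed to the Hom-functors) and that $D \cong 0$ in $\kkbanG$ kills all Hom-groups touching $D$; both are immediate from the construction of $\kkbanG$ as a localisation of $\SigmaHobanGcat$ recalled above.
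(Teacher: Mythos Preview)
Your proposal is correct and is precisely the argument the paper has in mind: the paper gives no explicit proof but states that the lemma ``can directly be read off the triangulated structure of $\kkbanG$'', and your use of the distinguished extension triangle from Definition and Lemma~\ref{Deflemma:ExtensionsAreDistinguishedInkkban} together with the standard fact that a zero vertex forces the opposite morphism to be an isomorphism is exactly this reading.
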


\begin{lemma}\label{Lemma:kkbanVonErweiterungenLeiterdiagramm:kkbanG}
Let $\epsilon\colon \xymatrix{B \ \ar@{>->}[r]& D \ar@{->>}[r]^{\pi}& A}$ and $\epsilon'\colon \xymatrix{B' \ \ar@{>->}[r]& D' \ar@{->>}[r]^{\pi'}& A'}$ be extensions in $\mE^G_{\ssplit}$ which can be put in a diagram of the form
$$
\xymatrix{B \ \ar@{>->}[r] \ar[d]_{\psi}& D \ar@{->>}[r]\ar[d]& A\ar[d]^{\varphi} \\ B' \ \ar@{>->}[r]& D' \ar@{->>}[r]& A'}
$$
Then 
$$
\kkbanG(\Sigma \varphi) \cdot \kkbanG(\epsilon') =  \kkbanG(\epsilon) \cdot \kkbanG(\psi) \ \in\ \kkbanG(\Sigma A, B').
$$
\end{lemma}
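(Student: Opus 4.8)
The plan is to lift the given ladder to a strict morphism of cone triangles in $\GBanAlg$, push it forward into $\kkbanG$ via the canonical functor, and then conjugate by the comparison isomorphisms $\kappa_\pi$, $\kappa_{\pi'}$ so that it becomes a morphism of the two extension triangles of Definition and Lemma~\ref{Deflemma:ExtensionsAreDistinguishedInkkban}; the asserted identity is then exactly the commutativity of the resulting leftmost square, and essentially all of the work stays at the level of $G$-Banach algebras. Concretely, I would first name the middle vertical map of the ladder, say $\chi\colon D\to D'$, so that $\chi\iota=\iota'\psi$ and $\pi'\chi=\varphi\pi$ hold strictly in $\GBanAlg$. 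The commuting square formed by $\pi$, $\pi'$, $\chi$, $\varphi$ induces, by the universal property of the mapping cone, a $G$-equivariant homomorphism $\zeta\colon\cone_\pi\to\cone_{\pi'}$ (in the usual model $\cone_\pi=\{\,(d,f)\in D\oplus\cone A:\pi(d)=f(0)\,\}$ it is $(d,f)\mapsto(\chi(d),\varphi\circ f)$), and $\zeta$ assembles with $\Sigma\varphi$, $\chi$, $\varphi$ into a diagram of cone triangles
$$
\xymatrix{
\Sigma A \ar[r]^-{\can_\pi} \ar[d]_{\Sigma\varphi} & \cone_\pi \ar[r] \ar[d]_{\zeta} & D \ar[r]^{\pi} \ar[d]_{\chi} & A \ar[d]^{\varphi} \\
\Sigma A' \ar[r]_-{\can_{\pi'}} & \cone_{\pi'} \ar[r] & D' \ar[r]_{\pi'} & A'
}
$$
where $\can_\pi\colon\Sigma A\to\cone_\pi$ is the canonical morphism appearing in the cone triangle of $\pi$. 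All squares commute already in $\GBanAlg$; for the leftmost one this is just the remark that both composites carry $g\in\Sigma A$ to $(0,\varphi\circ g)$. Applying the canonical functor $\GBanAlg\to\kkbanG$ makes this a morphism of distinguished triangles in $\kkbanG$.

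Next I would record the one extra identity linking these cone triangles to the extension triangles: since $\kappa_\pi(b)=(\iota(b),0)$ and $\chi\iota=\iota'\psi$, we have $\zeta\circ\kappa_\pi=\kappa_{\pi'}\circ\psi$ already in $\GBanAlg$, hence $\kappa_\pi\cdot\zeta=\psi\cdot\kappa_{\pi'}$ in $\kkbanG$ (products composed left to right, as in Definition and Lemma~\ref{Deflemma:ExtensionsAreDistinguishedInkkban}). By the theorem preceding Definition and Lemma~\ref{Deflemma:ExtensionsAreDistinguishedInkkban} the morphisms $\kappa_\pi$, $\kappa_{\pi'}$ are invertible in $\kkbanG$, so this rearranges to $\kappa_\pi^{-1}\cdot\psi=\zeta\cdot\kappa_{\pi'}^{-1}$. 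Together with the commutativity of the leftmost square of the diagram above and with $\kkbanG(\epsilon)=\can_\pi\cdot\kappa_\pi^{-1}$ from Definition and Lemma~\ref{Deflemma:ExtensionsAreDistinguishedInkkban}, this gives, in $\kkbanG$ (writing $\psi$, $\zeta$, $\can_\pi$, $\can_{\pi'}$, $\Sigma\varphi$ also for their images in $\kkbanG$),
$$
\begin{aligned}
\kkbanG(\epsilon)\cdot\kkbanG(\psi)
 &= \can_\pi\cdot\kappa_\pi^{-1}\cdot\psi
  = \can_\pi\cdot\zeta\cdot\kappa_{\pi'}^{-1}\\
 &= \Sigma\varphi\cdot\can_{\pi'}\cdot\kappa_{\pi'}^{-1}
  = \kkbanG(\Sigma\varphi)\cdot\kkbanG(\epsilon'),
\end{aligned}
$$
which is the claim. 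More conceptually: transporting the cone-triangle morphism of the diagram along the isomorphisms $\kappa_\pi\colon B\to\cone_\pi$ and $\kappa_{\pi'}\colon B'\to\cone_{\pi'}$ replaces its middle vertical map $\zeta$ by $\psi$ and turns it into a morphism of the extension triangles of $\epsilon$ and $\epsilon'$, whose leftmost square is precisely $\kkbanG(\Sigma\varphi)\cdot\kkbanG(\epsilon')=\kkbanG(\epsilon)\cdot\kkbanG(\psi)$.

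I do not anticipate a real obstacle: the entire content lies in the hypothesis that the ladder commutes in $\GBanAlg$, which is what lets us build an honest morphism of cone triangles and thereby sidestep the non-uniqueness of completions in the morphism-of-triangles axiom of a triangulated category. The one point deserving a word of care is that the ladder is assumed to commute strictly in $\GBanAlg$ rather than merely up to $G$-homotopy or in $\kkbanG$; if only the weaker commutativity were available one would have to pass to a $G$-homotopic replacement of $\epsilon'$ first, but this refinement is not needed under the stated hypotheses.
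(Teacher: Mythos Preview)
Your argument is correct and is precisely the natural elaboration of what the paper means when it says (just above the lemma) that the result ``can directly be read off the triangulated structure of $\kkbanG$'': the paper gives no further proof, and your construction of the map $\zeta\colon\cone_\pi\to\cone_{\pi'}$, the verification that $\zeta\circ\kappa_\pi=\kappa_{\pi'}\circ\psi$ and $\can_\pi\cdot\zeta=\Sigma\varphi\cdot\can_{\pi'}$, and the resulting chain of equalities is exactly the standard unpacking of that remark via Definition and Lemma~\ref{Deflemma:ExtensionsAreDistinguishedInkkban}.
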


\noindent The following theorem is proved just as Theorem~6.14 of \cite{Paravicini:14:kkban:arxiv},  {\it cf.} Theorem~7.26 of \cite{CMR:07}. 

\begin{theorem}[Universal property of $\kkbanG$]\label{Theorem:UniversalProperty:Equivariant} 

Let $F$ be any functor from the category $\GBanAlg$ to an additive category that is $G$-homotopy invariant, $G$-Morita invariant, and half-exact for (equivariantly) semi-split extensions. Then $F$ factors uniquely through $\kkbanGfunc$.

Let $F$ and $F'$ be functors with the above properties, so that they descend to functors $\overline{F}$ and $\overline{F}'$ on $\kkbanG$. If $\Phi\colon F \to F'$ is a natural transformation, then $\Phi$ remains natural with respect to morphisms in $\kkbanG$, that is $\Phi$ is a natural transformation $\overline{F} \to \overline{F}'$.
\end{theorem}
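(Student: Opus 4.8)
The plan is to mirror, step by step, the proof of Theorem~6.14 of \cite{Paravicini:14:kkban:arxiv} (\emph{cf.}\ Theorem~7.26 of \cite{CMR:07}), indicating only the points at which $G$-equivariance has to be fed in. Let $\mC$ denote the additive target category. The canonical functor $\kkbanGfunc$ factors as
$$
\GBanAlg \longrightarrow \GBanAlg/\!\!\sim \;\xrightarrow{\can}\; \SigmaHobanGcat \longrightarrow \kkbanG ,
$$
so it suffices to factor $F$ uniquely through each of these three arrows in turn. The first factorisation, through $\GBanAlg \to \GBanAlg/\!\!\sim$, is immediate from $G$-homotopy invariance of $F$; I keep the name $F$ for the induced functor on the $G$-homotopy category.

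The one substantial step — and the step I expect to be the main obstacle — is the factorisation through $\can$: one must produce a (necessarily unique) additive functor $\widehat{F}\colon \SigmaHobanGcat\to\mC$ with $\widehat{F}\circ\can=F$ that is homological, i.e.\ sends distinguished triangles to exact sequences. This is exactly where half-exactness for equivariantly semi-split extensions is used, and the \emph{only} equivariant input is the observation that all the auxiliary extensions produced by the suspension/mapping-cone machinery still lie in $\mE^G_{G-\ssplit}$. This is easy: for a $G$-equivariant homomorphism $\varphi$, the mapping-cone extension $\Sigma B\rightarrowtail\cone_{\varphi}\twoheadrightarrow A$ — and in particular the suspension extension $\Sigma A\rightarrowtail\cone A\twoheadrightarrow A$ — admits a continuous linear split given by multiplication with a fixed cut-off function in the interval variable, and since $G$ acts trivially on $[0,1]$ this split is automatically $G$-equivariant, so the half-exactness hypothesis applies. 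With this in hand, every remaining ingredient of the non-equivariant proof goes through verbatim: the vanishing $F(\cone A)=0$ (the identity and zero endomorphisms of an equivariant cone are $G$-homotopic), the Puppe-type long exact sequences, the verification that $F$ is stable under suspension — which is what makes $F$ compatible with the colimits $\colim_k[\Sigma^{m+k}A,\Sigma^{n+k}B]$ defining the morphism sets of $\SigmaHobanGcat$ — the construction of $\widehat{F}$ on the formally (de)suspended objects $(A,m)$, and the proofs that $\widehat{F}$ is homological and unique.

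It then remains to factor $\widehat{F}$ through the localisation $\kkbanG=\SigmaHobanGcat[\mM^G_{G-\ssplit}\cup\mM^G_{\text{Morita}}]$ of the triangulated category $\SigmaHobanGcat$. By the universal property of such a localisation this amounts to checking that $\widehat{F}$ inverts every morphism in $\mM^G_{G-\ssplit}\cup\mM^G_{\text{Morita}}$. For a comparison morphism $\kappa_\pi\in\mM^G_{G-\ssplit}$ one compares the cone triangle of $\pi$ with the extension triangle of the given equivariantly semi-split extension and applies the five lemma together with the long exact sequences just established — once again exactly as in the non-equivariant case. For a corner inclusion $\iota_E\in\mM^G_{\text{Morita}}$ attached to a $G$-equivariant Morita equivalence ${_A}E_B$: by $G$-Morita invariance $F$ factors through the canonical functor $\GBanAlg\to\MoritabanGcat$ (Theorem~\ref{Theorem:LiftToMoritabanG}), and in $\MoritabanGcat$ the full-corner inclusion $\iota_E$ is an isomorphism, so $\widehat{F}(\iota_E)$ is invertible as well. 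Composing the three factorisations yields the asserted factorisation of $F$ through $\kkbanGfunc$; its uniqueness follows as in \cite{Paravicini:14:kkban:arxiv}, since $\kkbanG$ is generated as a triangulated category by the image of $\GBanAlg$ together with the formally inverted morphisms.

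For the last assertion I would argue that $\Phi\colon F\to F'$ first descends to a natural transformation between the induced functors on the $G$-homotopy category (both $F$ and $F'$ being $G$-homotopy invariant) and then, since the passage to $\SigmaHobanGcat$ is natural in the functor, to a natural transformation $\widehat{F}\to\widehat{F}'$. Every morphism of $\kkbanG$ is a finite composite of images of $\SigmaHobanGcat$-morphisms under the localisation functor and of formal inverses of morphisms in $\mM^G_{G-\ssplit}\cup\mM^G_{\text{Morita}}$; $\Phi$ is natural with respect to the former by the preceding, and with respect to a formal inverse $s^{-1}$ because it is natural with respect to $s$ (the naturality square of an isomorphism may be inverted). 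Since naturality is preserved under composition, $\Phi\colon\overline{F}\to\overline{F}'$ is natural. In summary, the only genuinely non-formal part of the argument is the construction of the homological extension $\widehat{F}$ in the second paragraph; in the equivariant setting this comes down to the routine remark that the extensions occurring in the suspension/mapping-cone machinery remain equivariantly semi-split, so that the half-exactness hypothesis is applicable, and everything else is either one of the three hypotheses on $F$ or formal localisation theory.
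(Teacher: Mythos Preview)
Your proposal is correct and follows exactly the approach the paper indicates: the paper gives no argument beyond the sentence ``proved just as Theorem~6.14 of \cite{Paravicini:14:kkban:arxiv}, \emph{cf.}\ Theorem~7.26 of \cite{CMR:07}'', and your write-up is a faithful unpacking of what that deferral entails, including the one genuinely equivariant observation (that the splits in the cone/suspension extensions are automatically $G$-equivariant because $G$ acts trivially on the interval). There is nothing to add or correct.
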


\subsection{The comparison map $\KKbanG\to \kkbanG$}\label{Subsection:Comparison}

\begin{theorem}
There is a canonical bi-natural transformation $\kkbanG$ (in the category of abelian groups) from the bi-functor $\KKbanG(\cdot, \cdot\cdot)$ to the bi-functor $\kkbanG(\cdot, \cdot\cdot)$ on $\GBanAlg$ such that $\kkbanG([\id_B]) = \id_{\kkbanG(B)}$ for all $G$-Banach algebras $B$.  
\end{theorem}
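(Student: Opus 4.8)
The strategy is to build the comparison map cycle-by-cycle using the machinery of Sections~\ref{Subsection:EquivariantExtensions} and~\ref{Subsection:EquivariantCycles}, and then invoke Morita invariance of $\kkbanG$ to land back in the right group. Given a class $x\in\KKbanG(A,B)$, I would split into the odd and even parts of a Kasparov cycle. For the comparison map one first treats a homomorphism-type cycle trivially, then the suspension generators (Bott elements and the like) exactly as in the non-equivariant construction of \cite{Paravicini:14:kkban:arxiv}; these contribute the part of $\KKbanG$ that is visibly functorial. The genuinely new input is the treatment of an arbitrary cycle $(E,T)\in\EbanG(A,B)$ whose operator $T$ is only equivariant modulo compacts.

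\emph{Key steps, in order.} First, apply the ``Version~2'' transformation: by Definition~\ref{Definition:GammaAB} and the surrounding lemma, $\gamma_{A,B}$ sends $(E,T)$ to $\Cont_0(G,(E,T))\in\KK^{\ban}_{G,\equi}(\Leb^1(G\ltimes G,A),B)$, a cycle with a genuinely $G$-equivariant operator. Second, for a $G$-equivariant cycle the double-split-extension construction of \cite{Paravicini:14:kkban:arxiv} goes through verbatim in the equivariant setting (all the auxiliary extensions are now $G$-equivariant and semi-split, hence live in $\mE^G_{\ssplit}$, which by Deflemma~\ref{Deflemma:ExtensionsAreDistinguishedInkkban} yields distinguished triangles in $\kkbanG$); this produces a well-defined morphism $\Leb^1(G\ltimes G,A)\to B$ in $\kkbanG$. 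Third, precompose with the inverse, in $\kkbanG$, of the isomorphism $A\to\Leb^1(G\ltimes G,A)$ supplied by Proposition~\ref{Proposition:MoritaEquivalenceLeb1} together with the fact that $\mM^G_{\text{Morita}}$ is inverted in $\kkbanG$; this gives a morphism $A\to B$ in $\kkbanG$, i.e.\ an element of $\kkbanG(A,B)$. Fourth, check that homotopic cycles produce equal morphisms: the homotopy gives a cycle over $B[0,1]$, run the same construction to get a morphism $A\to B[0,1]$ in $\kkbanG$, and use $G$-homotopy invariance of $\kkbanG$ (built into $\SigmaHobanGcat$) so that $\ev_0$ and $\ev_1$ agree after composing. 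Fifth, additivity on $\EbanG(A,B)$ follows from the direct-sum behaviour of the construction, matching the non-equivariant proof. Sixth, bi-naturality: for an equivariant homomorphism $f\colon A'\to A$ (resp.\ $g\colon B\to B'$), all the constructions ($\Cont_0(G,\cdot)$, the double split extensions, the linking-algebra Morita isomorphisms) are functorial, and Lemma~\ref{Lemma:kkbanVonErweiterungenLeiterdiagramm:kkbanG} and Lemma~\ref{Lemma:ConcurrentEquivalences:Equivariant} supply the commuting ladder diagrams needed. Finally, $\kkbanG([\id_B])=\id_{\kkbanG(B)}$ is checked by unwinding: the identity homomorphism corresponds to the cycle $(B\oplus B,0)$-type generator whose associated extension is split in the strongest sense, so the construction collapses to the identity, just as in \cite{Paravicini:14:kkban:arxiv}.

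\emph{Main obstacle.} The hard part is Step~2 together with the compatibility of Steps~1--3, namely verifying that the double-split-extension machinery — which in \cite{Paravicini:14:kkban:arxiv} is delicate even non-equivariantly — still functions once all data are required to be $G$-equivariant, and that the passage through $\Leb^1(G\ltimes G,-)$ in Step~1 does not interfere with it. One must confirm that the auxiliary algebras and pairs constructed from an equivariant cycle $\Cont_0(G,(E,T))$ carry compatible $G$-actions making the relevant short exact sequences lie in $\mE^G_{\ssplit}$, and that the choices made (the Morita equivalence $A\simeq\Leb^1(G\ltimes G,A)$ in particular) produce a morphism independent of those choices. Establishing well-definedness — that two different presentations of the same $\KKbanG$-class give the same $\kkbanG$-morphism — will require carefully chaining the homotopies from Proposition~\ref{Proposition:MoritaEquivalenceLeb1} and the concurrent-homomorphism lemma, and this bookkeeping is where essentially all the work of the theorem is concentrated; the remaining functoriality and normalisation assertions are then formal.
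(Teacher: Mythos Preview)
Your proposal follows essentially the same route as the paper: apply $\gamma_{A,B}$ to pass to a cycle with genuinely equivariant operator over $\Leb^1(G\ltimes G,A)$, run the quasi-homomorphism (double-split) construction from \cite{Paravicini:14:kkban:arxiv} equivariantly, and then invoke the Morita equivalence of Proposition~\ref{Proposition:MoritaEquivalenceLeb1} to return to $\kkbanG(A,B)$; homotopy invariance, additivity and naturality are then checked as in the non-equivariant case. Two small remarks. First, the sentence about ``suspension generators (Bott elements and the like)'' is extraneous and somewhat misleading: neither the paper nor \cite{Paravicini:14:kkban:arxiv} decomposes a $\KKbanG$-class into homomorphism-type pieces plus Bott classes --- the map is defined uniformly on every cycle $(E,T)$ via the quasi-homomorphism $(\alpha,\Ad_T\circ\bar{\alpha})$ after reducing to $T^2=1$ by Lafforgue's trick. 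Second, the ``independence of choices'' you flag as the main obstacle is exactly what the paper isolates as Proposition~\ref{Proposition:AlreadyEquivariantOkay}: when $T$ happens to be equivariant already, the detour through $\Leb^1(G\ltimes G,A)$ must agree with the direct construction, and the paper settles this not by a direct homotopy but via an auxiliary transformation $\tilde{\gamma}_{A,B}$ landing in $\KKbanG(L,B)$ for the linking algebra $L$, together with Lemma~\ref{Lemma:GammaUndGammaTilde} --- a device you may want to anticipate rather than discover in the bookkeeping.
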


\begin{remark}
This theorem, together with the universal property of $\kkbanG$, implies that, for every functor $\mF \colon \GBanAlg \to \mC$, where $\mC$ is some additive category, that is $G$-homotopy invariant, $G$-Morita invariant and half-exact for semi-split extensions of $G$-Banach algebras, we have a bi-natural transformation $\mF\colon \KKbanG(\cdot, \cdot\cdot) \to \mC(\mF(\cdot), \mF(\cdot\cdot))$ such that $\mF([\id_B]) = \id_{\mF(B)}$ for all $G$-Banach algebras $B$. 

In fact, the proof that we give below shows that the same is true for functors that are  $G$-homotopy invariant, $G$-Morita invariant and just split exact. So if one chooses to define a variant of $\kkbanG$ that is universal for such functors one also has a bi-natural transformation from $\KKbanG$ into this alternative theory.
\end{remark}

\begin{remark}
Although the definition of $\KKbanG$ can be adapted to possibly degenerate Banach algebras, we \emph{confine ourselves to treating only non-degenerate Banach algebras} in what follows to make the presentation clearer; recall that a Banach algebra $B$ is called non-degenerate if the linear span of $B\cdot B$ is dense in $B$. The adaption of the definitions for possibly degenerate Banach algebra given in \cite{Paravicini:14:kkban:arxiv} is left to the interested reader.
\end{remark}

As in the non-equivariant case, we have to consider double split extensions rather than plain extensions (this would be the somewhat simpler case of odd $\KK$-theory); we thus consider quasi-homomorphisms. The double split extensions and quasi-homomorphisms that arise naturally are not $G$-equivariant in a very strict sense (in the first case, one of the splits, and in the second case, one of the homomorphisms is not necessarily $G$-equivariant). This problem seems to be hard to overcome with the trick that worked for plain extensions in Theorem \ref{Theorem:Extension:GequivariantSplit}. But we can go back to the individual $\KKban$-cycle and make it equivariant as in Paragraph \ref{Subsection:EquivariantCycles}.

Let $A$ and $B$ be non-degenerate $G$-Banach algebras and let $(E,T)$ be an element of $\EbanG(A,B)$, i.e., an even $\KKbanG$-cycle.

Assume first that $sT = T$ for all $s\in G$, i.e., assume equivariance of $T$. 

If $T^2 = 1$, then write $E$ as $E_0 \oplus E_1$, by degree. Let $\alpha\colon A \to \Lin_B(E_0) \subseteq \Lin_B(E)$ denote the action of $A$ on $E_0$ and $\bar{\alpha}\colon A \to \Lin_B(E_1) \subseteq \Lin_B(E)$ denote the action of $A$ on $E_1$. Since $T^2 =1$ and $T$ is $G$-equivariant, we have another continuous $G$-equivariant homomorphism:
$$
\Ad_T \circ \bar{\alpha}\colon A \to \Lin_B(E_0) \subseteq \Lin_B(E), \quad a\mapsto T \bar{\alpha}(a) T.
$$
The condition $[a, T] \in \Komp_B(E)$ for all $a\in A$ 
 yields $\Ad_T \circ \bar{\alpha}(a) - \alpha(a) \in \Komp_B(E_0) \subseteq \Komp_B(E)$ for all $a\in A$. Hence we get a quasi-homomorphism
$$
(\alpha, \Ad_T \circ \bar{\alpha})\colon A \rightrightarrows \Lin_B(E) \vartriangleright \Komp_B(E).
$$
This is not quite a $G$-equivariant quasi-homomorphism because the action of $G$ on the Banach algebra $\Lin_B(E)$ is not strongly continuous in general. But we can replace this algebra by the subalgebra of all operators $S\in \Lin_B(E)$ such that $s\mapsto sS$ is continuous. We will not comment on this technical point in what follows and use $\Lin_B(E)$ instead.

This procedure defines an element $\overline{\kkbanG}(E,T)$ of $\kk^{\ban}_{G,0}(A, \Komp_B(E))$ by split-exactness of $\kkbanG$; we write $\overline{\kkbanG}(E,T)$ to distinguish it from the following morphism: As $\Komp_B(E)$ is Morita equivalent to an ideal of $B$, we obtain an element 
$$
\kkbanG(E,T)\quad \in \quad \kk^{\ban}_{G,0}(A, B)
$$

If $T^2 \neq 1$, then use the trick of Lemme 1.2.10 of \cite{Lafforgue:02} that already solved the problem in the non-equivariant case, see \cite{Paravicini:14:kkban:arxiv}, Section~3. This construction preserves equivariance of $T$, and recall that it is compatible with the push-forward and with the sum of cycles. In particular, it respects homotopies. In other words: without loss of generality we can assume that $T^2 =1$.

%

\begin{lemma}\label{Lemma:NaturalTrafoNatural}
Let $(E,T) \in \EbanG(A,B)$ with equivariant $T$ and let $\psi\colon B\to B'$ be an equivariant homomorphism. Then
$$
\kkbanG(\psi_*(E,T)) = \kkbanG(\psi) \circ \kkbanG(E,T)  \ \in \kk^{\ban}_{G,0}(A,B').
$$
\end{lemma}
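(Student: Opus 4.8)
The plan is to trace through the construction of $\kkbanG(E,T)$ and check compatibility with push-forward at each stage. Recall that, assuming $T^2=1$, the morphism $\kkbanG(E,T)$ is built in two steps: first one writes $E = E_0 \oplus E_1$ by degree, forms the quasi-homomorphism $(\alpha, \Ad_T \circ \bar\alpha)\colon A \rightrightarrows \Lin_B(E) \vartriangleright \Komp_B(E)$, and uses split-exactness of $\kkbanG$ to extract $\overline{\kkbanG}(E,T) \in \kk^{\ban}_{G,0}(A,\Komp_B(E))$; then one composes with the canonical isomorphism in $\kkbanG$ coming from the fact that $\Komp_B(E)$ is $G$-equivariantly Morita equivalent to an ideal of $B$, to land in $\kk^{\ban}_{G,0}(A,B)$.

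First I would observe that $\psi_*(E,T) = (\psi_*E, T \otimes 1)$, where $\psi_* E = E \otimes_B B'$ (the pushout along $\psi$), and that $(\psi_* E)_i = \psi_*(E_i)$ respects the grading, so the left actions are $\psi_*\alpha = \alpha \otimes 1$ and $\psi_*\bar\alpha = \bar\alpha \otimes 1$; since $T$ is equivariant, so is $T \otimes 1$, and $(T\otimes 1)^2 = 1$. Hence the quasi-homomorphism attached to $\psi_*(E,T)$ is $(\alpha\otimes 1, \Ad_{T\otimes 1}\circ(\bar\alpha\otimes 1))$ mapping $A \rightrightarrows \Lin_{B'}(\psi_* E) \vartriangleright \Komp_{B'}(\psi_* E)$. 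The key structural fact is that push-forward induces a canonical homomorphism $\Komp_B(E) \to \Komp_{B'}(\psi_* E)$ (indeed $\Komp_{B'}(\psi_* E) \cong \Komp_B(E) \otimes_B B'$ is the push-forward of $\Komp_B(E)$), and that this homomorphism intertwines the two quasi-homomorphisms with $\psi$ itself. So I would assemble a commuting diagram: the quasi-homomorphism for $(E,T)$ followed by $\Komp_B(E) \to \Komp_{B'}(\psi_*E)$ equals $\psi$ composed (as a coefficient map) with the quasi-homomorphism for $\psi_*(E,T)$. Applying the functoriality/naturality of the split-exactness construction (which is part of how $\kkbanG$ is set up — cf. the treatment of quasi-homomorphisms in the universal triangulated category) gives $\kkbanG(\Komp_B(E)\to\Komp_{B'}(\psi_*E)) \circ \overline{\kkbanG}(E,T) = \overline{\kkbanG}(\psi_*(E,T))$.

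Then I would handle the second step: the Morita isomorphisms $\Komp_B(E) \simeq$ (ideal of $B$) $\hookrightarrow B$ and $\Komp_{B'}(\psi_* E) \simeq$ (ideal of $B'$) $\hookrightarrow B'$ fit into a commuting square with the vertical maps $\Komp_B(E)\to\Komp_{B'}(\psi_* E)$ on one side and $\psi$ on the other — this is again just naturality of the Morita equivalence $E \mapsto \Komp_B(E)$ under push-forward, together with the compatibility of the corner inclusion with $\psi$. Using that $\kkbanG$ inverts Morita morphisms and respects composition, chaining these two commuting squares yields exactly $\kkbanG(\psi)\circ\kkbanG(E,T) = \kkbanG(\psi_*(E,T))$. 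For the general case $T^2 \neq 1$ I would invoke the remark already made in the excerpt: the Lafforgue trick of Lemme~1.2.10 of \cite{Lafforgue:02} reducing to $T^2=1$ is compatible with push-forward and with homotopies, so no further work is needed.

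The main obstacle I expect is bookkeeping rather than a genuine difficulty: making precise that the split-exactness construction used to define $\overline{\kkbanG}$ from a quasi-homomorphism is functorial in the target Banach algebra in the required sense (i.e.\ that a morphism of quasi-homomorphisms covering a coefficient change $\psi$ induces the expected relation on the $\kkbanG$-classes). In the non-equivariant setting this is standard (it is how composition with homomorphisms is handled in \cite{Paravicini:14:kkban:arxiv}), and the equivariant version is formally identical once one accepts the strong-continuity caveat about $\Lin_B(E)$ already flagged in the text; so the proof is essentially a diagram chase assembled from naturality of (i) the push-forward on Banach pairs, (ii) the assignment $E\mapsto\Komp_B(E)$, and (iii) the split-exactness extraction, all of which are available from the preceding sections.
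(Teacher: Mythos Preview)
Your approach is correct and is precisely the argument one would expect. Note, however, that the paper does not actually supply a proof of this lemma: it is stated without proof, and the surrounding discussion defers to the non-equivariant treatment in \cite{Paravicini:14:kkban:arxiv} (specifically the material around Remark~3.2, Theorem~3.5 and Proposition~3.6 there). Your outline --- reduce to $T^2=1$ via Lafforgue's trick, compare the two quasi-homomorphisms via the canonical map $\Komp_B(E)\to\Komp_{B'}(\psi_*E)$, invoke naturality of the split-exactness construction, and then match up the Morita passages to $B$ and $B'$ --- is exactly how the non-equivariant argument runs, with $G$-actions carried along passively. So your proposal is not so much a different route as a spelled-out version of what the paper leaves implicit.
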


Let $(E_0,T_0)$ and $(E_1, T_1)$ be $G$-equivariantly homotopic elements of $\E^{\ban}_{G,\equi}(A,B)$. Then the lemma implies that $\kkbanG(E_0,T_0) = \kkbanG(E_1, T_1)$. So we obtain a map
$$
\kkbanG\colon \KK^{\ban}_{G,\equi}(A,B) \to \kkbanG(A,B).
$$
One can show as in \cite{Paravicini:14:kkban:arxiv}, Remark~3.2, Theorem~3.5 and Proposition~3.6 that this construction respects the sum of cycles, that it is functorial in both variables, and that it also respects the action of $G$-equivariant Morita morphisms in the second component, compare \cite{Paravicini:07:Morita:richtigerschienen}.

Now we consider the case that $T$ might not be equivariant.

Let $(E,T) \in \EbanG(A,B)$. Define $\Cont_0(G, (E,T))=(\Cont_0(G, E), \Cont_0(G, T))$ as in Definition \ref{Definition:ContNullVomZykel}. Then $\Cont_0(G, (E,T))$ is a cycle in $\E^{\ban}_{G,\equi}(\Leb^1(G\ltimes G, A), B)$, i.e., such that $\Cont_0(G, T)$ is equivariant. 

Note that $T^2=1$ implies $\Cont_0(G, T)^2=1$, and that the construction which lets us assume that $T^2=1$ is compatible with $T\mapsto \Cont_0(G, T)$. In other words, we can still assume without loss of generality that $\Cont_0(G, T)^2=1$. Recall that the map $(E,T) \mapsto \Cont_0(G, (E,T))$ descends to homomotpy classes and that the resulting natural transformation is denoted by $\gamma_{A,B}$.

Consider the composition
$$
\xymatrix{
\KKbanG(A,B) \ar[rr]^-{\gamma_{A,B}}&& \KK^{\ban}_{G,\equi}(\Leb^1(G\ltimes G, A), B) \ar[rr]^-{\kkbanG}&& \kkbanG(\Leb^1(G\ltimes G, A), B)
}
$$
It is a natural homomorphism. Now $\Leb^1(G\ltimes G, A)$ is $G$-equivariantly and naturally Morita equivalent to $A$, so we obtain a natural isomorphism
$$
\kkbanG(\Leb^1(G\ltimes G, A), B) \cong \kkbanG(A,B).
$$
So we obtain a natural homomorphism
$$
\kkbanG\colon \KKbanG(A,B) \to \kkbanG(A,B).
$$
Note that $\gamma_{A,B}$ respects the right-action on $\KKbanG$ and $\KK^{\ban}_{G,\equi}$ by $G$-equivariant Morita morphisms, so the same holds for the natural homomorphism $\kkbanG$ we have constructed from it.

\begin{proposition}\label{Proposition:AlreadyEquivariantOkay}
In case that the operator $T$ is already equivariant, this more complicated construction yields the same result as the direct construction for equivariant $T$, i.e., the following diagram commutes, where the vertical arrow denotes the obvious forgetful morphism:
$$
\xymatrix{
\KK^{\ban}_{G, \equi}(A,B) \ar[rr]^-{\kkbanG} \ar[d] && \kkbanG(A,B) \\
\KK^{\ban}_{G}(A,B) \ar[rru]_-{\kkbanG}
}
$$
\end{proposition}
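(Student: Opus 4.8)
The plan is to evaluate both arrows of the diagram on a cycle $(E,T)\in\E^{\ban}_{G,\equi}(A,B)$ and to check that the two occurrences of the Morita equivalence of Proposition~\ref{Proposition:MoritaEquivalenceLeb1} are mutually inverse and cancel. By the reduction already used in the construction, the passage to an operator with $T^{2}=1$ is compatible with $T\mapsto\Cont_{0}(G,T)$, so we may assume $T^{2}=1$; then both comparison morphisms are computed from the split\nobreakdash-exactness construction applied to the associated quasi\nobreakdash-homomorphisms. Write $\mu_{A}=(\Leb^{1}(G,A),\Cont_{0}(G,A))$ for the $G$-equivariant Morita equivalence from $\Leb^{1}(G\ltimes G,A)$ to $A$ of Proposition~\ref{Proposition:MoritaEquivalenceLeb1}, let $L_{A}$ be its linking algebra with the $G$-equivariant corner embeddings $j_{1}\colon\Leb^{1}(G\ltimes G,A)\hookrightarrow L_{A}$ and $j_{2}\colon A\hookrightarrow L_{A}$ (both invertible in $\kkbanG$), and set $\lambda:=\kkbanG(j_{2})^{-1}\circ\kkbanG(j_{1})\in\kkbanG(\Leb^{1}(G\ltimes G,A),A)$. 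This $\lambda$ is the canonical $\kkbanG$-isomorphism attached to $\mu_{A}$, and the natural isomorphism $\kkbanG(\Leb^{1}(G\ltimes G,A),B)\cong\kkbanG(A,B)$ entering the definition of the lower arrow is precomposition with $\lambda^{-1}$.

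The first and main step is a module identification: for $(E,T)$ with $T$ equivariant there is a canonical $G$-equivariant isomorphism of $G$-Banach $\Leb^{1}(G\ltimes G,A)$-$B$-pairs
$$
\Cont_{0}(G,E)\ \cong\ \mu_{A}\hotimes_{A}E
$$
carrying $\Cont_{0}(G,T)$ to the operator $1\hotimes T$ induced by $T$; equivalently, $\gamma_{A,B}(E,T)=\Cont_{0}(G,(E,T))$ represents, as a class in $\KK^{\ban}_{G,\equi}(\Leb^{1}(G\ltimes G,A),B)$, the left tensor product of $(E,T)$ with the Morita equivalence $\mu_{A}$. The twists in Definitions~\ref{Definition:ContNullVomPaar} and \ref{Definition:ContNullVomZykel} are precisely those needed for this — in particular the factor $t^{-1}T^{<}(t\,\cdot\,)$ in $\Cont_{0}(G,T)^{<}$ collapses to $T^{<}(\,\cdot\,)$ exactly because $T$ is equivariant, so the identification genuinely uses the equivariance hypothesis; the verification is direct but a little lengthy, of the same flavour as the computations in Section~5 of \cite{Paravicini:07:Induction:erschienen}. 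I expect this to be the principal obstacle.

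The second step is functoriality of the direct comparison morphism in the first variable: the quasi\nobreakdash-homomorphism attached to a pull\nobreakdash-back $\varphi^{*}(E,T)$ along a $G$-equivariant homomorphism $\varphi\colon A'\to A$ is that of $(E,T)$ precomposed with $\varphi$, hence $\kkbanG(\varphi^{*}(E,T))=\kkbanG(E,T)\circ\kkbanG(\varphi)$ (this is part of the functoriality of the comparison morphism, established as in \cite{Paravicini:14:kkban:arxiv}). Applying this over $L_{A}$, there is a $G$-equivariant cycle $(\widetilde E,\widetilde T)\in\E^{\ban}_{G,\equi}(L_{A},B)$ whose pull\nobreakdash-backs along $j_{2}$ and $j_{1}$ are, up to degenerate summands, $(E,T)$ and $(\mu_{A}\hotimes_{A}E,\,1\hotimes T)$ respectively, so that
$$
\kkbanG(\mu_{A}\hotimes_{A}E,\,1\hotimes T)=\kkbanG(\widetilde E,\widetilde T)\circ\kkbanG(j_{1})=\kkbanG(E,T)\circ\kkbanG(j_{2})^{-1}\circ\kkbanG(j_{1})=\kkbanG(E,T)\circ\lambda .
$$

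Combining the two steps, the lower arrow sends $(E,T)\in\KK^{\ban}_{G,\equi}(A,B)$ to $\kkbanG(\Cont_{0}(G,(E,T)))\circ\lambda^{-1}=\kkbanG(E,T)\circ\lambda\circ\lambda^{-1}=\kkbanG(E,T)$, which is the value of the upper arrow; all maps involved being natural in $A$ and $B$, the triangle commutes. The only remaining bookkeeping is to choose orientations consistently, so that the $\lambda$ produced in the second step is inverse to the one normalising the lower arrow; this is automatic, since both arise from the single Morita equivalence $\mu_{A}$.
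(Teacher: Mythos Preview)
Your overall strategy is close to the paper's: both arguments route through the linking algebra $L$ of the Morita equivalence $\mu_A$ and exploit naturality of the direct comparison map $\kkbanG\colon\KK^{\ban}_{G,\equi}\to\kkbanG$ with respect to the corner embeddings. The paper packages this as a diagram chase built on the auxiliary map $\tilde\gamma_{A,B}\colon\KK^{\ban}_{G,\equi}(A,B)\to\KK^{\ban}_{G,\equi}(L,B)$, $(E,T)\mapsto(\Cont_0(G,E)\oplus E,\ \Cont_0(G,T)\oplus T)$, together with Lemma~\ref{Lemma:GammaUndGammaTilde}, which computes $\iota_A^*\tilde\gamma_{A,B}=\id$ and $\iota_{\Leb^1}^*\tilde\gamma_{A,B}=\gamma_{A,B}$ directly, without ever comparing $\Cont_0(G,E)$ to a balanced tensor product.

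There is, however, a genuine gap in your Step~1. In the Banach setting the canonical map $\mu_A\hotimes_A E\to\Cont_0(G,E)$ is only a contractive concurrent homomorphism with dense range, not an isomorphism of Banach pairs: the projective balanced tensor product $\Cont_0(G,A)\hotimes_A E^>$ is in general strictly smaller than $\Cont_0(G,E^>)$, and likewise on the left. So the ``module identification'' you announce does not hold as stated. What is true --- and what you actually need --- is that this concurrent homomorphism satisfies the criterion of \cite{Paravicini:07:Morita:richtigerschienen}, Theorem~3.1, and hence yields a homotopy, so that $[\Cont_0(G,(E,T))]=[\mu_A\hotimes_A(E,T)]$ in $\KK^{\ban}_{G,\equi}(\Leb^1(G\ltimes G,A),B)$. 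Once Step~1 is downgraded to this equality of classes, your Step~2 (functoriality over $L$) goes through and the argument is complete; but note that at this point you are essentially reproducing Lemma~\ref{Lemma:GammaUndGammaTilde}, since the cycle $(\widetilde E,\widetilde T)$ you invoke over $L$ is exactly $(\Cont_0(G,E)\oplus E,\ \Cont_0(G,T)\oplus T)$ and its pull-back along $j_1$ is literally $\Cont_0(G,(E,T))$, not $\mu_A\hotimes_A(E,T)$. The paper's route is therefore slightly cleaner: it bypasses the tensor-product comparison altogether and reads off both pull-backs of $\tilde\gamma_{A,B}$ directly.
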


\noindent Before we show this proposition, we give a construction that will be useful also later on.

Let $(E,T) \in \EbanG(A,B)$. Then 
$$
\left(\Cont_0(G,E) \oplus E, \ \Cont_0(G,T) \oplus T\right) \in \EbanG( L, B)
$$
where $L$ denotes the linking algebra of the Morita equivalence between $\Leb^1(G\ltimes G, A)$ and $A$; the left action of $L$ on $\Cont_0(G,E)$ is given as follows: the corner $\Leb^1(G\ltimes G, A)\subseteq L$ acts on $\Cont_0(G,E)$ as above (and by $0$ on $E$); the corner $A \subseteq L$ acts on $E$ by the given action and not on $\Cont_0(G,E)$. The subspace $\Cont_0(G,A) \subseteq L$ acts on $E^>$ by the following map
$$
\Cont_0(G,A) \times E^> \to \Cont_0(G,E^>), \ (\alpha, e^>) \mapsto \left[s \mapsto \alpha(s) \ se^> \right];
$$
we also have a multiplication 
$$
\Leb^1(G,E^<) \times \Cont_0(G,A) \to E^<,\ (\xi^<, \alpha) \mapsto \int_{G} \xi^<(r) \ r \alpha(r^{-1}) \rmd r.
$$
The actions of $\Leb^1(G, A)$ on $\Cont_0(G,E^>)$ from the left and on $E^<$ from the right are defined similarly.

If $T$ is equivariant then so is $\Cont_0(G,T) \oplus T$. If $T$ satisfies $T^2=1$ then so does $\Cont_0(G,T) \oplus T$. The construction defines a natural homomorphism
$$
\tilde{\gamma}_{A,B}\colon \KKbanG(A,B) \to \KKbanG(L,B).
$$

\begin{lemma}\label{Lemma:GammaUndGammaTilde} Let $\iota_A$ and $\iota_{\Leb^1}$ denote the inclusions of $A$ and of $\Leb^1(G\ltimes G, A)$ into the linking algebra $L$, respectively. Let $x\in \KKbanG(A,B)$. Then
$$
\iota_A^*(\tilde{\gamma}_{A,B}(x)) = x \LazyAnd \iota_{\Leb^1}^*(\tilde{\gamma}_{A,B}(x)) = \gamma_{A,B}(x) \in \KK^{\ban}_{G}(\Leb^1, B).
$$
This remains true if you replace  $ \KK^{\ban}_{G}$ with $ \KK^{\ban}_{G, \equi}$, everywhere.
\end{lemma}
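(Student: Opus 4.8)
The plan is to verify both equalities directly on representatives. All the ingredients involved — the transformations $\gamma_{A,B}$ and $\tilde{\gamma}_{A,B}$, the pull-backs $\iota_A^{*}$ and $\iota_{\Leb^1}^{*}$, and the forgetful map $\KK^{\ban}_{G,\equi}\to\KK^{\ban}_{G}$ — are already known to be well defined on homotopy classes, so it suffices to fix a cycle $(E,T)\in\EbanG(A,B)$ representing $x$ and to identify the two pulled-back cycles up to a degenerate summand. The only general fact I would invoke beyond the definitions of the excerpt is the standard one that a Banach $\KK$-cycle whose left action vanishes is degenerate, hence null in $\KKbanG$ (respectively in $\KK^{\ban}_{G,\equi}$), together with the fact that the direct sum of cycles induces addition in these groups.

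For the first equality I would unwind the definition of the left $L$-action on the pair $\Cont_0(G,E)\oplus E$ that underlies $\tilde{\gamma}_{A,B}(E,T)$: by construction the corner $A\subseteq L$ acts by $0$ on the summand $\Cont_0(G,E)$ and by the original action on $E$, and the off-diagonal parts of $L$ (the $\Cont_0(G,A)$- and $\Leb^1(G,A)$-multiplications) play no role after restricting along the corner inclusion $\iota_A$. Hence, as an element of $\EbanG(A,B)$,
$$
\iota_A^{*}\bigl(\Cont_0(G,E)\oplus E,\ \Cont_0(G,T)\oplus T\bigr)\ =\ \bigl(\Cont_0(G,E),\,0,\,\Cont_0(G,T)\bigr)\ \oplus\ (E,T),
$$
where the first summand carries the zero left $A$-action and is therefore degenerate; passing to $\KKbanG$ kills it and leaves $[(E,T)]=x$. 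For the second equality the same computation applies with the roles of the two summands exchanged: the corner $\Leb^1(G\ltimes G,A)\subseteq L$ acts by $0$ on $E$ and on $\Cont_0(G,E)$ by exactly the action used in Definition~\ref{Definition:ContNullVomPaar} to build the pair $\Cont_0(G,E)$, so
$$
\iota_{\Leb^1}^{*}\bigl(\Cont_0(G,E)\oplus E,\ \Cont_0(G,T)\oplus T\bigr)\ =\ \Cont_0\bigl(G,(E,T)\bigr)\ \oplus\ (E,\,0,\,T),
$$
and the degenerate summand $(E,0,T)$ disappears in $\KKbanG$, leaving $[\Cont_0(G,(E,T))]=\gamma_{A,B}(x)$.

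For the equivariant statement one observes that if $T$ is $G$-equivariant then so are $\Cont_0(G,T)$, the whole operator $\Cont_0(G,T)\oplus T$, and the two summands in each of the two decompositions above; moreover the inclusions $\iota_A$ and $\iota_{\Leb^1}$ are $G$-equivariant and $\gamma_{A,B}$ already takes values in $\KK^{\ban}_{G,\equi}$. The identical direct-sum argument then proves the statement verbatim with $\KK^{\ban}_{G}$ replaced by $\KK^{\ban}_{G,\equi}$ everywhere.

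I expect the only genuinely fiddly point to be the bookkeeping in the middle step: confirming that the left $L$-module structure on $\Cont_0(G,E)\oplus E$ really restricts to the claimed block-diagonal actions on the two corners — in particular that the multiplication and inner-product maps linking the $<$- and $>$-components across the two summands contribute nothing once one has composed with a corner embedding — together with spelling out (or citing) the standard fact that a cycle with zero left action is degenerate and hence represents $0$ in $\KKbanG$. Everything else is a matter of reading off the definitions.
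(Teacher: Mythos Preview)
Your argument is correct and takes a genuinely different route from the paper's proof. The paper does not decompose the pulled-back cycle as a direct sum; instead it constructs the inclusion $E\hookrightarrow \Cont_0(G,E)\oplus E$ as a concurrent homomorphism of cycles with coefficient maps $\iota_A$ and $\id_B$, and then invokes a general criterion (Theorem~3.1 of \cite{Paravicini:07:Morita:richtigerschienen}) to conclude that such a concurrent homomorphism induces a homotopy between $(E,T)$ and $\iota_A^{*}(\tilde E,\tilde T)$; the second identity and the equivariant case are handled by the same mechanism. Your approach is more elementary in that it avoids the concurrent-homomorphism machinery entirely and reduces everything to the single standard fact that a cycle with vanishing left action is degenerate and hence null. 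The paper's approach, on the other hand, stays within the toolkit that is used repeatedly throughout the article (concurrent homomorphisms and the cited homotopy criterion), so it requires no separate appeal to the ``degenerate $\Rightarrow$ null'' lemma and makes the equivariant statement automatic once one notes that the inclusion $E\hookrightarrow \tilde E$ is $G$-equivariant. Both arguments are short; yours is closer in spirit to the usual $\KK$-theoretic folklore, while the paper's is more in keeping with its internal formalism.
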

\begin{proof}
Let $x\in \KKbanG(A,B)$ be represented by $(E,T) \in \EbanG(A,B)$. We construct a $G$-equivariant concurrent homomorphism $\Phi$ from $(E,T)$ to $(\tilde{E}, \tilde{T}):=\left(\Cont_0(G,E) \oplus E, \ \Cont_0(G,T) \oplus T\right)$ with coefficient maps $\iota_A$ and $\id_B$ that will give us a homotopy from $(\id_B)_*(E,T)$ to $\iota_A^*(\tilde{E}, \tilde{T})$. The homomorphism just maps $E$ identically to the summand $E$ of $\tilde{E}$, and it is clear by definition that it intertwines $T$ and $\tilde{T}$. Moreover, the criterion from Theorem~3.1 of \cite{Paravicini:07:Morita:richtigerschienen} is trivially satisfied, so $\Phi$ induces a homotopy as desired.

The same argument works for the second identity, and if $T$ is equivariant, then all operators and homotopies are equivariant, as well.
\end{proof}

\begin{proof}[Proof of Proposition \ref{Proposition:AlreadyEquivariantOkay}.] Consider the following diagram
$$
\xymatrix{
\KK^{\ban}_{G, \equi}(A,B) \ar[dd]_{\gamma_{A,B}} \ar@<-0.5ex>[dr]_{\tilde{\gamma}_{A,B}} \ar[rrrr]^{\kkbanG} &&&& \kkbanG(A,B)\\
& \KK^{\ban}_{G, \equi}(L,B) \ar@<-0.5ex>[ul]_{\iota_A^*} \ar[dl]^{\iota_{\Leb^1}^*} \ar[rr]^{\kkbanG}&&  \kkbanG(L,B)\ar[ur]^{ [\iota_A]\circ} \ar[dr]_{[\iota_{\Leb^1}]\circ}&\\
\KK^{\ban}_{G, \equi}(\Leb^1,B) \ar[rrrr]_{\kkbanG} &&&& \kkbanG(\Leb^1,B) \ar[uu]_{[\iota_{A}]\circ[\iota_{\Leb^1}]^{-1}\circ}\\
}
$$
We have to show that the outer square commutes. Because $\kkbanG$ is a natural transformation, it is clear that the upper square and the lower square commute. The right-hand triangle commutes by definition; note that it is composed of isomorphisms by the definition of $\kkbanG$. The left-hand triangle commutes by the last part of Lemma~\ref{Lemma:GammaUndGammaTilde}. Now the whole diagram commutes because $\iota_A^* \circ \tilde{\gamma}_{A,B}$ is the identity on $\KK^{\ban}_{G, \equi}(A,B) $, a fact that is also contained in Lemma~\ref{Lemma:GammaUndGammaTilde}
\end{proof}

\subsection{The descent homomorphism}
 
Let $\mA(G)$ be an unconditional completion of $\Cont_c(G)$. Consider the functor $\mA(G, \cdot)$ from $\GBanAlg$ to  $\kkban$ given by $B \mapsto \mA(G, B)$ and $\varphi \mapsto \mA(G, \varphi)$.

\begin{proposition}\label{Proposition:DescentOnkkbanG}
The functor $\mA(G, \cdot)$ lifts to a functor from $\kk^{\ban}_G$ to $\kkban$. 
These functors are additive and triangulated.
\end{proposition}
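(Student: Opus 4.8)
The plan is to deduce the statement from the universal property of $\kkbanG$ (Theorem~\ref{Theorem:UniversalProperty:Equivariant}). Write $F$ for the functor $\mA(G,\cdot)\colon\GBanAlg\to\kkban$, i.e.\ the composite of the descent functor $\GBanAlg\to\BanAlg$ with the canonical functor into the additive (indeed triangulated) category $\kkban$. To invoke the universal property I need that $F$ is $G$-homotopy invariant, $G$-Morita invariant, and half-exact for equivariantly semi-split extensions. The first two are already contained in the proof of Proposition~\ref{Proposition:Descent:Morita}: a $G$-equivariant homotopy $A\to B[0,1]$ is sent by $\mA(G,\cdot)$ to a morphism which, composed with the canonical contraction $\mA(G,B[0,1])\to\mA(G,B)[0,1]$, is a genuine homotopy, hence an equality in $\kkban$; and $\mA(G,\cdot)$ sends a $G$-equivariant Morita equivalence together with its linking algebra to an ordinary Morita equivalence together with its linking algebra, which is invertible in $\kkban$ by Morita invariance.

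The main point to establish is half-exactness of $F$ for equivariantly semi-split extensions. Here I would show that $\mA(G,\cdot)$ carries every $\epsilon\colon B\rightarrowtail D\twoheadrightarrow A$ in $\mE^G_{\ssplit}$ to a semi-split extension $\mA(G,\epsilon)$ of Banach algebras. The key observation is that $\mA(G,\cdot)$ is functorial not just on equivariant homomorphisms but on \emph{arbitrary} continuous linear maps of $G$-Banach spaces, with $\norm{\mA(G,T)}\le\norm{T}$ because $\mA(G)$ is unconditional. Thus the possibly non-equivariant split $\sigma$ produces a bounded linear split $\mA(G,\sigma)$ of $\mA(G,\pi)$, and exactness of $\mA(G,\epsilon)$ in the middle is obtained by applying $\mA(G,\cdot)$ to the linear identities $\pi\sigma=\id_A$, $\id_D-\sigma\pi=\iota\rho$ and $\rho\iota=\id_B$, where $\rho\colon D\to B$ is the bounded linear map with $\iota\rho=\id_D-\sigma\pi$ furnished by the open mapping theorem. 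Consequently $\mA(G,\epsilon)$ lies in the domain of the extension-triangle construction for $\kkban$ (the non-equivariant analogue of Definition and Lemma~\ref{Deflemma:ExtensionsAreDistinguishedInkkban} from \cite{Paravicini:14:kkban:arxiv}) and therefore yields long exact sequences in both variables in $\kkban$; in particular $F$ is half-exact for equivariantly semi-split extensions. Theorem~\ref{Theorem:UniversalProperty:Equivariant} then gives a unique factorisation $\overline F\colon\kkbanG\to\kkban$.

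Finally I would verify that $\overline F$ is additive and triangulated. Additivity is automatic once $\overline F$ is known to be triangulated, and it also follows directly from the fact that $\mA(G,\cdot)$ sends finite direct sums of $G$-Banach algebras to direct sums, so that $\overline F$ preserves biproducts. For the triangulated structure I would use the natural isomorphism $\mA(G,\Cont_0(Y,C))\cong\Cont_0(Y,\mA(G,C))$, valid for every locally compact Hausdorff $Y$ carrying the trivial $G$-action and every $G$-Banach algebra $C$ (again by unconditionality), to produce natural isomorphisms $\mA(G,\Sigma C)\cong\Sigma\,\mA(G,C)$ and, for every equivariant homomorphism $\varphi$, $\mA(G,\cone_\varphi)\cong\cone_{\mA(G,\varphi)}$ compatible with the structure maps of the cone triangles. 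It then follows that $\overline F$ commutes with the translation functor $\Sigma^{-1}$ and carries the image of the cone triangle of $\varphi$ to a diagram isomorphic in $\kkban$ to the cone triangle of $\mA(G,\varphi)$, hence to a distinguished triangle; since every distinguished triangle of $\kkbanG$ is isomorphic to a shift of such a cone triangle, $\overline F$ is triangulated.

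I expect the main obstacle not to be this formal skeleton but the two analytic facts about unconditional completions underlying it: that $\mA(G,\cdot)$ is exact on linearly split short exact sequences of $G$-Banach spaces, and that it commutes with $\Cont_0(Y,\cdot)$ for a trivial $G$-action. Both are part of the standard descent machinery (cf.\ \cite{Lafforgue:02}), but they are where the actual work lies; granted them, the proposition follows formally from Theorem~\ref{Theorem:UniversalProperty:Equivariant} together with the corresponding properties of $\kkban$ from \cite{Paravicini:14:kkban:arxiv}.
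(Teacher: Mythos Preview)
Your overall strategy---apply the universal property after checking homotopy invariance, Morita invariance and half-exactness, then verify compatibility with $\Sigma$ and with cone triangles---matches the paper. The treatment of homotopy and Morita invariance and of the descent of the linear split is fine and agrees with the paper's argument.

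The gap is in the triangulated part. You assert a natural isomorphism of Banach algebras
\[
\mA(G,\Cont_0(Y,C))\ \cong\ \Cont_0(Y,\mA(G,C))
\]
for trivial $G$-action on $Y$, ``by unconditionality''. This is not true in general: unconditionality only gives you a canonical contractive homomorphism in one direction (pointwise evaluation), not an inverse. Already for $\mA(G)=\Leb^1(G)$ and $Y=]0,1[$ the two sides differ, since $\sum_n\sup_y\abs{f_n(y)}<\infty$ is strictly stronger than $\sup_y\sum_n\abs{f_n(y)}<\infty$. So your identifications $\mA(G,\Sigma C)\cong\Sigma\,\mA(G,C)$ and $\mA(G,\cone_\varphi)\cong\cone_{\mA(G,\varphi)}$ are not available as isomorphisms of Banach algebras, and the argument as written does not go through. (The subsequent Proposition in the paper, which proves such an identification only in $\kkban$ and only for finite CW-pairs, is a symptom of exactly this issue.)

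The paper circumvents this by working entirely inside $\kkban$. It descends the semi-split extension $\Sigma B\rightarrowtail\cone B\twoheadrightarrow B$; since $\cone B$ is $G$-equivariantly contractible, $\mA(G,\cone B)$ is contractible and hence zero in $\kkban$, so Lemma~\ref{Lemma:ExactSequencesWithKContractiveMiddleTerm:kkbanG} (in its non-equivariant form) produces an isomorphism $\Sigma\,\mA(G,B)\cong\mA(G,\Sigma B)$ in $\kkban$. With this in hand, the canonical map $\mA(G,\cone_\varphi)\to\cone_{\mA(G,\varphi)}$ is shown to be an isomorphism in $\kkban$ by a ladder diagram of semi-split extensions. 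Replacing your Banach-algebra isomorphism by this $\kkban$-level argument fixes the proof; the rest of your outline is correct.
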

\begin{proof}
To show this result, we just check that $\mA(G,\cdot)\colon \GBanAlg \to \kkban$ satisfies the universal property of $\kk^{\ban}_G$, see Theorem~\ref{Theorem:UniversalProperty:Equivariant}. We already know that it is $G$-homotopy invariant and respects $G$-Morita equivalences, see Proposition~\ref{Proposition:Descent:Morita} and its proof.
\begin{itemize}
\item The functor respects (semi-split) extensions: Let 
$$
\xymatrix{ \epsilon\colon B \ \ar@{>->}[r]& E \ar@{->>}[r]& A}
$$
be a (semi-split) extension of $G$-Banach algebras. In the semi-split case, the continuous linear split $\sigma$ descends to a continuous linear split $\mA(G, \sigma)$ of 
$$
\xymatrix{ \mA(G,\epsilon)\colon \mA(G,B) \ \ar@{>->}[r]& \mA(G,E) \ar@{->>}[r]& \mA(G,A)}.
$$
If the original extension is not semi-split, it nevertheless allows for a continuous $1$-homogeneous split by Michael's selection principle. Such splits also descend under unconditional completions. 

\item The functor respects the suspension: We have to show that, for any $G$-Banach algebra $B$, we have $\mA(G, \Sigma B) \cong \Sigma \mA(G,B)$ in $\kkban$. Consider the short exact sequence
$$
\xymatrix{ \Sigma B \ \ar@{>->}[r]& \cone B \ar@{->>}[r]&B}.
$$
It descends to an extension 
$$
\xymatrix{ \mA(G, \Sigma B) \ \ar@{>->}[r]& \mA(G,\cone B) \ar@{->>}[r]&\mA(G,B)}
$$
with contractible middle term. It hence gives, by Lemma \ref{Lemma:ExactSequencesWithKContractiveMiddleTerm:kkbanG}, an isomorphism $\Sigma \mA(G,B) \cong \mA(G, \Sigma B)$ in $\kkban$.

\item Let $\varphi \colon A \to B$ be a continuous homomorphism of $G$-Banach algebras. Then the canonical homomorphism from $\mA(G, \cone_{\varphi})$ to $\cone_{\mA(G,\varphi)}$ is an isomorphism in $\kkban$: It fits into the commutative diagram
$$
\xymatrix{ 
\mA(G, \Sigma B) \ \ar@{>->}[r]\ar[d]^{\cong}& \mA(G,\cone_\varphi) \ar@{->>}[r]\ar[d]&\mA(G,A)\ar@{=}[d]\\
\Sigma \mA(G, B) \ \ar@{>->}[r]& \cone_{\mA(G,\varphi)} \ar@{->>}[r]&\mA(G,A)\\
}
$$
The left-hand vertical arrow is an isomorphism in $\kkban$ and so is hence the middle vertical arrow.

\item This shows: The functor $\mA(G,\cdot)$ sends cone triangles to distinguished triangles.  

\end{itemize}
So we get a triangulated descent functor from $\kk^{\ban}_G$ to $\kkban$.
\end{proof}

\begin{proposition}
Let $(X,X_0)$ be a finite CW-pair. Then we have a natural isomorphism
$$
\mA(G,\ \Cont((X,X_0), B)) \cong \Cont((X,X_0),\ \mA(G, B))
$$
in $\kkban$ for every $G$-Banach algebra $B$.
\end{proposition}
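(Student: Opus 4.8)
The plan is to produce a natural comparison homomorphism and then to prove that it is invertible in $\kkban$ by an induction over the cells of $X$ relative to $X_0$. For every $x\in X$ the evaluation $\ev_x\colon\Cont((X,X_0),B)\to B$ is a $G$-equivariant contractive homomorphism (here $B$ carries its given action and $\Cont((X,X_0),B)$ the pointwise one), so $\mA(G,\ev_x)$ is a contractive homomorphism $\mA(G,\Cont((X,X_0),B))\to\mA(G,B)$. I would set
$$
\eta_{(X,X_0),B}\colon\mA(G,\Cont((X,X_0),B))\longrightarrow\Cont((X,X_0),\mA(G,B)),\qquad \xi\mapsto\bigl(x\mapsto\mA(G,\ev_x)(\xi)\bigr).
$$
On the dense subalgebra $\Cont_c(G,\Cont((X,X_0),B))$ the function $x\mapsto\mA(G,\ev_x)(\xi)$ is continuous (uniform continuity of $\xi$ on its compact support together with the continuity of $\Cont_c(G)\hookrightarrow\mA(G)$ for the inductive limit topology) and visibly vanishes on $X_0$, so it defines an element of $\Cont((X,X_0),\mA(G,B))$; since each $\mA(G,\ev_x)$ is contractive, $\eta_{(X,X_0),B}$ is contractive and extends to the completion. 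It is multiplicative and natural in $B$, and also in $(X,X_0)$ for maps of pairs. It remains to prove that $\eta_{(X,X_0),B}$ is an isomorphism in $\kkban$.

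First I would treat a single relative cell, i.e.\ show that $\eta_{(D^n,S^{n-1}),B}$ is an isomorphism in $\kkban$ for all $n\in\N$, fixing once and for all a homeomorphism of $]0,1[^n$ with the open $n$-cell so that $\Cont((D^n,S^{n-1}),B)=\Cont_0(]0,1[^n,B)$. For $n=0$ the map is the identity of $\mA(G,B)$. For the step, consider the $G$-equivariant semi-split extension
$$
\Cont_0(]0,1[^n,B)\ \hookrightarrow\ \Cont_0(]0,1[^{n-1}\times[0,1[,B)\ \twoheadrightarrow\ \Cont_0(]0,1[^{n-1},B),
$$
where the quotient map is evaluation at the last coordinate $0$, with continuous linear split $g\mapsto\chi\otimes g$ for a fixed $\chi\in\Cont_c([0,1[)$ with $\chi(0)=1$; its middle term is $G$-equivariantly contractible, hence $\cong0$ in $\kkbanG$. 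Applying the triangulated descent functor $\mA(G,\cdot)$ (Proposition~\ref{Proposition:DescentOnkkbanG}) gives an extension in $\kkban$ with middle term $\cong0$, and the same construction with $B$ replaced by $\mA(G,B)$ gives another such; by Lemma~\ref{Lemma:ExactSequencesWithKContractiveMiddleTerm:kkbanG} the associated extension morphisms $\Sigma\mA(G,\Cont_0(]0,1[^{n-1},B))\xrightarrow{\ \sim\ }\mA(G,\Cont_0(]0,1[^n,B))$ and $\Sigma\Cont_0(]0,1[^{n-1},\mA(G,B))\xrightarrow{\ \sim\ }\Cont_0(]0,1[^n,\mA(G,B))$ are isomorphisms. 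The natural transformation $\eta$ provides a morphism of the first extension onto the second, so Lemma~\ref{Lemma:kkbanVonErweiterungenLeiterdiagramm:kkbanG} identifies $\eta_{(D^n,S^{n-1}),B}$ with the composite of these two isomorphisms with $\Sigma\eta_{(D^{n-1},S^{n-2}),B}$; since $\Sigma$ is an autoequivalence of $\kkban$, the inductive hypothesis finishes the step.

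For the general pair, list the cells of $X$ not contained in $X_0$ in order of non-decreasing dimension, obtaining subcomplexes $X_0=Y_0\subseteq Y_1\subseteq\cdots\subseteq Y_N=X$ with $Y_i=Y_{i-1}\cup_{\varphi_i}e^{n_i}$. For each $i\ge1$ the restriction maps give a $G$-equivariant short exact sequence
$$
\Cont((Y_i,Y_{i-1}),B)\ \hookrightarrow\ \Cont((Y_i,X_0),B)\ \twoheadrightarrow\ \Cont((Y_{i-1},X_0),B),
$$
which is semi-split (a continuous linear section exists by Dugundji's extension theorem, and since $X_0\subseteq Y_{i-1}$ it automatically preserves vanishing on $X_0$; it need not be $G$-equivariant, which is enough for membership in $\mE^G_{\ssplit}$), and whose ideal is $\Cont_0$ of the open cell $e^{n_i}$, hence $\Cont((D^{n_i},S^{n_i-1}),B)$. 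Applying $\mA(G,\cdot)$ yields a distinguished triangle in $\kkban$ --- by Proposition~\ref{Proposition:DescentOnkkbanG} it is the extension triangle of the descended extension --- and $\eta$ maps it to the extension triangle of the analogous extension of $\mA(G,B)$-algebras; by naturality of $\eta$ and Lemma~\ref{Lemma:kkbanVonErweiterungenLeiterdiagramm:kkbanG} this is a morphism of distinguished triangles whose component on the ideal term is $\eta_{(D^{n_i},S^{n_i-1}),B}$ and whose component on the quotient term is $\eta_{(Y_{i-1},X_0),B}$. By the previous paragraph the first is an isomorphism, and by the inductive hypothesis so is the second; the standard fact that a morphism of distinguished triangles which is invertible on two of the three terms is invertible on the third then shows that $\eta_{(Y_i,X_0),B}$ is an isomorphism. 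The induction starts at $i=0$, where $\Cont((X_0,X_0),B)=0$, and ends with the asserted natural isomorphism for $(X,X_0)=(Y_N,X_0)$.

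The genuinely analytic input is minimal --- essentially only that $\eta$ is a well-defined contractive homomorphism, which follows from the continuity of $\Cont_c(G)\hookrightarrow\mA(G)$. The main obstacle is diagram bookkeeping rather than substance: one must verify that $\eta$ really induces morphisms of \emph{distinguished} triangles, i.e.\ that the squares involving the boundary maps commute, which is where Lemma~\ref{Lemma:kkbanVonErweiterungenLeiterdiagramm:kkbanG} and the fact (contained in the proof of Proposition~\ref{Proposition:DescentOnkkbanG}) that $\mA(G,\cdot)$ carries extension triangles to extension triangles of the descended extensions are used, and one must keep the two nested inductions --- on $n$ for relative cells and on $i$ for the filtration of $X$ --- correctly coordinated.
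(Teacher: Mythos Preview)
Your proof is correct and follows the expected approach: the paper's own argument is simply a reference to Proposition~2.7 of \cite{Paravicini:14:kkban:arxiv}, whose proof is precisely the cell-by-cell induction you carry out, using the comparison homomorphism built from evaluation maps, the extensions with contractible cone middle terms for single cells, and the two-out-of-three property for morphisms of distinguished triangles along a cellular filtration. Your careful bookkeeping (in particular the observation that $\mA(G,\cdot)$ sends extension triangles to the extension triangles of the descended extensions, and the use of Lemma~\ref{Lemma:kkbanVonErweiterungenLeiterdiagramm:kkbanG} to control the boundary maps) is exactly what the referenced argument requires.
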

\begin{proof}
One can prove this just as Proposition~2.7 of \cite{Paravicini:14:kkban:arxiv}. 
\end{proof}

\subsection{Compatibility of the comparison map with the descent}\label{Subsection:CompatibilityComparisonDescent}

\begin{theorem}\label{Theorem:ComparisonAndDescentCompatible:KK0}
Let $A$ and $B$ be non-degenerate $G$-Banach algebras.Then the following diagram is commutative
\begin{equation}\label{Equation:Abstiegskompatibilitaet:kk0}
\xymatrix{
\KK^{\ban}_{G}(A,B) \ar[rr]^-{\mA(G, \cdot)} \ar[d]_{\kkbanG} && \KK^{\ban}(\mA(G,A), \mA(G,B))\ar[d]^{\kkban} \\
\kk^{\ban}_{G}(A,B) \ar[rr]^-{\mA(G, \cdot)} && \kk^{\ban}(\mA(G,A), \mA(G,B))
}
\end{equation}
where the horizontal arrows are the respective descent homomorphisms and the vertical arrows denote the comparison functors between $\KKban$ and $\kkban$.
\end{theorem}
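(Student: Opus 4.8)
The plan is to reduce the commutativity of the square \eqref{Equation:Abstiegskompatibilitaet:kk0} to the \emph{equivariant case}, i.e.\ to the situation of a cycle $(E,T)\in\EbanG(A,B)$ with $T$ already $G$-equivariant, and then to verify it by an explicit comparison of the two resulting split-exact constructions. The first observation is that, by the very construction of the comparison map $\kkbanG\colon \KKbanG\to\kkbanG$ in Subsection~\ref{Subsection:Comparison}, everything factors through the transformation $\gamma_{A,B}\colon\KKbanG(A,B)\to\KK^{\ban}_{G,\equi}(\Leb^1(G\ltimes G,A),B)$ followed by the natural Morita isomorphism $\kkbanG(\Leb^1(G\ltimes G,A),B)\cong\kkbanG(A,B)$. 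On the non-equivariant side, Lafforgue's descent $\mA(G,\cdot)$ on $\KKban$ and the comparison $\kkban$ both already enjoy their respective functorialities and Morita invariance. So I would first record two compatibilities: (a) the descent functor $\mA(G,\cdot)$ intertwines the equivariant $\Leb^1(G\ltimes G,\cdot)$-construction on $\KKbanG$ with the corresponding reduced crossed-product-type construction on $\KKban$, essentially because $\mA(G,\Leb^1(G\ltimes G,A))$ is, up to a Morita equivalence that is natural and respected by $\mA(G,\cdot)$ (Proposition~\ref{Proposition:Descent:Morita}), isomorphic to $\mA(G,A)$ in $\kkban$; and (b) the Morita isomorphism $\kkbanG(\Leb^1(G\ltimes G,A),B)\cong\kkbanG(A,B)$ maps, under $\mA(G,\cdot)$, to the corresponding Morita isomorphism in $\kkban$, which is guaranteed precisely by Proposition~\ref{Proposition:DescentOnkkbanG} (the descent functor is triangulated, hence respects the Morita isomorphisms that generate $\kkban$). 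Granting (a) and (b), the outer square reduces to the commutativity of the inner square for the \emph{equivariant} comparison map, i.e.\ to the statement that for $(E,T)\in\E^{\ban}_{G,\equi}(A,B)$ one has
$$
\kkban\bigl(\mA(G,(E,T))\bigr)=\mA\bigl(G,\kkbanG(E,T)\bigr)\quad\in\quad\kkban(\mA(G,A),\mA(G,B)).
$$

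To prove this equivariant version I would unwind both sides. By the trick of Lemme~1.2.10 of \cite{Lafforgue:02} (compatible with $\mA(G,\cdot)$ and with the equivariant structure) we may assume $T^2=1$ and hence $\mA(G,T)^2=1$ as well. Then $\kkbanG(E,T)$ is, by definition, obtained from the $G$-equivariant quasi-homomorphism $(\alpha,\Ad_T\circ\bar\alpha)\colon A\rightrightarrows\Lin_B(E)\vartriangleright\Komp_B(E)$ via split-exactness of $\kkbanG$, followed by the Morita identification $\Komp_B(E)\simeq$ (an ideal of) $B$. Applying $\mA(G,\cdot)$ to this quasi-homomorphism produces the quasi-homomorphism
$$
\bigl(\mA(G,\alpha),\mA(G,\Ad_T\circ\bar\alpha)\bigr)\colon \mA(G,A)\rightrightarrows \mA\bigl(G,\Lin_B(E)\bigr)\vartriangleright \mA\bigl(G,\Komp_B(E)\bigr),
$$
and since $\mA(G,\cdot)$ is split-exact (Proposition~\ref{Proposition:DescentOnkkbanG}) the element of $\kkban$ it determines is $\mA(G,\kkbanG(E,T))$ composed with the appropriate Morita isomorphism. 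On the other hand $\mA(G,(E,T))\in\KKban(\mA(G,A),\mA(G,B))$ is the Lafforgue cycle $(\mA(G,E),\mA(G,T))$, whose non-equivariant comparison $\kkban$ is by construction obtained from the quasi-homomorphism $(\mA(G,\alpha),\Ad_{\mA(G,T)}\circ\mA(G,\bar\alpha))$ with target $\Komp_{\mA(G,B)}(\mA(G,E))$. So what remains is to identify these two quasi-homomorphisms: one must check that the canonical map $\mA(G,\Lin_B(E))\to\Lin_{\mA(G,B)}(\mA(G,E))$ carries $\mA(G,\Komp_B(E))$ into $\Komp_{\mA(G,B)}(\mA(G,E))$ compatibly, and that $\mA(G,\Ad_T)=\Ad_{\mA(G,T)}$ under this identification — both of which are formal from the functoriality of $\mA(G,\cdot)$ on pairs and operators and the fact that $T$ is equivariant so that $\mA(G,T)$ is the "obvious" operator. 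Finally one checks that the two Morita identifications ($\mA(G,\Komp_B(E))\simeq$ ideal of $\mA(G,B)$ versus $\Komp_{\mA(G,B)}(\mA(G,E))\simeq$ ideal of $\mA(G,B)$) agree; this again follows because $\mA(G,\cdot)$ is triangulated and respects Morita equivalences, together with the compatibility of $\kkban$ and $\kkbanG$ with Morita morphisms in the second variable that was recorded in Subsection~\ref{Subsection:Comparison}.

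For the bookkeeping it is cleanest to package the reduction as a diagram chase: write the full comparison $\KKbanG(A,B)\xrightarrow{\gamma_{A,B}}\KK^{\ban}_{G,\equi}(\Leb^1(G\ltimes G,A),B)\xrightarrow{\kkbanG}\kkbanG(\Leb^1(G\ltimes G,A),B)\xrightarrow{\cong}\kkbanG(A,B)$ across the top, apply $\mA(G,\cdot)$ to each object, and observe that $\mA(G,\cdot)$ commutes with $\gamma$ (compatibility (a), a cycle-level identity checked directly on $\EbanG$-cycles), commutes with the equivariant $\kkban$-map (the equivariant case just treated), and commutes with the Morita isomorphism (compatibility (b), Proposition~\ref{Proposition:DescentOnkkbanG}); on the bottom row the analogous factorisation of $\kkban$ through Lafforgue's descent of $\gamma$ makes the diagram commute cell by cell. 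The main obstacle is the equivariant step, and within it the genuinely non-formal point is the identification of the target algebras of the two quasi-homomorphisms: one needs that $\mA(G,\cdot)$ applied to the corner inclusion $\Komp_B(E)\hookrightarrow\Lin_B(E)$ matches, up to a Morita morphism in the second variable, the honest corner inclusion $\Komp_{\mA(G,B)}(\mA(G,E))\hookrightarrow\Lin_{\mA(G,B)}(\mA(G,E))$. This is where the hypothesis that $A$ and $B$ are non-degenerate and the careful analysis of how $\mA(G,\Komp_B(E))$ acts on $\mA(G,E)$ (analogous to \cite{Paravicini:07}, Paragraph~5.2.7) are needed; everything else is an exercise in the functoriality already established.
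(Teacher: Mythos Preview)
Your treatment of the equivariant step is essentially what the paper does in its lemma: reduce to $T^2=1$, compare the two quasi-homomorphisms via the canonical maps $\Psi_{\Lin}\colon\mA(G,\Lin_B(E))\to\Lin_{\mA(G,B)}(\mA(G,E))$ and $\Psi_{\Komp}$, and invoke the compatibility of the comparison map with Morita morphisms in the second variable. That part is fine.

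The gap is in your reduction, specifically in what you call ``compatibility~(a)''. You assert that $\mA(G,\cdot)$ commutes with $\gamma_{A,B}$ as ``a cycle-level identity checked directly on $\EbanG$-cycles'', and that on the non-equivariant side there is an ``analogous factorisation of $\kkban$ through Lafforgue's descent of $\gamma$''. But after descent there is no group action left, so there is no analogue of $\gamma$ on $\KKban(\mA(G,A),\mA(G,B))$; the only candidate for the missing vertical arrow is the Morita action of the descended equivalence $\mA(G,A)\sim\mA(G,\Leb^1(G\ltimes G,A))$ on the first variable of $\KKban$. Showing that this Morita action intertwines descent with $\gamma_{A,B}$ is not a cycle-level identity: it amounts to producing a homotopy between $\mA(G,\Cont_0(G,(E,T)))$ and the Morita pullback of $\mA(G,(E,T))$, which is exactly the kind of comparison the paper flags as problematic (the ``dashed line'' in its big diagram~(\ref{Equation:Abstiegskompatibilitaet:RiesenDiagramm})).

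The paper's fix is an idea you are missing: it replaces $\gamma_{A,B}$ by the auxiliary map $\tilde{\gamma}_{A,B}\colon\KKbanG(A,B)\to\KKbanG(L,B)$ landing in the linking algebra $L$ of the Morita equivalence between $A$ and $\Leb^1(G\ltimes G,A)$. The point is that both $A$ and $\Leb^1(G\ltimes G,A)$ sit inside $L$ via honest $G$-equivariant \emph{homomorphisms} $\iota_A$, $\iota_{\Leb^1}$, and Lemma~\ref{Lemma:GammaUndGammaTilde} gives $\iota_A^*\tilde{\gamma}_{A,B}=\id$ and $\iota_{\Leb^1}^*\tilde{\gamma}_{A,B}=\gamma_{A,B}$. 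Since descent and the comparison maps are manifestly natural for homomorphisms (no Morita action needed), all the small squares built from $\iota_A$, $\iota_{\Leb^1}$ commute automatically, and the outer hexagon then yields the result. So the missing ingredient in your argument is precisely this linking-algebra detour; without it, your ``compatibility~(a)'' is an assertion that still requires proof, and the proof is essentially equivalent in content to Lemma~\ref{Lemma:GammaUndGammaTilde}.
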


\begin{lemma}
Let $A$ and $B$ be  (non-degenerate) $G$-Banach algebras. Then the following diagram is commutative
\begin{equation}\label{Equation:Abstiegskompatibilitaet:kk0:equi}
\xymatrix{
\KK^{\ban}_{G,\equi}(A,B) \ar[rr]^-{\mA(G, \cdot)} \ar[d]_{\kkbanG} && \KK^{\ban}(\mA(G,A), \mA(G,B))\ar[d]^{\kkban} \\
\kk^{\ban}_{G}(A,B) \ar[rr]^-{\mA(G, \cdot)} && \kk^{\ban}(\mA(G,A), \mA(G,B))
}
\end{equation}
\end{lemma}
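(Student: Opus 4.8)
The plan is to unwind the explicit definition of the comparison map $\kkbanG$ on an equivariant cycle and to use that the descent functor $\mA(G,\cdot)\colon\kk^{\ban}_G\to\kk^{\ban}$ is additive and triangulated (Proposition~\ref{Proposition:DescentOnkkbanG}), so that it automatically intertwines the split-exactness constructions underlying $\kkbanG$ and $\kkban$. First I would reduce to the case $T^2=1$: for $(E,T)\in\E^{\ban}_{G,\equi}(A,B)$, the trick of Lemme~1.2.10 of \cite{Lafforgue:02} produces a homotopic cycle with $T^2=1$, and this procedure preserves equivariance of $T$, is compatible with the descent $\mA(G,\cdot)$ on cycles, and respects homotopies, so neither composite in the square changes. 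Assuming $T^2=1$, I would then write $E=E_0\oplus E_1$ by degree and form the equivariant quasi-homomorphism $(\alpha,\Ad_T\circ\bar\alpha)\colon A\rightrightarrows\Lin_B(E)\vartriangleright\Komp_B(E)$; its class $\overline{\kkbanG}(E,T)\in\kk^{\ban}_{G,0}(A,\Komp_B(E))$ is the element produced by split-exactness of $\kk^{\ban}_G$ from this quasi-homomorphism, and $\kkbanG(E,T)$ is obtained from it by composing with the ($G$-equivariant) Morita equivalence of $\Komp_B(E)$ with an ideal of $B$. The analogous description applies to $\kkban$ on the descended cycle.

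Next I would check that descent carries this data to the corresponding data of the descended cycle. When $T$ is equivariant, the $G$-action twist present in Lafforgue's formula for the descended operator disappears, so $\mA(G,(E,T))=(\mA(G,E),\mA(G,T))$ with $\mA(G,T)^2=1$ and $\mA(G,E)=\mA(G,E_0)\oplus\mA(G,E_1)$ by degree; since the descent is a functor compatible with $\Lin_B(\cdot)$, $\Komp_B(\cdot)$ and with $\Ad$, the quasi-homomorphism of $\mA(G,(E,T))$ is $(\mA(G,\alpha),\mA(G,\Ad_T\circ\bar\alpha))$, i.e.\ $\mA(G,\cdot)$ applied to the quasi-homomorphism above. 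Here I would invoke the canonical identification $\mA(G,\Komp_B(E))\cong\Komp_{\mA(G,B)}(\mA(G,E))$ and the fact that, by Proposition~\ref{Proposition:Descent:Morita}, the Morita equivalence of $\Komp_B(E)$ with an ideal of $B$ descends to the Morita equivalence of $\Komp_{\mA(G,B)}(\mA(G,E))$ with the corresponding ideal of $\mA(G,B)$.

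Finally I would invoke additivity: since $\mA(G,\cdot)\colon\kk^{\ban}_G\to\kk^{\ban}$ is an additive, triangle-preserving functor, it sends the element produced by the split-exactness construction from a quasi-homomorphism to the element produced by the same construction from the image quasi-homomorphism; combined with the previous paragraph this gives $\mA(G,\overline{\kkbanG}(E,T))=\overline{\kkban}(\mA(G,(E,T)))$, and composing with the descended Morita equivalence yields $\mA(G,\kkbanG(E,T))=\kkban(\mA(G,(E,T)))$, which is precisely the commutativity of~\eqref{Equation:Abstiegskompatibilitaet:kk0:equi}. The step I expect to be the main obstacle is the bookkeeping in the second paragraph: showing that descent turns the equivariant quasi-homomorphism \emph{on the nose} into the quasi-homomorphism of the descended cycle, i.e.\ pinning down the identification $\mA(G,\Komp_B(E))\cong\Komp_{\mA(G,B)}(\mA(G,E))$, its compatibility with the degree grading and with $\Ad_T$, and the naturality of the descent on the Morita equivalence of $\Komp_B(E)$ with an ideal of $B$. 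Once that is settled, the additive and triangulated nature of the descent functor closes the argument with essentially no further computation.
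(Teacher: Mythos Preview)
Your overall strategy matches the paper's: reduce to $T^2=1$, pass to the quasi-homomorphism, and compare the two routes through the square by analysing how the descent interacts with $\Lin_B(E)$, $\Komp_B(E)$ and the Morita step. The paper's proof is organised exactly around the two-line diagram you describe.

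The one point where your write-up diverges from the paper, and where you correctly sense the obstacle, is the ``canonical identification $\mA(G,\Komp_B(E))\cong\Komp_{\mA(G,B)}(\mA(G,E))$''. For a general unconditional completion this is \emph{not} an isomorphism of Banach algebras; there is only a natural comparison homomorphism $\Psi_{\Komp}\colon \mA(G,\Komp_B(E))\to \Komp_{\mA(G,B)}(\mA(G,E))$, and likewise $\Psi_{\Lin}\colon \mA(G,\Lin_B(E))\to \Lin_{\mA(G,B)}(\mA(G,E))$. Consequently the descended quasi-homomorphism does not coincide ``on the nose'' with the quasi-homomorphism of the descended cycle: the former lives in $\mA(G,A)\rightrightarrows \mA(G,\Lin(E))\vartriangleright \mA(G,\Komp(E))$, the latter in $\mA(G,A)\rightrightarrows \Lin(\mA(G,E))\vartriangleright \Komp(\mA(G,E))$, and they are linked by $(\Psi_{\Lin},\Psi_{\Komp})$. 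The paper therefore does not try to identify the two algebras; instead it writes down the two-row ladder connected by $\Psi_{\Lin}$ and $\Psi_{\Komp}$ (with the Morita maps $\kkban(\mA(G,E))$ on the right) and invokes Lemma~1.19 and Proposition~2.11 of \cite{Paravicini:14:kkban:arxiv} to conclude that the two lines yield the same element of $\kkban(\mA(G,A),\mA(G,B))$. If you rephrase your ``identification'' as this comparison homomorphism and replace the appeal to additivity of the descent by the cited lemma and proposition from Part~I, your argument becomes the paper's proof.
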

\begin{proof}
Let $(E,T) \in \E^{\ban}_{G, \equi}(A,B)$ such that $T^2=1$ (we can assume without loss of generality that this is the case because the construction of \cite{Paravicini:14:kkban:arxiv}, Section~3 is compatible with the descent). 

Consider the following diagram:
$$
\xymatrix{
\mA(G,A)  \ar@{}  |-{ \rightrightarrows} [r] \ar@{=}[d]& \mA(G,\Lin(E)) \ar@{}  |{ \vartriangleright} [r] \ar[d]^{\Psi_{\Lin}}& \mA(G, \Komp(E)) \ar[rrr]^{\kkban(\mA(G,E))} \ar[d]^{\Psi_{\Komp}}&& &Å\mA(G,B) \ar@{=}[d]\\
\mA(G,A) \ar@{}  |-{ \rightrightarrows} [r]  & \Lin(\mA(G,E)) \ar@{}  |{ \vartriangleright} [r]& \Komp(\mA(G, E)) \ar[rrr]^{\kkban(\mA(G,E))}& &&\mA(G,B)
}
$$

If we trace $(E,T)$ in Diagram~(\ref{Equation:Abstiegskompatibilitaet:kk0:equi}) first down and then right, then we obtain the element of $\kkban(\mA(G,A), \mA(G,B))$ given by the upper line in this diagram, i.e., it is the composition of a morphism given by the quasi-homomorphism on the left and the morphism given by $\mA(G,E)$ from $\mA(G,\Komp(E))$ to $\mA(G,B)$. If you trace $(E,T)$ first right and then down you arrive at the corresponding morphism given by the lower line. The lines are connected by the morphisms $\Psi_{\Lin}$ and $\Psi_{\Komp}$, and it is easy to see that the resulting diagram commutes in the obvious sense, compare \cite{Paravicini:14:kkban:arxiv}, Lemma~1.19 and Proposition~2.11. It follows from the cited statements that the elements of $\kkban(\mA(G,A), \mA(G,B))$ given by the two lines agree.
\end{proof}

\begin{proof}[Proof of Theorem~\ref{Theorem:ComparisonAndDescentCompatible:KK0}]
Consider the following diagram
\begin{equation}\label{Equation:Abstiegskompatibilitaet:RiesenDiagramm}
\xymatrix{
\KK^{\ban}_{G}(A,B) \ar[rrr]^-{\mA(G, \cdot)} \ar[d]_{\gamma_{A,B}} &&& \KK^{\ban}(\mA(G,A), \mA(G,B))\ar[ddd]^{\kkban} \ar@{--}[dl] \\
\KK^{\ban}_{G,\equi}(\Leb^1,B) \ar[rr]^-{\mA(G, \cdot)} \ar[d]_{\kkbanG} && \KK^{\ban}(\mA(G,\Leb^1), \mA(G,B))\ar[d]^{\kkban} &\\
\kk^{\ban}_{G}(\Leb^1,B) \ar[rr]^-{\mA(G, \cdot)} \ar[d]_{\cong} && \kk^{\ban}(\mA(G,\Leb^1), \mA(G,B))\ar[dr]^{\cong} &\\
\kk^{\ban}_{G}(A,B) \ar[rrr]^-{\mA(G, \cdot)} &&& \kk^{\ban}(\mA(G,A), \mA(G,B))
}
\end{equation}
Here $\Leb^1$ is an abbreviation of $\Leb^1(G\ltimes G, A)$. The isomorphisms in the lower left-hand square are induced by the Morita equivalence between $A$ and $\Leb^1$ (and its descended version) and it is easy to see that this square commutes. The central square commutes by the preceding lemma. What is left to show is that the remaining diagram is commutative, i.e., the upper left-hand square and the right-hand square. The problem is, that we don't know how to construct an arrow that can be put where the dashed line is and which makes the whole diagram commutative. We hence resort to the following diagram; it suffices to show that it is commutative to see that Diagram~(\ref{Equation:Abstiegskompatibilitaet:RiesenDiagramm}) is commutative as well.
$$
\xymatrix{
\KK^{\ban}_{G}(A,B) \ar@<-0.5ex>[d]_{\tilde{\gamma}_{A,B}} \ar[r] \ar@<-2ex>@/_8ex/[dd]_{\gamma_{A,B}}& \KKban(\mA(G,A), \mA(G,B)) \ar[r]& \kkban(\mA(G,A), \mA(G,B))  \ar@<7ex>@/^8ex/[dd]^{\cong} \\
\KK^{\ban}_{G}(L,B) \ar[d] \ar@<-0.5ex>[u] \ar[r]&\KKban(\mA(G,L), \mA(G,B)) \ar[r]\ar[d]\ar[u] &\kkban(\mA(G,L), \mA(G,B))\ar[d]^{\cong} \ar[u]_{\cong} \\
\KK^{\ban}_{G}(\Leb^1,B) \ar[r]& \KKban(\mA(G,\Leb^1), \mA(G,B)) \ar[r]& \kkban(\mA(G,\Leb^1), \mA(G,B))
}
$$
Here $L$ denotes the linking algebra of $A$ and $\Leb^1$. The left-hand triangle is commutative by Lemma~\ref{Lemma:GammaUndGammaTilde}. The six vertical arrows the sources of which lie in the central column are induced by the injections of $A$ and $\Leb^1$ into the linking algebra $L$; it is hence clear that the four squares in the diagram commute. The left-hand triangle commutes by definition of the morphism on the right-hand side. Now Lemma~\ref{Lemma:GammaUndGammaTilde} implies that the outside hexagon of the diagram commutes which means that Diagram~(\ref{Equation:Abstiegskompatibilitaet:RiesenDiagramm}) is commutative. 
\end{proof}

\section{Equivariant Morita morphisms and the dual Green-Julg theorem}



%

\subsection{Inner actions}\label{Subsection:InnerActions}

\begin{definition}
Let $D$ be a $G$-Banach algebra. Then the action of $G$ on $D$ is called \demph{inner} if there is a strictly continuous and globably bounded homomorphism $U\colon G \to \Mult(D)^{\times}$ such that 
$$
g \cdot d = U_g d U_{g}^{-1}
$$ 
for all $g\in G$ and $d\in D$.
\end{definition}

Let $D$ be a $G$-Banach algebra equipped with an inner action given by a homomorphism $U \colon G \to \Mult(D)^{\times}$; to avoid confusion we denote $D$ by $D_{\kappa}$ when equipped with this inner action. Let $D_{\tau}$ be the Banach algebra $D$ equipped with the trivial $G$-action. Let $D_{\lambda}$ be the Banach algebra $D$ regarded as a Banach $D_{\kappa}$-$D_{\tau}$-bimodule and equipped the the $G$-action given by 
$$
g\cdot d:= U_g d, \quad g\in G, d\in D=D_{\lambda};
$$
let $D_{\rho}$ be the Banach algebra $D$ regarded as a Banach $D_{\tau}$-$D_{\kappa}$-bimodule and equipped the the $G$-action given by 
$$
g\cdot d:= dU_{g}^{-1}, \quad g\in G, d\in D=D_{\rho}.
$$
Now the following lemma can be shown by direct verification of the definitions:
\begin{lemma}\ref{Lemma:InnerActionsMoritaEquivalent}
If you take the multiplication of $D$ as inner products then the pair $(D_{\rho}, D_{\lambda})$ is a $G$-equivariant Morita equivalence between $D_{\kappa}$ and $D_{\tau}$. We will abuse notation and call the Morita equivalence $D_{\lambda}$.
\end{lemma}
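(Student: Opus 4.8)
The plan is to verify directly that $(D_\rho, D_\lambda)$ together with the two multiplication maps as inner products satisfies all the axioms of a $G$-equivariant Morita equivalence between $D_\kappa$ and $D_\tau$, as recalled in the Preliminaries section and in \cite{Paravicini:07:Morita:richtigerschienen}. Concretely, I would first check the algebraic module structure: $D_\lambda$ is a left $D_\kappa$-module by $d' \cdot d := d'd$ and a right $D_\tau$-module by $d \cdot d'' := dd''$, and these two actions commute by associativity of multiplication in $D$; symmetrically $D_\rho$ is a $D_\tau$-$D_\kappa$-bimodule. The inner products are $\langle\,\cdot\,,\,\cdot\,\rangle_{D_\tau}\colon D_\rho \times D_\lambda \to D_\tau$, $(d,d') \mapsto dd'$, and ${}_{D_\kappa}\langle\,\cdot\,,\,\cdot\,\rangle\colon D_\lambda \times D_\rho \to D_\kappa$, $(d,d') \mapsto dd'$; the compatibility (``imprimitivity'') conditions, e.g. $d_1\langle d_2, d_3\rangle_{D_\tau} = {}_{D_\kappa}\langle d_1, d_2\rangle d_3$, and the positivity-free Banach-space norm estimates $\norm{\langle d, d'\rangle} \le \norm{d}\norm{d'}$, are all immediate from submultiplicativity of the norm on $D$. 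Fullness of the inner products is where one uses non-degeneracy appropriately: the closed linear span of $\langle D_\rho, D_\lambda\rangle_{D_\tau}$ equals the closed span of $D\cdot D$, which is all of $D$ since $D$ is non-degenerate (or, in the possibly degenerate setting, equals $D^2$, and one works with $D^2$ throughout exactly as in Proposition~\ref{Proposition:MoritaEquivalenceLeb1}); the same for the other inner product.

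Next I would check $G$-equivariance of everything, which is the only place the specific choice of actions $g\cdot d = U_g d$ on $D_\lambda$ and $g\cdot d = d U_g^{-1}$ on $D_\rho$ actually gets used. The left $D_\kappa$-action on $D_\lambda$ is equivariant because $g\cdot(d'd) = U_g d' d = (U_g d' U_g^{-1})(U_g d) = (g\cdot d')(g\cdot d)$, using that the action on $D_\kappa$ is conjugation by $U_g$; the right $D_\tau$-action is equivariant because the action on $D_\tau$ is trivial and $g\cdot(dd'') = U_g d d'' = (g\cdot d) d''$. Symmetrically for $D_\rho$. For the inner products: $g\cdot\langle d, d'\rangle_{D_\tau} = \langle d, d'\rangle_{D_\tau}$ since $D_\tau$ has trivial action, while $\langle g\cdot d, g\cdot d'\rangle_{D_\tau} = (d U_g^{-1})(U_g d') = d d' = \langle d,d'\rangle_{D_\tau}$, and ${}_{D_\kappa}\langle g\cdot d, g\cdot d'\rangle = (U_g d)(d' U_g^{-1}) = U_g (d d') U_g^{-1} = g\cdot {}_{D_\kappa}\langle d, d'\rangle$. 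One also notes that the $G$-actions on $D_\lambda$ and $D_\rho$ are isometric (since $U$ is globally bounded one should, strictly speaking, first renorm $D$ equivalently so that the $U_g$ act isometrically, or observe that the actions are uniformly bounded and invoke the standard renorming; this is the kind of routine point I would handle in one sentence) and strongly continuous (from strict continuity of $U$ together with non-degeneracy, exactly as one argues that a strictly continuous multiplier-valued homomorphism gives a strongly continuous action on the algebra).

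The main obstacle — really the only genuine point rather than bookkeeping — is the interaction between the boundedness/continuity hypotheses on $U$ and the requirement that a Banach $G$-action be \emph{isometric} and \emph{strongly continuous}: one must either renorm $D$ by $\norm{d}' := \sup_{g\in G}\norm{U_g d U_g^{-1}}$ (using global boundedness of $U$ to see this is an equivalent norm, and checking it is still submultiplicative and still makes $U$ land in multipliers) so that all four avatars $D_\kappa, D_\tau, D_\lambda, D_\rho$ carry honestly isometric actions, and verify that strict continuity of $U$ plus non-degeneracy yields strong continuity of the resulting actions on the modules. Once that is set up, the remaining verifications are the ``direct verification of the definitions'' the statement advertises: associativity identities and the norm estimate $\norm{\langle x,y\rangle}\le\norm{x}\,\norm{y}$, which are nothing more than associativity and submultiplicativity in $D$. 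I would therefore structure the proof as: (1) renorm and record isometry/strong continuity; (2) state the bimodule structures and inner products; (3) list the (trivial) algebraic identities and norm estimates; (4) check fullness via non-degeneracy; (5) check equivariance by the three short computations above; and conclude that $(D_\rho, D_\lambda) \in \MbanG(D_\kappa, D_\tau)$ is a $G$-equivariant Morita equivalence in the sense of \cite{Paravicini:07:Morita:richtigerschienen}.
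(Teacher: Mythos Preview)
Your proposal is correct and is precisely the ``direct verification of the definitions'' that the paper invokes in lieu of a written proof; the paper gives no argument beyond that phrase. You in fact go further than the paper by flagging and handling the isometry/renorming issue arising from $U$ being merely globally bounded, a point the paper passes over in silence.
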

In other words, every inner action is naturally Morita equivalent to the trivial action.

\begin{remark}
Note that to be an inner action in the above sense is more than to be an action by inner automorphisms: There are C$^*$-algebras $D$ and unitary actions of $\Z^2$ on $D$ such that $D$ with this action is not equivariantly Morita equivalent to $D$ with the trivial action, but all automorphisms  by which $\Z^2$ acts are inner. The point is that the individual representatives of each automorphism cannot necessarily be chosen in a way to combine to a group homomorphism; in the case of $\Z^2$, the commutators of the representing unitaries will only be central elements and not the identity element.
\end{remark}

If we consider the above situation after taking the $\Leb^1$-descent, we can deduce that $\left(\Leb^1(G,D_{\rho}), \Leb^1(G,D_{\lambda})\right)$ is a Morita equivalence between $\Leb^1(G,D_{\kappa})$ and $\Leb^1(G,D_{\tau})$. But actually, these algebras are already isomorphic as Banach algebras:

Define $\Gamma \colon \Leb^1(G,D_{\kappa}) \to \Leb^1(G,D_{\tau})$ by
$$
f \mapsto \left(t \mapsto f(t)U_t\right).
$$
This is a continuous homomorphism of Banach algebras, and even an isomorphism, the inverse of which has a very similar form. So:
\begin{lemma}
The map $\Gamma$ is a continuous isomorphism of Banach algebras between $ \Leb^1(G,D_{\kappa})$ and $\Leb^1(G,D_{\tau})$. 
\end{lemma}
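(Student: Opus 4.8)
The plan is to write down the map $\Gamma$ and its candidate inverse explicitly on the dense subalgebra $\Cont_c(G, D)$, check that both are contractive (or at least bounded) there, and then extend by continuity. Concretely, for $f \in \Cont_c(G, D)$ set $\Gamma(f)(t) := f(t) U_t$ and, for $h \in \Cont_c(G, D)$, set $\Gamma^{-1}(h)(t) := h(t) U_t^{-1}$; since $U$ is globally bounded, say $\norm{U_t}, \norm{U_t^{-1}} \le C$ for all $t$, both maps are bounded on $\Cont_c(G,\cdot)$ for the $\Leb^1$-norm, hence extend to bounded linear maps between $\Leb^1(G, D_\kappa)$ and $\Leb^1(G, D_\tau)$. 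On the dense subspace $\Gamma \circ \Gamma^{-1}$ and $\Gamma^{-1}\circ \Gamma$ are visibly the identity, so the extensions are mutually inverse bounded linear bijections.

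The substantive point is that $\Gamma$ is multiplicative, and here one must be careful about which convolution product sits on which side. On $\Leb^1(G, D_\kappa)$ the convolution uses the inner $G$-action $g \cdot d = U_g d U_g^{-1}$: $(f_1 *_\kappa f_2)(t) = \int_G f_1(s)\, \bigl(s \cdot f_2(s^{-1}t)\bigr)\rmd s = \int_G f_1(s)\, U_s f_2(s^{-1}t) U_s^{-1}\rmd s$, whereas on $\Leb^1(G, D_\tau)$ the action is trivial, so $(h_1 *_\tau h_2)(t) = \int_G h_1(s)\, h_2(s^{-1}t)\rmd s$. Then I would compute
$$
\Gamma(f_1 *_\kappa f_2)(t) = \left(\int_G f_1(s)\, U_s f_2(s^{-1}t) U_s^{-1}\rmd s\right) U_t,
$$
and, using that $U$ is a homomorphism so $U_s^{-1} U_t = U_{s^{-1}t}$,
$$
\bigl(\Gamma(f_1) *_\tau \Gamma(f_2)\bigr)(t) = \int_G f_1(s) U_s\, f_2(s^{-1}t) U_{s^{-1}t}\rmd s = \int_G f_1(s) U_s\, f_2(s^{-1}t)\, U_s^{-1} U_t\rmd s,
$$
which is the same integral. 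Hence $\Gamma$ is a homomorphism on $\Cont_c(G,D)$, and by density on all of $\Leb^1(G, D_\kappa)$. The analogous computation (or simply the fact that $\Gamma$ is a bijective homomorphism) shows $\Gamma^{-1}$ is a homomorphism as well.

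I expect the only genuine obstacle to be bookkeeping: making sure the convolution formulas, the placement of $U_t$ versus $U_t^{-1}$, and the normalisation conventions for $\Leb^1(G, \cdot)$ from \cite{Paravicini:14:kkban:arxiv} are used consistently, and confirming that multiplier-valued functions $t \mapsto f(t) U_t$ genuinely land in $\Leb^1(G, D_\tau)$ rather than merely in an $\Leb^1$-space of multipliers — this is where global boundedness and strict continuity of $U$ (so that $t \mapsto f(t) U_t$ is continuous and $D$-valued for $f \in \Cont_c(G,D)$) are exactly what is needed. No deep idea is involved; it is a direct verification, which is why the statement is phrased as such.
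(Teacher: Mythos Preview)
Your proposal is correct and is exactly the direct verification the paper has in mind; the paper does not give a proof but simply remarks before the lemma that $\Gamma$ ``is a continuous homomorphism of Banach algebras, and even an isomorphism, the inverse of which has a very similar form.'' Your computation of multiplicativity via $U_s^{-1}U_t = U_{s^{-1}t}$ and your remark that strict continuity and global boundedness of $U$ ensure $t\mapsto f(t)U_t$ lands in $\Cont_c(G,D)$ are precisely the points one needs to check.
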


As a consequence, we now have  two isomorphisms in the Morita category from $ \Leb^1(G,D_{\kappa})$ to $\Leb^1(G,D_{\tau})$: The Morita equivalence defined above and the homomorphism $\Gamma$. In fact, they agree:

\begin{lemma}\label{Lemma:TwoWaysTheSame}
The Morita equivalence $\left(\Leb^1(G,D_{\rho}), \Leb^1(G,D_{\lambda})\right)$ and the homomorphism $\Gamma$ give the same element of $\Moritaban\left(\Leb^1(G,D_{\kappa}),\Leb^1(G,D_{\tau})\right)$.
\end{lemma}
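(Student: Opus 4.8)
The plan is to exhibit an explicit $G$-equivariant concurrent homomorphism from the Morita equivalence $\left(\Leb^1(G,D_{\rho}), \Leb^1(G,D_{\lambda})\right)$ (viewed between $\Leb^1(G,D_{\kappa})$ and $\Leb^1(G,D_{\tau})$) to the ``trivial'' Morita equivalence given by $\Leb^1(G,D_{\tau})$ itself, sitting over the coefficient maps $\Gamma$ and $\id$, and then invoke Lemma~\ref{Lemma:ConcurrentEquivalences:Equivariant} to conclude that the two resulting elements of $\Moritaban$ coincide. Concretely, I would define the candidate map on the right-hand module $\Leb^1(G,D_{\lambda})$ by the same formula as $\Gamma$, namely $f\mapsto (t\mapsto f(t)U_t)$, and on the left-hand module $\Leb^1(G,D_{\rho})$ by a formula of the analogous (mirror) type; one then checks that this pair of maps is compatible with the bimodule structures, with the inner products (which are just convolution built from the multiplication of $D$), and is $G$-equivariant. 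Since $\Leb^1(G,D_{\tau})$ as a Morita self-equivalence is trivial, the concurrent homomorphism lands in the identity equivalence, and the diagram of Lemma~\ref{Lemma:ConcurrentEquivalences:Equivariant} then reads precisely $\MoritabanG(\id)\circ\MoritabanG\left(\Leb^1(G,D_{\lambda})\right) = \MoritabanG(\id)\circ\MoritabanG(\Gamma)$, i.e.\ the two elements agree.

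In carrying this out, the first step is to pin down the correct formula on $\Leb^1(G,D_{\rho})$: since $D_{\rho}$ carries the $G$-action $g\cdot d = dU_g^{-1}$ and the convolution on $\Leb^1(G,D_{\rho})$ twists by this action, the map intertwining it with the untwisted convolution on $\Leb^1(G,D_{\tau})$ should again be $f\mapsto(t\mapsto f(t)U_t)$ (up to a modular-function normalisation if $G$ is non-unimodular — but in the intended applications $G$ is compact, so this is harmless; I would remark on this). The second step is the routine but slightly tedious verification that this pair respects: (a) the right $\Leb^1(G,D_{\tau})$-action on both modules and the left actions (via $\Gamma$ on the $D_\kappa$-side, identically on the $D_\tau$-side); (b) the two inner products, i.e.\ $\langle\cdot,\cdot\rangle$ and ${}_{\Leb^1}\langle\cdot,\cdot\rangle$; (c) $G$-equivariance, which follows because $U$ is a homomorphism so the cocycle terms telescope. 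Each of these reduces, after unwinding the convolution integrals, to the single identity that $U$ is multiplicative, together with $g\cdot d = U_g d U_g^{-1}$; I would do one such computation in detail (say the inner-product compatibility) and declare the rest analogous.

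I expect the main obstacle to be purely bookkeeping: keeping straight which of the several $G$-actions ($\kappa,\tau,\lambda,\rho$) and which of the several module structures is in force in each formula, and making sure the concurrent-homomorphism axioms from the definition in \cite{Paravicini:07:Morita:richtigerschienen} (as used in Lemma~\ref{Lemma:ConcurrentEquivalences:Equivariant}) are matched exactly — in particular that the ``coefficient maps'' come out as $\Gamma$ and $\id_{\Leb^1(G,D_{\tau})}$ and not, say, as $\Gamma$ and $\Gamma$. There is no conceptual difficulty and no genuine analytic estimate beyond the contractivity already recorded for $\Gamma$ and for the $\Leb^1(G,\cdot)$-construction. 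Once the concurrent homomorphism is in place, the passage from it to the equality in $\Moritaban$ is immediate from Lemma~\ref{Lemma:ConcurrentEquivalences:Equivariant} (applied without the $G$-equivariance decoration, since the target of the present lemma is the non-equivariant Morita category — though the equivariant version is available and could equally be cited, then composed with the forgetful functor).
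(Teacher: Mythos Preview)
Your approach is essentially the paper's: build a concurrent homomorphism from $\bigl(\Leb^1(G,D_{\rho}),\Leb^1(G,D_{\lambda})\bigr)$ to the trivial self-equivalence $\bigl(\Leb^1(G,D_{\tau}),\Leb^1(G,D_{\tau})\bigr)$ with coefficient maps $\Gamma$ and $\id$, then invoke the concurrent-homomorphism lemma (the paper cites the non-equivariant Lemma~1.19 of \cite{Paravicini:14:kkban:arxiv} directly, which is cleaner than routing through Lemma~\ref{Lemma:ConcurrentEquivalences:Equivariant} and forgetting the $G$-action, since after descent there is no residual $G$-structure in play).

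One point worth correcting: the paper's concurrent homomorphism is simply the pair $(\Gamma,\id_{\Leb^1(G,D)})$, i.e.\ $\Gamma$ on the bra-side $\Leb^1(G,D_{\rho})$ and the \emph{identity} on the ket-side $\Leb^1(G,D_{\lambda})$, not a $\Gamma$-like map on both components as you suggest. This matters: if you apply $f\mapsto f(\cdot)U_{(\cdot)}$ on $\Leb^1(G,D_{\lambda})$ as well, the left-action compatibility with coefficient map $\Gamma$ fails (you pick up $U_{r^{-1}s}$ where you need $U_s$). With the pair $(\Gamma,\id)$ the verifications are immediate --- for instance, on the ket-side the left action of $\Leb^1(G,D_{\kappa})$ on $\Leb^1(G,D_{\lambda})$ already involves the twist $r\cdot d=U_rd$, which is exactly what $\Gamma$ inserts on the coefficient side, so $\id$ intertwines. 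No modular-function issue arises, and your worry about the non-unimodular case evaporates. This is precisely the bookkeeping pitfall you anticipated; once you take the paper's pair the rest is as you describe.
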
 
\begin{proof}
Consider the Morita equivalence $\left(\Leb^1(G,D_{\tau}), \Leb^1(G,D_{\tau})\right)$
 between $\Leb^1(G, D_{\tau})$ and itself. The pair of maps $\left(\Gamma, \id_{\Leb^1(G,D)}\right)$ is a concurrent homomorphism of Morita equivalences between $\left(\Leb^1(G,D_{\rho}), \Leb^1(G,D_{\lambda})\right)$ and $\left(\Leb^1(G,D_{\tau}), \Leb^1(G,D_{\tau})\right)$ with coefficient maps $\Gamma$ and $ \id_{\Leb^1(G,D)_{\tau}}$. It follows from Lemma 1.19 of \cite{Paravicini:14:kkban:arxiv} that 
\begin{eqnarray*}
\left[\left(\Leb^1(G,D_{\rho}), \Leb^1(G,D_{\lambda})\right)\right]&= & [ \id_{\Leb^1(G,D)_{\tau}}]\circ \left[\left(\Leb^1(G,D_{\rho}), \Leb^1(G,D_{\lambda})\right)\right]\\ &=& \left[\left(\Leb^1(G,D_{\tau}), \Leb^1(G,D_{\tau})\right)\right] \circ [\Gamma] = [\Gamma]
\end{eqnarray*}
 in $\Moritaban\left(\Leb^1(G,D_{\kappa}),\Leb^1(G,D_{\tau})\right)$. This shows the claim.
\end{proof}

\subsection{The dual Green-Julg theorem}

\begin{theorem}[The dual Green-Julg Theorem for $\kkban$]\label{Theorem:DualGreenJulg}
Let $G$ be a discrete group. Then
$$
\kkbanG(A, B_{\tau}) \cong \kkban(\ell^1(G, A), B),
$$
naturally, where $A$ is a $G$-Banach algebra and $B$ is a Banach algebra.
\end{theorem}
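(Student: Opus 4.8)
The plan is to prove the dual Green-Julg theorem as an adjunction, exactly as the introduction announces for the Green-Julg theorem: we exhibit the functor $A \mapsto \ell^1(G,A)$ (from $G$-Banach algebras with equivariant structure to plain Banach algebras) as \emph{left} adjoint to the functor $B \mapsto B_\tau$ (equip a Banach algebra with the trivial $G$-action), both considered as functors between the triangulated categories $\kkbanG$ and $\kkban$. Concretely, I would construct a unit natural transformation $\eta_A \colon A \to \ell^1(G,A)_\tau$ in $\kkbanG$ and a co-unit natural transformation $\varepsilon_B \colon \ell^1(G, B_\tau) \to B$ in $\kkban$, and then verify the two triangle identities $\varepsilon_{\ell^1(G,A)} \circ \ell^1(G,\eta_A) = \id$ and $(\varepsilon_B)_\tau \circ \eta_{B_\tau} = \id$, each as an equation of morphisms in the respective $\kk$-category.

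For the co-unit I would use the classical fact that for a discrete group $G$ and a Banach algebra $B$ with trivial action, $\ell^1(G, B_\tau)$ is Morita equivalent to $B$ via an $\ell^1$-analogue of the averaging/projection argument; in fact this is precisely the content of Lemma~\ref{Lemma:TwoWaysTheSame} and the inner-action machinery of Subsection~\ref{Subsection:InnerActions} applied to the regular representation of $G$ on itself — note the parenthetical ``$\Leb^1(G\ltimes G, A)$ is $G$-equivariantly and naturally Morita equivalent to $A$'' used throughout Section~2. So $\varepsilon_B$ is the inverse in $\kkban$ of a canonical Morita equivalence, and it is an isomorphism. For the unit $\eta_A$, the natural candidate is the composite of the equivariant inclusion $A \to \Leb^1(G\ltimes G, A)$ (a Morita equivalence by Proposition~\ref{Proposition:MoritaEquivalenceLeb1}, hence invertible in $\kkbanG$) with a canonical comparison map relating $\Leb^1(G\ltimes G, A)$ to $\ell^1(G,A)_\tau$; here the key point specific to the \emph{discrete} case is that $\ell^1(G)$ has a unit (the point mass at $e$), so one has a genuine equivariant homomorphism $\ell^1(G,A)_\tau \to \Leb^1(G\ltimes G, A)$ or a Morita-type morphism in the other direction, which one checks is an isomorphism in $\kkbanG$ by another application of the inner-action lemmas of Subsection~\ref{Subsection:InnerActions}.

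Having $\eta$ and $\varepsilon$, the two triangle identities are then checked at the level of concrete Morita cycles: each side of each identity is a composite of Morita equivalences and canonical inclusions, and one invokes Lemma~\ref{Lemma:ConcurrentEquivalences:Equivariant} (equivariant concurrent homomorphisms, i.e. commuting squares in the Morita category) together with Lemma~1.19 of \cite{Paravicini:14:kkban:arxiv} to reduce each identity to a computation of an explicit concurrent homomorphism of Morita equivalences — exactly the style of argument used in the proof of Lemma~\ref{Lemma:TwoWaysTheSame}. Naturality of the isomorphism $\kkbanG(A, B_\tau) \cong \kkban(\ell^1(G,A), B)$ in both variables then follows formally from naturality of $\eta$ and $\varepsilon$, which in turn follows because all the maps involved are built functorially from $A$ and $B$.

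The main obstacle I expect is the unit $\eta_A$ and its triangle identity rather than the co-unit. The co-unit really is just the regular-representation case of the already-proved inner-action material. But for the unit one must carefully relate the two different ``$G$-on-$G$'' completions $\ell^1(G,A)$ (discrete, carrying the induced $G$-action by translation) and $\Leb^1(G\ltimes G, A)$, and show that the relevant comparison is an isomorphism in $\kkbanG$ — this needs the discreteness hypothesis in an essential way (so that the relevant $\ell^1$-algebras are unital and the various convolution identities close up). Verifying that the composite $\Leb^1(G, \eta_A)$ followed by $\varepsilon_{\Leb^1(G,A)}$ is the identity in $\kkbanG$, as opposed to merely an isomorphism, will require writing down the intertwining concurrent homomorphism explicitly and checking the criterion of Theorem~3.1 of \cite{Paravicini:07:Morita:richtigerschienen}, which is the one genuinely computational part of the argument.
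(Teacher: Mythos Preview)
Your overall architecture is exactly right: the theorem is an adjunction, one constructs a unit $\eta_A$ and a co-unit $\varepsilon_B$, and one verifies the triangle identities in the equivariant Morita category using concurrent homomorphisms (Lemma~\ref{Lemma:ConcurrentEquivalences:Equivariant} and Lemma~\ref{Lemma:TwoWaysTheSame}). That part matches the paper precisely. But both of your concrete candidates for $\eta$ and $\varepsilon$ are wrong, and the error in $\varepsilon$ is fatal.

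You assert that $\ell^1(G,B_\tau)$ is Morita equivalent to $B$ and take $\varepsilon_B$ to be the inverse of that equivalence. This is false: for $G$ finite non-trivial, $\ell^1(G)$ decomposes as a sum of matrix algebras indexed by the irreducible representations, so $\KTh_0(\ell^1(G))\cong \Z^{|\widehat G|}\neq \Z$. Neither Lemma~\ref{Lemma:TwoWaysTheSame} nor Proposition~\ref{Proposition:MoritaEquivalenceLeb1} says anything of the kind; the first compares two $G$-actions on the \emph{same} algebra $\Leb^1(G,D)$, the second compares $A$ with $\Leb^1(G\ltimes G,A)$. The paper's co-unit is simply the summation homomorphism $\varepsilon_B\colon \ell^1(G,B_\tau)\to B$, $f\mapsto \sum_{s\in G} f(s)$, which is \emph{not} invertible. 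If $\varepsilon$ were an isomorphism the adjunction would be an equivalence of categories, which it is not.

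Your unit is also off target. There is no equivariant algebra inclusion $A\to \Leb^1(G\ltimes G,A)$, and no evident comparison map from $\Leb^1(G\ltimes G,A)$ (functions on $G\times G$) to $\ell^1(G,A)_\tau$ (functions on $G$). The paper instead uses the inner-action machinery of Subsection~\ref{Subsection:InnerActions} directly on $\ell^1(G,A)$: the conjugation action $\kappa$ on $\ell^1(G,A)$ is inner via the multipliers $U_g$, so $\ell^1(G,A)_\kappa$ is $G$-equivariantly Morita equivalent to $\ell^1(G,A)_\tau$ by the bimodule $\ell^1(G,A)_\lambda$; and because $G$ is discrete there is a genuine $G$-equivariant homomorphism $\iota_A\colon A\to \ell^1(G,A)_\kappa$ sending $a$ to the point mass at $e$. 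The unit is the composite $\eta_A=[\ell^1(G,A)_\lambda]\circ[\iota_A]$ in the Morita category. With these correct $\eta$ and $\varepsilon$, the two triangle identities are checked exactly by the concurrent-homomorphism diagrams you anticipated, and Lemma~\ref{Lemma:TwoWaysTheSame} (the comparison $\Gamma$) is what makes the second identity close up.
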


Note that we write $B_{\tau}$ for the Banach algebra $B$ equipped with the trivial $G$-action.

We are going to show this theorem in a series of lemmas. First note that the theorem states that the functor $\ell^1(G, \cdot)$ from $\kkbanGcat$ to $\kkbancat$ is left adjoint to the functor $B \mapsto B_{\tau}$. We hence just have to produce the correspoding unit and co-unit and show the unit-co-unit equations for this adjunction. The unit and the co-unit will actually be given by Morita morphisms so it suffices to check the equations in this context.

\begin{deflemma}
Let $G$ be a locally compact Hausdorff group and let $A$ be a $G$-Banach algebra and let $g\in G$.

We define a multiplier $U_g=(U_g^<,U_g^>) \in \Mult(\Leb^1(G,A))$ of the Banach algebra $\Leb^1(G,A)$ as follows:
$$
U_g^>(\xi)(s):= g \xi( g^{-1} s), \quad \xi \in \Leb^1(G,A), s\in G,
$$
and
$$
U_g^<(\xi)(s):= \Delta(g^{-1})\xi(sg^{-1}), \quad \xi \in \Leb^1(G,A), s\in G,
$$
where $\Delta \colon G \to \R$ denotes the modular function. We have 
$$
U_{gh} = U_g \circ U_h
$$
for all $g,h\in G$, and $U$ is strictly continuous and globally bounded (by 1). 
\end{deflemma}

\begin{definition}
Let $G$ be a locally compact Hausdorff group. Define an inner action on $\Leb^1(G,A)$ as follows:
If $g\in G$ and $f\in \Leb^1(G,A)$, then define
$$
(gf)(s) := g f(g^{-1}sg), \quad s\in G.
$$
When equipped with this action, we denote $\Leb^1(G,A)$ with $\Leb^1(G,A)_{\kappa}$. We write $\Leb^1(G,A)_{\tau}$ for $\Leb^1(G,A)$ equipped with the trivial action.
\end{definition}

Note that $(gf) = U_g f U_{g^{-1}}$ for all $f\in \Leb^1(G,A)_{\kappa}$ and $g\in G$, so the action is inner. We can hence apply Lemma~\label{Lemma:InnerActionsMoritaEquivalent}:

\begin{lemma}
Let $G$ be a locally compact Hausdorff group. Then $\Leb^1(G,A)_{\kappa}$ and $\Leb^1(G,A)_{\tau}$ are $G$-equivariantly Morita-equivalent. The Morita equivalence is given by the pair (in the notation of Subsection~\ref{Subsection:InnerActions}):
$$
\Leb^1(G,A)_{\lambda}:=(\Leb^1(G,A)_{\rho}, \Leb^1(G,A)_{\lambda})
$$
The action of $G$ on $\Leb^1(G,A)_{\lambda}$ is defined as follows: On the right-hand module $\Leb^1(G,A)_{\lambda}= \Leb^1(G,A)$ it is given by $(g \xi^>) := U_g \xi^>$, on the left-hand module $\Leb^1(G,A)_{\rho}= \Leb^1(G,A)$ it is given by $(g \xi^<) := \xi^< U_{g^{-1}}$.
\end{lemma}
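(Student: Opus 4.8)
The plan is to obtain this lemma as a direct instance of Lemma~\ref{Lemma:InnerActionsMoritaEquivalent}, applied with $D:=\Leb^1(G,A)$ and with the inner action on $D$ given by the multiplier homomorphism $U\colon G\to\Mult(\Leb^1(G,A))^{\times}$, $g\mapsto U_g=(U_g^{<},U_g^{>})$, introduced in the preceding Definition and Lemma. Everything then reduces to verifying that $U$ satisfies the hypotheses of Lemma~\ref{Lemma:InnerActionsMoritaEquivalent} and to transcribing the formulas of Subsection~\ref{Subsection:InnerActions} into the present situation.

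First I would collect the properties of $U$ already recorded above: the pair $(U_g^{<},U_g^{>})$ is a genuine double multiplier of the convolution algebra $\Leb^1(G,A)$; the assignment is multiplicative, $U_{gh}=U_g\circ U_h$, so $U_e=\id$ and $U_g$ is invertible in $\Mult(\Leb^1(G,A))$ with $U_g^{-1}=U_{g^{-1}}$, whence $U$ takes values in $\Mult(\Leb^1(G,A))^{\times}$; and $U$ is strictly continuous and globally bounded (by $1$). The one remaining point is that $U$ implements the action $\kappa$, i.e. that $gf=U_g f U_{g^{-1}}$ for all $f\in\Leb^1(G,A)_{\kappa}$ and $g\in G$; this is precisely the identity stated just before the lemma, and upon expanding the convolution product together with the formulas for $U_g^{<}$ and $U_g^{>}$ it amounts to a short computation in which the modular-function factors appearing in $U_g^{<}$ cancel against those produced by the relevant change of variables. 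With these verifications in hand, Lemma~\ref{Lemma:InnerActionsMoritaEquivalent} applies and yields that $(D_{\rho},D_{\lambda})$, with the multiplication of $D$ as inner products, is a $G$-equivariant Morita equivalence between $D_{\kappa}=\Leb^1(G,A)_{\kappa}$ and $D_{\tau}=\Leb^1(G,A)_{\tau}$.

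Finally I would read off the $G$-actions on the two modules from the definitions of $D_{\lambda}$ and $D_{\rho}$ in Subsection~\ref{Subsection:InnerActions}: on the right-hand module $D_{\lambda}=\Leb^1(G,A)$ the action is $g\cdot\xi^{>}=U_g\xi^{>}$, and on the left-hand module $D_{\rho}=\Leb^1(G,A)$ it is $g\cdot\xi^{<}=\xi^{<}U_g^{-1}=\xi^{<}U_{g^{-1}}$, which is exactly the asserted description of the Morita equivalence $\Leb^1(G,A)_{\lambda}$.

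Since all the structural identities have already been established in the cited results, there is no real obstacle here; the only mildly delicate step is the bookkeeping with the modular function $\Delta$ in the verification $gf=U_g f U_{g^{-1}}$, which is trivial when $G$ is discrete (the case relevant for the dual Green--Julg theorem) and is otherwise a routine change-of-variables computation.
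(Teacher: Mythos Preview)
Your proposal is correct and follows exactly the approach indicated in the paper: the sentence immediately preceding the lemma notes that $gf=U_g f U_{g^{-1}}$, so the action is inner, and the lemma is then obtained as a direct instance of Lemma~\ref{Lemma:InnerActionsMoritaEquivalent}. Your write-up simply spells out this application in slightly more detail, including the bookkeeping with the modular function, which the paper leaves implicit.
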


The following lemma that can be checked by direct calculation says that $\Leb^1(G,A)$ together with $g \mapsto U_g$ can be thought of as a covariant representation of the $G$-Banach algebra $A$. We restrict our attention to discrete groups to avoid technical nuisances concerning the relation of $\Leb^1(G,A)$ and its multiplier algebra.

\begin{lemma}
Let $G$ be a discrete group and let $A$ be a $G$-Banach algebra. Then the map
$$
\iota_A\colon A\mapsto \ell^1(G,A), \quad a \mapsto \left[s \mapsto \begin{cases} a & \text{ if $s=e_G$,}\\ 0 & \text{ else,} \end{cases}\right]
$$
is a homomorphism from $A$ to $\ell^1(G,A)$; it is $G$-equivariant as a homomorphism from $A$ to $\ell^1(G,A)_{\kappa}$.
\end{lemma}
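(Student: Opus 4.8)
The statement is a pair of routine identities, so the plan is to verify them directly from the definitions of the twisted convolution on $\ell^1(G,A)$ and of the inner action on $\ell^1(G,A)_\kappa$. Note first that $\iota_A(a)$ is nothing but the point mass $\delta_{e_G}\cdot a$ concentrated at the neutral element, so $\|\iota_A(a)\|_1=\|a\|$ and $\iota_A$ is a well-defined isometric linear map into $\ell^1(G,A)$. For multiplicativity I would expand $\iota_A(a)*\iota_A(b)$ with the convolution $(f_1*f_2)(s)=\sum_{r\in G}f_1(r)\bigl(r\cdot f_2(r^{-1}s)\bigr)$; since $\iota_A(a)(r)=0$ unless $r=e_G$, only the summand $r=e_G$ survives and one obtains $(\iota_A(a)*\iota_A(b))(s)=a\cdot\iota_A(b)(s)$, which equals $ab$ for $s=e_G$ and $0$ otherwise. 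Hence $\iota_A(a)*\iota_A(b)=\iota_A(ab)$, and together with linearity this shows that $\iota_A$ is a continuous (indeed isometric) homomorphism of Banach algebras.

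For the equivariance I would use the explicit description $(g\cdot f)(s)=g\cdot f(g^{-1}sg)$ of the inner action on $\ell^1(G,A)_\kappa$ --- equivalently $g\cdot f=U_g f U_{g^{-1}}$. Conjugation by $g$ is a bijection of $G$ fixing $e_G$, so $s\mapsto\iota_A(a)(g^{-1}sg)$ is again supported only at $s=e_G$, where its value is $a$; applying the coefficient action then gives $g\cdot a$. Thus $(g\cdot\iota_A(a))(s)=\iota_A(g\cdot a)(s)$ for every $s\in G$, i.e.\ $\iota_A\colon A\to\ell^1(G,A)_\kappa$ is $G$-equivariant.

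There is no substantial obstacle: the computation is entirely formal. The only points that need attention are keeping the two conventions straight --- the twist $r\cdot f_2(r^{-1}s)$ in the convolution product and the conjugation $g^{-1}sg$ in the inner action --- and observing that $\iota_A$ lands in $\ell^1(G,A)$ with the correct norm, which is immediate because $\iota_A(a)$ is finitely supported.
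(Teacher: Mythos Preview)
Your proof is correct and is precisely the direct calculation the paper has in mind; the paper does not spell out the argument but simply remarks that the lemma ``can be checked by direct calculation,'' which is exactly what you do.
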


Now we have everything in place to define the unit and the co-unit of the adjunction.

Let $A$ be a $G$-Banach algebra and $B$  a Banach algebra. Define
$$
\varepsilon_B \colon \ell^1(G, B_{\tau}) \to B, \ f \mapsto \sum_{s\in G} f(s)
$$
as well as 
$$
\eta_A := \ell^1(G,A)_{\lambda} \circ \iota_A \ \in\  \MoritabanG\left(A,\, \ell^1(G,A)_{\tau}\right).
$$

Note that $\eta_A$ is the composition of (the class of) a homomorphism of Banach algebras and (the class of) a Morita equivalence.

\begin{proposition}
The following (unit-co-unit-) equations hold in $\MoritabanG$ and in $\Moritaban$, respectively: For all $G$-Banach algebras $A$ and all Banach algebras $B$, we have
$$
(\varepsilon_B)_{\tau} \circ \eta_{B_{\tau}} \ = \ \id_{B_{\tau}}
$$
and
$$
\varepsilon_{\ell^1(G,A)} \circ \ell^1(G, \eta_A) \ = \ \id_{\ell^1(G,A)}.
$$
\end{proposition}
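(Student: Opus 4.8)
The plan is to verify the two displayed identities separately; in each case the idea is to rewrite the composite on the left as the class of an honest homomorphism of Banach algebras and then to observe that this homomorphism is the identity. For the first identity, recall that $\eta_{B_\tau}$ is by definition the composite $[\ell^1(G,B_\tau)_\lambda]\circ[\iota_{B_\tau}]$ of the equivariant homomorphism $\iota_{B_\tau}\colon B_\tau\to\ell^1(G,B_\tau)_\kappa$ with the $G$-equivariant Morita equivalence $\ell^1(G,B_\tau)_\lambda$ from $\ell^1(G,B_\tau)_\kappa$ to $\ell^1(G,B_\tau)_\tau$. First I would note that $\varepsilon_B$ is $G$-equivariant not only as a map $\ell^1(G,B_\tau)_\tau\to B_\tau$ but also as a map $\ell^1(G,B_\tau)_\kappa\to B_\tau$, since $\varepsilon_B(gf)=\sum_{s}f(g^{-1}sg)=\varepsilon_B(f)$ for all $g\in G$. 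The key step is then to exhibit a $G$-equivariant concurrent homomorphism from the Morita equivalence $\ell^1(G,B_\tau)_\lambda$ to the trivial Morita equivalence on $B_\tau$, which represents $\id_{B_\tau}$ in $\MoritabanGcat$, both of whose component maps are given by $\varepsilon_B$ and whose coefficient maps are the two versions of $\varepsilon_B$ just mentioned; the required compatibilities with the two bimodule actions and with both inner products all reduce to the multiplicativity of $\varepsilon_B$, and the compatibility with the $G$-actions on $\ell^1(G,B_\tau)_\lambda$ and $\ell^1(G,B_\tau)_\rho$ holds because translation by the $U_g$ is absorbed by the summation in $\varepsilon_B$. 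Lemma~\ref{Lemma:ConcurrentEquivalences:Equivariant} then gives $(\varepsilon_B)_\tau\circ[\ell^1(G,B_\tau)_\lambda]=[\varepsilon_B\colon\ell^1(G,B_\tau)_\kappa\to B_\tau]$ in $\MoritabanG$, hence $(\varepsilon_B)_\tau\circ\eta_{B_\tau}=[\varepsilon_B\circ\iota_{B_\tau}]$; and $\varepsilon_B\circ\iota_{B_\tau}=\id_{B_\tau}$ by inspection, since $\iota_{B_\tau}(b)$ is the point mass at $e_G$ with value $b$.

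For the second identity I would use that the descent functor $\ell^1(G,\cdot)\colon\MoritabanGcat\to\Moritabancat$ of Proposition~\ref{Proposition:Descent:Morita} is a functor, so that $\ell^1(G,\eta_A)=[\ell^1(G,\ell^1(G,A)_\lambda)]\circ[\ell^1(G,\iota_A)]$, where $\ell^1(G,\ell^1(G,A)_\lambda)$ denotes the descended Morita equivalence between $\ell^1(G,\ell^1(G,A)_\kappa)$ and $\ell^1(G,\ell^1(G,A)_\tau)$. Applying Lemma~\ref{Lemma:TwoWaysTheSame} to the $G$-Banach algebra $D:=\ell^1(G,A)$ equipped with the inner action given by the multipliers $U_g$, this descended Morita equivalence coincides, as an element of $\Moritaban$, with the class of the isomorphism of Banach algebras $\Gamma\colon\ell^1(G,\ell^1(G,A)_\kappa)\to\ell^1(G,\ell^1(G,A)_\tau)$, $h\mapsto(t\mapsto h(t)U_t)$. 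Hence $\ell^1(G,\eta_A)=[\Gamma\circ\ell^1(G,\iota_A)]$ is again the class of a homomorphism of Banach algebras, and it remains to compute $\varepsilon_{\ell^1(G,A)}\circ\Gamma\circ\ell^1(G,\iota_A)$ on a given $f\in\ell^1(G,A)$. Since $\ell^1(G,\iota_A)(f)(t)=\iota_A(f(t))$ is the point mass at $e_G$ with value $f(t)$, a short computation with the explicit formula for $U_t$ shows that $\Gamma(\ell^1(G,\iota_A)(f))(t)$ is the point mass at $t$ with value $f(t)$; applying $\varepsilon_{\ell^1(G,A)}$, which sums over $t$, returns $f$. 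Thus the composite equals $\id_{\ell^1(G,A)}$, which proves the second identity in $\Moritaban$.

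The main obstacle is not conceptual but one of bookkeeping: for the first identity one has to verify in detail that the two copies of $\varepsilon_B$ genuinely assemble into an admissible $G$-equivariant concurrent homomorphism between the two Morita equivalences in question, checking all module-, inner-product- and $G$-action-compatibilities; and for the second identity one has to be careful about the side on which the multiplier $U_t$ acts on $\ell^1(G,A)$ and about the interaction of the two nested $\ell^1$-constructions, so that the reindexing in the final computation comes out correctly. Everything else is a direct application of results already established above.
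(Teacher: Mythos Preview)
Your proposal is correct and follows essentially the same route as the paper's proof: for the first identity you use the concurrent homomorphism $(\varepsilon_B,\varepsilon_B)$ from $\ell^1(G,B_\tau)_\lambda$ to the trivial equivalence together with $\varepsilon_B\circ\iota_{B_\tau}=\id$, and for the second you invoke Lemma~\ref{Lemma:TwoWaysTheSame} to replace the descended Morita equivalence by the isomorphism $\Gamma$ and then compute $\varepsilon_{\ell^1(G,A)}\circ\Gamma\circ\ell^1(G,\iota_A)$ directly---exactly as the paper does (the paper calls this composite $\varphi$ and organises the argument via commutative diagrams, but the content is identical).
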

\begin{proof}
Consider the following diagram
$$
\xymatrix{
B_{\tau} \ar[r]^-{\iota_{B_{\tau}}}  \ar@{=}[dr]& \ell^1(G,B_{\tau})_{\kappa} \ar[rr]^-{\ell^1(G,B_{\tau})_{\lambda}} \ar[d]_{\varepsilon_B}& \ar@{=>}[d]|{(\varepsilon_B,\varepsilon_B)}& \ell^1(G,B_{\tau})_{\tau} \ar[d]^{\varepsilon_B} \ar[r]^-{\varepsilon_B} & B_{\tau}\\ 
&  B_{\tau} \ar[rr]_{(B_{\tau} ,B_{\tau} )}&&B_{\tau} \ar@{=}[ur]
}
$$
The top horizontal line represents the composition $(\epsilon_B)_{\tau} \circ \eta_{B_{\tau}}$ in $\MoritabanG$. It is easy to see that the left-hand triangle commutes. The right-hand triangle commutes trivially. The central square has the trivial Morita equivalence between $B_{\tau}$ and itself at the bottom, i.e., the Morita equivalence given by $B_{\tau}$ itself that represents the unit morphism in $\MoritabanG(B_{\tau}, B_{\tau})$. The summation homomorphism $\varepsilon_B$ induces a concurrent homomorphism of $G$-Morita equivalences from $\ell^1(G,B_{\tau})_{\lambda}$ to $(B_{\tau} ,B_{\tau} )$, so the central square commutes by Lemma \ref{Lemma:ConcurrentEquivalences:Equivariant}. Hence the composition $(\epsilon_B)_{\tau} \circ \eta_{B_{\tau}}$ is equal to the threefold composition of the identity morphism $\id_{B_{\tau}} \in  \MoritabanG(B_{\tau}, B_{\tau})$ so we have shown the first unit-co-unit equality.

To see that the second equation is true, consider the following diagram:
$$
\xymatrix{
\ell^1(G,A) \ar[r]^-{\ell^1(G,\iota_A)}  \ar[dr]_{\varphi}& \ell^1(G,\ell^1(G,A)_{\kappa}) \ar[rrrr]^-{\ell^1(G,\ell^1(G,A)_{\lambda})} \ar[d]_{\Gamma}&& \ar@{=>}[d]|{(\Gamma,\id)}&& \ell^1(G,\ell^1(G,A)_{\tau}) \ar@{=}[d] \ar[r]^-{\varepsilon_{\ell^1(G,A)}} & \ell^1(G,A)\\ 
&  \ell^1(G,\ell^1(G,A)_{\tau})  \ar[rrrr]_{\ell^1(G,\ell^1(G,A)_{\tau})}&&&&\ell^1(G,\ell^1(G,A)_{\tau}) \ar[ur]_{\varepsilon_{\ell^1(G,A)}}
}
$$
Note that the top row represents the morphism $\varepsilon_{\ell^1(G,A)} \circ \ell^1(G, \eta_A)$.

The homomorphism $\Gamma$ is defined as in Lemma~\ref{Lemma:TwoWaysTheSame}; the same lemma shows that the central square commutes. The homomorphism $\varphi$ is defined so that the left-hand triangle commutes. This means that 
$$
\varphi(f)(t)(s) = \begin{cases}f(t) & \text{ if } t=s,\\ 0 & \text{ else,}\end{cases} \quad \forall f\in \ell^1(G,A), t,s\in G.
$$
The right-hand triangle commutes trivially. So the morphism $\varepsilon_{\ell^1(G,A)} \circ \ell^1(G, \eta_A)$ is the same as the composition $\varepsilon_{\ell^1(G,A)} \circ \varphi$. This homomorphism can be calculated explicitly as follows: 
$$
(\varepsilon_{\ell^1(G,A)} \circ \varphi)(f)(s) = \sum_{t\in G} \varphi(f)(t)(s) = f(s) 
$$
for all $f\in \ell^1(G,A)$ and $s\in G$. So $\varepsilon_{\ell^1(G,A)} \circ \varphi= \id_{\ell^1(G,A)}$.
\end{proof}

\begin{proof}[Proof of Theorem~\ref{Theorem:DualGreenJulg}]
Given that the unit-co-unit equations hold for $\eta$ and $\varepsilon$, the only thing that remains to be checked is that $\eta$ and $\varepsilon$ are natural transformations, where naturality is with respect to $\kk$-elements rather than just homomorphisms of Banach algebras.

Note that, however, it suffices to check naturality only for homomorphisms of Banach algebras because every morphism in $\kkban$ and $\kkbanG$ can be written as the composition of a homomorphism and the inverse of a homomorphism; at least this is true up to suspension, but note that the following diagram commutes for every $G$-Banach algebra $A$:
$$
\xymatrix{
\Sigma A \ar[rr]^{\iota_{\Sigma A}} \ar[rrd]_{\Sigma \iota_A}& & \ell^1(G,\Sigma A)_{\kappa} \ar[d] \ar[rr]^{\ell^1(G,\Sigma A)_{\lambda}}&& \ell^1(G,\Sigma A)_{\tau} \ar[d] \\
&& \Sigma \ell^1(G,A)_{\kappa} \ar[rr]^{\Sigma \ell^1(G,A)} && \Sigma \ell^1(G,A)_{\tau} 
}
$$
Here, the vertical homomorphisms are the canonical maps that are isomorphisms in $\kkban$. So $\eta_{\Sigma A}$ can be identified with $\Sigma \eta_A$. The same is true for $\varepsilon_{\Sigma B}$ and $\Sigma \varepsilon_B$.

The naturality of $\varepsilon$ for homomorphisms is straighforward. To see that $\eta$ is natural, let $\varphi\colon A \to A'$ be a $G$-equivariant homomorphism of Banach algebras. Consider the following diagram
$$
\xymatrix{
\eta_A\colon &A \ar[r]^-{\iota_A} \ar[d]_{\varphi} & \ell^1(G,A)_{\kappa} \ar[rr]^-{\ell^1(G,A)_{\lambda}} \ar[d]_{\ell^1(G,\varphi)_{\kappa}}& \ar@{=>}[d]|{\ell^1(G,\varphi)_{\lambda}}& \ell^1(G,A)_{\tau} \ar[d]^{\ell^1(G,\varphi)_{\tau}}\\ 
\eta_{A'}\colon &A' \ar[r]_-{\iota_{A'}} & \ell^1(G,A')_{\kappa} \ar[rr]_-{\ell^1(G,A')_{\lambda}} && \ell^1(G,A')_{\tau}\\ 
}
$$
The vertical arrows $\ell^1(G,\varphi)_{?}$ are just $\ell^1(G,\varphi)$ in the case $?=\kappa, \tau$ and the concurrent homomorphism $(\ell^1(G,\varphi), \ell^1(G,\varphi))$ in the case $?=\lambda$. That the left-hand square commutes is obvious. Because $\ell^1(G,\varphi)_{\lambda}$ is an equivariant concurrent homomorphism between $G$-Morita equivalences, it follows from Lemma \ref{Lemma:ConcurrentEquivalences:Equivariant} that also the right-hand square commutes on the level of $\MoritabanG$. So $\eta$ is natural for homomorphisms.
\end{proof}

\section{The Green-Julg theorem}

\begin{theorem}[The Green-Julg theorem for $\kkban$]\label{Theorem:GreenJulg}
Let $G$ be a compact Hausdorff group. Then
$$
\kkbanG(A_{\tau}, B) \cong \kkban(A, \Leb^1(G,B)),
$$
naturally, where $A$ is a Banach algebra (denoted by $A_{\tau}$ when equipped with trivial $G$-action) and $B$ is a $G$-Banach algebra.
\end{theorem}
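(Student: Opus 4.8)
The plan is to recognise the asserted natural isomorphism as the adjunction isomorphism of an adjoint pair of functors and then verify the two triangle identities, exactly in the style of the proof of Theorem~\ref{Theorem:DualGreenJulg}. Let $L$ be the ``trivial $G$-action'' functor $A\mapsto A_{\tau}$ from $\kkbancat$ to $\kkbanGcat$; on the level of Banach algebras it sends homotopies, Morita equivalences and semi-split extensions to their $G$-equivariant counterparts, so it is homotopy invariant, Morita invariant and half-exact and hence, by the universal property of $\kkban$ from \cite{Paravicini:14:kkban:arxiv}, descends to a functor $\kkbancat\to\kkbanGcat$. Let $R$ be the descent functor $B\mapsto\Leb^1(G,B)$ from $\kkbanGcat$ to $\kkbancat$, which exists by Proposition~\ref{Proposition:DescentOnkkbanG} applied to the unconditional completion $\mathcal A(G)=\Leb^1(G)$. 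The theorem is the statement $L\dashv R$; so I have to produce a unit $\eta_A\in\kkban\bigl(A,\Leb^1(G,A_{\tau})\bigr)$ and a co-unit $\varepsilon_B\in\kkbanG\bigl(\Leb^1(G,B)_{\tau},B\bigr)$, verify $\varepsilon_{A_{\tau}}\circ(\eta_A)_{\tau}=\id_{A_{\tau}}$ in $\kkbanG$ and $\Leb^1(G,\varepsilon_B)\circ\eta_{\Leb^1(G,B)}=\id_{\Leb^1(G,B)}$ in $\kkban$, and finally check naturality with respect to $\kk$-morphisms. As in the dual case, both $\eta$ and $\varepsilon$ will be Morita morphisms, so the two identities may be checked already in $\MoritabanG$ and $\Moritaban$, which map canonically to $\kkbanG$ and $\kkban$ by Theorem~\ref{Theorem:LiftToMoritabanG}.

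The unit is a homomorphism and is the only place where the compactness of $G$ enters: normalising the Haar measure of $G$ to total mass $1$, the constant function $\mathbf{1}_G\in\Cont(G)$ becomes an idempotent of $\Leb^1(G)$, and since the convolution of $\Leb^1(G,A_{\tau})$ carries no twist (the action on $A$ being trivial) the constant embedding
$$
\eta_A\colon A\longrightarrow\Leb^1(G,A_{\tau}),\qquad a\longmapsto\bigl[\,s\mapsto a\,\bigr],
$$
is a homomorphism of Banach algebras, manifestly natural in $A$. The co-unit is built, dually to the unit $\ell^1(G,A)_{\lambda}\circ\iota_A$ of the dual Green-Julg theorem, as a homomorphism followed by a Morita equivalence. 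Set $j_B\colon\Leb^1(G,B)_{\tau}\to\Leb^1(G\ltimes G,B)$, $f\mapsto\bigl[(s,t)\mapsto f(s)\bigr]$: comparing with the convolution of Section~\ref{Subsection:EquivariantExtensions} one sees $j_B(f_1)*j_B(f_2)=j_B(f_1*f_2)$, and $j_B$ is $G$-equivariant because its image consists of $G$-fixed elements (the $G$-action on $\Leb^1(G\ltimes G,B)$ only moves the second coordinate) while its source carries the trivial action. Composing $\MoritabanG(j_B)$ with the $G$-equivariant Morita equivalence between $\Leb^1(G\ltimes G,B)$ and $B$ of Proposition~\ref{Proposition:MoritaEquivalenceLeb1} and passing to $\kkbanG$ yields $\varepsilon_B$.

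For the first triangle identity I would stay in $\MoritabanG$. Unravelling the definitions, $\varepsilon_{A_{\tau}}\circ(\eta_A)_{\tau}$ equals the Morita equivalence of Proposition~\ref{Proposition:MoritaEquivalenceLeb1} precomposed with the homomorphism $A\to\Leb^1(G\ltimes G,A)$, $a\mapsto\bigl[(s,t)\mapsto a\bigr]$. The $G$-equivariant concurrent homomorphism from the trivial equivalence $(A,A)$ to $(\Leb^1(G,A),\Cont_0(G,A))$ given by $\Phi^{>}(a):=[\,t\mapsto a\,]$ and $\Phi^{<}(a):=[\,s\mapsto a\,]$, with coefficient maps $a\mapsto[(s,t)\mapsto a]$ on the left and $\id_A$ on the right, then shows by Lemma~\ref{Lemma:ConcurrentEquivalences:Equivariant} that this composite is $\id_{A_{\tau}}$. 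Checking the concurrent-homomorphism axioms is a short computation which uses that $A_{\tau}$ carries the trivial action and that the Haar mass is $1$.

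The second triangle identity is where I expect the main obstacle. Here one must compute, in the plain Morita category $\Moritaban$, the composite of the homomorphism $\Leb^1(G,B)\to\Leb^1(G,\Leb^1(G\ltimes G,B))$, $f\mapsto\bigl[u\mapsto[(s,t)\mapsto f(s)]\bigr]$ (which is $\Leb^1(G,j_B)\circ\eta_{\Leb^1(G,B)}$), with the descended Morita equivalence between $\Leb^1(G,\Leb^1(G\ltimes G,B))$ and $\Leb^1(G,B)$ obtained from Proposition~\ref{Proposition:MoritaEquivalenceLeb1} through Proposition~\ref{Proposition:Descent:Morita}, and show that the result is $\id_{\Leb^1(G,B)}$. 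The plan is to write out the operations of the descended pair explicitly and to exhibit a concurrent homomorphism from the trivial equivalence $(\Leb^1(G,B),\Leb^1(G,B))$ to it with coefficient maps the above homomorphism and $\id_{\Leb^1(G,B)}$, then to invoke the (non-equivariant) concurrence lemma; this is the analogue of the explicit-summation step in the proof of Theorem~\ref{Theorem:DualGreenJulg}, but with an additional integration over the second copy of $G$, and it is here that compactness of $G$ (finiteness and conjugation-invariance of the Haar measure, so that the relevant Fubini rearrangements are legitimate) is really used. Once both triangle identities hold, naturality with respect to $\kk$-morphisms follows exactly as in the proof of Theorem~\ref{Theorem:DualGreenJulg}: it is enough to check naturality for homomorphisms, since every morphism of $\kkban$ and of $\kkbanG$ is, up to suspension, a composite of a homomorphism and the inverse of a homomorphism, and one records the commuting square identifying $\eta_{\Sigma A}$ with $\Sigma\eta_A$ and $\varepsilon_{\Sigma B}$ with $\Sigma\varepsilon_B$.
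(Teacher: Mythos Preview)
Your approach is essentially the paper's: same adjunction, same unit $\eta_A$ (constant embedding), same co-unit $\varepsilon_B=j_B$ followed by the Morita equivalence of Proposition~\ref{Proposition:MoritaEquivalenceLeb1}, same concurrent-homomorphism argument for the first triangle identity, and the same reduction of naturality to homomorphisms up to suspension.

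There is one technical point in your plan for the second triangle identity that will not work as stated. You propose to produce a concurrent homomorphism from the \emph{trivial} equivalence $(\Leb^1(G,B),\Leb^1(G,B))$ into the descended pair $\bigl(\Leb^1(G,\Leb^1(G,B)),\,\Leb^1(G,\Cont(G,B))\bigr)$. On the ket side this would require a bounded map $\Leb^1(G,B)\to\Leb^1(G,\Cont(G,B))$ compatible with the module structures, but an $\Leb^1$-class $\beta$ does not naturally produce elements of $\Cont(G,B)$. The paper circumvents this by taking as source the asymmetric pair $(\Leb^1(G,B),\Cont(G,B))$, where all operations are convolution; this pair still represents $\id_{\Leb^1(G,B)}$ because the inclusion $\Cont(G,B)\hookrightarrow\Leb^1(G,B)$ yields a concurrent homomorphism to the trivial equivalence. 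From this asymmetric source the concurrent homomorphism is explicit:
\[
\Phi_B^{>}(\beta^{>})(t)(s)=\beta^{>}(st),\qquad \Phi_B^{<}(\beta^{<})(t)(s)=\beta^{<}(s),
\]
for $\beta^{>}\in\Cont(G,B)$ and $\beta^{<}\in\Leb^1(G,B)$; boundedness of $\Phi_B^{>}$ uses only that $G$ has Haar mass $1$, and the coefficient maps are $\phi_B=\Leb^1(G,j_B)\circ\eta_{\Leb^1(G,B)}$ and $\id_{\Leb^1(G,B)}$. With this adjustment your sketch becomes the paper's proof.
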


In other words, the functor $B \mapsto \Leb^1(G,B)$ is right-adjoint to the functor $A \mapsto A_{\tau}$. We give the unit and the co-unit of this adjunction:

Let $A$ be a Banach algebra. Define 
$$
\eta_A\colon A \to \Leb^1(G, A_{\tau}), a \mapsto (t \mapsto a).
$$
Note that the constant function $t\mapsto a$ is in $\Cont(G,A)$ and hence in $\Leb^1(G,A)$ for every $a\in A$. It is easy to see that $\eta_A$ is a homomorphism of Banach algebras of norm $\leq 1$. 

Now let $B$ be a $G$-Banach algebra. We have seen above that $B$ is $G$-equivariantly Morita equivalent to $\Leb^1(G\ltimes G, B)$ via the Morita equivalence $(\Leb^1(G,B), \Cont(G,B))$ (we will denote this Morita equivalence just by $\Cont(G,B)$). Consider the $G$-equivariant homomorphism of Banach algebras 
$$
j_B \colon \Leb^1(G,B)_{\tau} \to \Leb^1(G\ltimes G, B), \ f \mapsto \left[(s,t) \mapsto f(s)\right].
$$
We define $\varepsilon_B\in \MoritabanG(\Leb^1(G,B)_{\tau}, B)$ as the composition
$$
\xymatrix{
\varepsilon_B\colon & \Leb^1(G,B)_{\tau} \ar[r]^-{j_B} & \Leb^1(G\ltimes G, B) \ar[rr]^-{\Cont_0(G,B)}_-{\cong} && B. 
} 
$$

As in the proof of Theorem \ref{Theorem:DualGreenJulg} the unit and the co-unit are compatible with the suspension and natural for homomorphisms of Banach algebras. So they are natural with respect to $\rkkban$-elements as well, and what is left to check is that they satisfy the unit-co-unit equations in $\rkkban$. As above, the equations are already true on the level of Morita morphisms:

\begin{proposition}
The following (unit-co-unit-) equations hold in $\MoritabanG$ and in $\Moritaban$, respectively: For all Banach algebras $A$ and all $G$-Banach algebras $B$, we have
$$
\varepsilon_{A_{\tau}} \circ (\eta_A)_{\tau} \ = \ \id_{A_{\tau}}
$$
and
$$
\Leb^1(G, \varepsilon_B) \circ \eta_{\Leb^1(G,B)} \ = \ \id_{\Leb^1(G,B)}.
$$
\end{proposition}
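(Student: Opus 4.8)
Throughout, the plan is to verify the two triangle identities of the adjunction separately, each by the same diagram-chase technique that was used for Theorem~\ref{Theorem:DualGreenJulg}: one side of the equation is written as a composite of an honest Banach algebra homomorphism with a Morita equivalence, a concurrent homomorphism whose \emph{source} is an identity Morita equivalence is glued in, and the conclusion is extracted from Lemma~\ref{Lemma:ConcurrentEquivalences:Equivariant} in the first case and from its non-equivariant analogue (Lemma~1.19 of \cite{Paravicini:14:kkban:arxiv}) in the second. One works with the normalised Haar measure on the compact group $G$, so that constant functions lie in the relevant $\Leb^1$- and $\Cont$-completions and interact transparently with the convolution formulas.

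For the first equation, unravelling $\varepsilon_{A_\tau}$ shows that $\varepsilon_{A_\tau}\circ(\eta_A)_\tau$ is the composite
$$
A_\tau \xrightarrow{(\eta_A)_\tau} \Leb^1(G,A_\tau)_\tau \xrightarrow{j_{A_\tau}} \Leb^1(G\ltimes G, A_\tau) \xrightarrow{\Cont(G,A_\tau)} A_\tau
$$
in $\MoritabanGcat$, the last arrow being the $G$-Morita equivalence $(\Leb^1(G,A),\Cont(G,A))$ of Proposition~\ref{Proposition:MoritaEquivalenceLeb1}. The two honest homomorphisms on the left compose to the $G$-equivariant map $c_A\colon A_\tau\to\Leb^1(G\ltimes G,A_\tau)$, $a\mapsto[(s,t)\mapsto a]$, so it suffices to show $\Cont(G,A_\tau)\circ c_A=\id_{A_\tau}$ in $\MoritabanG$. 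I would produce a $G$-equivariant concurrent homomorphism from the identity $G$-Morita equivalence $(A_\tau,A_\tau)$ (from $A_\tau$ to $A_\tau$) to $\Cont(G,A_\tau)$ (from $\Leb^1(G\ltimes G,A_\tau)$ to $A_\tau$) with left coefficient map $c_A$ and right coefficient map $\id_{A_\tau}$; the module maps send an element $a$ of a module $A$ of the identity equivalence to the constant function $[t\mapsto a]$ in the corresponding module ($\Leb^1(G,A)$ or $\Cont(G,A)$) of $\Cont(G,A_\tau)$. The computation that makes this go through is that the operations of Proposition~\ref{Proposition:MoritaEquivalenceLeb1} — e.g.\ $(\alpha\cdot\xi^>)(t)=\int_G\alpha(r,t)\,r\xi^>(r^{-1}t)\rmd r$ — collapse, for $\alpha$ or $\xi^<,\xi^>$ constant in $t$ and the trivial $G$-action on $A_\tau$, exactly to the structure maps of the identity equivalence. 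Once the remaining compatibilities (both module actions, both inner products) are checked, Lemma~\ref{Lemma:ConcurrentEquivalences:Equivariant} gives $\id_{A_\tau}\circ\id_{A_\tau}=\Cont(G,A_\tau)\circ c_A$, which is the claim.

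For the second equation I would proceed analogously one level up, after applying the descent. Using $\varepsilon_B=\MoritabanG(\Cont(G,B))\circ\MoritabanG(j_B)$ and the functoriality of $\Leb^1(G,\cdot)\colon\MoritabanGcat\to\Moritabancat$ from Proposition~\ref{Proposition:Descent:Morita} (which sends $G$-Morita equivalences to Morita equivalences and honest homomorphisms to honest homomorphisms), one obtains $\Leb^1(G,\varepsilon_B)=\Leb^1(G,\Cont(G,B))\circ\Leb^1(G,j_B)$. Composing with $\eta_{\Leb^1(G,B)}$ and collapsing the two homomorphisms yields an honest homomorphism $h:=\Leb^1(G,j_B)\circ\eta_{\Leb^1(G,B)}\colon\Leb^1(G,B)\to\Leb^1(G,\Leb^1(G\ltimes G,B))$, namely $f\mapsto[t\mapsto j_B(f)]$, i.e.\ $f\mapsto[(t,(s,u))\mapsto f(s)]$; a short calculation confirms that $h$ is a homomorphism. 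It then suffices to exhibit a (non-equivariant) concurrent homomorphism from the identity Morita equivalence on $\Leb^1(G,B)$ to the descended Morita equivalence $\Leb^1(G,\Cont(G,B))$ — whose module structure is the one coming from the linking-algebra description in the proof of Proposition~\ref{Proposition:Descent:Morita} — with left coefficient map $h$ and right coefficient map $\id_{\Leb^1(G,B)}$, the module maps again being ``constant in the new $G$-variable''. Lemma~1.19 of \cite{Paravicini:14:kkban:arxiv} then gives $\Leb^1(G,\Cont(G,B))\circ\Moritaban(h)=\id_{\Leb^1(G,B)}$, hence $\Leb^1(G,\varepsilon_B)\circ\eta_{\Leb^1(G,B)}=\id_{\Leb^1(G,B)}$.

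The hard part is the verification of the second concurrent homomorphism: one must track how the descended module actions, the twisted convolution on $\Leb^1(G\ltimes G,B)$, the two families of convolution operations of Proposition~\ref{Proposition:MoritaEquivalenceLeb1}, and the genuine (non-trivial) $G$-action on $B$ interact after the outer crossed-product construction has been applied, and confirm that the constant-function module maps intertwine all of it. None of this is conceptually difficult, but it is exactly the place where the bookkeeping is genuinely delicate, just as in the corresponding step of the proof of Theorem~\ref{Theorem:DualGreenJulg}. The suspension-compatibility and the naturality of $\eta$ and $\varepsilon$ are not needed for this proposition; they are dealt with in the proof of the theorem itself.
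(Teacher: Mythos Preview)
Your treatment of the first equation is essentially the paper's: compose $(\eta_A)_\tau$ and $j_{A_\tau}$ to the constant-function map $A_\tau\to\Leb^1(G\ltimes G,A_\tau)$, then produce a concurrent homomorphism from the identity equivalence $(A_\tau,A_\tau)$ to $(\Leb^1(G,A),\Cont(G,A))$ by inclusion as constant functions. The paper cites the non-equivariant Lemma~1.19 of \cite{Paravicini:14:kkban:arxiv} rather than Lemma~\ref{Lemma:ConcurrentEquivalences:Equivariant}, but this is cosmetic.

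For the second equation your sketch diverges from the paper in a way that matters. The paper does \emph{not} take the identity equivalence $(\Leb^1(G,B),\Leb^1(G,B))$ as the source of the concurrent homomorphism; it uses the pair $(\Leb^1(G,B),\Cont(G,B))$ with convolution operations, and argues separately (via another concurrent homomorphism) that this pair represents the identity in $\Moritaban$. More importantly, the paper's module map on the $>$-side is
\[
\Phi_B^>(\beta^>)(t)(s)=\beta^>(st),
\]
which is a shift, not a constant extension; only the $<$-side map $\Phi_B^<(\beta^<)(t)(s)=\beta^<(s)$ is constant in $t$. Your proposed ``constant in the new $G$-variable'' map on the $>$-side fails for two reasons: a generic element of $\Leb^1(G,B)$ does not lie in $\Cont(G,B)$, so the map does not even land in $\Leb^1(G,\Cont(G,B))$; and even on $\Cont(G,B)$, the $G$-action on the right module $\Cont_0(G,B)$ in Proposition~\ref{Proposition:MoritaEquivalenceLeb1} is $(u\xi^>)(t)=\xi^>(tu)$, so after descent the right $\Leb^1(G,B)$-action on $\Leb^1(G,\Cont(G,B))$ twists by this shift, and a constant-in-$t$ map cannot intertwine it with plain convolution on the source. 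The shift in $\Phi_B^>$ is exactly what absorbs this twist. So your outline is correct in spirit, but the specific ansatz you write down for the concurrent homomorphism is not the one that works; you would discover this as soon as you attempted the ``genuinely delicate'' verification you flag.
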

\begin{proof}
For the first equation, Let $A$ be a Banach algebra and consider the homomorphism $\kappa_{A}:=j_{A_{\tau}}\circ (\eta_{A})_{\tau}$ from $A_{\tau}$ to  $\Leb^1(G\ltimes G,A_{\tau})$. It maps an element $a\in A$ to the function $(s,t) \mapsto a$ in $\Cont(G\ltimes G, A)\subseteq \Leb^1(G\ltimes G, A_{\tau})$. Let $K_A= (K_A^<, K_A^>)$ be the concurrent homomorphism from the Morita equivalence $(A_{\tau}, A_{\tau})$ between $A_{\tau}$ and itself to the Morita equivalence $\Cont(G,A)=(\Leb^1(G,A), \Cont(G,A))$ given by the inclusion as constant functions, i.e., $(K_A^>(a))(t):=a$ and $(K_A^<(a))(t):=a$ for all $a\in A$ and $t\in G$. It has coefficient maps $\kappa_A$ and $\id_A$. This situation is summarised in the diagram
$$
\xymatrix{
\Leb^1(G\ltimes G,A_{\tau}) \ar[rrr]^{ \Cont(G,A_{\tau})} & && A_{\tau} \ar@{=}[d]  \\ 
A_{\tau} \ar[rrr]_{(A_{\tau} ,A_{\tau} )} \ar[u]^-{\kappa_{A}} &\ar@{=>}[u]|{(K_A^<,K_A^>)}&& A_{\tau} 
}
$$
By \cite{Paravicini:14:kkban:arxiv}, Lemma~1.19, this implies that $\varepsilon_{A_{\tau}} \circ (\eta_A)_{\tau}$ is, in $\MoritabanG(A_{\tau}. A_{\tau})$, identical to $\id_{A_{\tau}} \circ [(A_{\tau},A_{\tau})] = \id_{A_{\tau}}$.

Now let $B$ be a $G$-Banach algebra. Consider the following diagram:
$$
\xymatrix{
\Leb^1(G, \Leb^1(G\ltimes G, B)) \ar[rrr]^{ \Leb^1(G,\ \Cont(G,B))} & && \Leb^1(G,B) \ar@{=}[d]  \\ 
\Leb^1(G,B) \ar[rrr]_{(\Leb^1(G,B) ,\ \Cont(G,B) )} \ar[u]^-{\phi_B} &\ar@{=>}[u]|{(\Phi_B^<,\Phi_B^>)}&& \Leb^1(G,B) 
}
$$
Here, the homomorphism $\phi$ is the composition $\phi=\Leb^1(G,j_B) \circ \eta_{\Leb^1(G,B)}$ so the composition of the left-hand vertical and upper horizontal arrow is equal to $\Leb^1(G, \varepsilon_B) \circ \eta_{\Leb^1(G,B)}$. The lower horizontal Morita morphism is in fact given by a Morita equivalence. As a pair, it is given by $(\Leb^1(G,B) ,\ \Cont(G,B))$; all operations between elements of $\Leb^1(G,B)$ and $\Cont(G,B)$ are given by convolution. This Morita equivalence induces the identity morphism from $\Leb^1(G,B)$ to $\Leb^1(G,B)$ in the Morita category because there is a canonical concurrent homomorphism from it to the pair $(\Leb^1(G,B), \Leb^1(G,B))$. So what is left to show is that the above diagram is commutative. We do this by producing a concurrent homomorphism $\Phi_B=(\Phi_B^<, \Phi^>_B)$ from $(\Leb^1(G,B) ,\ \Cont(G,B))$ to what we call $\Leb^1(G,\ \Cont(G,B))$ and by which we mean the pair $\left(\Leb^1(G,\ \Leb^1(G,B)), \Leb^1(G,\ \Cont(G,B))\right)$. It is defined as follows:
$$
\Phi_B^>\colon \Cont(G,B) \to \Leb^1(G,\ \Cont(G,B)), \ \beta^> \mapsto \left(t \mapsto \left[ s \mapsto \beta^>(st)\right]\right)
$$
and
$$
\Phi_B^<\colon \Leb^1(G,B) \to \Leb^1(G,\ \Leb^1(G,B)), \ \beta^< \mapsto \left(t \mapsto \left[ s \mapsto \beta^<(s)\right]\right).
$$
Direct calculations show that $\Phi$ is a concurrent homomorphism with coefficient maps $\phi$ and $\id_{\Leb^1(G,B)}$, so \cite{Paravicini:14:kkban:arxiv}, Lemma~1.19, shows that the above diagram commutes. 
\end{proof}

\section{A $\Cont_0(X)$-linear theory}

\subsection{Definition and basic properties}

We can define a theory
$$
\rRkk^{\ban}_G(X; A,B)
$$
where $G$ is a locally compact Hausdorff group, $X$ is a locally compact Hausdorff $G$-space and $A$ and $B$ are $G$-$\Cont_0(X)$-Banach algebras, i.e., $G$-Banach algebras that are also non-degenerate Banach modules over $\Cont_0(X)$ such that all structures are compatible, compare \cite{Paravicini:07,  Paravicini:10:GreenJulg:erschienen, Paravicini:10:SpectralRadius:erschienen}. In particular, $G$-$\Cont_0(X)$-C$^*$-algebras are algebras of this kind.

The definition of the above theory is completely parallel to the definition of $\kk^{\ban}_G$, you just replace all $G$-Banach algebras by $G$-$\Cont_0(X)$-Banach algebras, all continuous linear maps by $\Cont_0(X)$-linear continuous linear maps etc. In particular, Morita equivalences are given by bimodules that are also non-degenerate Banach modules over $\Cont_0(X)$ and such that all operations are compatible. 

If $G$ is trivial then we write $\rRkkban(X; A, B)$ instead of $\rRkk^{\ban}_G(X; A,B)$.

There is a $\Cont_0(X)$-linear version of the universal property and there is a comparison map from $\RKKbanG$ to $\rRkk^{\ban}_G$ just as for $\KKbanG$ and $\kkbanG$. 

In addition, there is an obvious forgetful functor $\mF_X$ from $\rRkkbanG$ to $\kkbanG$; it is a triangulated functor. In formulas, it is often omitted  to avoid clumsy notation. 

Conversely, there is a canonical triangulated functor in the opposite direction, namely the functor $B \mapsto B \otimes \Cont_0(X)$  and $\varphi \mapsto \varphi \otimes \id_{\Cont_0(X)}$. Both functors can be constructed from the universal properties of $\rRkkbanG$ and $\kkbanG$. In fact, they are adjoint functors if $X$ is compact:

\begin{theorem}\label{Theorem:RkkbanForCompactSpace}
Let $X$ be a compact Hausdorff space. Let $A$ be a $G$-Banach algebra and let $B$ be a $G$-$\Cont(X)$-Banach algebra. Then
$$
\rRkkbanG(X; A \otimes \Cont(X), B) \cong \kkbanG(A,B),
$$
naturally. The isomorphism is given by the forgetful map
$$
\rRkkbanG(X; A \otimes \Cont(X), B) \to \kkbanG(A\otimes \Cont(X),B)
$$
followed by the left multiplication with the homomorphism $a \mapsto a \otimes 1$ from $A$ to $A\otimes \Cont(X)$.
\end{theorem}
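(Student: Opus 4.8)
The plan is to realise the claimed isomorphism as the hom-set bijection of an adjunction between the triangulated categories $\kkbanG$ and $\rRkkbanG$, whose left adjoint is the functor $B \mapsto B \otimes \Cont(X)$ and whose right adjoint is the forgetful functor $\mF_X$; this parallels the proofs of Theorems~\ref{Theorem:DualGreenJulg} and~\ref{Theorem:GreenJulg}. Compactness of $X$ enters precisely in making the unit exist: for a $G$-Banach algebra $A$ I would take $\eta_A \colon A \to A \otimes \Cont(X)$, $a \mapsto a \otimes 1$, which needs the unit of $\Cont(X)$. For the co-unit I would take, for a $G$-$\Cont(X)$-Banach algebra $B$, the canonical $G$-equivariant $\Cont(X)$-linear homomorphism $\varepsilon_B \colon \mF_X(B) \otimes \Cont(X) \to B$ which on the dense span of elementary tensors is $\sum_i b_i \otimes f_i \mapsto \sum_i f_i \cdot b_i$, using the $\Cont(X)$-module structure of $B$; in the fibrewise picture of $\Cont(X)$-Banach algebras this is restriction to the diagonal of $X \times X$. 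Both $\eta$ and $\varepsilon$ are honest homomorphisms, hence genuine natural transformations of functors out of $\GBanAlg$, respectively into the category of $G$-$\Cont(X)$-Banach algebras.

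The first thing to verify is that $\varepsilon_B$ is well defined, i.e. a contractive homomorphism of $G$-$\Cont(X)$-Banach algebras for an arbitrary $B$; here one uses the identification $\mF_X(B) \otimes \Cont(X) \cong \Cont(X, \mF_X(B))$ together with the fact that the fibre-quotient maps of $B$ over $X$ are $\Cont(X)$-algebra homomorphisms and that the $G$-action on $B$ is compatible with the $\Cont(X)$-structure. Next I would check the two triangle identities, which hold already at the level of honest homomorphisms: $\varepsilon_{A \otimes \Cont(X)} \circ (\eta_A \otimes \id_{\Cont(X)}) = \id_{A \otimes \Cont(X)}$, because $\eta_A \otimes \id_{\Cont(X)}$ produces functions on $X \times X$ that are constant in one coordinate and $\varepsilon$ restricts to the diagonal; and $\mF_X(\varepsilon_B) \circ \eta_{\mF_X(B)} = \id_{\mF_X(B)}$, because $\eta_{\mF_X(B)}$ sends $b$ to the constant function $x \mapsto b$ and $\varepsilon_B$ then evaluates on the diagonal, returning $b$. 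Both are short, purely formal computations.

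For naturality with respect to morphisms in $\kkbanG$ and $\rRkkbanG$ — not just homomorphisms — I would appeal to the universal properties (Theorem~\ref{Theorem:UniversalProperty:Equivariant} and its $\Cont_0(X)$-linear analogue) exactly as in the proof of Theorem~\ref{Theorem:DualGreenJulg}: the two functors $\can$ and $\mF_X \circ (-\otimes\Cont(X)) \circ \can$ on $\GBanAlg$ both factor through $\kkbanG$ (the second being the restriction along $\can$ of the triangulated functor $\mF_X \circ (-\otimes\Cont(X))$ already available on $\kkbanG$), so the natural transformation $\eta$ between them stays natural for all $\kkbanG$-morphisms; the same argument on the $\Cont(X)$-linear side shows that $\varepsilon$ stays natural for all $\rRkkbanG$-morphisms. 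The compatibility with the suspensions that was used in the earlier proofs is subsumed by this.

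Granting naturality and the triangle identities, the standard adjunction formalism yields mutually inverse natural bijections $\phi \mapsto \mF_X(\phi) \circ \eta_A$ and $\chi \mapsto \varepsilon_B \circ (\chi \otimes \id_{\Cont(X)})$ between $\rRkkbanG(X; A \otimes \Cont(X), B)$ and $\kkbanG(A, B)$; the first of these is precisely the forgetful map followed by left multiplication with the homomorphism $a \mapsto a \otimes 1$, as asserted. The main obstacle I anticipate is the routine-but-fiddly verification that $\varepsilon_B$ is a genuine $G$-equivariant $\Cont(X)$-linear homomorphism for an arbitrary $B$ and that the identifications of the various tensor products are correct; once this bookkeeping is in place, the triangle identities and the naturality argument are entirely formal.
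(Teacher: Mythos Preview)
Your approach is essentially identical to the paper's: the paper also proves the adjunction by writing down the same unit $\eta_A\colon a\mapsto a\otimes 1$ and counit $\varepsilon_B\colon b\otimes\chi\mapsto b\chi$ (though it swaps the names ``unit'' and ``counit''), and then checks the two triangle identities by the same one-line computations you outline. Your extra care about naturality with respect to all $\kkbanG$-morphisms and about the well-definedness of $\varepsilon_B$ is more explicit than the paper's treatment, but not a different route.
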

\begin{proof}
We give the unit and the counit of this adjunction. The unit $\varepsilon_B$ for a $G$-$\Cont(X)$-Banach algebra $B$ is given by
$$
\varepsilon_B \colon B \otimes \Cont(X) \to B, \ b \otimes \chi \mapsto b \chi.
$$
And the counit $\eta_A$ for a $G$-Banach algebra $A$ is given by 
$$
\eta_A \colon A \to A \otimes \Cont(X), \ a \mapsto a \otimes 1.
$$
We check the counit-unit equations: Let $B$ be a $G$-$\Cont(X)$-Banach algebra. Then
$$
\left[\mF_X(\varepsilon_B) \circ \eta_{\mF_X(B)}\right] (b) =\varepsilon_B (b \otimes 1) = b\cdot 1 = b
$$
for all $b\in B$. So $\mF_X(\varepsilon_B) \circ \eta_{\mF_X(B)}= \id_{\mF_X(B)}$.

Now let $A$ be a $G$-$\Cont(X)$-Banach algebra. Then
$$
\left[\varepsilon_{A \otimes \Cont(X)} \circ (\eta_A \otimes \id_{\Cont(X)})\right](a \otimes \chi) = \varepsilon_{A \otimes \Cont(X)} (\eta_A(a) \otimes \chi ) = \varepsilon_{A \otimes \Cont(X)} (a \otimes 1 \otimes \chi ) = a \otimes (1 \chi) = a \otimes \chi
$$
for all $a\in A$ and $\chi \in \Cont(X)$. So $\varepsilon_{A \otimes \Cont(X)} \circ (\eta_A \otimes \id_{\Cont(X)}) = \id_{A\otimes \Cont(X)}$. Thus the theorem is shown.
\end{proof}

\begin{corollary}\label{Corollary:RkkbanAndCompactSpaces}
Let $X$ be a  compact Hausdorff space and let $B$ be a $\Cont(X)$-Banach algebra. Then there is a natural isomorphism
$$
\rRkkban(X; \Cont(X), B) \cong \KTh_0(B).
$$
\end{corollary}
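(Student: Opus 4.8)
The plan is to obtain this corollary by specialising Theorem~\ref{Theorem:RkkbanForCompactSpace} to the trivial group and the ground field, and then invoking the known computation of $\kkban$ against $\C$. Concretely, I would apply Theorem~\ref{Theorem:RkkbanForCompactSpace} with $G$ the trivial group and $A=\C$: for trivial $G$ a $G$-Banach algebra is just a Banach algebra, so $\C$ is admissible, and a $G$-$\Cont(X)$-Banach algebra is just a $\Cont(X)$-Banach algebra, so $B$ is admissible. The theorem then produces a natural isomorphism
$$
\rRkkban(X;\ \C\otimes\Cont(X),\ B)\ \cong\ \kkban(\C,B),
$$
where on the left I use that for trivial $G$ one writes $\rRkkban$ for $\rRkkbanG$, and on the right that $\kkban=\kkbanG$.

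Next I would identify the left-hand side with $\rRkkban(X;\Cont(X),B)$: the canonical isomorphism of Banach algebras $\C\otimes\Cont(X)\cong\Cont(X)$ is $\Cont(X)$-linear and carries the unit homomorphism $\eta_\C\colon\C\to\C\otimes\Cont(X)$, $1\mapsto 1\otimes 1$, to the unital inclusion $\C\to\Cont(X)$, which is exactly the homomorphism occurring in the ``forgetful-then-multiply'' description of the isomorphism of Theorem~\ref{Theorem:RkkbanForCompactSpace}. Hence the displayed isomorphism reads
$$
\rRkkban(X;\Cont(X),B)\ \cong\ \kkban(\C,B).
$$

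It remains to identify $\kkban(\C,B)$ with $\KTh_0(B)$. This is the computation of $\kkban$ in its first variable at the ground field, established in \cite{Paravicini:14:kkban:arxiv}: the functor $B\mapsto\KTh_0(B)$ on Banach algebras is homotopy invariant, Morita invariant and half-exact for semi-split extensions, hence by the universal property it factors through $\kkban$, and pairing with the standard generator $[1]\in\KTh_0(\C)$ yields the natural isomorphism $\kkban(\C,B)\cong\KTh_0(B)$; equivalently one composes the comparison transformation $\KKban(\C,B)\to\kkban(\C,B)$ with Lafforgue's identification of $\KKban(\C,B)$ with topological $\KTh_0(B)$. Composing the two displayed natural isomorphisms gives the claim, and naturality of the composite is inherited from naturality of the factors. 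There is no genuine obstacle here; the only point worth a line of checking is the compatibility of the algebra isomorphism $\C\otimes\Cont(X)\cong\Cont(X)$ with the module structures and with the specific description of the isomorphism in Theorem~\ref{Theorem:RkkbanForCompactSpace}, and everything else is bookkeeping.
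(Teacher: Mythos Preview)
Your proposal is correct and follows exactly the intended derivation: specialise Theorem~\ref{Theorem:RkkbanForCompactSpace} to trivial $G$ and $A=\C$, identify $\C\otimes\Cont(X)$ with $\Cont(X)$, and invoke the identification $\kkban(\C,B)\cong\KTh_0(B)$ from \cite{Paravicini:14:kkban:arxiv}. The paper states the corollary without proof immediately after the theorem, so this is precisely the argument being left to the reader.
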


Note that in \cite{Paravicini:14:kkban:arxiv} there is a corresponding result for $\RKKban$, and in fact, the isomorphisms are compatible via the comparison map. The reason is that, also for non-compact $X$, the forgetful map and also the map that sends $B$ to $B \otimes \Cont_0(X)$ are compatible with the comparison map, i.e., we have commutative diagrams
$$
\xymatrix{
\RKKbanG(\Cont_0(X); A,B) \ar[d] \ar[rr]^-{\mF_X}&& \KKbanG(A,B) \ar[d]\\
\rRkkbanG(\Cont_0(X); A,B) \ar[rr]^-{\mF_X}&& \kkbanG(A,B) \\
}
$$
where $A$ and $B$ are $G$-$\Cont_0(X)$-Banach algebras, and
$$
\xymatrix{
\KKbanG(A,B) \ar[d] \ar[rr]^-{\cdot \otimes \Cont_0(X)}&& \RKKbanG(X;A\otimes \Cont_0(X),B\otimes \Cont_0(X)) \ar[d]\\
\kkbanG( A,B) \ar[rr]^-{\cdot \otimes \Cont_0(X)}&& \rRkkbanG(X;A\otimes \Cont_0(X),B\otimes \Cont_0(X)) \\
}
$$
where $A$ and $B$ are $G$-Banach algebras; all vertical maps are supposed to be the respective comparison maps.

Note that the descent homomorphism from Proposition~\ref{Proposition:DescentOnkkbanG} together with the forgetful functor gives a functorial map
$$
\rRkkbanG(\Cont_0(X); A, B) \to \kkban(\mA(G,A), \mA(G,B))
$$
for every unconditional completion $\mA(G)$ of $\Cont_c(G)$. If $X$ is a proper $G$-space, then we can do sligthly better: the space $X/G$ is itself locally compact Hausdorff in this case and we get a functorial map
$$
\rRkkbanG(\Cont_0(X); A, B) \to \rRkkban(X/G;\mA(G,A), \mA(G,B)).
$$
To see this, you just have to add actions of $\Cont_0(X)$ or $\Cont_0(X/G)$ in the appropriate places in the proof of Proposition~\ref{Proposition:DescentOnkkbanG}. This enhanced descent map is compatible with the comparison map between $\RKKbanG$ and $\rRkkbanG$:
\begin{equation}\label{Equation:Abstiegskompatibilitaet:rRkk0}
\xymatrix{
\RKK^{\ban}_{G}(X;A,B) \ar[rr]^-{\mA(G, \cdot)} \ar[d]_{\rRkkbanG} && \RKK^{\ban}(X/G;\mA(G,A), \mA(G,B))\ar[d]^{\rRkkban} \\
\rRkk^{\ban}_{G}(X;A,B) \ar[rr]^-{\mA(G, \cdot)} && \rRkk^{\ban}(X/G;\mA(G,A), \mA(G,B))
}
\end{equation}You prove this fact, again, just by adding actions of $\Cont_0(X)$ and $\Cont_0(X/G)$ in the proof of Theorem~\ref{Theorem:ComparisonAndDescentCompatible:KK0}.

\subsection{A generalised Green-Julg theorem}

Let $G$ be a locally compact Hausdorff group acting continuously and \emph{properly} on a locally compact Hausdorff space $X$. Assume, moreover, that $X/G$ is $\sigma$-compact. This ensures that the $G$-space $X$ posseses a cut-off function, \cite{Tu:04}:

\begin{definition}\label{Definition:CutOffFunction}
A continuous function $c\colon X\to [0,\infty[$ is called \demph{cut-off function} for the $G$-space $X$ if
\begin{enumerate}
\item $\forall x\in X:\ \int_{G} c(t^{-1}x) \rmd t =1$;
\item the restriction of $c$ to any $G$-compact subset of $X$ has compact support.
\end{enumerate}
\end{definition}

Note that, given a cut-off function $c$ on $X$, there is a continuous function $d \colon X \to [0,\infty[$ such that $\norm{d}_{\infty} = 1$, the restriction of $d$ to any $G$-compact subset of $X$ has compact support and $d\equiv 1$ on the support of $c$, compare Proposition~3.2 of \cite{Paravicini:10:GreenJulg:erschienen}. We are going to use both, the function $c$ and the function $d$ later on.

\begin{theorem}[The Green-Julg Theorem for proper group actions]\label{Theorem:GreenJulg:generalised}
Let $G$ be a locally compact Hausdorff group acting properly and continuously on a locally compact Hausdorff space $X$. Then
$$
\rRkkbanG(X; A_{\tau}, B) \cong \rRkkban(X/G; A, \Leb^1(G,B)),
$$
naturally, where $A$ is a $\Cont_0(X/G)$-Banach algebra, $B$ is a $G$-$\Cont_0(X/G)$-Banach algebra and $A_{\tau}:=\Cont_0(X) \otimes_{\Cont_0(X/G)} A$, equipped with the $G$-action coming from the canonical action of $G$ on $\Cont_0(X)$.
\end{theorem}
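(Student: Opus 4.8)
The plan is to prove the theorem in the same way as Theorem~\ref{Theorem:GreenJulg} (of which it is the special case $X=\pt$): to show that the functor $A\mapsto A_\tau=\Cont_0(X)\otimes_{\Cont_0(X/G)}A$, from $\Cont_0(X/G)$-Banach algebras to $G$-$\Cont_0(X)$-Banach algebras, is left adjoint to the functor $B\mapsto\Leb^1(G,B)$, which lands in $\Cont_0(X/G)$-Banach algebras because $\Cont_0(X/G)$ acts on any $G$-$\Cont_0(X)$-Banach algebra through $G$-invariant multipliers. First I would check that both functors descend to the $\Cont_0(X)$-, resp.\ $\Cont_0(X/G)$-linear bivariant categories: for $\Leb^1(G,\cdot)$ this is the fibred refinement of Proposition~\ref{Proposition:DescentOnkkbanG} already indicated just before this subsection (one carries the actions of $\Cont_0(X)$ and $\Cont_0(X/G)$ through the proof); for $(-)_\tau$ one invokes the $\Cont_0(X)$-linear universal property, since tensoring with $\Cont_0(X)$ over $\Cont_0(X/G)$ takes $\Cont_0(X/G)$-linear homotopies, $\Cont_0(X/G)$-Morita equivalences and semi-split extensions (a continuous linear splitting tensors up, and a split-exact sequence of Banach modules stays split exact) to their $\Cont_0(X)$-linear counterparts, so $(-)_\tau$ is homotopy invariant, Morita invariant and half-exact and hence factors through $\rRkkban(X/G;\cdot,\cdot)\to\rRkkbanG(X;\cdot,\cdot)$.

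Next I would write down the unit and the co-unit using the cut-off function. Since $X$ is proper with $\sigma$-compact quotient, fix a cut-off function $c$ as in Definition~\ref{Definition:CutOffFunction} together with the auxiliary function $d$ described after it. The unit would be the homomorphism
$$
\eta_A\colon A\to\Leb^1(G,A_\tau),\qquad a\mapsto\Bigl(t\mapsto\bigl[x\mapsto c(t^{-1}x)\,a\bigr]\Bigr),
$$
where $[x\mapsto c(t^{-1}x)\,a]$ denotes the corresponding section in $\Cont_0(X)\otimes_{\Cont_0(X/G)}A$; multiplying by $d$ and using non-degeneracy of $A$ over $\Cont_0(X/G)$ makes this land in $A_\tau$, and property~(2) of $c$ guarantees that, locally over $X/G$, the function of $t$ has compact support, so that $\eta_A(a)$ lies in $\Leb^1(G,A_\tau)$. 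The co-unit $\varepsilon_B\in\MoritabanG(\Leb^1(G,B)_\tau,B)$ would be built, as in the compact-group case, as the composition of a $G$-$\Cont_0(X)$-linear homomorphism out of $\Leb^1(G,B)_\tau=\Cont_0(X)\otimes_{\Cont_0(X/G)}\Leb^1(G,B)$ — a cut-off-function-twisted version of the map $j_B$ used there, whose target is an $X$-fibred analogue of $\Leb^1(G\ltimes G,B)$ — with the $\Cont_0(X/G)$-linear version of the Morita equivalence of Proposition~\ref{Proposition:MoritaEquivalenceLeb1} that identifies that algebra with $B$.

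I would then verify the two triangle identities
$$
\varepsilon_{A_\tau}\circ(\eta_A)_\tau=\id_{A_\tau}\ \text{in}\ \MoritabanG,\qquad\Leb^1(G,\varepsilon_B)\circ\eta_{\Leb^1(G,B)}=\id_{\Leb^1(G,B)}\ \text{in}\ \Moritaban,
$$
by the same diagram chases as in the proofs of Theorems~\ref{Theorem:DualGreenJulg} and~\ref{Theorem:GreenJulg}: the pieces of the relevant squares that involve Morita equivalences are made to commute by Lemma~\ref{Lemma:ConcurrentEquivalences:Equivariant} (exhibiting the obvious concurrent homomorphisms), and the remaining pieces reduce to the pointwise identity $\int_G c(t^{-1}x)\rmd t=1$ (property~(1) of the cut-off function) together with $d\equiv 1$ on $\supp c$. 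For naturality I would argue exactly as before: $\eta$ and $\varepsilon$ are natural for $\Cont_0(X/G)$-, resp.\ $\Cont_0(X)$-linear homomorphisms of Banach algebras — directly for $\varepsilon$, and for $\eta$ using Lemma~\ref{Lemma:ConcurrentEquivalences:Equivariant} for the Morita factor — and they commute with the suspension via the canonical isomorphisms $\Sigma\Leb^1(G,-)\cong\Leb^1(G,\Sigma-)$ and $\Sigma(-)_\tau\cong(\Sigma-)_\tau$; since every morphism in these bivariant categories is, up to suspension, a composite of classes of homomorphisms and inverses of homomorphisms, this upgrades $\eta$ and $\varepsilon$ to natural transformations on the $\rRkkban$-level, whence the adjunction $(-)_\tau\dashv\Leb^1(G,\cdot)$ and the asserted natural isomorphism.

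The main obstacle I expect is not conceptual but the construction of $\eta_A$ and $\varepsilon_B$ in the non-unital, $\Cont_0(X)$-fibred setting: one must check that the cut-off ``spreading'' map really defines a homomorphism \emph{into} $\Leb^1(G,A_\tau)$ rather than merely into its multiplier algebra, that $\varepsilon_B$ is genuinely $G$-$\Cont_0(X)$-linear and non-degenerate, and that the $X$-fibred version of Proposition~\ref{Proposition:MoritaEquivalenceLeb1} used to define $\varepsilon_B$ behaves as in the plain case. Beyond this bookkeeping nothing new happens compared with Theorem~\ref{Theorem:GreenJulg}: properness of the action together with the existence of the cut-off function plays exactly the role that compactness of $G$ played there, which is why the $\kkban$-proof is considerably shorter than the $\KKban$-proof of \cite{Paravicini:10:GreenJulg:erschienen}.
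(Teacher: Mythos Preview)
Your overall strategy---prove the adjunction $(-)_\tau\dashv\Leb^1(G,\cdot)$ via explicit unit and co-unit at the Morita level, then upgrade naturality as in Theorem~\ref{Theorem:GreenJulg}---is exactly the paper's, and your description of the co-unit $\varepsilon_B$ matches the paper's construction essentially verbatim. The gap is in the unit.

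You propose $\eta_A\colon A\to\Leb^1(G,A_\tau)$ as a genuine bounded homomorphism, with $a\mapsto(t\mapsto c(t^{-1}\cdot)\,a)$ (decorated by $d$). This map is \emph{not} bounded from $A$ to $\Leb^1(G,A_\tau)$ in general. The $\Leb^1$-norm computes $\int_G\|d\,(s\cdot c)\,\chi\|_\infty\,\rmd s$, i.e.\ supremum over $X$ \emph{inside} the $G$-integral; the cut-off identity $\int_G c(s^{-1}x)\,\rmd s=1$ only controls the integral with the supremum \emph{outside}. As the support of $a$ (equivalently of the auxiliary $\chi\in\Cont_c(X/G)$) grows, the integral $\int_G\|d\,(s\cdot c)\,\chi\|_\infty\,\rmd s$ can blow up, so $\phi_A$ is well-defined on $A_c=\Cont_c(X/G)A$ but unbounded for the $A$-norm. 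Your diagnosis that the obstacle is ``landing in $\Leb^1(G,A_\tau)$ rather than its multiplier algebra'' misidentifies the problem: each $\phi_A(a)$ does land in $\Leb^1(G,A_\tau)$; what fails is continuity.

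The paper's fix is to complete $A_c$ in the transported norm $\|a\|_0:=\|\phi_A(a)\|_{\Leb^1(G,A_\tau)}$, obtaining a new $\Cont_0(X/G)$-Banach algebra $A_0$ on which $\phi_A$ is isometric, together with a contractive comparison map $\psi_A\colon A_0\to A$. One then shows $\psi_A$ is a $\Cont_0(X/G)$-linear Morita equivalence (via the pair $(A_0,A_0)$ with the evident $A$-bimodule structure), and defines $\eta_A:=\phi_A\circ\psi_A^{-1}$ as a \emph{Morita morphism}, not a homomorphism. This is precisely the extra ingredient you are missing. A parallel difficulty appears in the second triangle identity: the Morita equivalence $(\Leb^1(G,B),\Leb^1(G,B))$ representing $\psi_{\Leb^1(G,B)}$ is too large to map concurrently into $\Leb^1(G,\Cont_0(G,B))$, and the paper has to build by hand a smaller equivalence $H_B=(H_B^<,H_B^>)$ with carefully designed norms before the diagram chase you sketch can go through.
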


As above, the theorem says that the functor $B \mapsto \Leb^1(G,B)$ is right-adjoint to the functor $A \mapsto A_{\tau}$. Note that, if $X/G$ is compact and $A$ is trivial, the right-hand side in the above theorem reduces to $\KTh$-theory by Corollary~\ref{Corollary:RkkbanAndCompactSpaces}.

\begin{corollary}\label{Corollary:GGJ} If $X/G$ is compact and $A=\Cont_0(X/G)$ then the above theorem gives a natural isomorphism
$$
\rRkkbanG(X; \Cont_0(X), B)\ \cong \ \rRkkban(X/G; \Cont_0(X/G), \Leb^1(G,B))\  \cong\ \KTh_0(\Leb^1(G,B)).
$$
\end{corollary}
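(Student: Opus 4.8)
The plan is to derive the corollary purely by specialising Theorem~\ref{Theorem:GreenJulg:generalised} to $A=\Cont_0(X/G)$ and then composing with Corollary~\ref{Corollary:RkkbanAndCompactSpaces}; no new construction is involved, only a check of the module and $G$-action structures.

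First I would identify the algebra $A_{\tau}$ appearing in Theorem~\ref{Theorem:GreenJulg:generalised}. With $A=\Cont_0(X/G)$ we have $A_{\tau}=\Cont_0(X)\otimes_{\Cont_0(X/G)}\Cont_0(X/G)$. Since $\Cont_0(X/G)$ is the monoidal unit for $\Cont_0(X/G)$-Banach algebras under the fibrewise tensor product (and, because $X/G$ is compact, is literally a unital algebra acting as the identity on $\Cont_0(X)$), the canonical multiplication map is an isometric isomorphism $\Cont_0(X)\otimes_{\Cont_0(X/G)}\Cont_0(X/G)\cong\Cont_0(X)$ of $\Cont_0(X)$-Banach algebras. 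By the very definition of $A_{\tau}$ the $G$-action on the left-hand side is the one induced by the canonical $G$-action on $\Cont_0(X)$ (and the trivial action on the second factor), so the isomorphism is $G$-equivariant and identifies $A_{\tau}$ with $\Cont_0(X)$ as a $G$-$\Cont_0(X)$-Banach algebra. Substituting this into Theorem~\ref{Theorem:GreenJulg:generalised} gives the first isomorphism
$$
\rRkkbanG(X; \Cont_0(X), B)\ \cong\ \rRkkban(X/G; \Cont_0(X/G), \Leb^1(G,B)),
$$
natural in $B$.

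It then remains to evaluate the right-hand side. Since $X/G$ is compact, $\Cont_0(X/G)=\Cont(X/G)$, and $\Leb^1(G,B)$ is a $\Cont(X/G)$-Banach algebra: the $\Cont(X/G)$-module structure on $B$ (which is $G$-equivariant because $X/G$ carries the trivial $G$-action) extends pointwise to $\Leb^1(G,B)$ and is non-degenerate, $\Cont(X/G)$ being unital. Corollary~\ref{Corollary:RkkbanAndCompactSpaces}, applied to the compact space $X/G$ and the $\Cont(X/G)$-Banach algebra $\Leb^1(G,B)$, then yields the natural isomorphism
$$
\rRkkban(X/G; \Cont(X/G), \Leb^1(G,B))\ \cong\ \KTh_0(\Leb^1(G,B)),
$$
and composing the two natural isomorphisms proves the corollary. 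The only step that is not a purely formal substitution is the identification of the balanced tensor product $\Cont_0(X)\otimes_{\Cont_0(X/G)}\Cont_0(X/G)$ with $\Cont_0(X)$ together with the matching of $G$-actions and $\Cont_0(X)$-module structures; this is routine, and the naturality of the composite in $B$ is inherited from that of the two ingredients.
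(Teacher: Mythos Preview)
Your proposal is correct and follows essentially the same approach as the paper: specialise Theorem~\ref{Theorem:GreenJulg:generalised} to $A=\Cont_0(X/G)$, identify $A_{\tau}\cong\Cont_0(X)$, and then invoke Corollary~\ref{Corollary:RkkbanAndCompactSpaces} for the second isomorphism. The paper in fact gives no separate proof for this corollary beyond the sentence preceding it, so your write-up is, if anything, more explicit about the identification of $A_{\tau}$ and the matching of structures than the original.
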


We are now going to construct the unit and the co-unit of the adjunction we want to establish. As above, the unit and the co-unit will be compatible with the suspension and natural for homomorphisms, so they will be natural for $\rRkkban$-elements, too. It will hence suffice to check the unit-co-unit equations.

\subsubsection*{The unit of the adjunction}

Let $A$ be a $\Cont_0(X/G)$-Banach algebra. Essentially. the unit will be a homomorphism 
$$
\eta_A \colon A \to \Leb^1(G, A_{\tau}).
$$
If we regard $A$ as an algebra of sections of a field of Banach algebras over $X/G$, and $A_{\tau}$ as an algebra of sections of the corresponding pull-back field over $X$, then the formula for $\eta_A$ is
$$
[\eta_A (\chi)] (s)(x) = d(x) c(s^{-1}x) \chi([x]),\quad \chi\in \ContSect_c(X/G,A), s\in G, x\in X;
$$
here, the functions $c$ and $d$ are the ones introduced above. But there are two technical obstacles to this approach: Firstly, $A$ might fail to be an algebra of sections if the technical condition of local $\Cont_0(X/G)$-convexity is not met, so a term like $\chi([x])$ might not make sense for $\chi\in A$; secondly, the above formula for $\eta_A$, that is fine for sections $\chi$ with compact support, might not extend to all of $A$. 

We will deal with the first problem by rewriting the above formula without the use of elements $x$ of $X$; the second problem can be resolved by completing ``sections with compact support'' in $A$ to obtain another Banach algebra on which $\eta_A$ is well-defined and that is still sufficiently close to $A$.

To this end, let $A_c$ denote the subspace $\Cont_c(X/G) A$ of $A$. It is a dense subalgebra of $A$ and we think of it as the set of all ``sections with compact support'' in $A$, whereas $A$ itself can be thought of as the ''sections vanishing at infinity''. 

On $A_c$, we define a homomorphism $\phi_A$ to $\Leb^1(G, A_{\tau})$ as follows: Let $a\in A_c$. Find a function $\chi \in \Cont_c(X/G)$ such that $\chi a = a$. Then we define
$$\label{Equation:DefinitionOfphiA}
\phi(a)(s) = d (s\cdot c) \chi \otimes a, \quad s\in G,
$$
where the product of a function $f\in \Cont(X)$ and $\chi \in \Cont(X/G)$ is defined to be $x\mapsto f(x)\chi([x])$. It is easy to see that this definition is independent of the choice of the function $\chi$, so we get a well-defined $\Cont_0(X/G)$-linear homomorphism. We are sometimes going to write $d (s\cdot c) \otimes a$ instead of $d (s\cdot c) \chi \otimes a$ (and similar expressions) in what follows to streamline our notation. Note, however, that $d (s\cdot c)$ is not contained in $\Cont_0(X)$, in general, whereas $d (s \cdot c) \chi$ is. 

Note, moreover, that $\phi_A(a) \in \Cont_c(G, A_{\tau})$ for all $a\in A_c$, but the formula needs not make sense for all $a\in A$ because there is an issue with the norms involved:

Let $a\in A_c$ and $\chi$ as above with $\norm{\chi}_{\infty}= 1$. Then
$$
\norm{\phi_A(a)} = \int_{G} \norm{\phi_A(a)(s)}_{A_{\tau}} \rmd s = \int_{G} \norm{d\  sc\ \chi \otimes a}_{A_{\tau}} \rmd s \leq \int_{G} \norm{d\  sc\ \chi}_{\infty} \rmd s \norm{a}_A.
$$
If we could only interchange the order of integration and supremum, we would arrive at the expression
$$
\sup_{x\in X} \int_{G} \abs{d(x) c(s^{-1}x) \chi([x])} \rmd s \leq \norm{d}_{\infty} \norm{\chi}_{\infty} \int_{G} c(s^{-1}x)\rmd s = 1\cdot 1 \cdot 1 = 1.
$$
But with the given order of integration and supremum, we have to deal with the fact that $\phi_A$ is not bounded in norm as a map from $A_c \subseteq A$ to $\Leb^1(G, A_{\tau})$. 

But we can make it bounded (actually, isometric) by transplanting the norm of $\Leb^1(G, A_{\tau})$ to $A_c$. Define
$$
\norm{a}_0 := \norm{\phi_A(a)}_{\Leb^1(G, A_{\tau})} 
$$
for all $a\in A_c$. Let $A_0$ denote the completion of $A_c$ for this norm. 

\begin{lemma}\label{Lemma:DefinitionOfpsiA}
For all $a\in A_c$ and $\chi'\in \Cont_0(X/G)$, we have $\norm{a}_A \leq \norm{a}_0$ and $\norm{\chi' a}_0 \leq \norm{\chi'}_{\infty} \norm{a}_0$. In particular, the identity on $A_c$ extends to a norm-decreasing $\Cont_0(X/G)$-linear homomorphism 
$$
\psi_A \colon A_0 \to A.
$$
By construction, $\phi_A$ extends to an isometric $\Cont_0(X/G)$-linear homomorphism
$$
\phi_A \colon A_0 \to \Leb^1(G,A_{\tau}).
$$
\end{lemma}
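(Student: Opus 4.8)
The plan is to reduce the statement to its two substantive inequalities and read off the rest formally. First I would note that, since $\phi_A$ is a homomorphism on $A_c$ (as was observed when $\phi_A$ was introduced), it is an isometric embedding of $(A_c,\norm{\cdot}_0)$ into the Banach algebra $\Leb^1(G,A_\tau)$; hence $\norm{\cdot}_0$ is submultiplicative, $A_0$ is a Banach algebra, and $\phi_A$ extends by continuity to an isometric $\Cont_0(X/G)$-linear homomorphism $A_0\to\Leb^1(G,A_\tau)$. Granting the first inequality $\norm{a}_A\le\norm{a}_0$, the inclusion $A_c\hookrightarrow A$ is a contractive homomorphism for $\norm{\cdot}_0$ into the Banach algebra $A$ and $A_c$ is dense in $A_0$, so it extends uniquely to a norm-decreasing homomorphism $\psi_A\colon A_0\to A$; granting the second inequality $\norm{\chi'a}_0\le\norm{\chi'}_\infty\norm{a}_0$, the $\Cont_0(X/G)$-module structure of $A_c$ extends to $A_0$ and $\psi_A$, being $\Cont_0(X/G)$-linear on the dense subalgebra $A_c$, is $\Cont_0(X/G)$-linear. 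So it remains to prove the two inequalities.

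For $\norm{a}_A\le\norm{a}_0$ I would construct a contractive linear left inverse of $\phi_A$. Using the cut-off function $c$, define an averaging map
$$
\rho_c\colon\Cont_0(X)\longrightarrow\Cont_0(X/G),\qquad \rho_c(h)([x]):=\int_G h(t^{-1}x)\,c(t^{-1}x)\rmd t,
$$
where $x$ is any lift of $[x]\in X/G$. I would check that $\rho_c$ is a well-defined contraction: the integrand is supported, in $t$, on a relatively compact set (properness of the action together with condition (2) of Definition~\ref{Definition:CutOffFunction}), so the integral converges; its value is independent of the lift $x$ by left invariance of the Haar measure; $\rho_c$ carries $\Cont_c(X)$ into $\Cont_c(X/G)$ continuously, hence extends to $\Cont_0(X)$ with $\norm{\rho_c(h)}_\infty\le\norm{h}_\infty\sup_x\int_Gc(t^{-1}x)\rmd t=\norm{h}_\infty$; and $\rho_c$ is $\Cont_0(X/G)$-linear because the pullback to $X$ of a function on $X/G$ is $G$-invariant. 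Tensoring with the identity over $\Cont_0(X/G)$ then yields a contractive $\Cont_0(X/G)$-linear map $R\colon A_\tau\to A$, $h\otimes a\mapsto\rho_c(h)\,a$, and I would set $\beta\colon\Leb^1(G,A_\tau)\to A$, $\beta(f):=R\bigl(\int_Gf(s)\rmd s\bigr)$, which is contractive. Finally I would verify $\beta\circ\phi_A=\mathrm{id}_{A_c}$: for $a\in A_c$ with $\chi\in\Cont_c(X/G)$, $\norm{\chi}_\infty\le1$, $\chi a=a$, evaluating the Bochner integral pointwise and using $\int_Gc(s^{-1}x)\rmd s=1$ gives $\int_G\phi_A(a)(s)\rmd s=\bigl(\int_G d\,(s\cdot c)\,\chi\rmd s\bigr)\otimes a=(d\chi)\otimes a$, while $\rho_c(d\chi)([x])=\chi([x])\int_G d(t^{-1}x)\,c(t^{-1}x)\rmd t=\chi([x])$ because $d\equiv1$ on $\supp c$ removes the factor $d(t^{-1}x)$ wherever $c(t^{-1}x)\neq0$; hence $\beta(\phi_A(a))=\chi a=a$, and therefore $\norm{a}_A=\norm{\beta(\phi_A(a))}_A\le\norm{\phi_A(a)}_{\Leb^1(G,A_\tau)}=\norm{a}_0$.

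For $\norm{\chi'a}_0\le\norm{\chi'}_\infty\norm{a}_0$ I would invoke the $\Cont_0(X/G)$-linearity of $\phi_A$: for $\chi'\in\Cont_0(X/G)$ and $a\in A_c$ one has $\chi'a\in A_c$ and $\phi_A(\chi'a)=\chi'\cdot\phi_A(a)$, where $\chi'$ acts on $\Leb^1(G,A_\tau)$ pointwise through the $\Cont_0(X/G)$-module structure of $A_\tau$. Since that action satisfies $\norm{\chi'\cdot\xi}_{A_\tau}\le\norm{\chi'}_\infty\norm{\xi}_{A_\tau}$ for all $\xi\in A_\tau$, integrating over $G$ gives $\norm{\chi'a}_0=\int_G\norm{\chi'\cdot\phi_A(a)(s)}_{A_\tau}\rmd s\le\norm{\chi'}_\infty\norm{a}_0$.

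The main obstacle, and essentially the only place content enters, is the construction of $\beta$: one must check that the averaging integral defining $\rho_c$ converges and lands in $\Cont_0(X/G)$ (this is where properness and both cut-off conditions are used), that $\rho_c\otimes\mathrm{id}_A$ descends to the module tensor product defining $A_\tau$ contractively, and that $\beta\circ\phi_A=\mathrm{id}$ --- the last point being exactly where the normalisation $\int_G c(t^{-1}x)\rmd t=1$ and the relation $d\equiv1$ on $\supp c$ are used. Note that $\rho_c$ is not multiplicative, so $\beta$ is only a contractive \emph{linear} left inverse; this causes no trouble, since the homomorphism property of $\psi_A$ is inherited from the inclusion $A_c\hookrightarrow A$ and not from $\beta$.
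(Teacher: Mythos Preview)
Your proof is correct. The treatment of the second inequality $\norm{\chi'a}_0\le\norm{\chi'}_\infty\norm{a}_0$ is identical to the paper's. For the first inequality $\norm{a}_A\le\norm{a}_0$, however, you take a genuinely different route.

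The paper does not build a left inverse. Instead it introduces an auxiliary completion $\Leb^1(G\ltimes X)$ of $\Cont_c(G,\Cont_c(X))$ for the ``good'' norm $\norm{f}=\sup_{x}\int_G|f(s)(x)|\rmd s$, notes that this norm is dominated by the $\Leb^1(G,\Cont_0(X))$-norm, and then compares the element $(s\mapsto d\,(s\cdot c)\,\chi)\otimes a$ in the two tensor products $\Leb^1(G,\Cont_0(X))\otimes_{\Cont_0(X/G)}A$ and $\Leb^1(G\ltimes X)\otimes_{\Cont_0(X/G)}A$; in the latter the ``$\sup$'' and the integral come in the order that gives the bound $\norm{a}_A$ from below. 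So the paper argues by passing to a smaller norm on the target, whereas you argue by producing a contractive retraction $\beta=R\circ\int_G$ built from the averaging map $\rho_c\colon\Cont_0(X)\to\Cont_0(X/G)$.

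Your approach has the advantage of being entirely elementary and of making the role of the normalisation $\int_G c(t^{-1}x)\,\rmd t=1$ and of the condition $d\equiv 1$ on $\supp c$ completely explicit: they are exactly what forces $\beta\circ\phi_A=\id_{A_c}$. The paper's approach, on the other hand, fits more naturally into the fibred picture (the algebra $\Leb^1(G\ltimes X)$ is the $\Leb^1$-algebra of the transport groupoid $G\ltimes X$ and reappears later in the construction of the co-unit), so it foreshadows the structures used in the remainder of the section. Both arguments use the same ingredients---properness, the cut-off pair $(c,d)$, and $\Cont_0(X/G)$-linearity---but package them differently.
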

\begin{proof}
First, let $\chi'\in \Cont_0(X/G)$ and $a\in A_c$. Then 
$$
\norm{\chi' a}_0 = \norm{\phi_A(\chi' a)}_{1} = \norm{\chi' \phi_A(a)}_1 \leq \norm{\chi'}_{\infty} \norm{\phi_A(a)}_1 = \norm{\chi'}_{\infty} \norm{a}_0.
$$
Now let $\Leb^1(G\ltimes X)$ denote the completion of the subalgebra $\Cont_c(G, \Cont_c(X))$ of $\Leb^1(G, \Cont_0(X))$ for the norm
$$
\norm{f} := \sup_{x\in X} \int_{G} \abs{f(s)(x)} \rmd s, \quad f\in \Cont_c(G, \Cont_c(X)). 
$$
Note that $\norm{f} \leq \norm{f}_{\Leb^1(G, \Cont_0(X))}$, so there is a canonical homomorphism from $\Leb^1(G, \Cont_0(X))$ to $\Leb^1(G\ltimes X)$. The above calculations show that 
$$
\norm{a}_A = \norm{s\mapsto d \ sc \ \chi}_{\Leb^1(G\ltimes X} \norm{a}_A \leq \norm{(s\mapsto d \ sc \ \chi) \otimes a}_{\Leb^1(G\ltimes X) \otimes_{\Cont_0(X/G)} A},
$$
where $\chi\in \Cont_c(X/G)$ satisfies $\norm{\chi}_{\infty}=1$ and $\chi a = a$. Now there is a canonical isometric isomorphism 
$$
\Leb^1(G, \Cont_0(X)) \otimes_{\Cont_0(X/G)} A \ \cong \  \Leb^1(G, \Cont_0(X) \otimes_{\Cont_0(X/G)} A) = \Leb^1(G, A_{\tau}).
$$
and a canonical norm-decreasing map from $\Leb^1(G, \Cont_0(X)) \otimes_{\Cont_0(X/G)} A$ to $\Leb^1(G\ltimes X) \otimes_{\Cont_0(X/G)} A$. Putting these pieces of information togehter we arrive at
\begin{eqnarray*}
\norm{a}_A &\leq& \norm{(s\mapsto d \ sc \ \chi) \otimes a}_{\Leb^1(G\ltimes X) \otimes_{\Cont_0(X/G)} A}\\
& \leq& \norm{(s\mapsto d \ sc \ \chi) \otimes a}_{\Leb^1(G, \Cont_0(X)) \otimes_{\Cont_0(X/G)} A} = \norm{\phi_A(a)}_{\Leb^1(G,A_{\tau})} = \norm{a}_0.
\end{eqnarray*}
\end{proof}

\begin{lemma}
For all $a\in A_c$ and $a' \in A$ we have 
$$
\norm{aa'}_0 \leq \norm{a}_c \norm{a'}_A
$$
and
$$
\norm{a'a}_0 \leq  \norm{a'}_A\norm{a}_c.
$$
As a consequence, $A_0$ carries a left and a right action of $A$. Using these module actions and the homomorphism $\psi_A$, we can turn the pair $(A_0, A_0)$ into a $\Cont_0(X/G)$-linear Morita equivalence between $A_0$ and $A$. The Morita morphisms induced by $\psi_A$ and $(A_0, A_0)$ agree, more precisely:
$$
[\psi_A] = [(A_0, A_0)] \ \in \ \Moritaban(X/G; A_0, A).
$$
In particular, $\psi_A$ is a Morita isomorphism. 
\end{lemma}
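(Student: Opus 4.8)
The plan is to proceed in four steps: prove the two displayed norm inequalities; extend the multiplication of $A_c$ to $\Cont_0(X/G)$-linear actions of $A$ on $A_0$; equip the pair $(A_0,A_0)$ with Morita-equivalence data and verify the axioms; and finally identify $[(A_0,A_0)]$ with $[\psi_A]$.

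For the inequalities, fix $a\in A_c$ and $a'\in A$ and choose $\chi\in\Cont_c(X/G)$ with $\chi a=a$. Since $\Cont_0(X/G)$ acts centrally on $A$ we have $\chi(aa')=(\chi a)a'=aa'$ and $\chi(a'a)=a'(\chi a)=a'a$, so that the same $\chi$ may be used in the defining formula for $\phi_A$ and
\[
\phi_A(aa')(s)=\bigl(d\,(sc)\,\chi\otimes a\bigr)\cdot a',\qquad
\phi_A(a'a)(s)=a'\cdot\bigl(d\,(sc)\,\chi\otimes a\bigr),\qquad s\in G,
\]
where $A$ acts on $A_\tau=\Cont_0(X)\otimes_{\Cont_0(X/G)}A$ through its second tensor factor; this action is contractive in the $A$-variable, pointwise in $s$. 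Passing to $\Leb^1$-norms and integrating over $G$ gives $\norm{aa'}_0=\norm{\phi_A(aa')}_1\le\norm{\phi_A(a)}_1\,\norm{a'}_A=\norm a_c\norm{a'}_A$ and, symmetrically, $\norm{a'a}_0\le\norm{a'}_A\norm a_c$, where $\norm\cdot_c$ denotes the norm $\norm\cdot_0$ restricted to $A_c$. Applied with $a'\in A_c$ this also shows $\norm\cdot_0$ is submultiplicative on $A_c$, so that $A_0$ is a Banach algebra and $\psi_A$ a homomorphism.

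In particular, for fixed $a'\in A$ left and right multiplication by $a'$, viewed as maps $A_c\to A_c\subseteq A_0$, are $\norm\cdot_0$-bounded with norm $\le\norm{a'}_A$; they therefore extend to bounded operators on $A_0$, and since these extensions coincide with genuine multiplication on the $\norm\cdot_0$-dense subalgebra $A_c$ and multiplication is jointly continuous, they make $A_0$ a Banach $A$-bimodule, $\Cont_0(X/G)$-linearly by Lemma~\ref{Lemma:DefinitionOfpsiA}. Now put Morita-equivalence data on $(A_0,A_0)$: regard the first copy of $A_0$ as an $A_0$-$A$-bimodule and the second as an $A$-$A_0$-bimodule, the $A_0$-actions being multiplication in $A_0$ and the $A$-actions the ones just constructed, and set ${}_{A_0}\langle\xi^>,\xi^<\rangle:=\xi^>\xi^<\in A_0$ and $\langle\xi^<,\xi^>\rangle_A:=\psi_A(\xi^<\xi^>)=\psi_A(\xi^<)\psi_A(\xi^>)\in A$. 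On the dense subalgebra $A_c$ in each slot, all the required identities -- the module-linearity of ${}_{A_0}\langle\cdot,\cdot\rangle$ and $\langle\cdot,\cdot\rangle_A$ over the respective algebras, their mutual compatibility, and the imprimitivity relations (such as ${}_{A_0}\langle\xi^>,\xi^<\rangle\cdot\eta^>=\xi^>\cdot\langle\xi^<,\eta^>\rangle_A$) -- reduce to associativity of multiplication together with $\psi_A|_{A_c}=\id$, and then extend by continuity using the estimates above together with those of Lemma~\ref{Lemma:DefinitionOfpsiA}. The fullness conditions hold because ${}_{A_0}\langle A_0,A_0\rangle$ spans a dense subspace of $\overline{A_0A_0}$ tautologically, while $\langle A_0,A_0\rangle_A\supseteq\psi_A(A_cA_c)=A_cA_c$ is dense in $\overline{AA}$; one works throughout with the variant of the Morita formalism for possibly degenerate algebras from \cite{Paravicini:14:kkban:arxiv}, so that these conditions are phrased relative to $\overline{A_0A_0}$ and $\overline{AA}$.

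Finally, to see $[(A_0,A_0)]=[\psi_A]$ in $\Moritaban(X/G;A_0,A)$ I would argue exactly as in the proof of Lemma~\ref{Lemma:TwoWaysTheSame}: the pair $(\psi_A,\psi_A)$ is a $\Cont_0(X/G)$-linear concurrent homomorphism from the Morita equivalence $(A_0,A_0)$ to the trivial Morita equivalence $(A,A)$ between $A$ and itself, with coefficient maps $\psi_A$ and $\id_A$, which is checked by the same routine computations as above; the $\Cont_0(X/G)$-linear analogue of \cite{Paravicini:14:kkban:arxiv}, Lemma~1.19, then gives $[\id_A]\circ[(A_0,A_0)]=[(A,A)]\circ[\psi_A]$, i.e.\ $[(A_0,A_0)]=[\psi_A]$. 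Since $(A_0,A_0)$ is a Morita equivalence it is invertible in the Morita category, hence so is $[\psi_A]$, which is exactly the assertion that $\psi_A$ is a Morita isomorphism. The main obstacle I expect is the third step -- confirming that $(A_0,A_0)$ really satisfies all the axioms of a (possibly degenerate) $\Cont_0(X/G)$-linear Morita equivalence, in particular that the continuously extended $A$-module structure stays compatible with both inner products everywhere (not merely on $A_c$) and that the required density statements (such as $\overline{A_0A_0}=A_0$ and the analogous essentiality of the two modules) hold; everything else is either the routine estimate of the first step or a line-for-line repetition of concurrent-homomorphism arguments already carried out in this section.
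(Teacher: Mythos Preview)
Your proposal is correct and follows essentially the same approach as the paper: the norm inequality is obtained by the same pointwise estimate $\norm{d\,(sc)\otimes aa'}\le\norm{d\,(sc)\otimes a}\norm{a'}_A$ integrated over $G$, and the identification $[\psi_A]=[(A_0,A_0)]$ is deduced from exactly the concurrent homomorphism $(\psi_A,\psi_A)$ from $(A_0,A_0)$ to the trivial equivalence $(A,A)$ with coefficient maps $\psi_A$ and $\id_A$. The paper's proof is terser, leaving the explicit construction of the Morita-equivalence data on $(A_0,A_0)$ implicit, but your more detailed verification of that structure is sound and does not deviate in substance.
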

\begin{proof}
We just show the first norm inequality and the equality of the Morita morphisms. We have
$$
\norm{aa'}_c = \norm{\phi_A(aa')} = \int_{G} \norm{d \ sc \ \otimes aa'} \rmd s \leq \int_{G} \norm{d \ sc \ \otimes a}\norm{a'}_A \rmd s = \norm{a}_c \norm{a}_A.
$$
The equality of the Morita morphisms is settled by the commutativity of the following diagram
$$
\xymatrix{
A_0 \ar[rrrr]^{(A_0 ,A_0 )}  \ar[d]_-{\psi_A} &&\ar@{=>}[d]|{(\psi_A,\psi_A)} && A \ar@{=}[d]\\
A \ar[rrrr]_{(A,A )}&&&& A \\
}
$$
\end{proof}

We define $\eta_A\colon A \to \Leb^1(G, A_{\tau})$  to be the composition 
$$
\eta_A:= \phi_A \circ \psi_A^{-1} \quad \in \ \Moritaban(X/G; A, \Leb^1(G, A_{\tau})).
$$

\subsubsection*{The co-unit of the adjunction}

Now let $B$ be a $G$-$\Cont_0(X)$-Banach algebra. We want to define the co-unit of the adjunction as a morphism from the algebra $\Leb^1(G, \Cont_0(X))_{\tau}=  \Cont_0(X)\otimes_{\Cont_0(X/G)}\Leb^1(G,B)$ to the algebra $B$. We first study the co-unit in the case that $B=\Cont_0(X)$: 

Write $\gG$ for the (proper) transport groupoid $G\ltimes X$ (and write $r$ and $s$ for the range and source map of $\gG$, i.e., in our convention, $r(g,x) = x$ and $s(g,x) = g^{-1}x$). This groupoid is the replacement for the compact group $G$ in the non-compact case. We hence want to define a homomorphism from $\Leb^1(G, \Cont_0(X))_{\tau})$ to a Banach algebra such as $\Leb^1(\gG \ltimes \gG)$ that is Morita equivalence to $\Cont_0(X)$. 

We can think of $\Leb^1(G, \Cont_0(X))$ as a groupoid Banach algebra for $\gG$, so we can think of $\Leb^1(G,B)\otimes_{\Cont_0(X/G)} \Cont_0(X)$ as a groupoid Banach algebra for the groupoid $\gG \times_{X/G} X$. The unit space for the latter groupoid is $X \times_{X/G} X$, and the multiplication is induced from the multiplication on $\gG$ and the trivial multiplication on $X$.
Consider the left action of $\gG$ on itself. The resulting transport groupoid $\gG \ltimes \gG$ has morphism space $\gG \times_{X} \gG$  and unit space $X \times_{X/G} X$. The map 
$$
\nu \colon \gG \times_X \gG \to \gG \times_{X/G} X, \ (\gamma_1, \gamma_2) \mapsto (\gamma_1, s(\gamma_2))
$$
is a continuous homomorphism of groupoids and, because $\gG$ is proper, it is a proper map. So we can form the map
$$
\nu^*\colon \Cont_c(\gG \times_{X/G} X) \to \Cont_c(\gG \times_X \gG), \ f \mapsto v \circ \nu.
$$
This map is going to be an isometric homomorphism of Banach algebras from $\Leb^1(\gG \times_{X/G} X)$ to $\Leb^1(\gG \ltimes \gG)$. And, because the groupoids $\gG \ltimes \gG$ and $X$ are equivalent, the latter Banach algebra is Morita equivalent to $\Cont_0(X)$, see \cite{Paravicini:07:Induction:erschienen}. Taking the composition, we obtain a Morita morphism from $\Leb^1(\gG \times_{X/G} X)$ to $\Cont_0(X)$. 

We now switch to the case of a general $G$-$\Cont_0(X)$-Banach algebra $B$, i.e., we consider the above situation with Banach algebraic coefficients. In order to avoid the machinery of fields of Banach algebras and to stay on more familar ground we introduce the following isomorphism: 

The morphisms space $\gG \times_X \gG$ of the groupoid $\gG \ltimes \gG$ can be identified with the space $G \times G \times X$. Instead of looking at $\Leb^1(\gG \ltimes \gG)$ we will analyse $\Leb^1(G \ltimes G, \Cont_0(X))$, or rather, we will analyse $\Leb^1(G \ltimes G, B)$. We know from Proposition~\ref{Proposition:MoritaEquivalenceLeb1} that this algebra is $G$-equivariantly Morita equivalent to $B$, the Morita equivalence being given by the pair $(\Leb^1(G,B), \Cont_0(G,B))$ equipped with the operations as in Proposition~\ref{Proposition:MoritaEquivalenceLeb1}. We have to define actions of $\Cont_0(X)$ on $\Leb^1(G,B)$ and $\Cont_0(G,B)$ that are compatible with the given $\Cont_0(X)$-action on $B$ and the canonical $\Cont_0(X)$-action on $\Leb^1(G \ltimes G, B)$ that is given by 
$$
(f \alpha)(s,t) := t f \cdot \alpha(s,t), \quad f \in \Cont_0(X), \alpha \in \Leb^1(G\ltimes G, B), s,t \in G.
$$
Here we use the action $(t\cdot f)(x)=f(t^{-1}x)$ for all $t\in G$, $f \in \Cont_0(X)$ and $x\in X$. Direct calculation shows that compatible actions are given by
$$
(f \xi^>)(t) := (tf) \cdot \xi^>(t), \quad (f \xi^<)(s) := f \cdot \xi^<(s),
$$
where $f \in \Cont_0(X)$, $\xi^> \in \Cont_0(G,B)$, $\xi^<\in \Leb^1(G,B)$ and $s,t\in G$. 

So $\Leb^1(G \ltimes G, B)$ is $G$-$\Cont_0(X)$-Morita equivalent to $B$. The next step is to find a homomorphism $j_B$ from $\Leb^1(G,B)_{\tau}$ to $\Leb^1(G\ltimes G, B)$. Direct calculation shows that the ansatz
$$
j_B ( f \otimes \beta) (s)(t) := t f \cdot \beta (s)
$$
gives us a norm-contractive $G$-equivariant $\C$-linear and $\Cont_0(X)$-linear homomorphism. 

We can hence define $\varepsilon_B\in \MoritabanG(\Leb^1(G,B)_{\tau}, B)$ as the composition
$$
\xymatrix{
\varepsilon_B\colon & \Leb^1(G,B)_{\tau} \ar[r]^-{j_B} & \Leb^1(G\ltimes G, B) \ar[rrr]^-{(\Leb^1(G,B), \Cont_0(G,B))}_-{\cong} &&& B. 
} 
$$

\subsubsection*{The unit-co-unit equation I}

Let $A$ be a $\Cont_0(X/G)$-Banach algebra. 

\begin{proposition}
We have 
\begin{equation}\label{Equation:GGJ:UnitCoUnit1}
\varepsilon_{A_{\tau}} \circ (\eta_A)_{\tau} \ = \ \id_{A_{\tau}}
\end{equation}
in $\MoritabanG(X;A_{\tau}, A_{\tau})$.
\end{proposition}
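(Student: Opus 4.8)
The plan is to follow the pattern of the proofs of Theorems~\ref{Theorem:DualGreenJulg} and~\ref{Theorem:GreenJulg}: unwind $\varepsilon_{A_{\tau}}\circ(\eta_A)_{\tau}$ into a zig-zag of honest homomorphisms and Morita equivalences, and then exhibit a single equivariant $\Cont_0(X)$-linear concurrent homomorphism which, via the $\Cont_0(X)$-linear refinement of Lemma~\ref{Lemma:ConcurrentEquivalences:Equivariant}, collapses the composite to $\id_{A_{\tau}}$.

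First I would rewrite the two factors. By definition $\eta_A=\phi_A\circ\psi_A^{-1}$ with $[\psi_A]=\bigl[(A_0,A_0)\bigr]$ in $\Moritaban(X/G;A_0,A)$. Applying the tensor-up functor $(\cdot)_{\tau}=\Cont_0(X)\otimes_{\Cont_0(X/G)}(\cdot)$ --- which sends a homomorphism $\varphi$ to $\varphi\otimes\id_{\Cont_0(X)}$, sends the $\Cont_0(X/G)$-linear Morita equivalence $(A_0,A_0)$ to the $G$-$\Cont_0(X)$-linear Morita equivalence $\bigl((A_0)_{\tau},(A_0)_{\tau}\bigr)$ between $(A_0)_{\tau}$ and $A_{\tau}$, and preserves invertibility --- one obtains $(\eta_A)_{\tau}=(\phi_A)_{\tau}\circ\bigl((\psi_A)_{\tau}\bigr)^{-1}$ with $(\phi_A)_{\tau}\colon(A_0)_{\tau}\to\Leb^1(G,A_{\tau})_{\tau}$, $(\psi_A)_{\tau}\colon(A_0)_{\tau}\to A_{\tau}$, and $\bigl[(\psi_A)_{\tau}\bigr]=\bigl[\bigl((A_0)_{\tau},(A_0)_{\tau}\bigr)\bigr]$. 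On the other side $\varepsilon_{A_{\tau}}=[E]\circ[j_{A_{\tau}}]$, where $E:=\bigl(\Leb^1(G,A_{\tau}),\Cont_0(G,A_{\tau})\bigr)$ is the Morita equivalence between $\Leb^1(G\ltimes G,A_{\tau})$ and $A_{\tau}$ of Proposition~\ref{Proposition:MoritaEquivalenceLeb1}, carrying the $\Cont_0(X)$-actions introduced in the construction of the co-unit. Writing $h:=j_{A_{\tau}}\circ(\phi_A)_{\tau}\colon(A_0)_{\tau}\to\Leb^1(G\ltimes G,A_{\tau})$, we get $\varepsilon_{A_{\tau}}\circ(\eta_A)_{\tau}=[E]\circ[h]\circ\bigl[(\psi_A)_{\tau}\bigr]^{-1}$, so (using $\bigl[(\psi_A)_{\tau}\bigr]=\bigl[\bigl((A_0)_{\tau},(A_0)_{\tau}\bigr)\bigr]$) the assertion is equivalent to
\[
[E]\circ[h]\ =\ \bigl[\bigl((A_0)_{\tau},(A_0)_{\tau}\bigr)\bigr]\qquad\in\ \MoritabanG\bigl(X;(A_0)_{\tau},A_{\tau}\bigr).
\]

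Next I would produce an equivariant $\Cont_0(X)$-linear concurrent homomorphism $\Phi=(\Phi^<,\Phi^>)$ from the Morita equivalence $\bigl((A_0)_{\tau},(A_0)_{\tau}\bigr)$ to $E$ with left coefficient map $h$ and right coefficient map $\id_{A_{\tau}}$, as summarised by
\[
\xymatrix{
\Leb^1(G\ltimes G,A_{\tau}) \ar[rrr]^-{E} & && A_{\tau} \ar@{=}[d] \\
(A_0)_{\tau} \ar[rrr]_-{((A_0)_{\tau},(A_0)_{\tau})} \ar[u]^-{h} & \ar@{=>}[u]|{\Phi} && A_{\tau}.
}
\]
Its components $\Phi^<,\Phi^>$ are forced, up to the $G$-twists and modular factors occurring in Proposition~\ref{Proposition:MoritaEquivalenceLeb1} and in the formula for $j_{A_{\tau}}$, by the requirement that they intertwine the two module structures (via $h$, resp.\ $\id_{A_{\tau}}$) and the two inner products; concretely, on the dense subalgebra $\Cont_0(X)\otimes_{\Cont_0(X/G)}A_c$ they are built from the cut-off-weighted spreading maps that already underlie $\phi_A$ and $j_{A_{\tau}}$. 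One then has to verify: independence of the auxiliary function $\chi\in\Cont_c(X/G)$; the norm bounds making $\Phi^<,\Phi^>$ extend continuously to $(A_0)_{\tau}$ and making $\Phi^>$ genuinely take values in $\Cont_0(G,A_{\tau})$; compatibility with the bimodule actions; and --- the heart of the matter --- the two inner-product identities $\langle\Phi^<(\xi^<),\Phi^>(\xi^>)\rangle_{A_{\tau}}=(\psi_A)_{\tau}(\xi^<\xi^>)$ and ${}_{\Leb^1(G\ltimes G,A_{\tau})}\langle\Phi^>(\xi^>),\Phi^<(\xi^<)\rangle=h(\xi^>\xi^<)$, both of which come down to the defining properties $\int_G c(t^{-1}x)\rmd t=1$ and $d\equiv1$ on $\supp c$ of the cut-off functions, exactly as in Proposition~3.2 and the proof of the main theorem of \cite{Paravicini:10:GreenJulg:erschienen}. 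Once $\Phi$ is in hand, Lemma~\ref{Lemma:ConcurrentEquivalences:Equivariant} (in its $\Cont_0(X)$-linear form) gives $[\id_{A_{\tau}}]\circ\bigl[\bigl((A_0)_{\tau},(A_0)_{\tau}\bigr)\bigr]=[E]\circ[h]$, which is the displayed equality, and hence $\varepsilon_{A_{\tau}}\circ(\eta_A)_{\tau}=\bigl[(\psi_A)_{\tau}\bigr]\circ\bigl[(\psi_A)_{\tau}\bigr]^{-1}=\id_{A_{\tau}}$.

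The main obstacle is the construction and verification of $\Phi$: pinning down its components so that all the compatibilities hold on the nose, and in particular ensuring that $\Phi^>$ really lands in $\Cont_0(G,A_{\tau})$, i.e.\ that its values decay at infinity in the $G$-direction even though $A_{\tau}=\Cont_0(X)\otimes_{\Cont_0(X/G)}A$ contains non-compactly-supported elements. This is precisely where properness of the $G$-action on $X$ and the fine behaviour of $c$ and $d$ are indispensable, and where the argument departs from the compact-group case of Theorem~\ref{Theorem:GreenJulg}. A secondary, purely formal nuisance --- already present in the definition of $\phi_A$ --- is that products such as $d\,(s\cdot c)$ are not themselves elements of $\Cont_0(X)$ (only their products with compactly supported sections are), so all formulas must be phrased $\Cont_0(X)$-module-theoretically rather than pointwise.
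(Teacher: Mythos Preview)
Your plan is correct and matches the paper's proof essentially verbatim: your $h$ is the paper's $\kappa_A$, your $\Phi$ is the paper's $K_A$, and the commuting square you draw is exactly the one the paper uses together with Lemma~\ref{Lemma:ConcurrentEquivalences:Equivariant}. The paper simply writes down the components explicitly --- $(K_A^>(f\otimes\chi a))(t)=d\,(t\cdot f)\otimes\chi a$ and $(K_A^<(f\otimes\chi a))(s)=f\,(s\cdot c)\otimes\chi a$ --- and declares the required compatibilities a direct calculation, so the obstacle you highlight is milder than you suggest (properness plus the compact support of $d\chi$ immediately give $K_A^>$ values in $\Cont_c(G,A_\tau)$).
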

\begin{proof}
Choose $c$ and $d$ as in Definition~\ref{Definition:CutOffFunction} and the discussion thereafter and define $\mA(X/G)$, $A_0$, $\phi_A$ and $\psi_A$ as on page \pageref{Equation:DefinitionOfphiA} and Lemma~\ref{Lemma:DefinitionOfpsiA}. 

Define $\kappa_{A}:=j_{A_{\tau}}\circ (\phi_A )_{\tau}$ which is a homomorphism from $(A_0)_{\tau}$ to  $\Leb^1(G\ltimes G,A_{\tau})$. It maps an element $f \otimes \chi a\in (A_0)_{\tau}$, where $f\in \Cont_c(X)$, $\chi \in \Cont_c(X/G)$ and $a\in A$, to the function 
$$
\kappa_A(f\otimes \chi a)\colon G\times G \to A_{\tau},\ (s,t) \mapsto [(t\cdot f) \ d \ (s\cdot c)] \otimes \chi a
$$ 
in $\Cont_c(G\times G, A_{\tau})\subseteq \Leb^1(G\ltimes G, A_{\tau})$. 

Recall that $\psi_A\colon A_0 \to A$ is invertible in $\Moritaban(X/G; A_0, A)$ and can be represented by the following Morita equivalence: the $\Cont_0(X/G)$-Banach algebra $A_0$ carries canonical left and right actions by $A$, so we can turn the pair $(A_0, A_0)$ into a $\Cont_0(X/G)$-linear Morita equivalence between $A_0$ and $A$. 
Likewise, the homomorphism $(\psi_A)_{\tau}$ gives the same Morita isomorphism as the $G$-$\Cont_0(X)$-Morita equivalence $((A_0)_{\tau}, (A_0)_{\tau})$. 
 
We will now produce a concurrent homomorphism $K_A= (K_A^<, K_A^>)$ from the Morita equivalence $((A_0)_{\tau}, (A_0)_{\tau})$ between $(A_0)_{\tau}$ and $A_{\tau}$ to the Morita equivalence $\Cont(G,A_{\tau})=(\Leb^1(G,A_{\tau}), \Cont(G,A_{\tau}))$ with coefficient maps $\kappa_A$ and $\id_A$ that makes the following diagram commutative
%
$$
\xymatrix{
\Leb^1(G\ltimes G,A_{\tau}) \ar[rrr]^{ \Cont(G,A_{\tau})} & && A_{\tau} \ar@{=}[d]  \\ 
(A_0)_{\tau} \ar[rrr]_{((A_0)_{\tau} ,(A_0)_{\tau} )} \ar[u]^-{\kappa_{A}}  & \ar@{=>}[u]|{(K_A^<,K_A^>)}&& A_{\tau} \\
}
$$
Note that this implies Equation~(\ref{Equation:GGJ:UnitCoUnit1}) because the left-hand side of (\ref{Equation:GGJ:UnitCoUnit1}) can be found in the above diagram as the path from the lower right-hand corner to the left, then up and then right.

Define
$$
(K_A^>(f \otimes \chi a))(t):=d \ (t\cdot f) \otimes \chi a
$$ and 
$$
(K_A^<(f \otimes \chi a))(s):=f \ (s \cdot c) \otimes \chi a
$$
for all $f\in \Cont_0(X)$, $\chi \in \Cont_c(X/G)$, $a\in A$ and $s,t\in G$. Direct calculation shows that this definition gives a well-defined norm-contractive $G$-equivariant and $\Cont_0(X)$-linear concurrent homomorphism $K_A$. 
\end{proof}

\subsubsection*{The unit-co-unit equation II}

Let $B$ be a $G$-$\Cont_0(X)$-Banach algebra. 
\begin{proposition}
We have
\begin{equation}\label{Equation:GGJ:UnitCoUnit2}
\Leb^1(G, \varepsilon_B) \circ \eta_{\Leb^1(G,B)} \ = \ \id_{\Leb^1(G,B)}
\end{equation}
in $\Moritaban(X/G; \Leb^1(G,B), \Leb^1(G,B))$.
\end{proposition}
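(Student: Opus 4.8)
The plan is to mimic the proof of the first unit--co-unit equation and of Theorem~\ref{Theorem:DualGreenJulg}: we exhibit a single concurrent homomorphism that witnesses the desired identity of Morita morphisms. Set $A:=\Leb^1(G,B)$, regarded as a $\Cont_0(X/G)$-Banach algebra, so that $A_{\tau}=\Cont_0(X)\otimes_{\Cont_0(X/G)}\Leb^1(G,B)$ is precisely the algebra on which $j_B$ is defined, and recall that $\eta_A=\phi_A\circ\psi_A^{-1}$, with $\psi_A\colon A_0\to A$ the Morita isomorphism represented by $(A_0,A_0)$ and $\phi_A\colon A_0\to\Leb^1(G,A_{\tau})$ the isometric homomorphism of Lemma~\ref{Lemma:DefinitionOfpsiA}. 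Since the descent on the Morita category is functorial (Proposition~\ref{Proposition:Descent:Morita}, in its $\Cont_0(X/G)$-linear form) and $\varepsilon_B=\bigl[(\Leb^1(G,B),\Cont_0(G,B))\bigr]\circ[j_B]$, the left-hand side of~(\ref{Equation:GGJ:UnitCoUnit2}) equals $[M]\circ[\kappa_B]\circ[\psi_A]^{-1}$, where $M$ denotes the descended Morita equivalence $\bigl(\Leb^1(G,\Leb^1(G,B)),\Leb^1(G,\Cont_0(G,B))\bigr)$ between $\Leb^1(G,\Leb^1(G\ltimes G,B))$ and $\Leb^1(G,B)$, and $\kappa_B:=\Leb^1(G,j_B)\circ\phi_A\colon A_0\to\Leb^1(G,\Leb^1(G\ltimes G,B))$. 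Thus it suffices to prove
$$
[M]\circ[\kappa_B]\ =\ [(A_0,A_0)]\qquad\text{in } \Moritaban(X/G;A_0,\Leb^1(G,B)).
$$

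To obtain this I would construct a $\Cont_0(X/G)$-linear concurrent homomorphism $\Phi=(\Phi^<,\Phi^>)$ from the Morita equivalence $(A_0,A_0)$ (between $A_0$ and $\Leb^1(G,B)$) to $M$, with left coefficient map $\kappa_B$ and right coefficient map $\id_{\Leb^1(G,B)}$. By the non-equivariant, $\Cont_0(X/G)$-linear version of Lemma~\ref{Lemma:ConcurrentEquivalences:Equivariant}, such a $\Phi$ yields $[\id_{\Leb^1(G,B)}]\circ[(A_0,A_0)]=[M]\circ[\kappa_B]$, which is exactly what we want. Concretely, $\Phi^>$ and $\Phi^<$ are the $\Cont_0(X/G)$-twisted analogues of the maps $\Phi_B^>(\beta^>)(t)=[\,s\mapsto\beta^>(st)\,]$, $\Phi_B^<(\beta^<)(t)=[\,s\mapsto\beta^<(s)\,]$ appearing in the proof of Theorem~\ref{Theorem:DualGreenJulg}, with the cut-off functions $c$ and $d$ and the $\Cont_0(X)$-twists inherited from $j_B$ and from the generalised co-unit inserted in the places dictated by the groupoid $\gG\ltimes\gG$. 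One first defines $\Phi$ on the dense subspaces of compactly supported ($X/G$-)sections, checks by substitution in the convolution integrals that it is a pair of module maps intertwining the inner products and having the prescribed coefficient maps, and then checks that it is norm-decreasing so as to extend to the completions.

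The main obstacle is the norm bookkeeping needed to establish this last point, exactly as in Lemma~\ref{Lemma:DefinitionOfpsiA} and in the proof of Equation~(\ref{Equation:GGJ:UnitCoUnit1}). The norm on $A_0=(\Leb^1(G,B))_0$ is the transplanted norm $\norm{a}_0=\norm{\phi_A(a)}_{\Leb^1(G,A_{\tau})}$, which secretly encodes both the groupoid-algebra norm of $\Leb^1(G\ltimes X)$ and the averaging identity $\int_G c(t^{-1}x)\,\rmd t=1$; to see that the $\Cont_c$-level formulas for $\Phi^<,\Phi^>$ do not increase norms one must unwind this definition and use the same ``interchanging $\sup$ with $\int$ goes in the favourable direction'' estimate that made Lemma~\ref{Lemma:DefinitionOfpsiA} work, now carried out one level up with $\Leb^1(G,B)$-valued coefficients. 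Everything else is routine: $G$-equivariance is not an issue because, after descent, all algebras in sight carry the trivial $G$-action; $\Cont_0(X/G)$-linearity of $\Phi$ is immediate from the defining formulas; and the concurrent-homomorphism identities reduce to changes of variables in the relevant integrals over $G$.
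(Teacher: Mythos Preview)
Your overall strategy is the same as the paper's: reduce the identity to the commutativity of a square of Morita morphisms and witness this commutativity by a concurrent homomorphism into the descended equivalence $M=(\Leb^1(G,\Leb^1(G,B)),\Leb^1(G,\Cont_0(G,B)))$ with coefficient maps $\kappa_B$ and $\id$. The gap is precisely in the step you flag as ``norm bookkeeping'': your proposal to use $(A_0,A_0)$ itself as the source of the concurrent homomorphism does not go through, and this is not a matter of care but of substance.

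The difficulty is that the norm $\norm{\cdot}_0$ on $A_0=(\Leb^1(G,B))_0$, which is transplanted from $\Leb^1(G,A_\tau)=\Leb^1(G,\Cont_0(X)\otimes_{\Cont_0(X/G)}\Leb^1(G,B))$, does not control the norm of the image in $\Leb^1(G,\Cont_0(G,B))$. Concretely, for $\beta\in\Cont_c(G,B)$ the target norm of your $\Phi^>(\beta)$ is $\int_G\sup_s\norm{(s\cdot d)\,\beta(st)}_B\,\rmd t$, and there is no ``favourable $\sup$/$\int$ interchange'' that bounds this by $\norm{\beta}_0=\int_G\norm{d(r\cdot c)\otimes\beta}_{A_\tau}\,\rmd r$: the two expressions integrate and take suprema over different variables and in different orders. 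The paper is explicit about this obstruction: the naturally available equivalence ``is not small enough to map into $\Leb^1(G,\Cont_0(G,B))$''. Its remedy is to build a \emph{new} Morita equivalence $H_B=(H_B^<,H_B^>)$ between $\Leb^1(G,B)_0$ and $\Leb^1(G,B)$, where $H_B^>$ is the completion of $\Cont_c(G,B)\,\Cont_c(X)$ for the norm $\norm{\beta^>}_{\rhd}:=\max\{\norm{\Lambda_B^>(\beta^>)}_{\Leb^1(G,\Cont_0(G,B))},\,\norm{\beta^>}_{\Leb^1(G,B)}\}$ (and analogously on the bra side). The concurrent homomorphism $\Lambda_B\colon H_B\to M$ is then bounded \emph{by construction}, and a separate concurrent homomorphism $\Psi_B\colon H_B\to(\Leb^1(G,B),\Leb^1(G,B))$ shows that $H_B$ still represents $\psi_{\Leb^1(G,B)}$. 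Thus the argument requires two concurrent homomorphisms out of a purpose-built intermediate object, not one out of $(A_0,A_0)$; your sketch collapses these and thereby skips the genuine technical content of the proof.
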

\begin{proof}
Choose $c$ and $d$ as above Definition~\ref{Definition:CutOffFunction} and the discussion thereafter and use the notation of page \pageref{Equation:DefinitionOfphiA}. Define $\lambda_B$ to be the composition
$$
\xymatrix{
\lambda_B \colon \quad  \Leb^1(G,B)_0 \ar[rr]^-{\phi_{\Leb^1(G,B)}}&& \Leb^1(G, \Leb^1(G,B)_{\tau}) \ar[rr]^-{\Leb^1(G,j_B)} && \Leb^1(G, \Leb^1(G\ltimes G, B))
}
$$

Consider the following diagram
$$
\xymatrix{
\Leb^1(G,\Leb^1(G\ltimes G,B)) \ar[rrr]^-{ \Leb^1(G,\Cont_0(G,B))}  &&& \Leb^1(G,B) \ar@{=}[d]  \\ 
\Leb^1(G,B)_0 \ar[rrr]_-{(?^< ,?^> )} \ar[u]^-{\lambda_B}  & \ar@{=>}[u]|{(??^<,??^>)}&& \Leb^1(G,B) \\
}
$$
Here, $\Leb^1(G,\Cont_0(G,B))$ represents the $\Cont_0(X/G)$-linear Morita equivalence $(\Leb^1(G,\Leb^1(G,B)),\Leb^1(G,\Cont_0(G,B)))$, compare Proposition~\ref{Proposition:MoritaEquivalenceLeb1}. We are going to find a suitable $\Cont_0(X/G)$-linear Morita equivalence $(?^<,?^>)$ between $\Leb^1(G,B)_0$ and $\Leb^1(G,B)$ that represents the same Morita morphism as the homomorphism $\psi_{\Leb^1(G,B)}$. What is left to show is that the square diagram is commutative in $\Moritaban(X/G, \cdot, \cdot)$. We do this by producing a concurrent homomorphism $(??^<, ??^>)$ between the Morita equivalences.

We have seen above that the Morita equivalence $(\Leb^1(G,B), \Leb^1(G,B))$ represents $\psi_{\Leb^1(G,B)}$. But this equivalence is not small enough to map into $\Leb^1(G,\Cont_0(G,B))$. We have to construct something smaller:

On $\Leb^1(G,B)$ and on its subspace $\Cont_c(G, B)$, define a left and a right action of $\Cont_0(X)$: 
$$
(\beta^> f)(t) := \beta(t) \ (t\cdot f)
$$
and 
$$
(f \beta^<)(t) := f \ \beta(t)
$$
for all $f\in \Cont_0(X)$, $\beta^< \in \Cont_c(G,B)$, $\beta^> \in \Cont_c(G,B)$ and $t\in G$. Morally, these $\Cont_0(X)$-structures correspond to the fibrations over $X$ of the groupoid $G\ltimes X$ along the source and the range map. 

We define $(\Cont_c(G,B))_c:= \Cont_c(G,B) \Cont_c(X)$ and ${_c}(\Cont_c(G,B)):= \Cont_c(X) \Cont_c(G,B)$. 

Define a linear map $\Lambda_B^>$ on $(\Cont_c(G,B))_c$ with values in $\Leb^1(G, \Cont_0(G,B))$ as follows:
$$
\Lambda_B^>(\beta^>f)(t)(s) := sd \ (\beta^> f)(st) = (s\cdot d) \ (st\cdot f) \ \beta^>(st) 
$$
for $\beta^>\in \Cont_c(G, B)$, $f\in \Cont_c(X)$, $t,s\in G$. Note that $\Lambda_B^>(\beta^> f) \in \Cont_c(G, \Cont_c(G,B))$. We define a new norm on $(\Cont_c(G,B))_c$ as follows:
$$
\norm{\beta^>}_{\rhd} := \max\left\{\norm{\Lambda^>_B(\beta^>)}, \norm{\beta^>}_{\Leb^1(G,B)}\right\}
$$
Let $H^>_B$ denote the completion of $(\Cont_c(G,B))_c$ for this norm. It is elementary though somewhat tiresome to check that 
$$
\norm{\beta *\beta^>}_{\rhd} \leq \norm{\beta}_{\Leb^1(G,B)} \norm{\beta^>}_{\rhd} \LazyAnd \norm{\beta^> *\beta}_{\rhd} \leq  \norm{\beta^>}_{\rhd}\norm{\beta}_{\Leb^1(G,B)},
$$
for $\beta^>, \beta \in (\Cont_c(G,B))_c$. It follows from the density of $(\Cont_c(G,B))_c$ in $\Leb^1(G,B)$ that $H^>_B$ is a Banach $\Leb^1(G,B)$-$\Leb^1(G,B)$-bimodule; it is hence also a  Banach $\Leb^1(G,B)_0$-$\Leb^1(G,B)$-bimodule and thus qualifies as a possibe ``ket-part'' of a Morita equivalence between $\Leb^1(G,B)_0$ and $\Leb^1(G,B)$. It will replace the $?^>$ in the above diagram. 

Similarly, we define a map $\Lambda^<_B$ from ${_c}(\Cont_c(G,B))$ to $\Cont_c(G,\Cont_c(G,B)) \subseteq \Leb^1(G, \Leb^1(G,B))$ as follows:
$$
\Lambda_B^<(f \beta^>)(t)(s) := tc \ (f\beta^< )(s) = (t\cdot c) \ (s\cdot f) \ \beta^<(s) 
$$
for $\beta^<\in \Cont_c(G, B)$, $f\in \Cont_c(X)$, $t,s\in G$. One can show that 
$$
\norm{\Lambda_B^<(\beta^<)} \geq \norm{\beta^<}_{\Leb^1(G,B)},
$$
for all $\beta^< \in {_c}(\Cont_c(G,B))$ so the construction is slightly easier on the ``bra-side''. Define
$$
\norm{\beta^>}_{\lhd} := \norm{\Lambda^<_B(\beta^<)}
$$
for $\beta^> \in {_c}(\Cont_c(G,B))$. Let $H_B^<$ denote the completion of $ {_c}(\Cont_c(G,B))$ for this norm. Again, one can show that the convolution product extends to a Banach $\Leb^1(G,B)$-$\Leb^1(G,B)$-bimodule structure on $H_B^<$ because 
$$
\norm{\beta *\beta^<}_{\lhd} \leq \norm{\beta}_{\Leb^1(G,B)} \norm{\beta^<}_{\lhd} \LazyAnd \norm{\beta^< *\beta}_{\lhd} \leq  \norm{\beta^<}_{\lhd}\norm{\beta}_{\Leb^1(G,B)},
$$
and it follows that $H_B:=(H_B^<, H_B^>)$ can be turned into a Morita equivalence between $\Leb^1(G,B)_0$ and $\Leb^1(G,B)$ (note that the left-hand inner product on $H_B^> \times H_B^<$ takes its values in $\Leb^1(G,B)_0$, canonically). It comes with a concurrent homomorphism to $\Leb^1(G, \Cont_0(G,B))$ by construction, namely $(\Lambda_B^<, \Lambda^>_B)$. It is straightforward to check that $H_B$ and $\Lambda_B$ are compatible with the action of $\Cont_0(X/G)$.

The only thing that is left to check it that $H_B$ represents the homomorphism $\psi_{\Leb^1(G,B)}$ from $\Leb^1(G,B)_0$ to $\Leb^1(G,B)$. But, by construction, there are canonical norm-decreasing linear maps $\Psi_B^>\colon H_B^>\to \Leb^1(G,B)$ and $\Psi_B^<\colon H_B^<\to \Leb^1(G,B)$ extending the identity on $(\Cont_c(G,B))_c$ and ${_c}(\Cont_c(G,B))$, respectively. They fit as a concurrent homomorphism into the following diagram:
$$
\xymatrix{
\Leb^1(G,B)_0 \ar[rrrr]^{(H_B^<,H_B^>)}  \ar[d]_-{\psi_{\Leb^1(G,B)}} &&\ar@{=>}[d]|{(\Psi_B^<,\Psi_B^>)} && \Leb^1(G,B) \ar@{=}[d]\\
\Leb^1(G,B) \ar[rrrr]_{(\Leb^1(G,B), \Leb^1(G,B) )}&&&& \Leb^1(G,B) \\
}
$$
so the result follows.
\end{proof}

\subsection{Connection to the analogous theorem for $\KKban$}

Recall the following result from \cite{Paravicini:10:GreenJulg:erschienen}:

\begin{corollary}[Corollary of Theorem 2.4 of \cite{Paravicini:10:GreenJulg:erschienen}] 
Let $X$ be a proper locally compact Hausdorff $G$-space such that $X/G$ is $\sigma$-compact. Let $B$ be a locally $\Cont_0(X)$-convex $G$-$\Cont_0(X)$-Banach algebra. Then there is a natural isomorphism
$$
\RKKbanG(\Cont_0(X); \Cont_0(X), B) \ \cong \ \RKKban(\Cont_0(X/G); \Cont_0(X/G), \Leb^1(G,B)). 
$$
\end{corollary}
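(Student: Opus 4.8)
The plan is to obtain this statement as the special case $A=\Cont_0(X/G)$ of Theorem~2.4 of \cite{Paravicini:10:GreenJulg:erschienen}, the $\RKKban$-version of the generalised Green--Julg theorem (that is, of our Theorem~\ref{Theorem:GreenJulg:generalised}). That theorem provides, for every locally $\Cont_0(X/G)$-convex $\Cont_0(X/G)$-Banach algebra $A$ and every locally $\Cont_0(X)$-convex $G$-$\Cont_0(X)$-Banach algebra $B$, a natural isomorphism
$$
\RKKbanG(\Cont_0(X); A_{\tau}, B) \ \cong \ \RKKban(\Cont_0(X/G); A, \Leb^1(G,B)),
$$
where $A_{\tau} := \Cont_0(X) \otimes_{\Cont_0(X/G)} A$ carries the $G$-action induced by the canonical $G$-action on $\Cont_0(X)$.

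First I would check that the hypotheses hold for the choice $A = \Cont_0(X/G)$: this algebra is a commutative C$^*$-algebra, hence trivially locally $\Cont_0(X/G)$-convex, while $B$ is locally $\Cont_0(X)$-convex by assumption. Then I would identify the algebra $A_{\tau}$: the balanced tensor product $\Cont_0(X) \otimes_{\Cont_0(X/G)} \Cont_0(X/G)$ is canonically isometrically isomorphic to $\Cont_0(X)$, and under this identification the $G$-action induced from $\Cont_0(X)$ goes over to the standard $G$-action on $\Cont_0(X)$ coming from the $G$-action on $X$. Substituting $A = \Cont_0(X/G)$ into the displayed isomorphism then yields exactly
$$
\RKKbanG(\Cont_0(X); \Cont_0(X), B) \ \cong \ \RKKban(\Cont_0(X/G); \Cont_0(X/G), \Leb^1(G,B)),
$$
and naturality in $B$ is inherited from the naturality asserted in the cited theorem.

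There is no real conceptual obstacle here; the work is purely bookkeeping. The points that need a moment of care are: checking that the canonical isomorphism $\Cont_0(X) \otimes_{\Cont_0(X/G)} \Cont_0(X/G) \cong \Cont_0(X)$ is $G$-equivariant and compatible with the $\Cont_0(X)$-module structures on both sides, and confirming that the local $\Cont_0(X)$-convexity hypothesis of Theorem~2.4 of \cite{Paravicini:10:GreenJulg:erschienen}, once $A$ is specialised to a commutative C$^*$-algebra, reduces precisely to the hypothesis imposed here on $B$. Both are immediate from the definitions.
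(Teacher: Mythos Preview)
Your proposal is correct and matches the paper's treatment exactly: the paper states this result as an immediate corollary of Theorem~2.4 of \cite{Paravicini:10:GreenJulg:erschienen} without giving any further argument, and your specialisation $A=\Cont_0(X/G)$ together with the identification $\Cont_0(X)\otimes_{\Cont_0(X/G)}\Cont_0(X/G)\cong\Cont_0(X)$ is precisely the intended justification.
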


\begin{corollary}[Corollary of Corollary 2.5 of \cite{Paravicini:10:GreenJulg:erschienen}] 
Let $X$ be a proper locally compact Hausdorff $G$-space such that $X/G$ is compact. Let $B$ be a locally $\Cont_0(X)$-convex $G$-$\Cont_0(X)$-Banach algebra. Then there is a natural isomorphism
$$
\RKKbanG(\Cont_0(X); \Cont_0(X), B) \ \cong \ \KTh_0(\Leb^1(G,B)). 
$$
\end{corollary}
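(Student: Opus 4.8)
The plan is to reduce the statement to the preceding corollary together with the identification of $\RKKban$ over a compact base with ordinary $\KTh$-theory. First I would invoke the Corollary of Theorem~2.4 of \cite{Paravicini:10:GreenJulg:erschienen} stated just above: its only hypothesis is that $X/G$ be $\sigma$-compact, so it applies in particular when $X/G$ is compact and provides a natural isomorphism
$$
\RKKbanG(\Cont_0(X); \Cont_0(X), B) \ \cong \ \RKKban(\Cont_0(X/G); \Cont_0(X/G), \Leb^1(G,B)),
$$
where $\Leb^1(G,B)$ is equipped with its canonical $\Cont_0(X/G)$-structure, and where the hypothesis on $B$ ensures that it is a legitimate coefficient algebra for the theory over $X/G$.

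Next, since $X/G$ is compact, $\Cont_0(X/G) = \Cont(X/G)$ is unital, and the second step is to apply the $\RKKban$-counterpart of Corollary~\ref{Corollary:RkkbanAndCompactSpaces} — the result, contained in \cite{Paravicini:14:kkban:arxiv} and recalled in the remark following Corollary~\ref{Corollary:RkkbanAndCompactSpaces}, that for every compact Hausdorff space $Y$ and every $\Cont(Y)$-Banach algebra $D$ there is a natural isomorphism $\RKKban(\Cont(Y); \Cont(Y), D) \cong \KTh_0(D)$. Taking $Y := X/G$ and $D := \Leb^1(G,B)$ yields
$$
\RKKban(\Cont(X/G); \Cont(X/G), \Leb^1(G,B)) \ \cong \ \KTh_0(\Leb^1(G,B)),
$$
and composing the two natural isomorphisms gives the claim.

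I do not expect a genuine obstacle. Both ingredients are already on the table, and the only point requiring care is the bookkeeping of the $\Cont_0(X/G)$-module structures: one has to check that the $\Cont(X/G)$-action on $\Leb^1(G,B)$ delivered by the first step is precisely the one under which the second step is invoked, and that the whole chain is natural in $B$. These are the same compatibilities between $\RKKban$ and $\rRkkban$ already used in Corollary~\ref{Corollary:RkkbanAndCompactSpaces} and in the remarks preceding this subsection, so it suffices to unwind the relevant definitions from \cite{Paravicini:14:kkban:arxiv} and \cite{Paravicini:10:GreenJulg:erschienen}.
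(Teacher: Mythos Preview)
Your proposal is correct and matches the derivation implied by the paper: the paper states this result without proof, simply labelling it as a corollary of Corollary~2.5 of \cite{Paravicini:10:GreenJulg:erschienen}, and your two-step argument (first applying the preceding corollary of Theorem~2.4, then invoking the $\RKKban$-analogue of Corollary~\ref{Corollary:RkkbanAndCompactSpaces} for a compact base) is exactly the intended reduction. The bookkeeping you flag concerning the $\Cont_0(X/G)$-structure on $\Leb^1(G,B)$ is indeed routine and is handled in the cited references.
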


\begin{proposition}
Let $X$ be a proper locally compact Hausdorff $G$-space such that $X/G$ is $\sigma$-compact. Let $B$ be a locally $\Cont_0(X)$-convex $G$-$\Cont_0(X)$-Banach algebra. Then the following diagram commutes
$$
\xymatrix{
\RKKbanG(\Cont_0(X); \Cont_0(X), B) \ar[rr]^-{\cong}\ar[d] && \RKKban(\Cont_0(X/G); \Cont_0(X/G), \Leb^1(G,B))\ar[d]\\
\rRkkbanG(\Cont_0(X); \Cont_0(X), B) \ar[rr]^-{\cong} && \rRkkban(\Cont_0(X/G); \Cont_0(X/G), \Leb^1(G,B))\\
}
$$
In particular, we have by \ref{Corollary:RkkbanAndCompactSpaces} and the remark thereafter, for compact $X/G$, that the following diagram commutes
$$
\xymatrix{
\RKKbanG(\Cont_0(X); \Cont_0(X), B) \ar[rr]^-{\cong}\ar[d] && \KTh_0(\Leb^1(G,B)) \\
\rRkkbanG(\Cont_0(X); \Cont_0(X), B) \ar[rru]_-{\cong} && 
}
$$
\end{proposition}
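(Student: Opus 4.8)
The plan is to deduce the square from the description of both Green--Julg isomorphisms as adjunction isomorphisms implemented by the \emph{same} unit and co-unit data, together with a few formal properties of the comparison maps. Recall that, by the construction behind Theorem~\ref{Theorem:GreenJulg:generalised}, the isomorphism $\rRkkban_G(X;A_\tau,B)\cong\rRkkban(X/G;A,\Leb^1(G,B))$ sends a class $x$, viewed as a morphism $A_\tau\to B$ in $\rRkkban_G$, to the composite $\Leb^1(G,x)\circ\eta_A$, where $\Leb^1(G,\cdot)$ is the enhanced descent functor into $\rRkkban(X/G;\cdot,\cdot)$ and $\eta_A\in\Moritaban(X/G;A,\Leb^1(G,A_\tau))$ is the unit built from the cut-off function $c$ and the auxiliary function $d$ (the inverse being governed by the co-unit $\varepsilon_B$). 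The first thing to record is that the isomorphism of \cite{Paravicini:10:GreenJulg:erschienen} for $\RKKban$ is, up to the usual identification of adjoints, implemented by exactly the same formula $\Leb^1(G,y)\circ\eta_A$, with $\eta_A$ and $\varepsilon_B$ the images of the very same Morita morphisms under the canonical functor $\Moritaban(X/G;\cdot,\cdot)\to\RKKban(X/G;\cdot,\cdot)$; this must be read off from the construction in \cite{Paravicini:10:GreenJulg:erschienen}, which is assembled from the same cut-off data $c,d$.

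Granting this, I would invoke three properties of the comparison maps $\RKK^{\ban}_G\to\rRkkban_G$ and $\RKKban\to\rRkkban$. First, they are compatible with composition by homomorphisms and by Morita morphisms (Lemma~\ref{Lemma:NaturalTrafoNatural} and the remarks after Definition~\ref{Definition:GammaAB}, transported to the $\Cont_0$-linear setting), and they fix the classes of homomorphisms and of $\Cont_0$-linear equivariant Morita morphisms; in particular they carry $\eta_A$ and $\varepsilon_B$ of the $\RKKban$-adjunction to those of the $\rRkkban$-adjunction. Second, they are natural transformations of bifunctors. Third, and crucially, they commute with the enhanced descent functor $\Leb^1(G,\cdot)$ — this is precisely Diagram~(\ref{Equation:Abstiegskompatibilitaet:rRkk0}). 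With these in hand the square is a one-line chase: for $y\in\RKK^{\ban}_G(\Cont_0(X);\Cont_0(X),B)$, applying the comparison map to $\Leb^1(G,y)\circ\eta_{\Cont_0(X/G)}$ turns $\Leb^1(G,y)$ into $\Leb^1(G,\cdot)$ applied to the comparison-image of $y$ (by Diagram~(\ref{Equation:Abstiegskompatibilitaet:rRkk0})), leaves $\eta_{\Cont_0(X/G)}$ unchanged (it is a Morita morphism), and respects the composition (right-composition by a Morita morphism); the result is exactly the $\rRkkban$-Green--Julg isomorphism applied to the comparison-image of $y$, which is commutativity of the square.

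Finally, the triangle in the second display is obtained by gluing the square onto the identifications $\rRkkban(X/G;\Cont_0(X/G),\Leb^1(G,B))\cong\KTh_0(\Leb^1(G,B))$ of Corollary~\ref{Corollary:RkkbanAndCompactSpaces} (valid since $X/G$ is compact) and its $\RKKban$-analogue from \cite{Paravicini:14:kkban:arxiv}: as those two identifications are themselves compatible via the comparison map (noted right after Corollary~\ref{Corollary:RkkbanAndCompactSpaces}), the outer triangle commutes. The only step that is not purely formal — and hence the expected main obstacle — is the one flagged in the first paragraph: identifying the $\RKKban$-isomorphism of \cite{Paravicini:10:GreenJulg:erschienen} with the adjunction implemented by the Morita morphisms $\eta_A$, $\varepsilon_B$ used here, i.e.\ checking that the comparison map $\RKKban\to\rRkkban$ carries the cycle generating that isomorphism to $\eta_A$ (equivalently, that both isomorphisms satisfy the same unit--co-unit equations). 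Everything else reduces to naturality, functoriality, and the already-established compatibility of the comparison map with descent (Theorem~\ref{Theorem:ComparisonAndDescentCompatible:KK0} and Diagram~(\ref{Equation:Abstiegskompatibilitaet:rRkk0})).
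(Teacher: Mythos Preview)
Your overall strategy coincides with the paper's: reduce the square to the identity
\[
\text{(comparison of the $\RKKban$-isomorphism)} \;=\; \Leb^1(G,\cdot)\circ\eta_A \text{ applied to the comparison image},
\]
and then invoke Diagram~(\ref{Equation:Abstiegskompatibilitaet:rRkk0}) together with the fact that the comparison maps preserve Morita morphisms. The triangle for compact $X/G$ is indeed obtained by stacking the square on top of Corollary~\ref{Corollary:RkkbanAndCompactSpaces}; that part is fine.

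The gap is precisely where you locate it, but it is not something one can ``read off'' from \cite{Paravicini:10:GreenJulg:erschienen}. The isomorphism there is \emph{not} defined by the formula $y\mapsto \Leb^1(G,y)\circ\eta_A$; it is defined at the cycle level by the construction $(E,T)\mapsto (E_0,T_0)$, where $E_0$ is the completion of $E_c$ in the pull-back norm along explicit embeddings $\Phi_E^{>},\Phi_E^{<}$ into $\Leb^1(G,E^{>}),\Leb^1(G,E^{<})$ built from $c$ and $d$. The content of the paper's proof is to show that, after passing to $\rRkkban$, the class of $(E_0,T_0)$ agrees with $\Leb^1(G,(E,T))\circ\eta_A$. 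Concretely, one produces a $\Cont_0(X/G)$-linear concurrent homomorphism $\Phi_E$ from $\psi_A^*(E_0,T_0)$ to $\phi_A^*\bigl(\Leb^1(G,(E,T))\bigr)$ (here $A=\Cont_0(X/G)$, and $\eta_A=\phi_A\circ\psi_A^{-1}$), and verifies the hypothesis of the homotopy criterion from \cite{Paravicini:07:Morita:richtigerschienen}. That verification is not formal: it requires a lemma to the effect that for $G$-equivariant, $\Cont_0(X)$-linear, locally compact operators $T$ with $G$-compact support, the operator $\phi_A(\chi)\,\Leb^1(G,T)$ on $\Leb^1(G,E)$ factors through $E_0$ (via a map $\pi_\chi$ whose norm is controlled by $\norm{\chi}_{A_0}$), so that the pair $(T_0,\phi_A(\chi)\,\Leb^1(G,T))$ lies in $\Komp_{\id}(\Phi_E)$. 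Without this cycle-level comparison you have no bridge between the two Green--Julg maps, since $\RKKban$ has no composition that would let you interpret $\Leb^1(G,y)\circ\eta_A$ as a definition there; the equality can only be established after both sides are pushed into $\rRkkban$, and it is the concurrent-homomorphism argument that does this.
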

\begin{proof}
The isomorphism from $\RKKbanG(\Cont_0(X); \Cont_0(X), B)$ to $\RKKban(\Cont_0(X/G); \Cont_0(X/G), \Leb^1(G,B))$ of \cite{Paravicini:10:GreenJulg:erschienen} is given by the following device: If $(E,T)$ is in $\EbanG(\Cont_0(X); \Cont_0(X), B)$ such that $T$ is $G$-equivariant (which we can always assume), then we define an embedding of $E_c^>=\Cont_c(X/G) E^>$ into $\Leb^1(G,E^>)$ as follows: 
$$
\Phi_E^>(e^>) := \left[s \mapsto d \ s\cdot e^> \right], \quad e^> \in E_c^>,
$$
where $d$ is as above. We also define an embedding of $E^<_c$ into $\Leb^1(G,E^<)$ by
$$
\Phi_E^<(e^<) := \left[s \mapsto s\cdot c \ e^< \right], \quad e^< \in E_c^<,
$$
We define $E_0^>$, denoted by $\mD(X,E^>)$ in \cite{Paravicini:10:GreenJulg:erschienen}, to be the closure of $E_c^>$ in the pull-back norm from $\Leb^1(G,E^>)$ along $\Phi_E^>$. The space $E_0^<$ is defined analogoulsy. It can be shown that $E_0=(E_0^<, E_0^>)$ is a $\Cont_0(X/G)$-Banach $\Leb^1(G,B)$-pair in a canonical way. The operator $T$ acts on $E_0$, canonically, by the continuous extension $T_0$ of the restriction of $T$ to $E_c$. In \cite{Paravicini:10:GreenJulg:erschienen} it is shown (or rather in \cite{Paravicini:07}), that $(E_0, T_0)$ is in $\Eban(\Cont_0(X/G); \Cont_0(X/G), \Leb^1(G,B))$ and that the map $(E,T) \mapsto (E_0,T_0)$ is an isomorphism on the level of homotopy classes. 

Now consider the following diagram
$$
\xymatrix{
A \ar[rrrr]^{\mD(X,(E,T))} &&&& \Leb^1(G,B)\\
A_0 \ar[rrrr]|{\psi_A^*(E_0,T_0)} \ar[u]^{\psi_A} \ar[d]_{\phi_A}&&\ar@{=>}[u]|{\id}\ar@{=>}[d]|{\Phi_E}&& \Leb^1(G,B) \ar@{=}[u]\ar@{=}[d]\\
\Leb^1(G,A) \ar[rrrr]_{\Leb^1(G,(E,T))} &&&& \Leb^1(G,B)
}
$$
Here we use the notation introduced above, where $A=\Cont_0(X/G)$; recall that $\psi_A$ induced a Morita equivalence and that $\eta_A=\phi_A \circ \psi_A^{-1}$ is the unit of the above adjunction, interpreted as a morphism in the appropriate Morita category. The upper part of the diagram has more or less just illustrative purposes: The top horizontal arrow is the $\RKKban$-element of \cite{Paravicini:10:GreenJulg:erschienen}. The lower part of the diagram should be read as a statement about $\KKban$-elements:  The concurrent homomorphism $\Phi_E$ satisfies the conditions given in Paragraph 4.5.2 of \cite{Paravicini:10:GreenJulg:erschienen}, or rather its obvious variant with coefficient maps that are not the identity, compare \cite{Paravicini:07:Morita:richtigerschienen}, Section~3, and gives us therefore a homotopy between $\phi_A^*(\Leb^1(G,(E,T)))$ and $\psi_A^*(E_0,T_0)$ in $\RKKban(\Cont_0(X/G); A_0, \Leb^1(G,B))$.  If we can show this identity, then transfering it to $\Rkkban$ leads to the equality
$$
\Rkkban(E_0,T_0) \circ \psi_A =\Rkkban(\Leb^1(G,(E,T))) \circ \phi_A,
$$
or
$$
\Rkkban(E_0,T_0)= \Rkkban(\Leb^1(G,(E,T))) \circ \phi_A \circ \psi_A^{-1}  = \Rkkban(\Leb^1(G,(E,T))) \circ \eta_A. 
$$
Now
$$
\Rkkban(\Leb^1(G,(E,T))) \stackrel{(\ref{Equation:Abstiegskompatibilitaet:rRkk0})}{=} \Leb^1(G, \RkkbanG(E,T))
$$
so
$$
\Rkkban(E_0,T_0) = \Leb^1(G, \RkkbanG(E,T)) \circ \eta_A,
$$
i.e., the first part of the proposition is shown. The second follows from the first.
What is left to show is that $\Phi_E$ satisfies indeed the conditions of Theorem~4.15 of \cite{Paravicini:10:GreenJulg:erschienen}. This can be reduced to the following lemma.
\end{proof}

\begin{lemma}
Let $B$ be a $G$-$\Cont_0(X)$-Banach algebra and $E$ be a $G$-$\Cont_0(X)$-Banach $B$-pair. Let $T\in \Lin_B(E)$ be $G$-equivariant, $\Cont_0(X)$-linear and locally compact. Moreover, let $T$ have $G$-compact support in the sense that there exists a function $\chi\in \Cont_c(X/G)$ such that $\chi T = T$. 

Then $T_0$ (as defined above) is in $\Komp_{\Leb^1(G,B)}(E_0)$ and $\phi_A(\chi) \Leb^1(G,T)$ is in $\Komp_{\Leb^1(G,B)}(\Leb^1(G,E))$ and 
$$
\left(T_0, \ \phi_A(\chi) \Leb^1(G,T)\right) \ \in \ \Komp_{\id_{\Leb^1(G,B)}}(\Phi_E),
$$
compare Definition~2.4 of \cite{Paravicini:07:Morita:richtigerschienen} for the notation.
\end{lemma}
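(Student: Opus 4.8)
The plan is to reduce the three assertions to the case of a single ``elementary'' operator $T=\ketbra{e^>}{e^<}$ with $e^>\in E_c^>$ and $e^<\in E_c^<$, and then to verify them in that case from the explicit descriptions of $E_0$, $\Phi_E$ and $\Leb^1(G,\cdot)$, following the computations in \cite{Paravicini:07} and \cite{Paravicini:10:GreenJulg:erschienen}.

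For the reduction, first note that $T=\chi T=\chi' T$ for any $\chi'\in\Cont_c(X/G)$ with $\chi'\chi=\chi$, so local compactness of $T$ gives $T\in\Komp_B(E)$; moreover $T(E)\subseteq\chi E\subseteq E_c$, so $T$ restricts to an operator on $E_c$. Each of the three conclusions is additive in $T$, and -- this is the point where properness enters -- each is also norm-continuous on the subspace of those $S\in\Lin_B(E)$ with $\chi_0 S=S=S\chi_0$ for a fixed $\chi_0\in\Cont_c(X/G)$: since $G$ acts properly on $X$, $\supp\chi_0$ is the preimage of a compact subset of $X/G$ and $d\,\chi_0\in\Cont_c(X)$, the set $\{s\in G:s\cdot\supp\chi_0\cap\supp(d\,\chi_0)\neq\emptyset\}$ is relatively compact, and this confines all the $G$-integrals occurring in $S_0$ and in $\phi_A(\chi_0)\Leb^1(G,S)$ to a fixed compact set, yielding estimates $\norm{S_0}\lesssim_{\chi_0}\norm{S}$ and $\norm{\phi_A(\chi_0)\Leb^1(G,S)}\lesssim_{\chi_0}\norm{S}$. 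Now approximate $T$ in $\Lin_B(E)$ by finite sums $K_n$ of operators $\ketbra{f^>}{f^<}$; choosing $\chi'$ with $\chi'\chi=\chi$ and $\chi_0$ with $\chi_0\chi'=\chi'$, $\chi_0\chi=\chi$, the operators $\chi'K_n\chi'$ converge to $\chi'T\chi'=T$, all satisfy $\chi_0(\chi'K_n\chi')=\chi'K_n\chi'=(\chi'K_n\chi')\chi_0$, and each is a finite sum of operators $\ketbra{e^>}{e^<}$ with $e^>\in E_c^>$, $e^<\in E_c^<$. By the continuity just noted it thus suffices to prove the lemma for such elementary $T$; note that equivariance of $T$ plays no further role once we are down to this case.

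For elementary $T=\ketbra{e^>}{e^<}$ one checks directly from the formulas $\Phi_E^>(e^>)=[s\mapsto d\,(s\cdot e^>)]$, $\Phi_E^<(e^<)=[s\mapsto (s\cdot c)\,e^<]$ and the definitions of $E_0$, $\phi_A$ and $\Leb^1(G,\cdot)$ that $T_0$ is compact over $\Leb^1(G,B)$, that $\phi_A(\chi)\Leb^1(G,T)$ is compact over $\Leb^1(G,B)$, and that $\Phi_E^>\circ T_0^>=\bigl(\phi_A(\chi)\Leb^1(G,T)\bigr)^>\circ\Phi_E^>$ on $E_c^>$ together with the corresponding identity on the $<$-side; the latter two facts exhibit the pair $\bigl(T_0,\phi_A(\chi)\Leb^1(G,T)\bigr)$ as a limit of elementary operators compact relative to the concurrent homomorphism $\Phi_E$ in the sense of Definition~2.4 of \cite{Paravicini:07:Morita:richtigerschienen}, i.e.\ as an element of $\Komp_{\id_{\Leb^1(G,B)}}(\Phi_E)$. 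These are exactly the computations carried out in \cite{Paravicini:07} and in the proof of the generalised Green--Julg theorem for $\KKban$ in \cite{Paravicini:10:GreenJulg:erschienen}, the only novelty being the presence of the non-identity coefficient maps $\psi_A$ and $\phi_A$ on the left, which does not interfere with any of the compactness statements. The one place demanding care -- the ``main obstacle'' -- is the verification of the two intertwining identities: one must check that $c$ and $d$ enter the definitions of $\Phi_E$, of the $\Leb^1(G,B)$-valued inner products on $E_0$, and of $\phi_A(\chi)$ in mutually compatible ways, which is precisely where the cut-off identity $\int_G c(s^{-1}x)\,ds=1$ and the normalisation $d\equiv 1$ on $\supp c$ are used. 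Once these identities are in hand, the properness estimates of the previous paragraph and the density argument complete the proof.
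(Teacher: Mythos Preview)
Your reduction to elementary rank-one operators has a real gap. The construction $S\mapsto S_0$ (restrict $S$ to $E_c$ and extend to $E_0$) is only known to give a bounded operator on $E_0$ when $S$ is $G$-equivariant: it is precisely equivariance together with $\Cont_0(X)$-linearity that yields $\Phi_E^>\circ S^>=\Leb^1(G,S)^>\circ\Phi_E^>$ on $E_c^>$, and hence the bound $\norm{S_0}\leq\norm{S}$. Your approximants $\chi'K_n\chi'$ are not equivariant, so the statement ``$S\mapsto S_0$ is norm-continuous on operators with $\chi_0S=S=S\chi_0$'' is asserted without a definition of $S_0$ on that class. The support condition $\chi_0S=S$ controls where $S^>e^>$ lives over $X/G$, but does not let you compare $\int_G\norm{d\cdot s(S^>e^>)}\rmd s$ with $\int_G\norm{d\cdot se^>}\rmd s$, because $s(S^>e^>)\neq S^>(se^>)$. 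So the passage from convergence in $\Lin_B(E)$ to convergence of anything acting on $E_0$ is unjustified, and the phrase ``equivariance of $T$ plays no further role'' is exactly where the argument breaks.

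The paper circumvents this entirely. Rather than trying to push non-equivariant finite-rank operators through $(\cdot)_0$, it proves a single factorisation
\[
\phi_A(\chi)*\,\cdot\ =\ \Phi_E^>\circ\pi_\chi
\]
on $\Leb^1(G,E^>)$, with an explicit bounded map $\pi_\chi\colon\Leb^1(G,E^>)\to E_0^>$ (and a companion on the $<$-side). This says that multiplication by $\phi_A(\chi)$ factors through $E_0$; it is a ``local direct summand'' statement, with the norm of $\pi_\chi$ depending on $\chi$. Once you have this, writing $T=\chi^3T$ lets you sandwich $\Leb^1(G,T)$ between copies of $\phi_A(\chi)$ and read off all three assertions simultaneously from known compactness on $\Leb^1(G,E)$, without ever needing $(\cdot)_0$ on a non-equivariant operator. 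If you want to repair your argument, this factorisation is the missing ingredient; it replaces your continuity estimate and makes the link between $E_0$ and $\Leb^1(G,E)$ precise.
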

\begin{proof}
The proof is straightforward; the only thing one has to know is that $\phi_A(\chi) * \cdot$, as an element in $\Lin_{\Leb^1(G,B)}(\Leb^1(G,E))$, factors through $E_0$. On the right hand side, the factorisation is given by
$$
\phi_A(\chi)* \cdot  = \Phi_E^> \circ \pi_{\chi}
$$
where $\pi_\chi \colon \Leb^1(G,E^>) \to E_0^>$ is given by
$$
\pi_{\chi}(\xi^>) := \int_{G} \chi \circ \pi  \ s \cdot c \ s \xi^<(s^{-1}) \rmd s,
$$
for $\xi^> \in \Cont_C(G,E^>)$; note that $\norm{\pi_\chi} \subseteq \norm{\chi}_{A_0} <\infty$. Now pick $\chi \in \Cont_c(X/G)$ in such a way that $0 \leq \chi \leq 1$ and use $T = \chi^3 T$. 
\end{proof}

\begin{remark}
Note that, in the proof of the lemma, we cannot say that $E_0$ is a direct summand of $\Leb^1(G,E)$. We just show a ``local'' variant of this fact. The norm of $\pi_{\chi}$ can become worse if the support of $\chi$ becomes larger.
\end{remark}

\section{First Poincar{\'e} duality}

In this section, we prove that, under certain conditions, there is a natural isomorphism
$$
\kkban(A\otimes \ell^1(G, \mA) , B) \ \cong \ \kkban(A, B \otimes \ell^1(G, \Cont_0(X))),
$$
for all Banach algebras $A$ and $B$, where $X$ is a $G$-compact proper $G$-space and $\mA$ is a certain proper $G$-Banach algebra; the result is, on an abstract level, a Banach algebraic analogue of the C$^*$-algebraic Poincar{\'e} duality, compare \cite{EEK:07:erschienen, EEK:08, Emerson:02:erschienen, Connes:94}. But there is a technical problem that one has to overcome before one can actually apply this abstract result to actions on manifolds, see Remark~\ref{Remark:ProblemWithPoincare}.

Let $X$ be a $G$-space. For all $G$-Banach algebras $A$ and $B$ we define
$$
\RkkbanG(X; A,B):= \rRkkbanG(X; A\otimes \Cont_0(X), B \otimes \Cont_0(X)).
$$
Note that $\RkkbanG(X; A,B)$ can be thought of as the set off morphisms from $A$ to $B$ in a category $\RkkbanGcat$. The composition in this category is the composition coming from $\rRkkbanGcat$, the identity morphism on some $G$-Banach algebra $A$ is given by $\id_A \otimes \id_{\Cont_0(X)}$.

There is a canonical functor $\sigma_{\{\pt\}, \Cont_0(X)}$ from $\kkbanGcat$ to $\RkkbanGcat$ that is the identity on objects and maps an equivariant homomorphism $\varphi \colon A \to B$ to $\varphi \otimes \id_{\Cont_0(X)}$; note that this functor respects the suspension, sends Morita equivalences to isomorphisms and is also compatible with extensions with continuous linear (equivariant) split, so it extends uniquely to a functor on $\kkbanGcat$. We will abbreviate this functor by $\mG$ (in this paragraph) or $\sigma_{\Cont_0(X)}$ to avoid clumsy notation.

Note that the functor $\mG$ satisfies a ``linearity condition'': If $A$, $B$ and $C$ are $G$-Banach algebras, and $x \in \kkbanG(A,B)$, then 
\begin{equation}\label{Equation:LinearityConditionOfFirstFunctor}
\mG(\id_C \otimes x) = \id_C \otimes\mG(x) \ \in \ \RkkbanG(X; C \otimes A, C\otimes B).
\end{equation}

In the other direction, we can construct functors as follows: Let $\mA$ be a $G$-$\Cont_0(X)$-Banach algebra. There is a canonical homomorphism 
$$
\sigma_{X, \mA}\colon \rRkkbanG(X; A \otimes \Cont_0(X), B \otimes \Cont_0(X)) \to \rRkkbanG(X; A \otimes \mA, B \otimes \mA)
$$
given, on homomorphisms, by $\varphi \mapsto \varphi \otimes_{\Cont_0(X)} \id_{\mA}$. If we compose this with the forgetful functor $\mF_X \colon \rRkkbanGcat \to \kkbanGcat$ we obtain a functor
$$
\mF:=\mF_{\mA}\colon \RkkbanG(X; A, B) \to \kkbanG(A \otimes \mA, B \otimes \mA),
$$
that maps some object $B$ to the object $(B\otimes \Cont_0(X)) \otimes_{\Cont_0(X)} \mA \ \cong \ B\otimes \mA$; in particular, it maps $\C$ to $\mA$. 

Again, we have a ``linearity condition'': If $A$, $B$ and $C$ are $G$-Banach algebras, and $x \in \RkkbanG(X;A,B)$, then 
\begin{equation}\label{Equation:LinearityConditionOfSecondFunctor}
\mF(\id_C \otimes x) = \id_C \otimes\mF(x) \ \in \ \kkbanG(C \otimes A, C\otimes B).
\end{equation}

In certain cases, these two functors are adjoint:

\begin{proposition}[First Poincar{\'e} duality]\label{Proposition:AbstractPoincareDuality:Projective}
Let $\mA$ be a $G$-$\Cont_0(X)$-Banach algebra, $\theta \in \RkkbanG(X; \C, \mA)$ and $D\in \kkbanG(\mA, \C)$ such that $\mG(D) \circ \theta = 1 \in \RkkbanG(\C, \C)$ and such that $\mF_{\mA}(\theta) \in \kkbanG(\mA, \mA \otimes \mA)$ is invariant under the canonical flip isomorphism $\tau_{\mA}$ on $\mA \otimes \mA$.

Then $\mF=\mF_{\mA}$ and $\mG=\sigma_{\Cont_0(X)}$ are adjoint functors, i.e., for all $G$-Banach algebras $A$ and $B$, there is a natural isomorphism
$$
\delta_1^{A,B}\colon \kkbanG(\mF(A), B) \ \cong \ \RkkbanG(X; A, \mG(B))
$$
or, more explicitly,
$$
\delta_1^{A,B}\colon \kkbanG(A \otimes \mA, B) \ \cong \ \rRkkbanG(X; A \otimes \Cont_0(X), B \otimes \Cont_0(X)).
$$
Additionally, if $\eta_A\in \RkkbanG(X; A \otimes \mA, A)$ and $\varepsilon_B\in \kkbanG(B, B \otimes \mA)$ denote the unit and the co-unit of the adjunction, then, for every $G$-Banach algebra $C$, we have
\begin{equation}\label{Equation:LinearityConditionsOfAdjunction}
\eta_{C\otimes A} = \id_C \otimes \eta_A, \quad \varepsilon_{C \otimes B} = \id_C \otimes \varepsilon_B.
\end{equation}
\end{proposition}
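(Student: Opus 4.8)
The plan is to build the isomorphism $\delta_1^{A,B}$ together with a candidate inverse $\rho^{A,B}$ directly out of the classes $\theta$ and $D$, to verify the two triangle (zig--zag) identities so that these maps are mutually inverse, to observe that $\delta_1$ is natural in both variables, and finally to read off the unit and co-unit of the resulting adjunction and to deduce \eqref{Equation:LinearityConditionsOfAdjunction} from the linearity conditions \eqref{Equation:LinearityConditionOfFirstFunctor} and \eqref{Equation:LinearityConditionOfSecondFunctor}.

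First I would record a few structural facts. Since $\mG=\sigma_{\{\pt\},\Cont_0(X)}$ and $\mF=\mF_{\mA}=\mF_X\circ\sigma_{X,\mA}$ are each obtained from the universal properties of $\kkbanGcat$ and $\rRkkbanGcat$, the composite functors $\mF\circ\mG$ and $\mG\circ\mF$ agree on equivariant homomorphisms of ($\Cont_0(X)$-)Banach algebras with the endofunctor ``$\cdot\otimes\mA$'' (tensoring with $\mA$ regarded merely as a $G$-Banach algebra), and hence coincide with it on all of $\kkbanGcat$, respectively $\RkkbanGcat$; write $\sigma_{\{\pt\},\mA}$ for this endofunctor in either case. (That $\cdot\otimes\mA$ is a well-defined endofunctor of these bivariant categories is clear: it is manifestly $G$-homotopy- and $G$-Morita-invariant and sends a semi-split extension $B\rightarrowtail D\twoheadrightarrow A$ to a semi-split extension with split $\sigma\otimes\id_{\mA}$ --- the extension stays exact because a linear split identifies $D$ with $B\oplus A$ as a Banach space.) Applying $\mF$ to the hypothesis $\mG(D)\circ\theta=1\in\RkkbanG(X;\C,\C)$ and using $\mF\circ\mG=\sigma_{\{\pt\},\mA}$ together with $\mF(1_{\C})=\id_{\mA}$ yields
$$(D\otimes\id_{\mA})\circ\mF_{\mA}(\theta)=\id_{\mA}\quad\text{in }\kkbanG(\mA,\mA);$$
combining this with the flip-invariance $\tau_{\mA}\circ\mF_{\mA}(\theta)=\mF_{\mA}(\theta)$ and the naturality of $\tau_{\mA}$ (so that $(\id_{\mA}\otimes D)\circ\tau_{\mA}=D\otimes\id_{\mA}$) also gives $(\id_{\mA}\otimes D)\circ\mF_{\mA}(\theta)=\id_{\mA}$.

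Next I would set, for $f\in\kkbanG(A\otimes\mA,B)$,
$$\delta_1^{A,B}(f):=\mG(f)\circ(\id_A\otimes\theta)\ \in\ \RkkbanG(X;A,B),$$
using the tensor structure of $\RkkbanGcat$ to form $\id_A\otimes\theta\in\RkkbanG(X;A,A\otimes\mA)$, and the candidate inverse, for $g\in\RkkbanG(X;A,B)$,
$$\rho^{A,B}(g):=(\id_B\otimes D)\circ\mF(g)\ \in\ \kkbanG(A\otimes\mA,B).$$
For $\rho\circ\delta_1=\id$: functoriality of $\mF$, the identity $\mF\circ\mG=\sigma_{\{\pt\},\mA}$ and \eqref{Equation:LinearityConditionOfSecondFunctor} give $\rho(\delta_1(f))=(\id_B\otimes D)\circ(f\otimes\id_{\mA})\circ(\id_A\otimes\mF_{\mA}(\theta))$; rewriting the first two factors as $f\circ(\id_{A\otimes\mA}\otimes D)$ by bifunctoriality of $\otimes$, then pulling $\id_A$ out of $(\id_{A\otimes\mA}\otimes D)\circ(\id_A\otimes\mF_{\mA}(\theta))=\id_A\otimes\bigl((\id_{\mA}\otimes D)\circ\mF_{\mA}(\theta)\bigr)$ and applying the second identity above, one gets $\rho(\delta_1(f))=f\circ\id_{A\otimes\mA}=f$. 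For $\delta_1\circ\rho=\id$: functoriality of $\mG$, \eqref{Equation:LinearityConditionOfFirstFunctor}, the identity $\mG\circ\mF=\sigma_{\{\pt\},\mA}$ and bifunctoriality give $\delta_1(\rho(g))=(\id_B\otimes\mG(D))\circ(g\otimes\id_{\mA})\circ(\id_A\otimes\theta)=g\circ\bigl(\id_A\otimes(\mG(D)\circ\theta)\bigr)=g\circ(\id_A\otimes 1_{\C})=g$, this time needing only $\mG(D)\circ\theta=1$. Naturality of $\delta_1$ in both variables then follows at once from functoriality of $\mF$ and $\mG$, the identities $\mF\circ\mG=\mG\circ\mF=\sigma_{\{\pt\},\mA}$, and bifunctoriality of $\otimes$.

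Finally, evaluating $\delta_1$ and $\rho$ at identity morphisms produces the unit $\eta_A=\id_A\otimes\theta$ and the co-unit $\varepsilon_B=\id_B\otimes D$ of the adjunction, and \eqref{Equation:LinearityConditionsOfAdjunction} then drops out of associativity of $\otimes$: $\eta_{C\otimes A}=\id_{C\otimes A}\otimes\theta=\id_C\otimes(\id_A\otimes\theta)=\id_C\otimes\eta_A$ by \eqref{Equation:LinearityConditionOfFirstFunctor}, and $\varepsilon_{C\otimes B}=\id_{C\otimes B}\otimes D=\id_C\otimes\varepsilon_B$ by \eqref{Equation:LinearityConditionOfSecondFunctor}. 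The argument is thus entirely formal; the real work --- and the only genuine obstacle --- is the bookkeeping underneath it: pinning down the functorial identities $\mF\circ\mG=\mG\circ\mF=\sigma_{\{\pt\},\mA}$ while tracking the $\Cont_0(X)$-module structures that $\mF_X$ discards, and the interchange/bifunctoriality law for $\otimes$ in $\kkbanGcat$ and $\rRkkbanGcat$, all the while keeping careful track of the order of the two tensor copies of $\mA$ inside $\mF_{\mA}(\theta)$ --- it is precisely that ordering which forces the flip-invariance hypothesis into one of the two triangle identities, and nowhere else.
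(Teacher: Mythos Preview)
Your proof is correct and follows essentially the same route as the paper. The paper defines the unit and co-unit directly by $\eta_{\C}:=\theta$, $\varepsilon_{\C}:=D$ and extends via the linearity conditions, then verifies the two zig--zag identities at $\C$; one is immediate from $\mG(D)\circ\theta=1$, and the other is the chain $(\id_{\mA}\otimes D)\circ\mF(\theta)=(\id_{\mA}\otimes D)\circ\tau_{\mA}\circ\mF(\theta)=(D\otimes\id_{\mA})\circ\mF(\theta)=\mF(\mG(D)\circ\theta)=\id_{\mA}$, which is exactly your computation in reverse order. Your presentation via the explicit maps $\delta_1$ and $\rho$ just unpacks the same adjunction data, and your observation that the flip hypothesis enters precisely one of the two triangle identities matches the paper's argument.
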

\begin{proof}
We have to define the unit and the co-unit of the adjunction and show that they satisfy the usual equations. 

Note that (\ref{Equation:LinearityConditionsOfAdjunction}) implies that $\eta$ and $\varepsilon$ are determined by $\eta_{\C}$ and $\varepsilon_{\C}$, and by (\ref{Equation:LinearityConditionOfFirstFunctor}) and (\ref{Equation:LinearityConditionOfSecondFunctor}) it is actually sufficient to define $\eta_{\C}$ and $\varepsilon_{\C}$ in a way that insures that the unit-co-unit equations are valid for $\C$.

We define
$$
\varepsilon_{\C}:= D \LazyAnd \eta_{\C}:= \theta.
$$
Note that $\mF(\C) = \mA$ and $\mG(\C)= \C$, so $\mF(\mG(\C)) = \mA = \mG(\mF(\C))$. Now
$$
\mG(\varepsilon_{\C}) \circ \eta_{\mG(\C)} = \mG(D) \circ \theta = 1_{\mG(\C)} = \id_{\Cont_0(X)}\in \RkkbanG(X; \C, \C)
$$
by assumption. Moreover, we have
\begin{eqnarray*}
\varepsilon_{\mF(\C)} \circ \mF(\eta_{\C}) &=& \varepsilon_{\mA} \circ \mF(\theta) = (\id_{\mA} \otimes D) \circ \mF(\theta)\\
&=& (\id_{\mA} \otimes D) \circ \tau_{\mA}\circ \mF(\theta) = (D \otimes \id_{\mA}) \circ \mF(\theta) \\
&=& \mF(\mG(D)) \circ \mF(\theta) = \mF(\mG(D) \circ \theta)\\
& =& \mF(1_{\C}) = 1_{\mF(\C)} = \id_{\mA}\in \kkbanG(\mA, \mA).
\end{eqnarray*}
\end{proof}

\begin{corollary}[Compare \cite{Kasparov:88, EEK:07:erschienen}]
Let $G$ be discrete and let $X$ be a $G$-compact proper $G$-space. Let $\mA$, $\theta$ and $D$ be as in Proposition~\ref{Proposition:AbstractPoincareDuality:Projective}. Then, for all Banach algebras $A$ and $B$, there is a natural isomorphism
$$
\kkban(A\otimes \ell^1(G, \mA) , B) \ \cong \ \kkban(A, B \otimes \ell^1(G, \Cont_0(X))),
$$
i.e., $\ell^1(G,\mA)$ is a Poincar{\'e} dual of $\ell^1(G, \Cont_0(X))$. 
\end{corollary}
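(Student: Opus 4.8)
The plan is to obtain the asserted isomorphism as a composition of four natural isomorphisms already proved in the paper: the dual Green-Julg theorem lifts the statement to the $G$-equivariant theory, the equivariant Poincar\'e duality of Proposition~\ref{Proposition:AbstractPoincareDuality:Projective} exchanges the dual algebra $\mA$ for $\Cont_0(X)$, the Green-Julg theorem for proper actions descends the result to the quotient $X/G$, and Theorem~\ref{Theorem:RkkbanForCompactSpace} disposes of the remaining $\Cont(X/G)$-coefficients. Throughout I regard $A$ and $B$ as $G$-Banach algebras with the trivial $G$-action, written $A_\tau$ and $B_\tau$. Note that $X$ is proper by hypothesis and, being $G$-compact, has compact (in particular $\sigma$-compact) quotient $X/G$, so that all the hypotheses used below are satisfied; the data $\mA$, $\theta$, $D$ are, by assumption, exactly of the kind required in Proposition~\ref{Proposition:AbstractPoincareDuality:Projective}.

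Concretely: since $G$ is discrete, the twisted convolution defining the $\ell^1$-crossed product $\ell^1(G,A_\tau\otimes\mA)$ only involves the $\mA$-coordinate, giving an isometric isomorphism of Banach algebras $A\otimes\ell^1(G,\mA)\cong\ell^1(G,A_\tau\otimes\mA)$; the dual Green-Julg theorem (Theorem~\ref{Theorem:DualGreenJulg}), applied to the $G$-Banach algebra $A_\tau\otimes\mA$ and the Banach algebra $B$, then yields
$$
\kkban\bigl(A\otimes\ell^1(G,\mA),\,B\bigr)\ \cong\ \kkbanG\bigl(A_\tau\otimes\mA,\,B_\tau\bigr).
$$
Feeding $A_\tau$ and $B_\tau$ into the explicit form of Proposition~\ref{Proposition:AbstractPoincareDuality:Projective} gives
$$
\kkbanG\bigl(A_\tau\otimes\mA,\,B_\tau\bigr)\ \cong\ \rRkkbanG\bigl(X;\,A_\tau\otimes\Cont_0(X),\,B_\tau\otimes\Cont_0(X)\bigr).
$$
Using $\Cont_0(X)\otimes_{\Cont(X/G)}\Cont(X/G)\cong\Cont_0(X)$ we rewrite the first coefficient algebra as $A_\tau\otimes\Cont_0(X)\cong\Cont_0(X)\otimes_{\Cont(X/G)}\bigl(A\otimes\Cont(X/G)\bigr)$, i.e.\ as $\bigl(A\otimes\Cont(X/G)\bigr)_\tau$ with $A\otimes\Cont(X/G)$ in the role of the $\Cont(X/G)$-Banach algebra of Theorem~\ref{Theorem:GreenJulg:generalised}, while $B_\tau\otimes\Cont_0(X)$ is the $G$-$\Cont_0(X)$-Banach algebra fed into that theorem. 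As the action on $X$ is proper and $X/G$ is $\sigma$-compact, Theorem~\ref{Theorem:GreenJulg:generalised} applies and, together with the identification $\Leb^1(G,\,B_\tau\otimes\Cont_0(X))\cong B\otimes\ell^1(G,\Cont_0(X))$ of $\Cont(X/G)$-Banach algebras (valid because $G$ is discrete and acts trivially on $B$, so the crossed product twists only the $\Cont_0(X)$-factor), gives
$$
\rRkkbanG\bigl(X;\,A_\tau\otimes\Cont_0(X),\,B_\tau\otimes\Cont_0(X)\bigr)\ \cong\ \rRkkban\bigl(X/G;\,A\otimes\Cont(X/G),\,B\otimes\ell^1(G,\Cont_0(X))\bigr).
$$
Finally, $X/G$ being a compact Hausdorff space, Theorem~\ref{Theorem:RkkbanForCompactSpace} with the trivial group gives
$$
\rRkkban\bigl(X/G;\,A\otimes\Cont(X/G),\,B\otimes\ell^1(G,\Cont_0(X))\bigr)\ \cong\ \kkban\bigl(A,\,B\otimes\ell^1(G,\Cont_0(X))\bigr).
$$
Composing these four isomorphisms proves the corollary, and naturality in $A$ and $B$ is inherited from the naturality of each step.

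I expect the proof to be essentially formal, so the only real work is organisational: one must check at each stage that the coefficient algebra produced is an admissible input for the next theorem, which reduces to the three identifications $A\otimes\ell^1(G,\mA)\cong\ell^1(G,A_\tau\otimes\mA)$, $A_\tau\otimes\Cont_0(X)\cong\Cont_0(X)\otimes_{\Cont(X/G)}(A\otimes\Cont(X/G))$ and $\Leb^1(G,\,B_\tau\otimes\Cont_0(X))\cong B\otimes\ell^1(G,\Cont_0(X))$, together with the observation that $A\otimes\Cont(X/G)$, $B_\tau\otimes\Cont_0(X)$ and $B\otimes\ell^1(G,\Cont_0(X))$ are automatically non-degenerate over $\Cont(X/G)$, $\Cont_0(X)$ and $\Cont(X/G)$ respectively, so that the $\Cont_0$-linear theories are well defined even when $A$ and $B$ are degenerate. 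No genuine analytic difficulty arises, since the four ingredients are theorems established above; the statement is a formal consequence of the equivariant Poincar\'e duality of Proposition~\ref{Proposition:AbstractPoincareDuality:Projective} combined with the dual Green-Julg theorem, the Green-Julg theorem for proper actions, and the adjunction of Theorem~\ref{Theorem:RkkbanForCompactSpace}.
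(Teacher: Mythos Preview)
Your proof is correct and follows exactly the same route as the paper: the chain of isomorphisms $A\otimes\ell^1(G,\mA)\cong\ell^1(G,A\otimes\mA)$, then dual Green--Julg (Theorem~\ref{Theorem:DualGreenJulg}), then Proposition~\ref{Proposition:AbstractPoincareDuality:Projective}, then the generalised Green--Julg theorem (Theorem~\ref{Theorem:GreenJulg:generalised}), then Theorem~\ref{Theorem:RkkbanForCompactSpace}, and finally $\ell^1(G,B\otimes\Cont_0(X))\cong B\otimes\ell^1(G,\Cont_0(X))$. You supply more detail than the paper on the intermediate identifications and the verification of hypotheses, but the argument is the same.
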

\begin{proof}
We have a sequence of natural isomorphisms, analogously to Section~1 of \cite{EEK:07:erschienen}:
\begin{eqnarray*}
\kkban(A\otimes \ell^1(G, \mA) , B) &\cong & \kkban( \ell^1(G, A\otimes \mA) , B)\\
&\stackrel{\text{Th.\ }\ref{Theorem:DualGreenJulg}}{\cong} & \kkbanG(A \otimes \mA, B)\\
&\stackrel{\text{Prop.\ }\ref{Proposition:AbstractPoincareDuality:Projective}}{\cong} & \RkkbanG(X; A, B)\\
&=& \rRkkbanG(X; A \otimes \Cont_0(X), B \otimes \Cont_0(X)) \\
&\stackrel{\text{Th.\ }\ref{Theorem:GreenJulg:generalised}}{\cong} & \rRkkban(X/G; A\otimes \Cont_0(X/G), \ell^1(G, B \otimes \Cont_0(X)))\\
&\stackrel{\text{Th.\ }\ref{Theorem:RkkbanForCompactSpace}}{\cong} & \kkban(A, \ell^1(G, B \otimes \Cont_0(X)))\\
&\cong & \kkban(A, B \otimes \ell^1(G, \Cont_0(X))).
\end{eqnarray*}
Note that one can find the unit and the co-unit of this adjunction by composing the units and co-units of the adjunctions appearing in the above sequence of isomorphisms, compare \cite{EEK:07:erschienen}.
\end{proof}

\begin{remark}\label{Remark:ProblemWithPoincare}
Note that the conditions on the element $\theta$ appearing in Proposition~\ref{Proposition:AbstractPoincareDuality:Projective} are rather restrictive. To be an element of $\RkkbanG(X; \C, \mA)$ means to be an element of $\rRkkbanG(X; \Cont_0(X), \mA \otimes \Cont_0(X))$. Now the tensor product that we use for Banach algebras is the projective tensor product, so $\mA \otimes \Cont_0(X)$ is slightly ``smaller'' than $\Cont_0(X,\mA)$, and if $X$ is a $G$-compact proper $G$-manifold, then the element $\theta$ constructed in \cite{Kasparov:88}, Definition~4.4, is in $\rRkkbanG(X; \Cont_0(X), \Cont_0(X, \mA))$ where $\mA = \Cont_{\tau}(X)$. 

There are three possible solutions to this problem, neither of which I have studied so far: One could try to show that the change in tensor product does not matter, compare \cite{Paravicini:14:kkban:arxiv}, Proposition~2.7. Or, one could try to show a version of the Proposition~\ref{Proposition:AbstractPoincareDuality:Projective} that uses the injective tensor product instead of the projective; note that this means, among other things, that we have to argue why $\ell^1(G, \Cont_0(X, B))$ can be identified with $\ell^1(G, \Cont_0(X)) \otimes B$  in the corollary, a fact that might be easier to show if we allow ourselves to invert dense and spectral homomorphisms in $\kkban$. Thirdly, one could try to find variants of the $\theta$ that is usually used that are compatible even with the projective tensor product.
\end{remark}

\nocite{Lafforgue:04, Thomsen:01, Cuntz:81, Cuntz:84, Cuntz:87}

%
\providecommand{\bysame}{\leavevmode\hbox to3em{\hrulefill}\thinspace}
\providecommand{\MR}{\relax\ifhmode\unskip\space\fi MR }
\providecommand{\MRhref}[2]{%
  \href{http://www.ams.org/mathscinet-getitem?mr=#1}{#2}
}
\providecommand{\href}[2]{#2}

\end{document}